\newcommand{\pr}[2]{\langle {#1} , {#2} \rangle}
\newcommand{\norm}[1]{\left \| #1 \right \|}
\newtheorem{assumption}[thm]{Assumption}
\def \etc {,\ldots,}
\def \dist {{\rm dist}}
\newcommand{\Graph}{{\sf G}}
\newcommand{\Adj}{\mathrm{Adj}}
\newcommand{\dGraph}{\overrightarrow{\Graph}}
\newcommand{\bGraph}{{\sf BG}}
\def\corAB{\cyan}
\def \d {\delta}
\begin{document}

\title[{Invertibility of adjacency matrices}]{{Sharp transition of the invertibility of the adjacency matrices of sparse  random graphs}}

\author[A.\ Basak]{Anirban Basak$^*$}
 \author[M.\ Rudelson]{Mark Rudelson$^\dagger$}


\address{$^{*}$International Center for Theoretical Sciences
\newline\indent Tata Institute of Fundamental Research
\newline\indent Bangalore 560089, India
\newline\indent and
\newline\indent Department of Mathematics, Weizmann Institute of Science
\newline\indent POB 26, Rehovot 76100, Israel}

\address{$^\dagger$ Department of Mathematics, University of Michigan
\newline\indent East Hall, 530 Church Street
\newline\indent Ann Arbor, Michigan 48109, USA}

\date{\today}

\subjclass[2010]{46B09, 60B20.}

\keywords{Random matrices, sparse matrices, Erd\H{o}s-R\'{e}nyi graph, invertibility, smallest singular value, condition number}

\maketitle

\begin{abstract}
We consider three models of sparse random graphs:~undirected and directed Erd\H{o}s-R\'{e}nyi graphs and random bipartite graph with two equal parts. For such graphs, we show that if the edge connectivity probability $p$ satisfies $np\ge\log n+k(n)$ with $k(n)\to\infty$ as $n\to\infty$, then the adjacency matrix is invertible with probability approaching one ($n$ is the number of vertices in the two former cases and the same for each part in the latter case). For $np\le\log n-k(n)$ these matrices are invertible with probability approaching zero, as $n\to\infty$. In the intermediate region, when $np=\log n+k(n)$, for a bounded sequence $k(n)\in\R$,  the event $\Omega_0$ that the adjacency matrix has a zero row or a column and its complement both have a non-vanishing probability. For such choices of $p$ our results show that conditioned on the event $\Omega_0^c$ the matrices are again invertible with probability tending to one. This shows that the primary reason for the non-invertibility of such matrices is the existence of a zero row or a column. We further derive a bound on the (modified) condition number of these matrices on $\Omega_0^c$, with a large probability, establishing von Neumann's prediction about the condition number up to a factor of $n^{o(1)}$.
\end{abstract}


{
  \hypersetup{linkcolor=black}
  \tableofcontents
}

\section{Introduction}\label{sec:intro}
For an $n \times n$ real matrix $A_n$ its \emph{singular values} $s_k(A_n), \, k  \in [n]:=\{1,2,\ldots,n\}$ are the eigenvalues of $|A_n| :=\sqrt{A_n^* A_n}$ arranged in a non-increasing order. The maximum and the minimum singular values, often of particular interest, can also be defined as
\[
s_{\max}(A_n):= s_1(A_n) := \sup_{x \in S^{n-1}} \|A_n x \|_2, \qquad s_{\min}(A_n):=s_n(A_n):= \inf_{x \in S^{n-1}} \|A_n x \|_2,
\]
where $S^{n-1}:= \{ x \in \R^n: \|x\|_2 = 1\}$ and $\| \cdot \|_2$ denotes the Euclidean norm of a vector. Further, let \beq\label{eq:Omega-0}
\Omega_0:=\{A_n \text{ has either a zero row or a zero column}\}.
\eeq
Obviously, $s_{\min}(A_n)=0$ for any $A_n \in \Omega_0$.
For matrices with i.i.d.~(independent and identically distributed) Bernoulli entries we establish the following sharp transition of invertibility.
\begin{thm}\label{thm:bernoulli}
Let $A_n$ be an $n \times n$ matrix with i.i.d.~$\dBer(p)$ entries. That is, for $i,j \in [n]$
\[
\P(a_{i,j}=1)=p, \qquad \P(a_{i,j}=0) = 1-p,
\]
with $p \in (0,1/2]$, where $a_{i,j}$ is the $(i,j)$-th entry of $A_n$.
\begin{enumeratei}
\item There exist absolute constants $0 < c_{\ref{thm:bernoulli}}, \ol c_{\ref{thm:bernoulli}}, C_{\ref{thm:bernoulli}} < \infty$ such that for any $\vep >0$, and $p$ such that $np \ge \log(1/p)$, we have
\beq\label{eq:s-min-bd-ber}
\P\left(\left\{s_{\min}(A_n) \le c_{\ref{thm:bernoulli}} \vep \exp\left( -C_{\ref{thm:bernoulli}}\f{\log(1/p)}{\log (np)}\right)\sqrt{\f{p}{n}}\right\}\cap \Omega_0^c\right) \le {\vep + n^{-\ol{c}_{\ref{thm:bernoulli}}}}. 
\eeq
\item There exists an absolute constant $\bar{C}_{\ref{thm:bernoulli}}$ such that for $p$ satisfying $np \le \log(1/p)$ we have
\[
\P(\Omega_0) \ge 1- \f{\bar{C}_{\ref{thm:bernoulli}}}{\log n}.
\]
\end{enumeratei}
\end{thm}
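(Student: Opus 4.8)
I treat the two parts separately. For \emph{part (ii)} I would argue directly from the independence of the rows. Put $\mu:=n(1-p)^n$. Each row of $A_n$ vanishes identically with probability $(1-p)^n$, independently of the others, so the probability that no row vanishes is $\bigl(1-(1-p)^n\bigr)^n\le e^{-\mu}$, whence $\P(\Omega_0)\ge 1-e^{-\mu}$; it therefore suffices to show $\mu\ge\tfrac14\log n$ for all large $n$ throughout the range $np\le\log(1/p)$. Here the point is that $p\mapsto\mu(p)=n(1-p)^n$ is strictly decreasing, while $\{p\in(0,1):np\le\log(1/p)\}=(0,p_\star]$ (since $p\mapsto np+\log p$ increases from $-\infty$ to $n$), with $np_\star=\log(1/p_\star)$; so $\mu$ is minimized over the range at $p=p_\star$. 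As $np+\log p>0$ at $p=(\log n)/n$ for large $n$, one gets $p_\star<(\log n)/n$, hence $\log(1/p_\star)>\log n-\log\log n\ge\tfrac12\log n$; and using $\log(1-q)\ge -q-q^2$ on $(0,\tfrac12]$ together with $q\log(1/q)\le 1/e$,
\[
\mu(p_\star)=n(1-p_\star)^n\ge n\,e^{-np_\star-np_\star^2}=np_\star\,e^{-p_\star\log(1/p_\star)}\ge e^{-1/e}\log(1/p_\star)\ge\tfrac14\log n.
\]
So $\P(\Omega_0)\ge 1-n^{-1/4}$ for large $n$, which is a fortiori at least $1-\bar C_{\ref{thm:bernoulli}}/\log n$ once $\bar C_{\ref{thm:bernoulli}}$ is a large absolute constant (for small $n$ the asserted bound is vacuous).

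\emph{Part (i)} is the substantial statement, and I would run the compressible/incompressible scheme of Rudelson--Vershynin, adapted to the sparse setting. Write $s:=c_{\ref{thm:bernoulli}}\vep\,\exp\!\bigl(-\wt C_{\ref{thm:bernoulli}}\log(1/p)/\log(np)\bigr)\sqrt{p/n}$ for the target scale and split $S^{n-1}=\mathrm{Comp}\cup\mathrm{Incomp}$, with $\mathrm{Comp}$ the set of unit vectors lying within $\ell_2$-distance a small constant of some vector supported on $\le\rho n$ coordinates (small absolute $\rho$) and $\mathrm{Incomp}$ the complement; one bounds $\P(\{\inf_{x\in\mathrm{Comp}}\|A_nx\|_2\le s\}\cap\Omega_0^c)$ and $\P(\inf_{x\in\mathrm{Incomp}}\|A_nx\|_2\le s)$ separately. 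For the structured classes --- very sparse vectors, and vectors nearly constant on a block of linear size --- the argument is a net bound: for a fixed such $x$, lower bound $\|A_nx\|_2$ using that the bipartite graph of $A_n$, once its few rows and columns of abnormally small degree are deleted, is an expander (small sets of rows have many neighbours), giving $\|A_nx\|_2\gtrsim\sqrt{p/n}$ with failure probability $\lesssim(np)^{-1/4}$, small enough to absorb an $\vep$-net of the class. Deleting the low-degree rows and columns is legitimate on $\Omega_0^c$, but is exactly what forces the factor $\exp(-\wt C_{\ref{thm:bernoulli}}\log(1/p)/\log(np))$: near the threshold $np\approx\log n$ a vertex of degree $d$ --- with $d$ possibly as small as $\Theta(\log n/\log\log n)$ --- creates a direction along which $\|A_nx\|_2$ is only of order $p^{\Theta(d)}\sqrt{p/n}$.

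For $\mathrm{Incomp}$ I would use the distance reduction: if $x\in\mathrm{Incomp}$ and $\|A_nx\|_2\le s$, then $\dist(X_k,H_k)\le s/|x_k|$, with $X_k$ the $k$-th column of $A_n$ and $H_k=\Span(X_j:j\ne k)$, and incompressibility gives $|x_k|\ge c/\sqrt n$ for a positive proportion of $k$, so
\[
\P\bigl(\inf_{x\in\mathrm{Incomp}}\|A_nx\|_2\le s\bigr)\le\f{C}{n}\sum_{k=1}^{n}\P\bigl(\dist(X_k,H_k)\le s\sqrt n/c\bigr).
\]
Writing $\dist(X_k,H_k)=|\pr{X_k}{v}|$ for a unit normal $v=v_k$ to $H_k$ --- the null vector of the $(n-1)\times n$ matrix of the remaining columns, hence independent of $X_k$ --- each summand is a small-ball (Littlewood--Offord) probability for the weighted sum $\pr{X_k}{v}$ of i.i.d.\ $\dBer(p)$ variables, whose concentration function is governed by how delocalised $v$ is. The crux --- and the step I expect to be the main obstacle --- is the structural statement that with probability $1-O((np)^{-1/4})$ the normal $v$ is itself incompressible (no small coordinate subset carries most of its mass), apart from the small coordinates forced by the low-degree vertices, whose cumulative effect is again the $\exp(-\wt C_{\ref{thm:bernoulli}}\log(1/p)/\log(np))$ loss. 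Granting this, a sparse Littlewood--Offord inequality gives $\P(|\pr{X_k}{v}|\le s\sqrt n/c)\lesssim\vep^{1/5}+(np)^{-1/4}$ (the $\vep^{1/5}$ being the genuinely $\vep$-dependent part), and summing over $k$ and combining with the structured-vector estimate yields \eqref{eq:s-min-bd-ber}. The delocalisation of $v$ is typically obtained by a bootstrap --- first a weak invertibility statement off a small event, then an upgrade excluding localised null directions --- and carrying this through uniformly, all the way down to $np=\log(1/p)$ in the sparse regime, is the technical heart of the matter.
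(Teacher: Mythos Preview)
Your part~(ii) argument is correct and in fact cleaner than the paper's. You exploit the full independence of the rows in the i.i.d.\ model to get $\P(\Omega_0)\ge 1-e^{-\mu}$ with $\mu\ge\tfrac14\log n$, hence the stronger conclusion $\P(\Omega_0)\ge 1-n^{-1/4}$. The paper instead runs a second-moment (Chebyshev) argument on the \emph{number} of zero columns, obtaining only $1-\bar C/\log n$; the reason is that the paper proves part~(ii) under its general Assumption~\ref{ass:matrix-entry}, which allows $a_{i,j}$ and $a_{j,i}$ to be dependent (covering symmetric and directed Erd\H{o}s--R\'enyi adjacency matrices simultaneously), so the rows are not independent and your product bound is unavailable. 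For Theorem~\ref{thm:bernoulli} as stated, your route is preferable.

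For part~(i) your outline follows the paper's scheme --- compressible/incompressible split, distance-to-hyperplane reduction on the incompressible side, delocalisation of a kernel vector of the remaining $(n-1)\times n$ block, then a small-ball bound for $\langle X_k,v\rangle$ --- and you correctly flag the delocalisation step as the crux. Two points of divergence are worth noting. First, your account of the factor $\exp\bigl(-\wt C_{\ref{thm:bernoulli}}\log(1/p)/\log(np)\bigr)$ as coming from ``deleting low-degree vertices'' is not quite how it arises: in the paper it is $\rho^{3}$ with $\rho=(\wt c/K)^{2\ell_0+1}$ and $\ell_0\approx \log(1/p)/\log(np)$, produced by an $\ell_0$-step chaining over blocks of sizes $(np)^{\ell/2}$ in the treatment of very sparse vectors (Proposition~\ref{l: sparse vectors-2}), not by a direct degree count. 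The light-column analysis (Lemmas~\ref{lem: typical structure}--\ref{lem:light-col-card}) enters, but the exponential loss is the chaining depth, not a per-vertex penalty. Second, your attribution of the exponent $\vep^{1/5}$ to the ``sparse Littlewood--Offord inequality'' is off: in the i.i.d.\ case the Berry--Ess\'een bound (Lemma~\ref{lem:bound-levy}) gives $C(\vep+1/\sqrt{np})$, and the paper's $\vep^{1/5}+C(np)^{-1/4}$ is the price of the decoupling argument (Lemma~\ref{lem:decoupling}) needed only for the symmetric and directed models where the column $X_k$ is \emph{not} independent of $H_k$. Since Theorem~\ref{thm:bernoulli} is stated as a special case of Theorem~\ref{thm:s-min-graphs}, the paper records the weaker unified bound; your sketch, carried through for i.i.d.\ entries, would in fact yield the sharper $\vep+C/\sqrt{np}$ on the right-hand side.
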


\begin{rmk}
It can be easily verified that the condition $np \ge \log(1/p)$ is almost identical to the condition that $np \ge \log n - \log \log n$. We work with the former as it naturally arises in our proofs. Let us also add that if $p \in (1/2,1)$ then for $n \bar p \ge \log(1/\bar p)$ the conclusion of Theorem \ref{thm:bernoulli} continues to hold, where $\bar p:=1-p$. The proof of this extension easily follows from that of Theorem \ref{thm:bernoulli} by studying the smallest singular value of $\wh A_n - {\bm J}_n$, where $\wh A_n:= {\bm J}_n -  A_n$ and ${\bm J}_n$ is the matrix of all ones. Since our goal in this paper is treat small values of $p$ we avoid this extra step in our arguments, and work only with $p \in (0,1/2]$.
\end{rmk}

\vskip10pt
 Theorem \ref{thm:bernoulli} is a consequence of Theorem \ref{thm:s-min-graphs} which is proved under a more general set-up including, in particular, symmetric Bernoulli matrices. 
{The latter theorem shows that the same phase transition occurs for a broader class of random matrices. To simplify the exposition, we will start with the easier case of  matrices with i.i.d. entries. The results pertaining to other types of random matrices are discussed later in this section.}
 Throughout the paper $p=p_n$ may depend on $n$. For ease of writing we suppress this dependence.

 To understand the implication of Theorem \ref{thm:bernoulli} we see that studying the invertibility property of any given matrix amounts to understanding  the following three different aspects of it. Probably the easiest one is to find the probability that a random matrix is singular. For any given random matrix $A_n$, it means to find a bound on $\gp_n$, where
 \[
\gp_n:=\gp(A_n):=\P(A_n \text{ is non-invertible})= \P(\det(A_n)=0) = \P( s_{\min}(A_n)=0).
 \]
 If the entries of $A_n$ have densities with respect to the Lebesgue measure then $\gp_n=0$. However, for matrices with discrete entries, the problem of evaluating the singularity probability $\gp_n$ is non-trivial.

 The second question regarding the invertibility is more of a quantitative nature. There one is interested in finding the distance between $A_n$ and the set of all singular matrices. As
\[
s_{\min}(A_n) = \inf\{ \|A_n - B\|: \det(B)=0\},
\]
where $\|A_n - B\|$ denotes the operator norm of the matrix $A_n - B$,  a lower bound on $s_{\min}(A_n)$ yields a quantitative measure of invertibility.

The third direction, probably the most difficult, is to find the main reason for non-invertibility of a given random matrix. To elaborate on this let us consider the following well known conjecture:

 \begin{conj}\label{conj:rad}
 Let $A_n$ be a $n \times n$ matrix with i.i.d.~Rademacher random variables ($\pm 1$ with equal probability). Then
 \[
\gp_n= \left(1 +o(1)\right)n^2/2^{n-1},
 \]
 where we recall that the notation $a_n=o(b_n)$ means $\lim_{n \to \infty} a_n/b_n=0$.
 \end{conj}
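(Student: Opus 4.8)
\textbf{Proof proposal for Conjecture~\ref{conj:rad}.} This is a notoriously hard problem, so what follows is a genuine plan of attack, following the circle of ideas of Koml\'os, Kahn--Koml\'os--Szemer\'edi, Tao--Vu, Bourgain--Vu--Wood, and --- for the sharp constant --- Tikhomirov. The lower bound is the easy half: the event that rows $1$ and $2$ of $A_n$ coincide has probability $2^{-n}$, there are $\binom{n}{2}$ such pairs of rows and as many pairs of columns, and a union of these events already does the job, so $\gp_n \ge (1+o(1))\, n^2\, 2^{-n}$; since the $n$-th root of the right-hand side tends to $1/2$, this gives $\gp_n \ge (1/2+o(1))^n$. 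All the difficulty is in the matching upper bound $\gp_n \le (1/2+o(1))^n$.

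For the upper bound, the plan is to estimate $\P(\exists\, x \in S^{n-1}\colon A_n x = 0)$ by the standard geometric scheme: cover $S^{n-1}$ by a net of size $\exp(Cn)$, bound for each net point $x$ the probability that $\norm{A_n x}_2$ is small, and union bound. The per-point estimate is anti-concentration: since the rows of $A_n$ are i.i.d., $\P(\norm{A_n x}_2 \le t\sqrt n)$ factorizes over rows into a product of Littlewood--Offord/Esseen bounds on $\P(|\pr{a}{x}| \le t)$, and the quality of each factor is controlled by the arithmetic structure of $x$, measured by its (essential) least common denominator $D(x)$. One then partitions $S^{n-1}$ by level of structure: \emph{(i) compressible} $x$ (close to a sparse vector), handled directly by showing that a few columns of $A_n$ are well conditioned, with an $\exp(-cn)$ bound and room to spare; \emph{(ii) spread but arithmetically structured} $x$, with $D(x)$ of moderate size; and \emph{(iii) unstructured} $x$, with $D(x)$ exponentially large, where each row contributes an anti-concentration factor $O(n^{-1/2})$ and the independence of the rows yields $(C/\sqrt n)^n \ll (1/2)^n$.

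The whole difficulty lives in regime~(ii). There the net of directions with $D(x)$ at a dyadic scale $D$ is numerous, and although each such direction contributes an anti-concentration factor of only about $1/D$ per row, a crude union bound over all of them loses a constant in the base --- which is exactly why the classical approaches, whether via $\vep$-nets and anti-concentration or via inverse Littlewood--Offord, produce $\gp_n \le (c+o(1))^n$ with some fixed $c>1/2$ (e.g.\ $3/4$, then $1/\sqrt2$). The fix, which is the heart of Tikhomirov's argument, is to replace the lossy net-union bound at these scales by a randomized-rounding / conditioning argument: one bounds the probability that $A_n$ annihilates a structured vector by the probability that a fresh random $\pm1$ column lands in a small dilate of the set of structured directions compatible with the remaining rows, and then shows that this set --- although it contains many directions --- is so ``arithmetically thin'' that a random $\pm1$ vector meets a neighbourhood of it only with probability $(1/2+o(1))^n$, the $1/2$ reflecting the per-coordinate binary choice. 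Combining the three regimes then gives $\gp_n \le (1/2+o(1))^n$.

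The main obstacle --- the reason the conjecture stayed open for decades after $\gp_n = o(1)$ and $\gp_n \le 0.999^n$ were established --- is precisely this last point: obtaining the optimal base $1/2$ forces one to match the metric entropy of the structured unit vectors at each scale against the anti-concentration they provide \emph{with no constant to spare}, which the classical $\vep$-net union bound cannot do. By comparison, the compressible case, the genuinely unstructured case, and the reduction to null vectors with $\norm{x}_\infty \ge c\, n^{-1/2}$ are, by now, standard, and contribute only lower-order terms.
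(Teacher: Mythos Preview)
The paper does not prove this statement: it is stated as Conjecture~\ref{conj:rad}, i.e.\ as an open problem used for motivation in the introduction. The paper's own results concern sparse Bernoulli matrices near the threshold $np \approx \log n$, not dense Rademacher matrices, and the only information the paper records about $\gp_n$ for Rademacher matrices is the history of partial results (Koml\'os, Kahn--Koml\'os--Szemer\'edi, Tao--Vu, Bourgain--Vu--Wood) culminating in the bound $(1/\sqrt{2}+o(1))^n$ cited from \cite{bvw}. There is therefore no ``paper's proof'' to compare against.

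Regarding your sketch on its own merits: the lower bound is correct and standard, and your high-level outline of the upper bound --- split into compressible, structured-spread, and unstructured vectors, with the entire difficulty concentrated in the structured regime --- is accurate and is indeed the shape of Tikhomirov's resolution. But what you have written for regime (ii) is a description of \emph{what} must be achieved (match entropy to anti-concentration with no constant lost) together with a one-sentence gesture at randomized rounding and ``arithmetic thinness''; it is not a proof. The actual mechanism --- inverting the problem to count how many $\pm 1$ vectors are orthogonal to a given direction, the precise notion of threshold function replacing the LCD, the random-rounding construction that produces a discretized normal vector while controlling both its anti-concentration and the entropy of the resulting family, and the careful double counting that extracts the base $1/2$ --- is the entire content of the argument and is absent here. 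So as a proposal this correctly locates the obstacle and names the right circle of ideas, but it does not contain the key step.
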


 \vskip5pt
 It can be noted that the expression above is the probability that there exist either two columns or two rows of $A_n$ which are identical up to a change in sign.
Conjecture \ref{conj:rad} appears in \cite{kks, komlos3, odlyzko}.
 This conjecture, if true, may indicate that the main reason for the singularity for a matrix with i.i.d.~Rademacher entries  is conjectured to be the existence of two identical columns or rows, up to a reversal of sign.

 A few months after  the first posting of this paper on arXiv, a weaker version of Conjecture \ref{conj:rad} asserting that
 \[
  \gp_n= \left(\frac{1}{2} +o(1)\right)^n
 \]
  was proved by Tikhomirov \cite{T}.

Theorem \ref{thm:bernoulli} addresses all three different aspects of invertibility for sparse random matrices. As it yields a lower bound on the smallest singular value it readily gives a quantitative estimate on the invertibility of matrices with i.i.d.~Bernoulli entries. Setting $\vep=0$ in Theorem \ref{thm:bernoulli}(i) we obtain a bound on the singularity probability. 

Probably, the most important feature of Theorem \ref{thm:bernoulli} is that it identifies the existence of a zero row or a column as the primary reason for non-invertibility. To see this, let us denote
\beq\label{eq:Omega-0-col}
\Omega_{\col}:=\{ A_n \text{ has a zero column}\} \qquad \text{ and } \qquad \Omega_{\row}:=\{A_n \text{ has a zero row}\}.
\eeq
As the entries of $A_n$ are i.i.d.~$\dBer(p)$ it is immediate that
\beq\label{eq:omega-col-bd}
\P(\Omega_{\col})= \P(\Omega_{\row}) = 1 - (1 -(1-p)^n)^n.
\eeq
This shows that if $np \ge \log n + k(n)$ then
\[
\P(\Omega_{\col}) \to 0, \quad \text{ as } n \to \infty,
\]
whereas for $np \le \log n - k(n)$ one has
\[
\P(\Omega_{\col}) \to 1, \quad \text{ as } n \to \infty,
\]
for any sequence $k(n) \to \infty$ as $n \to \infty$.
As $np = \log(1/p)$ implies that $np = \log n - \delta_n \log\log n$, for some $\delta_n \sim 1$, from Theorem \ref{thm:bernoulli} we therefore deduce the following corollary.
 \begin{cor}\label{cor:bernoulli}
 Let $A_n$ be a matrix with i.i.d.~$\dBer(p)$ entries. Then we have the following:
 \begin{enumeratea}
 \item If $np =\log n +k(n)$, where $\{k(n)\}_{n \in \N}$ is such that $k(n) \to \infty$ as $n \to \infty$, then
 \[
\P(\Omega_0^c) \to 1 \qquad \text{ and } \qquad \P(A_n \text{ is invertible} ) \to 1 \qquad \text{ as } n \to \infty.
 \]

 \item If $np= \log n - k(n)$ then
 \[
\P(\Omega_0^c) \to 0 \qquad \text{ and } \qquad \P(A_n \text{ is invertible} ) \to 0 \qquad \text{ as } n \to \infty.
 \]

 \item  Moreover, if $np \ge \log(1/p)$ then
 \[
 \P(A_n \text{ is invertible} \mid \Omega_0^c) \to 1 \quad \text{ whenever }
 \P(\Omega_0^c) \cdot { n^{{\ol{c}_{\ref{thm:bernoulli}}}}} \to \infty 
 \quad\text{ as } n \to \infty.
 \]
 %
 \end{enumeratea}
 \end{cor}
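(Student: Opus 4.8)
I would handle these by combining the exact identity \eqref{eq:omega-col-bd} with the ``$\vep\downarrow 0$'' case of Theorem~\ref{thm:bernoulli}(i) (as noted in the paragraph preceding the corollary: the events in \eqref{eq:s-min-bd-ber} are nested and decrease to $\{s_{\min}(A_n)=0\}\cap\Omega_0^c$ as $\vep\downarrow 0$, so by monotone continuity of $\P$ one gets $\P(\{A_n\text{ singular}\}\cap\Omega_0^c)\le C_{\ref{thm:bernoulli}}/\sqrt[4]{np}$). For (a), write $np=\log n+k(n)$; since $(1-p)^n\le e^{-np}=n^{-1}e^{-k(n)}\le 1$, Bernoulli's inequality $(1-x)^n\ge 1-nx$ applied in \eqref{eq:omega-col-bd} gives $\P(\Omega_{\col})=\P(\Omega_{\row})\le e^{-k(n)}$, and a union bound yields $\P(\Omega_0^c)\ge 1-2e^{-k(n)}\to 1$. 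Also $\log(1/p)=\log n-\log(np)\le\log n\le np$, so Theorem~\ref{thm:bernoulli}(i) applies and $\P(\{A_n\text{ singular}\}\cap\Omega_0^c)\le C_{\ref{thm:bernoulli}}/\sqrt[4]{np}\to 0$; since $\{A_n\text{ invertible}\}\subseteq\Omega_0^c$, adding these gives $\P(A_n\text{ invertible})\ge\P(\Omega_0^c)-C_{\ref{thm:bernoulli}}/\sqrt[4]{np}\to 1$. For (b), write $np=\log n-k(n)$, so $0<np\le\log n$ and $p\le\log n/n\to 0$; then $n\log(1-p)=-np+O(np^2)=-(\log n-k(n))+o(1)$ because $np^2=p\cdot np\le(\log n)^2/n\to 0$, whence $n(1-p)^n=e^{k(n)}(1+o(1))\to\infty$. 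Using $(1-x)^n\le e^{-nx}$ in \eqref{eq:omega-col-bd} gives $\P(\Omega_{\col})\ge 1-e^{-n(1-p)^n}\to 1$, so $\P(\Omega_0^c)\le 1-\P(\Omega_{\col})\to 0$, and $\{A_n\text{ invertible}\}\subseteq\Omega_0^c$ forces $\P(A_n\text{ invertible})\to 0$ as well.

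\textbf{Part (c).} This is the only part that really pulls on Theorem~\ref{thm:bernoulli} from both sides of the transition. I would start from the identity
\[
\P(A_n\text{ invertible}\mid\Omega_0^c)=1-\frac{\P(\{s_{\min}(A_n)=0\}\cap\Omega_0^c)}{\P(\Omega_0^c)},
\]
which holds because $\{A_n\text{ invertible}\}\subseteq\Omega_0^c$ and $\Omega_0\subseteq\{s_{\min}(A_n)=0\}$. The point is that the hypothesis $\P(\Omega_0^c)(\log n)^{1/4}\to\infty$ automatically excludes the subcritical regime: along any subsequence of $n$'s with $np\le\log(1/p)$, Theorem~\ref{thm:bernoulli}(ii) gives $\P(\Omega_0^c)\le\bar C_{\ref{thm:bernoulli}}/\log n$, hence $\P(\Omega_0^c)(\log n)^{1/4}\le\bar C_{\ref{thm:bernoulli}}(\log n)^{-3/4}$ stays bounded there, contradicting the divergence. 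So $np>\log(1/p)$ for all large $n$; moreover $np\ge\log(1/p)=\log n-\log(np)$ forces $np\ge\tfrac12\log n$ for large $n$ by an elementary case split according to whether $np\ge\log n$ or $np<\log n$.

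Theorem~\ref{thm:bernoulli}(i) then applies, and letting $\vep\downarrow 0$ as above,
\[
\P(\{s_{\min}(A_n)=0\}\cap\Omega_0^c)\le\frac{C_{\ref{thm:bernoulli}}}{\sqrt[4]{np}}\le\frac{2^{1/4}C_{\ref{thm:bernoulli}}}{(\log n)^{1/4}},
\]
so that $\P(A_n\text{ invertible}\mid\Omega_0^c)\ge 1-2^{1/4}C_{\ref{thm:bernoulli}}\bigl(\P(\Omega_0^c)(\log n)^{1/4}\bigr)^{-1}\to 1$. There is no serious obstacle here — the whole corollary is bookkeeping on top of Theorem~\ref{thm:bernoulli}. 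The single observation worth isolating is the dichotomy used in (c): the \emph{failure} of $\P(\Omega_0^c)(\log n)^{1/4}\to\infty$ is exactly what Theorem~\ref{thm:bernoulli}(ii) already guarantees in the range $np\le\log(1/p)$, so the two halves of Theorem~\ref{thm:bernoulli} dovetail with no gap; the only calculational care needed is the monotone passage $\vep\downarrow 0$ in \eqref{eq:s-min-bd-ber} and the crude bound $np\ge\tfrac12\log n$ in the range relevant to (c).
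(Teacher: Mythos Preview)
Your proposal is correct and follows exactly the approach the paper intends: the corollary is not given a separate proof in the paper but is presented as an immediate consequence of Theorem~\ref{thm:bernoulli} together with the computation \eqref{eq:omega-col-bd} and the observation that $np=\log(1/p)$ corresponds to $np=\log n-\delta_n\log\log n$. Your write-up simply fills in those details carefully (Bernoulli's inequality and the Taylor expansion of $\log(1-p)$ for the two directions of \eqref{eq:omega-col-bd}, the $\vep\downarrow 0$ passage in \eqref{eq:s-min-bd-ber}, and the use of Theorem~\ref{thm:bernoulli}(ii) to rule out the subcritical regime in part~(c)), and there is nothing to add.
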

 

 \vskip5pt

 Corollary \ref{cor:bernoulli}(a)-(b) shows that the invertibility of a matrix with i.i.d.~Bernoulli entries undergoes a sharp transition essentially at $p =\f{ \log n}{n}$. On the event $\Omega_0$ the matrix is trivially singular. The importance of Corollary \ref{cor:bernoulli}(c) lies in the fact that it shows that even when $\Omega_0$ has a non-trivial probability, on the event $\Omega_0^c$ there is an exceptional set of negligible probability outside which the matrix is again invertible with large probability. This indicates that the main reason for the non-invertibility of a matrix with i.i.d.~Bernoulli entries is the existence of a zero row or a column.
 Moreover, the same phenomenon occurs for two other classes of sparse random matrices with some dependence between the entries symmetric with respect to the diagonal (see Theorem \ref{thm:s-min-graphs} for a precise assumption).
 To the best of our knowledge this is the first instance where the primary reason of the non-invertibility for these three classes of sparse random matrices, in particular the one with i.i.d.~entries, has been rigorously established.

 \begin{rmk}
The reader may also be tempted to interpret that the main reason for the non-invertibility of 
a matrix with i.i.d.~Bernoulli entries 
can only be identified as the presence of a zero row or a column if one shows that
 \beq\label{eq:main-reason-ref}
 \P(\Omega_0 |A_n \text{ is singular}) \to 1 \quad \text{ as } n \to \infty.
 \eeq
{When $\P(\Omega_0)$ is small, \eqref{eq:main-reason-ref} is certainly  stronger than what has been derived in Corollary \ref{cor:bernoulli}(c). However, when $\P(\Omega_0) \to 1$ it can be seen that \eqref{eq:main-reason-ref} is trivial to obtain and it does not provide any information, while Corollary \ref{cor:bernoulli}(c) yields an insight regarding the  reason of invertibility of $A_n$.  As we are concentrating on the phase transition from singularity to invertibility, the formulation of Corollary \ref{cor:bernoulli}(c) seems to be more natural. }

Let us also note that \eqref{eq:main-reason-ref} is false for $p=\frac12$. This is due to the fact that $\P(\wt \Omega_0)/\P(\Omega_0) \to \infty$, as $n \to \infty$, where $$\wt \Omega_0:= \{\text{either two rows or two columns of } A_n \text{ are identical}\}.$$ 
It is further believed that the leading order of of the probabilities of the events $\{A_n \text{ is singular}\}$ and $\wt \Omega_0$ match with each other for $p=\f12$ (see e.g.~\cite[Conjecture 1.2]{hH}). There have been some progress in this direction, see \cite{JSS2}. 
 
{There was a significant very recent progress in determining the main reason for singularity of Bernoulli random matrices with i.i.d. entries, and    \eqref{eq:main-reason-ref}  has been proved  for all $p \in [\log n/n, 1/2)$ in \cite{hH, JSS1, LT}. Establishing  \eqref{eq:main-reason-ref}  for other classes of random matrices including symmetric Bernoulli ones remains an open problem.}

 \begin{rmk}
 It can be seen that for for $p$ such that $np \le \frac{\log n}{2n}$, with probability approaching one, $A_n$ contains two identical columns and on this event $A_n$ is singular. Thus, for such choices of $p$ the bound \eqref{eq:s-min-bd-ber} should not hold. It is possible in that regime the primary reason for invertibility is the existence of two identical rows or columns. 
 \end{rmk}
 
 \end{rmk}

Understanding the singularity probability and the analysis of extremal singular values of random matrices have applications in compressed sensing, geometric functional analysis, theoretical computer science, and many other fields of science. Moreover, to find the limiting spectral distribution of any non-Hermitian random matrix ensemble one essentially needs to go via Girko's Hermitization technique which requires a quantitative lower bound on the smallest singular value. This has spurred a renewed interest in studying the smallest singular value. There have been numerous works in this direction over the last fifteen years. We refer the reader to \cite{BCZ, BD, BR-circular, bcc, C1, LLTTY, RT, RV3, tao_vu, wood}, the survey articles \cite{bordenave_chafai, tao-survey}, and the references therein.

The study of the smallest singular value of a random matrix dates back to 1940's when von Neumann and his collaborators used random matrices to test their algorithm for the inversion of large matrices. They speculated that
\beq\label{eq:von-neumann}
s_{\min}(A_n) \sim n^{-1/2}, \qquad s_{\max}(A_n) \sim n^{1/2}, \qquad \text{ with high probability}
\eeq
(see \cite[pp.~14, 477, 555]{von1} and \cite[Section 7.8]{von2}), where the notation $a_n \sim b_n$ implies that $0 < \liminf_{n \to \infty} a_n/b_n \le \limsup_{n\to\infty} a_n/b_n < \infty$. Therefore, the condition number, which often serves as a measure of stability in matrix algorithms in numerical linear algebra,
\beq\label{eq:cond-von-neumann}
\sigma(A_n):= \frac{s_{\max}(A_n)}{s_{\min}(A_n)} \sim n , \qquad \text{ with high probability}.
\eeq
A more precise formulation of this conjecture can be found in \cite{smale}.

For matrices with i.i.d.~standard normal entries Edelman \cite{edelman} showed that
\beq\label{eq:s-min-gaussian}
\P(s_{\min}(A_n)\le \vep n^{-1/2}) \sim \vep, \quad \text{ for } \vep \in (0,1).
\eeq
On the other hand Slepian's inequality and standard Gaussian concentration inequality for Lipschitz functions (see, e.g. \cite[Corollary 5.35]{V0}) yield that
\beq\label{eq:s-max-gaussian}
\P(s_{\max}(A_n) \ge 2 n^{1/2} + t ) \le \exp(-t^2/2).
\eeq
Therefore combining \eqref{eq:s-min-gaussian}-\eqref{eq:s-max-gaussian} one deduces \eqref{eq:von-neumann}-\eqref{eq:cond-von-neumann} for Gaussian matrices. In \cite{SST} it is shown that \eqref{eq:von-neumann}-\eqref{eq:cond-von-neumann} continues to hold for perturbations of Gaussian matrices that have operator norms bounded by $\sqrt{n}$. The prediction for general matrices remained open for a long time.


A lower bound of order $n^{-3/2}$ on the the smallest singular value of matrices with i.i.d.~centered sub-Gaussian entries was derived in \cite{R}. The optimal order $n^{-1/2}$ was achieved in \cite{RV1} where the assumption on the entries was also relaxed  to the condition that the entries have a finite fourth moment. Under this assumption it was shown that for any $\delta >0$ there exists a $\vep >0$ such that
\beq\label{eq:von-neumann-smin-fourth}
\P(s_{\min}(A_n) \le \vep n^{-1/2}) \le \f{\delta}{2}.
\eeq
Furthermore, from \cite{latala} it follows that for any $\delta >0$ there exists $K$ large enough so that
\[
\P(s_{\max}(A_n) \ge K n^{1/2}) \le \f{\delta}{2}.
\]
Hence, one finds that for such matrices
\beq\label{eq:von-neumann-cond-fourth}
\P(\sigma(A_n) \ge K \vep^{-1} n) \le \delta.
\eeq
It was recently shown in \cite{RT} that the bound for the smallest singular value is valid under even weaker moment assumptions.
Yet, the estimate of the largest one required for the condition number bound may not hold if the fourth moment assumption is relaxed.

Inequality \eqref{eq:von-neumann-cond-fourth} establishes von Neumann's prediction for the condition number for general matrices with i.i.d.~centered entries having finite fourth moments. If the entries are sub-Gaussian the results of \cite{RV1} further show that the probability bounds in \eqref{eq:von-neumann-smin-fourth} and \eqref{eq:von-neumann-cond-fourth} can be improved to $C \vep + c^n$ for some large constant $C$ and $c \in (0,1)$ that depend polynomially on the sub-Gaussian norm of the entries. We emphasize that one cannot obtain a probability estimate similar to \eqref{eq:s-min-gaussian}, as Rademacher random variables are sub-Gaussian and as noted earlier matrices with i.i.d.~Rademacher entries are singular with probability at least $(\f12+o(1))^n$.

As sparse matrices are more abundant in many fields such as statistics, neural network, financial modeling, electrical engineering, wireless communications  (we refer the reader to \cite[Chapter 7]{BS} for further examples, and their relevant references) it is natural to ask if there is an analogue of \eqref{eq:von-neumann}-\eqref{eq:cond-von-neumann} for such matrices. One natural model for sparse random matrices are matrices that are Hadamard products of matrices with i.i.d.~entries having a zero mean and unit variance, and matrices with i.i.d.~$\dBer(p)$ entries, where $p=o(1)$. In \cite{tao-vu-sparse} it was shown that (a similar result appeared in \cite{gotze_tikhomirov}) if $p = \Omega(n^{-\alpha})$, for some $\alpha \in (0,1)$ (the notation $a_n = \Omega(b_n)$ implies that $b_n= O(a_n)$), then for such matrices one has that $s_{\min}(A_n) \ge n^{-C_1}$ with large probability, for some large constant $C_1>0$. In \cite{gotze_tikhomirov} it was further shown that $s_{\max}(A_n) \le n\sqrt{p}$, with probability approaching one, under a minimal assumption on the moments of the entries. This shows that $\sigma(A_n) = O(n^C)$, for some large constant $C$, which is much weaker than the prediction \eqref{eq:cond-von-neumann}.

In \cite{BR-invertibility}, under an optimal moment assumption on the entries, this was improved to show that $\sigma(A_n)$ is indeed $O(n)$ with large probability, whenever $p= \Omega(n^{-\alpha})$ for some $\alpha \in (0,1)$. Results of \cite{BR-invertibility} further show that when the entries of the matrix are products of i.i.d.~sub-Gaussian random variables and $\dBer(p)$ variables then $\sigma(A_n)= O(n^{1+o(1)})$ with large probability, as long as $p \ge C \f{\log n}{n}$, for some large $C$. This also matches with von Neumann's prediction regarding the condition number of a random matrix except for the factor $n^{o(1)}$. As noted earlier in Corollary \ref{cor:bernoulli} when $p$ is near $\f{\log n}{n}$  one starts to see the existence of zero rows and columns, which means that matrix is singular with positive probability, and therefore von Neumann's prediction can no longer hold beyond $\f{\log n}{n}$ barrier.

In this paper our goal is to show that $\f{\log n}{n}$ is the sharp threshold where a general class of random matrices with $0/1$-valued entries undergoes a transition in their invertibility properties. Moreover, for such matrices we show that the existence of a zero row or a zero column is the main reason for the non-invertibility.

A related research direction was pursued by Costello and Vu in \cite{CV1} where they analyzed the rank of $\Adj_n$, the adjacency matrix of an Erd\H{o}s-R\'{e}nyi graph. Later in \cite{CV2} they considered the adjacency matrix of an Erd\H{o}s-R\'{e}nyi graph with general edge weights. The case of the matrix with i.i.d.~$\dBer(p)$ entries was treated in \cite{AE}. In \cite{CV1} it was shown that if $n p > c \log n$, where $p \in (0,1)$ is the edge connectivity probability and $c > \f12$ is some absolute constant, then the co-rank of $\Adj_n$ equals the number of isolated vertices in the graph with probability at least $1 - O((\log \log n)^{-1/4})$ (analogus result for the matrix with i.i.d.~$\dBer(p)$ entries were obtained in \cite[Theorem 2.2]{AE}). This, in particular establishes an analogue of Corollary \ref{cor:bernoulli}(a)-(b) for such matrices. Since \cite{CV1} studies only the rank of such matrices, unlike  Theorem \ref{thm:s-min-graphs} and Corollary \ref{cor:condition-no}, it does not provide any quantitative estimate on the lower bound on $s_{\min}$ and the upper bound on the modified condition number. Let us also add that, from \cite[Theorem 1.2]{CV1} and \cite[Theorem 2.2]{AE} it follows that the same conclusion as in Corollary \ref{cor:bernoulli}(c) holds for $A_n= \Adj_n$ and $A_n$ as in Corollary \ref{cor:bernoulli}, whenever $\P(\Omega_0^c) \cdot (\log \log n)^{1/4} \to \infty$ which is  weaker than the lower bound on $\P(\Omega_0^c)$ required in Corollary \ref{cor:bernoulli}(c).

Before describing the models for the sparse random graphs that we work with in this paper, let us mention the following class of random matrices with $0/1$-valued entries that are closely related. Recently, there have been interests to study properties of the adjacency matrices of directed and undirected $d$-regular random graphs. In the context of the invertibility, it had been conjectured that the adjacency matrices of random $d$-regular ($d \ge 3$) directed and undirected graphs on $n$ vertices are non-singular with probability approaching one, as $n \to \infty$, see \cite{frieze, vu1, vu2}. After a series of partial results \cite{BCZ, C2, C1, LLTTY1, LLTTY2, LLTTY3, LSY} the conjecture has been  recently proved in \cite{H1, H2, M, NW} for both the configuration model and the permutation model.

The adjacency matrix of a random $d$-regular graph and that of an Erd\H{o}s-R\'{e}nyi graph are similar in nature in many aspects. However, in the context of the invertibility property, the latter ceases to be non-singular when the average degree drops below $\log n$, and whereas the former remains invertible even when the degree is bounded. As highlighted in Corollary \ref{cor:bernoulli}(c) (see also Remark \ref{rmk:sharp-transition-gen}) the non-invertibility of the latter is purely due to the existence of a zero row or a column. Since, the former always have $d$ non-zero entries per row and column one does not see the transition in its invertibility property. 


Let us now describe the models of the random graphs.  We begin with the well known notion of undirected Erd\H{o}s-R\'{e}nyi graph. 

\begin{dfn}[Undirected Erd\H{o}s-R\'{e}nyi graphs] \label{def: undir ER}
The undirected Erd\H{o}s-R\'{e}nyi graph $\Graph(n,p)$ is a graph with vertex set $[n]$ such that for every pair of vertices $i$ and $j$ the edge between them is present with probability $p$, independently of everything else. Thus denoting $\Adj(\Graph)$ to be the adjacency matrix of a graph $\Graph$ we see that
\[
\Adj(\Graph(n,p))(i,j)= \left\{\begin{array}{ll}
\delta_{i,j} & \mbox{for } i <j\\
\delta_{j,i} & \mbox{for } i >j\\
0 & \mbox{otherwise}
\end{array}
\right.
,
\]
where $\{\delta_{i,j}\}_{i < j}$ are i.i.d.~$\dBer(p)$ random variables taking one with probability $p$ and zero with probability $(1-p)$.
\end{dfn}

\vskip5pt

Next we describe the model for the directed Erd\H{o}s-R\'{e}nyi graph.

\begin{dfn}[Directed Erd\H{o}s-R\'{e}nyi graphs] \label{def: dir ER}
We define the directed Erd\H{o}s-R\'{e}nyi graph with vertex set $[n]$ as follows: for each pair of vertices $i$ and $j$ the edge between them is drawn with probability $2p$, independently of everything else, and then the direction of the edge is chosen uniformly at random. Such graphs will be denoted by $\overrightarrow{\Graph}(n,p)$. We therefore note that
\[
\Adj(\dGraph(n,p))(i,j)= \left\{\begin{array}{ll}
\tilde \delta_{i,j} \cdot \theta_{i,j} & \mbox{for } i <j\\
\tilde \delta_{j,i} \cdot (1-\theta_{j,i})& \mbox{for } i >j\\
0 & \mbox{otherwise}
\end{array}
\right.
,
\]
where $\{\tilde \delta_{i,j}\}_{i <j}$ are i.i.d.~$\dBer(2p)$ and $\{\theta_{i,j}\}_{i <j}$ are i.i.d.~$\dBer(1/2)$ random variables, and these two collections of random variables are independent of each other.

It is easy to note that $\Adj(\dGraph(n,p))$ has the following representation which will be useful later:
\beq\label{eq:dir-erdos}
\Adj(\dGraph(n,p))(i,j)= \left\{\begin{array}{ll}
\tilde \delta_{i,j} \cdot \theta_{i,j} & \mbox{for } i <j\\
\tilde \delta_{i,j} \cdot (1-\theta_{j,i})& \mbox{for } i >j\\
0 & \mbox{otherwise}
\end{array}
\right.
\eeq
where $\{\theta_{i,j}\}_{i <j}$ are as above and $\{\tilde \delta_{i,j}\}_{i,j=1}^n$ are i.i.d.~$\dBer(2p)$ random variables, and as above these two sets of random variables are independent of each other. This representation yields additional independence which is exploited in our proofs.
\end{dfn}

 Below we define a random bipartite graph.

\begin{dfn}[Random bipartite graphs] \label{def: bip ER}
Fix $m, n \in \N$ and let $\bGraph(m,n,p)$ be a bipartite graph on $[m+n]$ vertices such that for every $i \in [m]$ and $j \in [m+n]\setminus [m]$ the edge between them is present with probability $p$, independently of everything else. Therefore,
\[
\Adj(\bGraph(m,n,p))(i,j)= \left\{\begin{array}{ll}
\delta_{i,j} & \mbox{for } i \in [m], j \in [n+m]\setminus[m]\\
\delta_{j,i} & \mbox{for } i  \in [n+m]\setminus[m],  j \in [m]\\
0 & \mbox{otherwise}
\end{array}
\right.
,
\]
where $\{\delta_{i,j}\}$ are i.i.d.~$\dBer(p)$. When $m=n$, for brevity we write $\bGraph(n,p)$.
\end{dfn}

Now we are ready to describe the main result of this paper. Let us recall the definition of $\Omega_0$ from \eqref{eq:Omega-0}.
{The definitions above give rise to three classes of random matrices. Namely,  the matrix appearing in Definition \ref{def: bip ER} consists of two off-diagonal blocks of fully i.i.d. Bernoulli matrices, and its singular values are the same as for each  of the blocks. Note that such matrices appear in the literature as a linearization of sample covariance-type matrices, see e.g.~\cite{Knowles-Yin}. The adjacency matrix in Definition \ref{def: undir ER} is a symmetric Bernoulli matrix with a zero diagonal, and the matrix in \eqref{eq:dir-erdos} does not match any classical ensemble. }

The next theorem states that on the event that the graph has no isolated vertices,  the same lower bound for the smallest singular value holds  for all three classes.

\begin{thm}\label{thm:s-min-graphs}
Let $A_n= \Adj(\Graph(n,p)), \, \Adj(\dGraph(n,p))$, or $\Adj(\bGraph(n,p))$, and $p \in (0,1/2]$.
\begin{enumeratei}
\item If $np \ge \log(1/p)$ then there exist absolute constants $0 < c_{\ref{thm:s-min-graphs}}, \wt C_{\ref{thm:s-min-graphs}}, C_{\ref{thm:s-min-graphs}} < \infty$ such that for any $\vep >0$, we have
\beq\label{eq:s-min-bd}
\P\left(\left\{s_{\min}(A_n) \le c_{\ref{thm:s-min-graphs}} \vep \exp\left( -\wt C_{\ref{thm:s-min-graphs}}\f{\log(1/p)}{\log (np)}\right)\sqrt{\f{p}{n}}\right\}\cap \Omega_0^c\right) \le \vep^{1/5} + \f{ C_{\ref{thm:s-min-graphs}}}{\sqrt[4]{np}}
\eeq
\item If $np \le \log(1/p)$ then there exists an absolute constant $\bar{C}_{\ref{thm:s-min-graphs}}$ such that
\[
\P(\Omega_0) \ge 1- \f{\bar{C}_{\ref{thm:s-min-graphs}}}{\log n}.
\]
\end{enumeratei}
\end{thm}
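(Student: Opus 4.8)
The plan is to prove that when $np \le \log(1/p)$ the graph has an isolated vertex with probability at least $1 - \bar C_{\ref{thm:s-min-graphs}}/\log n$, by a standard second-moment (Chebyshev) argument applied to the number of isolated vertices, after first reducing to a convenient range of $p$. First I would observe that the hypothesis $np \le \log(1/p)$ forces $p$ to be small; more precisely, writing $np = \log(1/p)$ at the boundary gives $p = \Theta\big(\tfrac{\log n}{n}\big)$ up to a $\log\log n$ correction, so in the regime of interest $np \le \log n$ and hence $1-p \ge 1 - o(1)$. For the directed and bipartite models the ``row/column'' degrees are independent $\dBer(p)$ (resp.\ $\dBer(2p)$) variables, so the computation is essentially identical; I will carry out the undirected case and indicate the trivial modifications. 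Let $N$ denote the number of isolated vertices of $\Graph(n,p)$, i.e.\ $N = \sum_{i=1}^n \mathbf 1_{\{v_i \text{ isolated}\}}$, so that $\Omega_{\row} = \Omega_{\col} \supseteq \{N \ge 1\}$ in the undirected case and the event $\{N \ge 1\}$ is contained in $\Omega_0$ in all three cases.

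Next I would compute the first two moments. Each vertex is isolated with probability $(1-p)^{n-1}$, so $\E N = n(1-p)^{n-1}$. Since $np \le \log(1/p) \le \log n$ (for $n$ large) and $p = \Theta(\log n / n)$, one has $(1-p)^{n-1} = \exp(-(n-1)(p + O(p^2))) = n^{-1+o(1)}$, and a short computation using $np \le \log(1/p)$ shows $\E N \to \infty$; in fact $\E N \ge (\log n)^{1-o(1)}$, which is the quantitative input that will produce the $1/\log n$ error term. For the second moment, $\E[N(N-1)] = n(n-1)(1-p)^{2(n-1)-1}$ since two distinct vertices are both isolated iff all $2(n-1)-1$ edges incident to either are absent (the shared potential edge counted once). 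Hence
\[
\frac{\E[N^2]}{(\E N)^2} = \frac{n(n-1)(1-p)^{2n-3} + n(1-p)^{n-1}}{n^2 (1-p)^{2n-2}} = \frac{n-1}{n}\cdot\frac{1}{1-p} + \frac{1}{\E N} = 1 + O(p) + \frac{1}{\E N},
\]
so $\Var N / (\E N)^2 \le O(p) + 1/\E N = o(1)$. By Chebyshev's inequality,
\[
\P(N = 0) \le \P\big(|N - \E N| \ge \E N\big) \le \frac{\Var N}{(\E N)^2} \le \frac{C}{\E N} + C p \le \frac{\bar C_{\ref{thm:s-min-graphs}}}{\log n},
\]
where in the last step I use $\E N \ge (\log n)^{1-o(1)}$ and $p \le \log n/n \ll 1/\log n$; absorbing the $o(1)$ in the exponent into the absolute constant (valid for $n$ large, and adjusting $\bar C_{\ref{thm:s-min-graphs}}$ to cover small $n$) gives the claim. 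Since $\{N \ge 1\} \subseteq \Omega_0$, this yields $\P(\Omega_0) \ge \P(N \ge 1) \ge 1 - \bar C_{\ref{thm:s-min-graphs}}/\log n$.

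The only genuinely delicate point — and the step I'd expect to need the most care — is the bookkeeping translating the hypothesis $np \le \log(1/p)$ into a clean lower bound on $\E N = n(1-p)^{n-1}$ of the form $(\log n)^{1-o(1)}$, uniformly over all $p$ with $np \le \log(1/p)$. One must check that $np \le \log(1/p)$ indeed forces $p \to 0$ and $p \gtrsim (\log n)/n$ is \emph{not} needed as a hypothesis: if $p$ is much smaller than $(\log n)/n$ then $\E N$ is even larger (close to $n$) and the bound is trivial, while near the threshold $p \asymp (\log n)/n$ the estimate $(1-p)^{n-1} \ge e^{-np(1+o(1))} = e^{-(1+o(1))\log(1/p)} = p^{1+o(1)} = n^{-1+o(1)}$ gives $\E N \ge n^{o(1)} \cdot (np)^{-o(1)} \cdot \log(1/p)$, which is at least $(\log n)^{1-o(1)}$; the monotonicity of $\E N$ in $p$ (decreasing) then handles intermediate values. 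For the directed and bipartite models one replaces $(1-p)^{n-1}$ by $(1-2p)^{n-1}$ (resp.\ $(1-p)^{n}$ with the vertex ``degree'' being a sum of $n$ independent $\dBer(p)$'s) and notes that an isolated right-vertex of $\bGraph(n,p)$ produces a zero column, so the same Chebyshev bound applies verbatim with cosmetic changes to constants.
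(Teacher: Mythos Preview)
Your proposal is correct and follows essentially the same route as the paper: a second-moment (Chebyshev) argument on the number of zero columns, with the key quantitative input being a lower bound of order $\log n$ on the expected count under the hypothesis $np \le \log(1/p)$. The paper in fact proves the statement under the general Assumption~\ref{ass:matrix-entry} covering all three models simultaneously, and splits the lower bound on $\E[\sN]$ into the ranges $p \ge \tfrac{\log n}{2n}$ (giving $\E[\sN] \ge np/2$) and $p \le \tfrac{\log n}{2n}$ (giving $\E[\sN] \ge \sqrt{n}/\log n$), which is a cleaner version of your case analysis. One small correction: in the directed model the off-diagonal entries are each $\dBer(p)$, not $\dBer(2p)$, and the indicators of two rows being zero are \emph{not} independent (since $a_{i,j}$ and $a_{j,i}$ are coupled through the same underlying $\dBer(2p)$ edge variable); this is precisely why the covariance bound $\Cov(\bI_i,\bI_j) \le Cp(1-p)^{2n}$ is needed rather than independence, but your second-moment framework already accommodates this.
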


\begin{rmk}\label{eq:iid-bipartite}
Note that
\[
\Adj(\bGraph(n,p)):= \begin{bmatrix} {\bm 0}_n & \Adj_{12}(\bGraph(n,p))\\ \Adj_{12}^*(\bGraph(n,p)) & {\bm 0}_n \end{bmatrix},
\]
where ${\bm 0}_n$ is the $n \times n$ matrix of all zeros and $\Adj_{12}(\bGraph(n,p))$ is a matrix with i.i.d.~$\dBer(p)$ entries. Therefore the set of singular values of $\Adj(\bGraph(n,p))$ are same with that of $\Adj_{12}(\bGraph(n,p))$ and each of the singular values of the former has multiplicity two. To simplify the presentation, we will use the $n \times n$ matrix $\Adj_{12}(\bGraph(n,p))$ as the adjacency matrix of a bipartite graph instead of the $(2n) \times (2n)$ matrix $\Adj(\bGraph(n,p))$.
This is the random matrix with  i.i.d.~$\dBer(p)$ entries considered above.
\end{rmk}

\begin{rmk}\label{rmk:sharp-transition-gen}
Theorem \ref{thm:s-min-graphs} implies that Corollary \ref{cor:bernoulli} holds for all three classes of adjacency matrices of random graphs. It means that the phase transition from invertibility to singularity occurs at the same value of $p$ in all three cases, and the main reason for singularity is also the same.


\end{rmk}

\begin{rmk}\label{thm:s-min-general-subgaussian}
To make the presentation simpler, we formulated Theorem \ref{thm:s-min-graphs}  for adjacency matrices of random graphs.
However, a similar result  holds for general sparse random matrices. For example,
a straightforward modification of the proof of Theorem \ref{thm:s-min-graphs} shows that it extends to a symmetric matrix with i.i.d.~Bernoulli entries on and above the diagonal, which is one of the classical ensembles in random matrix theory.
Moreover, it is immediate from the proofs that Theorem \ref{thm:s-min-graphs} extends to the case when the entries are product of a $\dBer(p)$ variable, and a sub-Gaussian random variable (independent with the $\dBer(p)$ variable) with support disjoint from zero. With some additional effort similar statements can be proved for matrices with i.i.d. random entries and for symmetric random matrices whose entries are products of Bernoulli  variables and i.i.d. sub-Gaussian variables, i.e., in the setup similar to \cite{BR-invertibility, wei}. We will not discuss these extensions here to keep the paper to a reasonable length.
\end{rmk}

\begin{rmk}
For $m \sim n$, one can extend the proof of Theorem \ref{thm:s-min-graphs} to derive a quantitative lower bound on $s_{\min}(\Adj(\bGraph(m,n,p)))$,where for rectangular matrices of dimension $m \times n$ we let $s_{\min}(\cdot)=s_{\min\{m, n\}}(\cdot)$. We do not pursue this extension here.
\end{rmk}

Building on Theorem \ref{thm:s-min-graphs} we now proceed to find an upper bound on the condition number.  We point out to the reader that as the entries of $A_n$ have non-zero mean $s_{\max}$ is of larger order of magnitude than the rest of the singular values. For example, it is well known that when $p \sim 1$ the bulk of the singular values is of order $\sqrt{n}$, with one outlier, the top singular value which is of order $n$. Thus, for such matrices to study the stability of inversion algorithms the natural choice would be to analyze the maximum of the ratios of singular values in the bulk.  Therefore, we define the following notion of modified condition number.





 \begin{dfn}\label{def:cond-no}
For any matrix $A_n$ we define its modified condition number as follows:
\[
\wt \sigma(A_n):= \f{s_2(A_n)}{s_{\min}(A_n)}.
\]
\end{dfn}

\vskip5pt
To obtain an upper bound on $\wt \sigma(A_n)$ we need the same for $s_2(A_n)$ which follows from the theorem below.
\begin{thm}\label{thm:s-max-general}
Let $A_n$ be as in Theorem \ref{thm:s-min-graphs}. Fix $c_0 >0, C_0 \ge 1$ and let $p \ge c_0 \f{\log n}{n}$. Then there exists a constant $C_{\ref{thm:s-max-general}}$, depending only on $c_0$ and $C_0$ such that
\[
\P\left(\|A_n -\E A_n\| \ge C_{\ref{thm:s-max-general}} \sqrt{np}\right) \le \exp(-C_0 \log n),
\]
for all large $n$.
\end{thm}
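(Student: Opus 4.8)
\medskip
\noindent\textbf{Proof plan for Theorem \ref{thm:s-max-general}.}
Write $M := A_n - \E A_n$; in all three models the entries of $M$ satisfy $|M_{ij}| \le 1$ and $\E M_{ij}^2 \le p$. The plan is to combine (i) an in-expectation bound $\E\|M\| \le C_1\sqrt{np}$ with $C_1 = C_1(c_0)$, together with (ii) a concentration statement showing that $\|M\|$ exceeds $\E\|M\|$ by more than $C_2\sqrt{np}$ with probability at most $\exp(-C_0\log n)$, and then to take $C_{\ref{thm:s-max-general}} := C_1 + C_2$.

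For (i), recall that the expected operator norm of a matrix with independent, centered, entrywise-bounded entries is governed by the two parameters $\widetilde\sigma := \max_i\big(\sum_j \E M_{ij}^2\big)^{1/2}$ and $\sigma_* := \max_{i,j}\|M_{ij}\|_\infty$, and admits a sharp nonasymptotic bound of the shape $\E\|M\| \le C(\widetilde\sigma + \sigma_*\sqrt{\log n})$; this is precisely what makes $np \asymp \log n$ the natural threshold here. In our case $\widetilde\sigma = \sqrt{np(1-p)} \le \sqrt{np}$ and $\sigma_* \le 1$, so the hypothesis $np \ge c_0\log n$ gives $\sigma_*\sqrt{\log n} \le c_0^{-1/2}\sqrt{np}$ and hence $\E\|M\| \le C_1(c_0)\sqrt{np}$. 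In the undirected and bipartite models the relevant entries of $M$ are genuinely independent (above the diagonal, in the symmetric case), so this applies directly. In the directed model the entries at $(i,j)$ and $(j,i)$ are dependent; there I would first write $M = (U - \E U) + (L - \E L)$, where $U$ and $L$ are the strictly upper and strictly lower triangular parts of $A_n$. By the representation \eqref{eq:dir-erdos}, $U$ and $L$ each carry \emph{independent} $\dBer(p)$ entries on their supports, so the bound above applies to each piece, yielding $\E\|M\| \le 2C_1(c_0)\sqrt{np}$. A fully self-contained substitute for this step is the Feige--Ofek / Kahn--Szemer\'{e}di-style argument: restrict to the high-probability event that all degrees are $O(np)$ and bound $\sup_{x,y}\pr{Mx}{y}$ over an $\varepsilon$-net of the sphere by splitting each bilinear form into a ``light'' part, controlled by Bernstein's inequality plus a union bound, and a ``heavy'' part, controlled by a discrepancy estimate.

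For (ii), observe that $\delta \mapsto \|A_n(\delta) - \E A_n\|$ is a \emph{convex} function of the underlying Bernoulli variables $\delta = \{\delta_{ij}\}$ --- being a norm composed with an affine map --- and is $\sqrt2$-Lipschitz in the Euclidean metric on $\delta$, since $\|A_n(\delta) - A_n(\delta')\| \le \|A_n(\delta-\delta')\|_F \le \sqrt2\,\|\delta-\delta'\|_2$. Talagrand's concentration inequality for convex Lipschitz functions of independent $[0,1]$-valued coordinates then yields $\P(\|M\| \ge \E\|M\| + t) \le C_3\exp(-c_3 t^2)$ for absolute $C_3, c_3 > 0$. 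For the undirected and bipartite models the $\delta_{ij}$ are independent and this applies as stated; for the directed model I would again use $M = (U-\E U)+(L-\E L)$, apply the inequality to each triangular piece (each being a genuine i.i.d.\ $\dBer(p)$ matrix in its own entries), and combine by a union bound. Taking $t = C_2\sqrt{np}$ with $C_2 = C_2(c_0,C_0)$ chosen so that $c_3 C_2^2 c_0 > C_0$, and using $np \ge c_0\log n$, gives $\P(\|M\| \ge (C_1+C_2)\sqrt{np}) \le C_3\exp(-c_3 C_2^2 c_0\log n) \le \exp(-C_0\log n)$ for all large $n$, as required.

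The one genuinely hard ingredient is the in-expectation bound in (i). Once $np$ falls below order $\log n$ the estimate $\E\|M\| = O(\sqrt{np})$ simply fails --- a single vertex of degree $\Delta \gg np$ already forces $\|M\| \ge \sqrt{\Delta}$ --- so one is operating exactly at the critical threshold, where the two most elementary tools, the trace (moment) method and the matrix Bernstein inequality, each overshoot $\sqrt{np}$ by a factor as large as $\sqrt{\log n}$. Obtaining the clean $\sqrt{np}$ therefore requires either quoting a refined operator-norm estimate for sparse random matrices or carrying out the delicate bounded-degree net argument sketched above; by contrast the triangular splitting in the directed case and the Talagrand concentration in (ii) are routine.
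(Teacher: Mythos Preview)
Your proposal is correct and follows essentially the same approach as the paper: Talagrand's convex concentration for the deviation, a Bandeira--van Handel--type bound for the expectation, and an upper/lower triangular split to decouple the $(i,j)$--$(j,i)$ dependence. The paper's expectation step is simply a more explicit version of what you cite as a black box --- it symmetrizes, dominates Rademacher by Gaussian via Jensen, applies \cite{BH} conditionally on the realized row sums of $A_n-A_n'$, and then controls those random row sums by a Chernoff bound on the high-probability event $\{\sigma \le C'\sqrt{np}\}$.
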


\begin{rmk}\label{rmk:s-2-bd}
If $A_n=\Adj(\Graph(n,p_n))$ or $\Adj(\dGraph(n,p_n))$ we note that $p {\bm J}_n - \E A_n = p I_n$, where ${\bm J}_n$ is the matrix of all ones and $I_n$ is the identity matrix. Therefore, Theorem \ref{thm:s-max-general} immediately implies that $\|A_n - p {\bm J}_n\| = O(\sqrt{np})$ with large probability for such matrices. Since
\[
s_2(A_n) = \inf_{v \in \R^n} \sup_{\substack{x \in S^{n-1}, \\ x \perp v}} \|A_n x \|_2 \, \le \sup_{\substack{x \in S^{n-1}, \\ x \perp {\bm 1} }}
\|A_n x \|_2 \le \sup_{x \in S^{n-1}} \|(A_n - p{\bm J}_n) x \|_2,
\]
it further yields that the same bound continues to hold for the second largest singular value of the adjacency matrices of directed and undirected Erd\H{o}s-R\'{e}nyi graphs. As $\Adj_{12}(\bGraph(n,p))$ is a matrix with i.i.d.~Bernoulli entries we have that $\E \Adj_{12}(\bGraph(n,p)) = p {\bm J}_n$. Therefore, recalling Remark \ref{eq:iid-bipartite} we deduce from Theorem \ref{thm:s-max-general} that $s_2(A_n) = O(\sqrt{np})$, with large probability, when $A_n$ is the adjacency matrix of a random bipartite graph.
\end{rmk}

Remark \ref{rmk:s-2-bd} combined with Theorem \ref{thm:s-min-graphs} yields the following corollary.

\begin{cor}\label{cor:condition-no}
Let $A_n= \Adj(\Graph(n,p)), \, \Adj(\dGraph(n,p))$, or $\Adj(\bGraph(n,p))$ and $p \in (0,1/2]$. If $np \ge \log(1/p)$ then there exist absolute constants $0 < C_{\ref{cor:condition-no}}, \wt C_{\ref{cor:condition-no}}, \bar C_{\ref{cor:condition-no}} < \infty$ such that for any $\vep >0$, we have
\beq\label{eq:s-min-bd}
\P\left(\left\{\wt \sigma(A_n) \ge C_{\ref{cor:condition-no}} \vep^{-1} n^{1+\f{\wt C_{\ref{cor:condition-no}}}{\log \log n}}\right\}\cap \Omega_0^c\right) \le \vep^{1/5} + \f{ \bar C_{\ref{cor:condition-no}}}{\sqrt[4]{np}}.
\eeq
\end{cor}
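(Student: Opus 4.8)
The plan is to combine the lower bound on $s_{\min}(A_n)$ from Theorem \ref{thm:s-min-graphs}(i) with the upper bound on $s_2(A_n)$ recorded in Remark \ref{rmk:s-2-bd}, via a union bound. First I would fix the choice of parameters: apply Theorem \ref{thm:s-max-general} (hence Remark \ref{rmk:s-2-bd}) with, say, $c_0$ any absolute constant compatible with the hypothesis $np \ge \log(1/p)$ and $C_0 = 1$, to obtain an event $\cE_1$ of probability at least $1 - \exp(-\log n) = 1 - n^{-1}$ on which $s_2(A_n) \le C \sqrt{np}$ for an absolute constant $C$; here one uses that $np \ge \log(1/p)$ forces $p \ge c \log n / n$ for an absolute $c$ (indeed $np \ge \log(1/p) \ge \log(n/\log n) \sim \log n$ when $p$ is of order $\log n / n$, and for larger $p$ the bound is immediate), so Theorem \ref{thm:s-max-general} applies. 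Then I would invoke Theorem \ref{thm:s-min-graphs}(i): on the complement of an event of probability at most $\vep^{1/5} + C_{\ref{thm:s-min-graphs}}/\sqrt[4]{np}$, intersected with $\Omega_0^c$, one has
\[
s_{\min}(A_n) > c_{\ref{thm:s-min-graphs}} \vep \exp\left(-\wt C_{\ref{thm:s-min-graphs}} \f{\log(1/p)}{\log(np)}\right) \sqrt{\f{p}{n}}.
\]

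On the intersection of $\cE_1$ with this event (inside $\Omega_0^c$) we divide the two bounds:
\[
\wt\sigma(A_n) = \f{s_2(A_n)}{s_{\min}(A_n)} \le \f{C\sqrt{np}}{c_{\ref{thm:s-min-graphs}} \vep \exp\left(-\wt C_{\ref{thm:s-min-graphs}} \f{\log(1/p)}{\log(np)}\right)\sqrt{p/n}} = \f{C}{c_{\ref{thm:s-min-graphs}}} \vep^{-1} n \exp\left(\wt C_{\ref{thm:s-min-graphs}} \f{\log(1/p)}{\log(np)}\right).
\]
The remaining point is to absorb the exponential factor into $n^{\wt C/\log\log n}$. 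Since $np \ge \log(1/p)$, writing $p = \log(1/p)/(n) \cdot (np/\log(1/p))$ one checks that $\log(1/p) \le \log n$ always (as $p \ge 1/n$ in the relevant range up to lower-order corrections — more carefully, $np \ge \log(1/p)$ together with $p \le 1$ gives $\log(1/p) \le np \le n$, and in fact $\log(1/p) \le 2\log n$ for all $n$ large since otherwise $p \le n^{-2}$ would violate $np \ge \log(1/p) \ge 2\log n$). Likewise $\log(np) \ge \log\log(1/p)$, and since $p \ge c\log n/n$ we get $np \ge c \log n$, so $\log(np) \ge \log\log n - O(1) \ge \tfrac12 \log\log n$ for $n$ large. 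Hence
\[
\f{\log(1/p)}{\log(np)} \le \f{2\log n}{\tfrac12 \log\log n} = \f{4\log n}{\log\log n},
\]
so that $\exp\left(\wt C_{\ref{thm:s-min-graphs}} \f{\log(1/p)}{\log(np)}\right) \le \exp\left(4\wt C_{\ref{thm:s-min-graphs}} \f{\log n}{\log\log n}\right) = n^{4\wt C_{\ref{thm:s-min-graphs}}/\log\log n}$. Setting $\wt C_{\ref{cor:condition-no}} := 4\wt C_{\ref{thm:s-min-graphs}}$ and $C_{\ref{cor:condition-no}} := C/c_{\ref{thm:s-min-graphs}}$ gives $\wt\sigma(A_n) \le C_{\ref{cor:condition-no}} \vep^{-1} n^{1 + \wt C_{\ref{cor:condition-no}}/\log\log n}$ on the good event.

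Finally, the failure probability is bounded by a union bound: the complement of $\cE_1$ contributes at most $n^{-1}$, and the bad event from Theorem \ref{thm:s-min-graphs}(i) contributes at most $\vep^{1/5} + C_{\ref{thm:s-min-graphs}}/\sqrt[4]{np}$; since $n^{-1} \le C'/\sqrt[4]{np}$ for an absolute $C'$ (again using $np \le n$), the total is at most $\vep^{1/5} + \bar C_{\ref{cor:condition-no}}/\sqrt[4]{np}$ for a suitable absolute constant $\bar C_{\ref{cor:condition-no}}$, which is the claimed bound. The whole argument is essentially bookkeeping; the only mild subtlety — and the step I would be most careful with — is the chain of elementary inequalities showing $\log(1/p)/\log(np) = O(\log n/\log\log n)$ uniformly over the range $np \ge \log(1/p)$, in particular handling the boundary regime $p \asymp \log n/n$ where both numerator and denominator are of order $\log n$ and $\log\log n$ respectively, so the exponent is genuinely of order $\log n/\log\log n$ and cannot be improved to a constant.
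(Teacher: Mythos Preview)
Your proposal is correct and follows exactly the approach the paper indicates: the paper states only that the corollary follows from combining Remark \ref{rmk:s-2-bd} with Theorem \ref{thm:s-min-graphs}, and you have carried out precisely that combination, including the necessary bookkeeping to convert $\exp\!\big(\wt C_{\ref{thm:s-min-graphs}}\log(1/p)/\log(np)\big)$ into $n^{O(1/\log\log n)}$. The one place to be slightly more careful is verifying that $np\ge\log(1/p)$ forces $p\ge c_0\log n/n$ so that Theorem \ref{thm:s-max-general} applies, but your sketch of this (and of the bound $\log(1/p)\le 2\log n$, $\log(np)\ge\tfrac12\log\log n$) is sound.
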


\vskip10pt


Thus, Corollary \ref{cor:condition-no} shows that
\emph{up to a set of a small probability, we have a dichotomy: either the matrix $A_n$ contains a zero row or zero column, and so $\wt \sigma(A_n) =\infty$, or $\wt \sigma(A_n) $ is roughly of the same order as for the dense random matrix}.

This establishes an analogue of von Neumann's conjecture for the condition number for the entire range of $p$. Let us add that the conclusion of Corollary \ref{cor:condition-no} continues to hold for $\sigma(A_n \odot R_n)$, where $A_n \odot R_n$   is the entry-wise product of (symmetric) matrices with i.i.d.~$\dBer(p)$ entries and Rademacher entries, independent of each other. The proof is a simple adaptation of that of Corollary \ref{cor:condition-no}.


The rest of the paper is organized as follows: In Section \ref{sec:prelim} we provide an outline of the proofs of Theorems \ref{thm:s-min-graphs} and \ref{thm:s-max-general}. In Section \ref{sec:inv-dom-comp} we show that $A_n$ is well invertible over the set of vectors that are close to sparse vectors. We split the set of such vectors into three subsets: vectors that are close very sparse vectors, close to moderately sparse vectors, and those that have a large spread component.
Section \ref{sec:incomp} shows that the matrix in context is well invertible over the set of vectors that are not close to sparse vectors. In Section \ref{sec:proof-main-thm} we first prove Theorem \ref{thm:s-min-graphs}(ii) which essentially follows from Markov's inequality. Then combining the results of Sections \ref{sec:inv-dom-comp}-\ref{sec:incomp} and using Theorem \ref{thm:s-max-general} we prove Theorem \ref{thm:s-min-graphs}(i). The proof of Theorem \ref{thm:s-max-general} can be found in Section \ref{sec:spectral-norm}. Appendix \ref{sec:appendix} contains the proofs of some structural properties of the adjacency matrices of the sparse random graphs that are used to treat very sparse vectors. In Appendix \ref{sec:invert-large-spread} we prove invertibility over vectors that are close to sparse vectors having a large spread component.

\vskip5pt

\noindent
{\bf Acknowledgements.} We thank the anonymous referees for their suggestions that led to an improvement of the presentation of this paper. AB acknowledges support of the Department of Atomic Energy, Government of India (GOI), under project no.~RTI4001. 
Research of AB was partially supported by grant 147/15 from the Israel Science Foundation, a funding from the European Research Council under the European Unions Horizon 2020 research and innovation program (grant agreement number 692452), an Infosys--ICTS Excellence Grant, and a Start-up Research Grant (SRG/2019/001376) and a MATRICS grant (MTR/2019/001105) from Science and Engineering Research Board of GOI. Research of AB is carried out in part  as a member of the Infosys-Chandrasekharan virtual center for Random Geometry, supported by a grant from the Infosys Foundation. Part of this research was performed while MR visited Weizmann Institute of Science in Rehovot, Israel, where he held Rosy and Max Varon Professorship. He is grateful to Weizmann Institute for its hospitality and for creating an excellent work environment.
The research  of MR was supported in part by the NSF grant DMS 1464514 and by a fellowship from the Simons Foundation.

\section{Proof outline}\label{sec:prelim}
In this section we provide outlines of the proofs of Theorems \ref{thm:s-min-graphs} and \ref{thm:s-max-general}.
Broadly, the proof of Theorem \ref{thm:s-max-general} consists of two parts. One of them is to show that $\|A_n - \E A_n\|$ concentrates near its mean. This is a consequence of Talagrand's concentration inequality for convex Lipschitz functions. The second step is to find a bound on $\E \| A_n - \E A_n\|$. This can be derived using \cite{BH}. The proof of Theorem \ref{thm:s-min-graphs}(ii) follows from standard concentration bounds.

The majority of this paper is devoted to the proof Theorem \ref{thm:s-min-graphs}(i), i.e.~to finding a lower bound on the smallest singular value. As we are interested in finding a lower bound on $s_{\min}$ for sparse matrices,  we will assume that $p \le c$ for some absolute constant $c \in (0,1)$ whenever needed during the course of the proof.

We begin by noting that
\[
s_{\min}(A_n)=\inf_{x \in S^{n-1}} \|A_n x\|_2.
\]
To obtain a lower bound on the infimum over the whole sphere we split the sphere into the set of vectors that are close to sparse vectors and its complement. Showing invertibility over these two subsets of the sphere requires two different approaches.

First let us consider the set of vectors that are close to sparse vectors. This set of vectors has a low metric entropy. So, the general scheme would be to show that for any unit vector $x$ that is close to some sparse vector, $\|A_n x \|_2$ cannot be too small with large probability. Then the argument will be completed by taking a union over an appropriate net of the set of such vectors that has a small cardinality.

To obtain an effective probability bound on the event that $\|A_n x\|_2$ is small when $x$ is close to a sparse vector we further need to split the set of such vectors into three subsets: vectors that are close to very sparse vectors, vectors that are close to moderately sparse vectors, and vectors that are close to sparse vectors having a sufficiently large spread component, or equivalently a large non-dominated tail (see Sections \ref{sec:sparse-1}-\ref{sec:sparse-3} for precise formulations).

Unlike the dense set-up, the treatment of very sparse vectors turns out be significantly different for sparse random matrices. It stems from the fact that for such vectors, the small ball probability estimate is too weak to be combined with the union bound over a net. A different method introduced in \cite{BR-invertibility} and subsequently used in \cite{wei}
relies on showing that for any very sparse vector $x$, one can find a large sub-matrix of $A_n$ such that it has exactly one non-zero entry per row. It effectively means that there is no cancellation in $(A_n x)_i$ for a large collection of rows $i \in [n]$. This together with the fact that the set of coordinates of $x$ indexed by the columns of the sub-matrix chosen supports a significant proportion of the norm completes the argument. However, as seen in \cite{BR-invertibility}, this argument works only when $np \ge C \log n$, for some large constant $C$. When, $np \le C \log n$ light columns (i.e.~the columns for which the number of non-zero entries is  much smaller than $np$, see also Definition \ref{def:light-col}) start to appear, with large probability. Hence, the above sub-matrix may not exist.

To overcome this obstacle one requires new ideas. Under the current set-up, we show that given any unit vector $x$, on the event that there is no zero row or column in $A_n$,  the vector $A_n x$ and the coordinates of $x$ that are not included in the set of light columns cannot have a small norm at the same time (see Lemma \ref{lem: normal coordinates}).
This essentially allows us to look for sub-matrices of $A_n$ having one non-zero entry per row, whose columns do not intersect with the set of light columns. In the absence of the light columns one can use Chernoff bound to obtain such a sub-matrix. This route was taken in \cite{BR-invertibility, wei}. However, as explained above, to carry out the same procedure here we need to condition on events involving light columns of $A_n$. So the joint independence of the entries is lost and hence Chernoff bound becomes unusable.

To tackle this issue we derive various structural properties of $A_n$ regarding light and normal (i.e.~not light) columns. Using this we then show that there indeed exists a large sub-matrix of $A_n$ with desired properties, with large probability. We refer the reader to Lemmas \ref{lem: typical structure} and \ref{lem: pattern} for a precise formulation of this step.

Next, we provide an outline of the proof to establish the invertibility over the second and the third sets of sparse vectors. To treat the infimum over such vectors, we first need to obtain small ball probability estimates. This is done by obtaining bounds on the L\'{e}vy concentration function which is defined below.
 \begin{dfn}[L\'{e}vy concentration function]
Let $Z$ be a random variable in $\R^n$. For every $\vep >0$, the L\'{e}vy concentration function of $Z$ is defined as
\[
\cL(Z,\vep):= \sup_{u \in \R^n}\P(\| Z - u \|_2 \le \vep).
\]
\label{dfn:levy}
\end{dfn}

The desired bound on the L\'{e}vy concentration function for the second set of vectors is a consequence of Paley-Zygmund inequality and a standard tensorization argument. Since the third set of vectors has a higher metric entropy than the second, the small ball probability bound derived for the second set of vectors becomes too weak to take a union bound. So using the fact that any vector belonging to the third set has a large spread component, we obtain a better bound on the L\'{e}vy concentration function which is essentially a consequence of the well known Berry-Ess\'{e}en theorem (see Lemma \ref{lem:bound-levy}). Using this improved bound we then carry out an $\vep$-net argument to show that $A_n$ is also well invertible over the third set of sparse vectors. Let us add that the outline of the arguments presented above to deduce invertibility over sparse vectors work for a more general class of matrices than those considered in Theorem \ref{thm:s-min-graphs}, including, in particular, skew-symmetric random matrices. See also Remark \ref{rmk:skew symm}.

Now it remains to provide an outline of the proof of the invertibility over non-sparse vectors. It is well known that such vectors have a large metric entropy, so one cannot use the same argument as above. Instead, using \cite{RV1} we obtain that it is enough to control $\dist(A_{n,1},H_{n,1})$, the distance of $A_{n,1}$, the first column of $A_n$, to $H_{n,1}$, the subspace spanned by the rest of the columns. To control the distance, we derive an expression for it that is more tractable (see Proposition \ref{prop:distance-quadratic}). From Proposition \ref{prop:distance-quadratic}, after some preprocessing, we find that it suffices to show that $\langle C_n^{-1} {\bm x}, {\bm y}\rangle$ is not too small with large probability, where $C_n^{\sf T}$ is the $(n-1) \times (n-1)$ sub-matrix of $A_n$ obtained by deleting its first row and column, and ${\bm x}^{\sf T}$ and ${\bm y}$ are the first row and column of $A_n$ with the first common entry removed, respectively (if $C_n$ is non-invertible, then there is an alternate and simpler lower bound on the relevant distance).

Since Theorem \ref{thm:s-min-graphs} allows ${\bm x}$ and ${\bm y}$ to be dependent a bound on $\cL(\langle C_n^{-1} {\bm x}, {\bm y}\rangle, \vep)$ is not readily available. We use a decoupling argument to show that it is enough to find a bound on the L\'{e}vy concentration function of the random variable $\langle C_n^{-1} \widehat{\bm x}, \widehat{\bm y}\rangle$ for some properly chosen $\widehat{\bm x}$ and $\widehat{\bm y}$, where $\widehat{\bm x}$ and $\widehat{\bm y}$ are now independent. This follows the road-map introduced in \cite{V} for symmetric matrices, although the implementation of it in our case is harder due to the fact that ${\bm x}$ and ${\bm y}$ may be different.
 Having shown this, the desired small ball probability follows once we establish that the random vector $v_\star:=C_n^{-1} \widehat{\bm x}$ has a large spread component. Note that $v_\star$ solves the equation $C_n v =\wh {\bm x}$.
We have already established invertibility of $C_n$  over sparse vectors that has a large spread component.  Now, we extend that argument to show that any solution of the equation $C_n v =\wh{\bm x}$ must also have a large spread component. This allows us to deduce the desired properties of $v_\star$. It completes the outline of the proof of Theorem \ref{thm:s-min-graphs}(i).

 \section{Invertibility over compressible and dominated vectors}\label{sec:inv-dom-comp}

 To prove a uniform lower bound on $\|A_n x \|_2$ for $x$ close to sparse vectors when $A_n$ is the adjacency matrix of one of the three models of the random graphs described in Section \ref{sec:intro}, we will unite them under the following general set-up. It is easy to see that the adjacency matrices of all three models of random graphs satisfy this general assumption.

  \begin{assumption}\label{ass:matrix-entry}
Let $A_n$ be a $n \times n$ matrix with entries $\{a_{i,j}\}$ such that
\begin{enumerate}

\item[(a)] The diagonals $\{a_{i,i}\}_{i=1}^n$ and the off-diagonals $\{a_{i,j}\}_{i \ne j}$ are independent of each other.

\item[(b)] The random variables $\{a_{i,i}\}_{i=1}^n$ are jointly independent and $a_{i,i}\sim \dBer(p_i)$ with $p_i \le p$ for all $i \in [n]$.

\item[(c)] For every $i \ne j \in [n]$, $a_{i,j} \sim \dBer(p)$ and independent of the rest of the entries except possibly $a_{j,i}$.
\end{enumerate}
\end{assumption}

\begin{rmk}\label{rmk:skew symm}
The proofs of the main results of this section extend for matrices with symmetrized Bernoulli entries satisfying the dependency structure of Assumption \ref{ass:matrix-entry}. That is, one can consider the matrix $A_n$ with
\[
\P(a_{i,j} = \pm 1) = \f{p}{2}, \qquad \P(a_{i,j}=0) =1 -p,
\]
and $a_{j,i} = - a_{i,j}$. Note that, this extension in particular includes skew-symmetric matrices. Although skew-symmetric matrices of odd dimension are singular, it shows that they are invertible over sparse vectors.
\end{rmk}

Before proceeding further let us now formally define the notions of vectors that are close to sparse vectors. These definitions are borrowed from \cite{BR-invertibility}.
\begin{dfn}\label{dfn:comp}
Fix $m<n$. The set of $m$-sparse vectors is given by
 \[
  \text{Sparse}(m):=\{ x \in \R^{n} \mid |\text{supp}(x)| \le m \},
 \]
 where $|S|$ denotes the cardinality of a set $S$.
 Furthermore, for any $\delta>0$, the unit vectors which are $\delta$-close to $m$-sparse vectors in the Euclidean norm, are called $(m, \delta)$-compressible vectors. The set of all such vectors hereafter will be denoted by $\text{Comp}(m, \delta)$. Thus,
 \[
   \text{Comp}(m, \delta):= \{x \in S^{n-1} \mid \exists y \in \text{Sparse}(m) \text{ such that } \norm{x-y}_2 \le \delta \},
  \]
  where $\| \cdot \|_2$ denotes the Euclidean norm. The vectors in $S^{n-1}$ which are not compressible, are defined to be incompressible, and the set of all incompressible vectors is denoted as $\text{Incomp}(m,\delta)$.
  \end{dfn}
As already seen in \cite{BR-invertibility, wei} for sparse random matrices one can obtain an effective bound over the subset of the incompressible vectors that have a non-dominated tail. This necessitates the following definition of  dominated vectors. These are also close to sparse vectors, but in a different sense.
 \begin{dfn}\label{dfn:dom}
 For any $x \in S^{n-1}$, let $\pi_x: [n] \to [n]$ be a permutation which arranges the absolute values of the coordinates of $x$ in a non-increasing order. For $1 \le m \le m' \le n$, denote by $x_{[m:m']} \in \R^n$ the vector with coordinates
  \[
    x_{[m:m']}(j)=x_j \cdot \mathbf{1}_{[m:m']}(\pi_x(j)).
  \]
  In other words, we include in $x_{[m:m']}$ the coordinates of $x$ which take places from $m$ to $m'$ in the non-increasing rearrangement.

  \noindent
  For $\alpha<1$ and $m \le n$ define the set of vectors with dominated tail as follows:
  \[
    \text{Dom}(m, \alpha):= \{ x \in S^{n-1} \mid\norm{x_{[m+1:n]}}_2 \le \alpha \sqrt{m} \norm{x_{[m+1:n]}}_{\infty} \}.
  \]
The set of vectors $S^{n-1} \setminus \text{Dom}(m,\alpha)$ will be called non-dominated vectors.
  
Note that by definition, $\text{Sparse}(m) \cap S^{n-1} \subset \text{Dom}(m, \alpha)$, since for $m$-sparse vectors, $x_{[m+1:n]}=0$.
  \end{dfn}

 \subsection{Invertibility over vectors close to very sparse}\label{sec:sparse-1}
 As mentioned in Section \ref{sec:prelim}, the key to control the $\ell_2$ norm of $A_n x$ when $x$ is close to very sparse vectors is to show that $A_n$ has large sub-matrices containing a single non-zero entry per row. This will be then followed by an $\vep$-net argument and the union bound. As we will see a direct application of this idea requires that $\|A_n\|=O(\sqrt{np})$ which does not hold with high probability, because the entries of $A_n$ have a non-zero mean. To overcome this obstacle we use the {\em folding trick} introduced in \cite{BR-invertibility}.

\begin{dfn}[Folded matrices and vectors]
Denote $\gn:=\lfloor n/2\rfloor$. For any $y \in \R^n$ we define
\[
\fold(y):= y_1  - y_2,
\]
where $y_i, \, i=1,2,$ are the vectors in $\R^\gn$ whose entries are the first and the next $\gn$ coordinates of $y$, i.e.~$y_1:= (y^{(1)}, y^{(2)}, \ldots, y^{(\gn)})^{\sf T}$ and $y_2:=(y^{(\gn+1)}, y^{(\gn+2)}, \ldots, y^{(2\gn)})^{\sf T}$, and $\{y^{(i)}\}_{i \in [n]}$ are the entries of $y$. Similarly for a $n \times n$ matrix $B_n$ we define
\[
\fold(B_n):=B_{n,1} - B_{n,2},
\]
where $B_{n,i}, \, i=1,2$ are $\gn \times n$ matrices consisting of the first and the next $\gn$ rows of $B_n$.
\label{dfn:folded}
\end{dfn}

It is easy to see that except a few of entries of $\fold({A}_n)$, the rest have zero mean which allows us to deduce that $\|\fold({A}_n)\|=O(\sqrt{np})$ with large probability. For example, one can use Talagrand's concentration inequality for quasi-convex Lipschitz functions and \cite{BH}. Moreover, using the triangle inequality we see that $\|\fold({A}_n) x\|_2 \le 2 \|A_n x \|_2$. So, we can work with $\fold({A}_n)$ instead of $A_n$. 

To obtain the small ball probability estimate on $\|\fold(A_n) x\|_2$, where $x$ is very close to a sparse vector we need to derive some structural properties of $A_n$.


To this end, we introduce the following notion of light and normal columns and rows.
\begin{dfn}[Light and normal columns and rows]
For a matrix $B_n$ and $i \in [n]$, let us write $\row_i(B_n)$ and $\col_i(B_n)$ to denote the $i$-th row and column of $B_n$ respectively. Let $\delta_0 \in (0,1/10)$ be a fixed constant.
We call $\col_j(B_n), \, j \in [n]$, light if $|\supp(\col_j((B_n))| \le \delta_0 np$. A column which is not light will be called normal. Similar definitions are adopted for the rows. \label{def:light-col}
\end{dfn}
Next denote
\[
\cL(B_n):=\{j \in [n]: \col_j(B_n) \text{ is light}\}.
\]
We are now ready to state the following result on the typical structural properties of $A_n$.

\begin{lem}[Structural properties of $A_n$]\label{lem: typical structure}
 Let $A_n$ satisfy Assumption \ref{ass:matrix-entry} and
 \beq\label{eq:p-ass-mod}
   np \ge \log (1/\bar C p),
 \eeq
 for some $\bar C \ge 1$. Let $\Omega_{\ref{lem: typical structure}}$ be the event such that the following assertions hold:
 \begin{enumerate}
   \item \label{item: no heavy} (No heavy rows and columns)
   For any $j \in [n]$,
   \[
   |\supp(\row_j(A_n))|, |\supp(\col_j(A_n))| \le C_{\ref{lem: typical structure}}np,
   \]
   where $C_{\ref{lem: typical structure}}$ is a large absolute constant.

   \item \label{item: disjoint support} (Light columns have disjoint supports)
   For any $(i,j) \in \binom{[n]}{2}$ such that $\col_i(A_n),\col_j(A_n)$ are light, $\supp(\col_i(A_n)) \cap  \supp(\col_j(A_n)) = \varnothing$.
   \item \label{item: bounded multiplicity} (The number of light columns connected to any given column is bounded) There is an absolute constant $r_0$ such that for any $j \in [n]$, the number of light columns $\col_i(A_n), \, i \in [n]$, with $\supp(\col_i(A_n)) \cap \, \supp(\col_j(A_n)) \neq \varnothing$ does not exceed $r_0$.
   \item \label{item: normal column} (The support of a normal column has a small intersection with the light ones)
       For any $j \in [n]$ such that $\col_j(A_n)$ is normal,
       \[
        \left|\supp(\col_j(\fold(A_n))) \cap  \left( \bigcup_{i \in \cL(A_n)} \supp(\col_i(\fold(A_n))) \right) \right|
        \le \frac{\delta_0}{16} np.
       \]
   \item \label{item: extension} (Extension property of the underlying graph)
 For any $I \subset [n]$ with $2 \le |I| \le c_{\ref{lem: typical structure}} p^{-1}$ 
   \[
    \left| \bigcup_{j \in I} \left(\supp(\col_j(\fold(A_n)))\right) \right|
    \ge \sum_{j \in I} |\supp(\col_j(\fold(A_n)))| - \frac{\d_0}{16} np |I|,
   \]
   where 
   $c_{\ref{lem: typical structure}}$ is a constant depending only on $\d_0$.

   \item \label{item:diff} (supports of columns of the matrix and its folded version are close in size) For every $j \in [n]$,
   \[
   \big| |\supp(\col_j(A_n))| - |\supp(\col_j(\fold(A_n)))|  \big| \le \f{\d_0}{8} np.
   \]
 \end{enumerate}
 Then there exists $n_0$, depending only $\bar C$ and $\delta_0$, such that for any $n \ge n_0$ the event $\Omega_{\ref{lem: typical structure}}$ occurs with probability at least $1-n^{-\bar c_{\ref{lem: typical structure}}}$ for some $\bar c_{\ref{lem: typical structure}}>0$ depending only on $\delta_0$.
\end{lem}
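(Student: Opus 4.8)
The plan is to prove each of the six structural properties separately, showing that the failure of each has probability at most $n^{-c}$ for a suitable $c = c(\delta_0) > 0$, and then take a union bound over the six events; since there are only finitely many properties, the resulting probability is still $1 - n^{-\bar c_{\ref{lem: typical structure}}}$ after possibly shrinking the exponent. Throughout I would use that under \eqref{eq:p-ass-mod} we have $np \gtrsim \log(1/p)$, so in particular either $p$ is bounded below by a constant (in which case $np \gtrsim \log n$ and the matrix is relatively dense, making all the estimates easy Chernoff bounds) or $p = o(1)$ and $np$ may be as small as roughly $\log\log n$; the delicate regime is the latter, and in fact the whole point of the lemma is that it must survive down to $np \asymp \log(1/p)$.

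The individual arguments I expect to run as follows. For \eqref{item: no heavy}, each $|\supp(\col_j(A_n))|$ is a sum of independent (or nearly independent, because of the single possibly-dependent diagonal entry) Bernoulli$(p)$ variables with mean $\le np$, so a standard Chernoff upper tail gives $\P(|\supp(\col_j)| \ge C_{\ref{lem: typical structure}} np) \le \exp(-c C_{\ref{lem: typical structure}} np) \le \exp(-c C_{\ref{lem: typical structure}} \log(1/\bar Cp))$, and choosing $C_{\ref{lem: typical structure}}$ large makes this summable in $n$ after the union over $j \in [n]$ — here one uses $np \ge \log(1/\bar C p)$ to dominate $\log n$ when $p$ is polynomially small, and handles the $p \gtrsim n^{-o(1)}$ range directly. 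Properties \eqref{item: disjoint support} and \eqref{item: bounded multiplicity} are the combinatorial heart: a light column has at most $\delta_0 np$ nonzero entries, and for two light columns to share a support element one needs a specific pair of entries to be $1$; I would bound the probability that there exist $i \ne j$ both light with overlapping support by summing, over all pairs $(i,j)$ and all potential overlap rows $k$, the probability that $a_{k,i} = a_{k,j} = 1$ together with the event that both columns are light, and crucially use that conditioning a column to be \emph{light} only \emph{decreases} the chance of having more ones elsewhere (a negative-association / FKG-type monotonicity, or simply revealing entries one at a time). The count gives something like $n^3 p^2 \cdot (\text{light probability})$, and the probability that a Binomial$(n-1,p)$ is $\le \delta_0 np$ is itself exponentially small, $\le \exp(-c(\delta_0) np)$, which again beats the polynomial factor because $np \ge \log(1/p)$; a similar but slightly more careful union (over $j$ and over the $r_0$-subsets of light columns touching column $j$) handles \eqref{item: bounded multiplicity}. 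For \eqref{item: normal column} and \eqref{item: extension} I would argue conditionally on the set $\cL(A_n)$ of light columns: given the (small, in total support) union of light columns, the intersection of a fixed normal column's support with that union is again a sum of Bernoullis with small mean (because the total support of all light columns is at most $\delta_0 np \cdot |\cL|$ and, by the disjointness from \eqref{item: disjoint support}, the relevant rows form a set of controlled size), so a Chernoff bound gives the $\frac{\delta_0}{16}np$ threshold; \eqref{item: extension} is an inclusion–exclusion / union-of-supports statement whose failure forces many coincidences among at most $c_{\ref{lem: typical structure}} p^{-1}$ columns, which I would bound by a union over the $\binom{n}{|I|}$ choices of $I$ weighted by the probability of the required amount of overlap — the constraint $|I| \le c_{\ref{lem: typical structure}} p^{-1}$ is exactly what keeps $\binom{n}{|I|} p^{(\text{overlaps})}$ summable. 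Finally \eqref{item:diff} compares $\col_j(A_n)$ with $\col_j(\fold(A_n))$: folding only identifies row $k$ with row $k + \gn$ and takes a difference, so a nonzero entry of the folded column is lost precisely when both $a_{k,j} = a_{k+\gn,j} = 1$, and the number of such coincidences in a fixed column is a sum of independent Bernoulli$(p^2)$-type variables with mean $\le \gn p^2 \le \frac{1}{2}np \cdot p = o(np)$, so it is $\le \frac{\delta_0}{8} np$ except with probability $\exp(-c(\delta_0) np)$, summable over $j$.

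The main obstacle, and the place where the proof genuinely needs care rather than routine Chernoff estimates, is the sparse regime $np \asymp \log(1/p)$ in properties \eqref{item: disjoint support}--\eqref{item: bounded multiplicity}: here $np$ can be as small as $\asymp \log\log n$, so a bare union bound over pairs or triples of columns costs $n^2$ or $n^3$ and must be paid for entirely by the exponential smallness $\exp(-c(\delta_0) np)$ of a single column being light, which is only $\exp(-c(\delta_0)\log(1/p)) = p^{c(\delta_0)}$, i.e. at best a small power of $n^{-1}$ when $p \asymp n^{-1}\log n$. Making the bookkeeping work therefore forces one to be precise about constants: one must show that the probability a Binomial$(n,p)$ is at most $\delta_0 np$ is $\le p^{K}$ with $K = K(\delta_0) \to \infty$ as $\delta_0 \to 0$ (true, since the rate function $\delta_0 - 1 - \delta_0\log\delta_0$ blows up), and then verify that choosing $\delta_0$ small enough makes $n^3 p^2 \cdot p^{2K}$, or the analogous expressions with the $r_0$- and $|I|$-fold products, genuinely $\le n^{-\bar c}$. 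A secondary but real annoyance is the mild dependence allowed by Assumption \ref{ass:matrix-entry}(c) — each off-diagonal entry $a_{i,j}$ may depend on $a_{j,i}$ — which means the entries of a single column are not quite independent of the entries of another column sharing the transposed pair; this is handled by noting the dependence is only pairwise and in fixed transposed locations, so it changes the relevant probabilities by at most a constant factor and does not affect the exponents, but it has to be said. Once these exponent inequalities are pinned down for one property the others follow the same template, and the union over the six events completes the proof.
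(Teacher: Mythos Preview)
Your overall architecture matches the paper: prove each of the six properties holds except on an event of probability $\le n^{-c}$, then union-bound. The arguments for \eqref{item: no heavy}, \eqref{item: disjoint support}, \eqref{item: extension}, and \eqref{item:diff} are essentially what the paper does (Chernoff for \eqref{item: no heavy} and \eqref{item:diff}; a union over pairs/rows plus light-probability for \eqref{item: disjoint support}; a Laplace-transform/moment computation over all $I$ for \eqref{item: extension}). Two corrections and one methodological warning are in order.

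First, you misread the sparsity floor. The condition $np \ge \log(1/\bar C p)$ forces $np \ge (1-o(1))\log n$, not $np \asymp \log\log n$; the paper uses this explicitly (``$np \ge \log n/2$ for all large $n$''). So the ``delicate regime'' is much milder than you feared. Second, and more importantly, your claim that $\P(\mathrm{Bin}(n,p)\le \delta_0 np)\le p^{K}$ with $K(\delta_0)\to\infty$ as $\delta_0\to 0$ is false: the large-deviation rate $I(\delta_0)=1-\delta_0+\delta_0\log\delta_0$ satisfies $I(\delta_0)\to 1$, so at the boundary $np=\log(1/\bar C p)$ the light-column probability is only $\approx p^{1-o_{\delta_0}(1)}$, never better. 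Fortunately this is enough: for \eqref{item: disjoint support} the union cost is $n^3$, the two entries $a_{k,i}=a_{k,j}=1$ contribute $p^2$, and two light columns contribute $\approx p^{2(1-\epsilon)}$, giving $n^3 p^{4-2\epsilon}\lesssim (\log n)^4/n$ when $p\lesssim (\log n)/n$; for larger $p$ there are no light columns at all by a separate Chernoff bound (the paper's Lemma \ref{lem:light-col-card}). So you do not need to ``choose $\delta_0$ small enough'' to push $K$ up --- the fixed constraint $\delta_0<1/10$ already makes $1-2\delta_0\log(2e/\delta_0)>0$, and that is all that is used.

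The methodological warning concerns \eqref{item: normal column}: your plan to condition on $\cL(A_n)$ and then treat the intersection as a fresh Bernoulli sum is circular, because $\cL(A_n)$ is determined by exactly the column entries you want to randomize. The paper sidesteps this by first using \eqref{item: bounded multiplicity} to say any column meets at most $r_0$ light columns, whence the intersection in \eqref{item: normal column} is bounded by $r_0\cdot\max_{i\ne j}|\supp(\col_i)\cap\supp(\col_j)|$, and the latter maximum is shown to be $\le \tfrac{\delta_0}{64 r_0}np$ by a direct union bound over pairs (no conditioning on lightness needed). A similar trick --- proving a preliminary claim that $|\supp(\col_j)\cap\cL(A_n)|\le 3$ for every $j$ --- is what makes \eqref{item: bounded multiplicity} go through despite the $a_{i,j}\leftrightarrow a_{j,i}$ dependence you correctly flagged.
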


The proof of Lemma \ref{lem: typical structure} relies on standard tools such as Chernoff bound, and Markov inequality. Its proof is deferred to Appendix \ref{sec:appendix}.

\begin{rmk}
From the proof of Lemma \ref{lem: typical structure} it follows that one can take $r_0 = 19$. The last property of the event $\Omega_{\ref{lem: typical structure}}$ holds when $p$ is assumed to be sufficiently small (possibly depending on $\delta_0$). 
\end{rmk}

\begin{rmk}\label{rmk:typical structure-rectangle}
As we will see in Section \ref{sec:incomp} (also mentioned in Section \ref{sec:prelim}), to establish the invertibility over incompressible and non-dominated vectors for the adjacency matrices of undirected and directed Erd\H{o}s-R\'{e}nyi graphs, one needs to find a uniform lower bound on $\|\fold(A_n) x - y_0\|_2$ over compressible and dominated vectors $x$ and some fixed $y_0 \in \R^n$ with $|\supp(y_0)| \le C_\star np$ for some $C_\star >0$.
While showing invertibility over vectors that are close to very sparse vectors, we tackle this additional difficulty by deleting the rows from $A_n$ that are in $\supp(y_0)$. This requires proving an analog of Lemma \ref{lem: typical structure} for rectangular sub-matrix $\bar A_n$ of dimension $\bar n \times n$, where $n - C_\star np \le \bar n \le n$. This means that to apply Lemma \ref{lem: typical structure} for the original matrix $A_n$ we need to prove it under the assumption \eqref{eq:p-ass-mod} rather than the assumption $np \ge \log(1/p)$. To keep the presentation of this paper simpler we prove Lemma \ref{lem: typical structure} only for square matrices. Upon investigating the proof it becomes clear that the extension to rectangular, almost square, matrices requires only minor changes.
\end{rmk}

\vskip10pt
Next we define the following notion of a good event needed to establish the small ball probability estimates on $\fold(A_n) x - \fold(y_0)$ for $x$ close to very sparse vectors and some fixed vector $y_0 \in \R^n$.

 \begin{dfn}[Good event]\label{dfn:good-event}
 Let $A_n$ satisfy Assumption \ref{ass:matrix-entry}. Fix $\kappa \in \N$, $J, J' \subset [n]$ disjoint sets. Denote
\[
\bar J :=\bar J(J):= \{ j \in [\gn]: j \in J \text{ or } j + \gn \in J\},
\]
and similarly $\bar J'$. 
For any $c>0$, define
\(
\cA_{c, \kappa}^{J,J'}
\)
to be the event that there exists $I \subset[\gn]\setminus (\bar J \cup \bar J')$ with $|I| \ge c \kappa np$ such for every $i \in I$ there further exists $j_i \in J$ so that
\[
|\ga_{i,j_i}|=  1, \quad \ga_{i,j} =0 \text{ for all } j \in  (J \setminus \{j_i\}) \cup J',
\]
where $\{\ga_{i,j}\}$ are the entries of $\fold(A_n)$, and
\beq\label{eq:row-does-not-sect}
\supp(\row_i(\fold(A_n))) \cap \cL(A_n)  = \emptyset,  \quad \text{ for all } i \in I.
\eeq
\end{dfn}

\begin{rmk}
In Definition \ref{dfn:good-event} above we needed to define $\bar J$ and $\bar J'$ because we work with $\fold(A_n)$. Since the entry $a_{i,j}$ may depend on $a_{j,i}$ we further require the set $I\subset [\gn]$ to be disjoint from $\bar J \cup \bar J'$. To treat matrices with i.i.d.~entries these modifications are not needed.
\end{rmk}

Now we are ready to state the structural lemma that shows that the good event $\cA_c^{J,J'}$ holds with high probability for appropriate sizes of $J$ and $J'$.

  \begin{lem}  \label{lem: pattern}
 Let $A_n$ satisfy Assumption \ref{ass:matrix-entry} and $ np \ge  \log (1/\bar C p)$ for some $\bar C \ge 1$.
Then, there exist an absolute constant $\bar c_{\ref{lem: pattern}}$, and constants ${c}_{\ref{lem: pattern}}, c^\star_{\ref{lem: pattern}}$, depending only on $\delta_0$, such that
\beq\label{eq: pattern}
\P\left( \bigcup_{\kappa \le c^\star_{\ref{lem: pattern}} (p \sqrt{pn})^{-1} \vee 1 } \ \bigcup_{\substack{J \in \binom{[n]}{\kappa} \\ J \cap \cL(A_n) = \varnothing}} \ \bigcup_{J' \in \binom{[n]}{\gm}, \, J \cap J'=\emptyset} \left({\mathcal A}^{J, J'}_{{c}_{\ref{lem: pattern}}, \kappa} \right)^c \cap  \Omega_{\ref{lem: typical structure}} \right)
\le n^{-\bar c_{\ref{lem: pattern}}},
\eeq
for all large $n$, where for $\kappa, m \in \N$ we write $\binom{[n]}{m}:= \{\wt J \subset [n]: |\wt J|=m\}$ and 
 \[
   \gm=\gm(\kappa):= \left(\kappa \sqrt{pn}\right) \wedge \left(\f{c^\star_{\ref{lem: pattern}}}{ p}\right)
  \]
 \end{lem}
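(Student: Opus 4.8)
The plan is to prove Lemma~\ref{lem: pattern} by a first-moment / union bound argument carried out \emph{conditionally on the structural event} $\Omega_{\ref{lem: typical structure}}$, which supplies all the combinatorial control we need once the randomness over the relevant rows is reintroduced. Fix $\kappa$ in the stated range, a set $J\in\binom{[n]}{\kappa}$ with $J\cap\cL(A_n)=\varnothing$ (so every column indexed by $J$ is normal), and a set $J'\in\binom{[n]}{\gm}$ disjoint from $J$. The first step is to record what $\Omega_{\ref{lem: typical structure}}$ gives us: by item~\eqref{item: normal column} each normal column of $\fold(A_n)$ meets the union of the supports of the light columns in at most $\tfrac{\delta_0}{16}np$ rows; by item~\eqref{item: extension} (the extension property) the supports of the columns in $J$ are almost disjoint, namely $|\bigcup_{j\in J}\supp(\col_j(\fold(A_n)))|\ge \sum_{j\in J}|\supp(\col_j(\fold(A_n)))| - \tfrac{\delta_0}{16}np\,\kappa$, which is valid since $\kappa\le c^\star_{\ref{lem: pattern}}(p\sqrt{pn})^{-1}\vee 1 \le c_{\ref{lem: typical structure}}p^{-1}$ for a suitable choice of $c^\star_{\ref{lem: pattern}}$; and by item~\eqref{item:diff} together with the fact that $J$ avoids $\cL(A_n)$, each column $\col_j(\fold(A_n))$, $j\in J$, has support of size at least $\delta_0 np - \tfrac{\delta_0}{8}np \ge \tfrac{\delta_0}{2}np$.

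The second step is to produce, \emph{on $\Omega_{\ref{lem: typical structure}}$}, a large set $I_0\subset[\gn]\setminus(\bar J\cup\bar J')$ of \emph{candidate} rows: rows $i$ such that $\row_i(\fold(A_n))$ has exactly one nonzero entry among the columns of $J$, none among $J'$, and does not meet $\cL(A_n)$. Combining the almost-disjointness of the $J$-columns with the lower bound $\tfrac{\delta_0}{2}np$ on each of their supports shows that the number of rows meeting \emph{exactly one} $J$-column is at least $(\tfrac{\delta_0}{2} - \tfrac{\delta_0}{16})np\,\kappa \gtrsim \delta_0 np\,\kappa$. From this set we must delete: rows meeting $\cL(A_n)$ (at most $\tfrac{\delta_0}{16}np$ per $J$-column by item~\eqref{item: normal column}, hence $\lesssim \delta_0 np \kappa$ with a small constant), rows in $\bar J\cup\bar J'$ (at most $\kappa+\gm = O(\kappa\sqrt{pn})$, negligible against $np\kappa$ since $p\sqrt{pn}\le 1$ in the relevant range), and rows meeting some column of $J'$ in addition to their unique $J$-column. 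This last deletion is the only place where $J'$ enters, and because $|J'|=\gm\le \kappa\sqrt{pn}$ and every column has support $\le C_{\ref{lem: typical structure}}np$ (item~\eqref{item: no heavy}), the rows to be removed number at most $C_{\ref{lem: typical structure}}np\cdot\gm \le C_{\ref{lem: typical structure}}np\cdot\kappa\sqrt{pn}$, which is $o(np\kappa)$ precisely because $\sqrt{pn}\,p = p\sqrt{pn}\to 0$ (this is why the cap $\gm\le c^\star_{\ref{lem: pattern}}/p$ and the constraint on $\kappa$ are needed). Choosing the constants so that the surviving set has size $|I_0|\ge c'\delta_0 np\kappa$ for an absolute $c'$ gives the claim on $\Omega_{\ref{lem: typical structure}}$ — but note this already \emph{is} essentially the event $\cA^{J,J'}_{c}$ with $c = c'\delta_0$, since the defining conditions of $\cA^{J,J'}_c$ are exactly membership in $I_0$ plus $|I_0|\ge c\kappa np$. \textbf{So in fact the bulk of the lemma is deterministic given $\Omega_{\ref{lem: typical structure}}$}; the only subtlety is that $\bar J, \bar J'$ are defined in terms of $J, J'$ in $[n]$ while the good event lives on $[\gn]$, so one checks $|\bar J|\le\kappa$, $|\bar J'|\le\gm$ and the counting above goes through verbatim after folding.

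The third step is the union bound over $\kappa$, $J$, and $J'$, which is only needed to absorb the negligible failure probability of $\Omega_{\ref{lem: typical structure}}$ outside what we have already controlled — but since we intersected with $\Omega_{\ref{lem: typical structure}}$ inside the probability in \eqref{eq: pattern}, and on $\Omega_{\ref{lem: typical structure}}$ the event $\bigcap_{J,J'}\cA^{J,J'}_{c_{\ref{lem: pattern}}}$ holds deterministically by Steps 1--2, the left-hand side of \eqref{eq: pattern} is actually $0$ for $n$ large once the constants are fixed correctly; the stated bound $n^{-\bar c_{\ref{lem: pattern}}}$ is then trivially satisfied, or, if one wants a genuinely probabilistic version, it comes for free from $\P(\Omega_{\ref{lem: typical structure}}^c)\le n^{-\bar c_{\ref{lem: typical structure}}}$. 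I expect the \textbf{main obstacle} to be bookkeeping the many small constants so that all the ``$O(\cdot)$ is negligible against $np\kappa$'' comparisons hold simultaneously and uniformly: one must choose $c^\star_{\ref{lem: pattern}}$ small enough that (a) $\kappa\le c^\star_{\ref{lem: pattern}}(p\sqrt{pn})^{-1}\vee 1$ forces $\kappa\le c_{\ref{lem: typical structure}}p^{-1}$ so the extension property applies, (b) $\gm = \kappa\sqrt{pn}\wedge(c^\star_{\ref{lem: pattern}}/p)$ is small enough that the $J'$-deletion of $C_{\ref{lem: typical structure}}np\gm$ rows leaves $\ge \tfrac12 \cdot (\text{const})\,\delta_0 np\kappa$ survivors, and (c) all of this is compatible with the range of validity of Lemma~\ref{lem: typical structure}, i.e. $np\ge\log(1/\bar Cp)$. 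A secondary technical point is verifying the exact-multiplicity count (rows meeting exactly one versus at least one $J$-column), for which inclusion--exclusion together with the extension property bound on pairwise support overlaps suffices.
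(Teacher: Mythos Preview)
Your Steps 1--3 are essentially the paper's argument and are fine: on $\Omega_{\ref{lem: typical structure}}$, the extension property \eqref{item: extension}, item \eqref{item:diff}, and item \eqref{item: normal column} together give $|\bar I^1(J)|\gtrsim \delta_0\,np\,\kappa$ deterministically, where $\bar I^1(J)$ is the set of rows meeting exactly one $J$-column, avoiding light columns.

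The gap is in your Step 4, the removal of rows that meet $J'$. Your arithmetic is wrong: the deterministic upper bound on the number of rows meeting some $J'$-column is indeed $C_{\ref{lem: typical structure}}\,np\cdot\gm$, but
\[
\frac{C_{\ref{lem: typical structure}}\,np\cdot\gm}{np\,\kappa}=C_{\ref{lem: typical structure}}\,\frac{\gm}{\kappa}=C_{\ref{lem: typical structure}}\sqrt{pn}\longrightarrow\infty,
\]
not $\to 0$. You wrote that this ratio is $p\sqrt{pn}$, which is the wrong quantity; $p\sqrt{pn}$ is the \emph{per-row} expected number of $J'$-columns hit (equivalently, $p\gm\lesssim c^\star$), not the ratio of global row counts. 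The extension property applied to $J\cup J'$ does not rescue this either: the defect bound it yields is $\tfrac{\delta_0}{16}np(\kappa+\gm)\asymp np\,\kappa\sqrt{pn}$, again a factor $\sqrt{pn}$ too large. So the lemma is \emph{not} deterministic on $\Omega_{\ref{lem: typical structure}}$, and your conclusion that the left side of \eqref{eq: pattern} vanishes is incorrect.

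What the paper does instead is treat the $J'$-deletion \emph{probabilistically}. Since $J\cap J'=\varnothing$ and one works on rows $i\notin\bar J\cup\bar J'$, Assumption~\ref{ass:matrix-entry} makes the event $\{i\in I^0(J')\}$ independent of $\bar I^1(J)$, with $\P(i\notin I^0(J'))\le 2p\gm\le 1/4$ (this is where the cap $\gm\le c^\star_{\ref{lem: pattern}}/p$ and the bound on $\kappa$ are used). Chernoff then gives, for any fixed $I\subset[\gn]$ with $|I|\gtrsim \delta_0\,np\,\kappa$,
\[
\P\Bigl(|I\setminus I^0(J')|\ge \tfrac12|I|\Bigr)\le \exp\Bigl(-c\,|I|\log\tfrac{1}{p\gm}\Bigr),
\]
and one checks that $\log(1/(p\gm))$ is large enough (this is the computation of the quantity $U$ in the paper) that this bound beats the union over $J'\in\binom{[n]}{\gm}$, then over $J\in\binom{[n]}{\kappa}$, then over $\kappa$. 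The resulting error probability is $\exp(-\kappa np)\le n^{-\bar c}$, which is the origin of the $n^{-\bar c_{\ref{lem: pattern}}}$ on the right of \eqref{eq: pattern}.
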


\begin{rmk}\label{rmk:diff-with-br}
We point out to the reader that \cite[Lemma 3.2]{BR-invertibility} derives a result similar to Lemma \ref{lem: pattern}. The key difference is that the former assumes $np \ge C \log n$, for some large constant $C$, which allows  to use Chernoff bound to conclude that given any set of columns $J \subset [n]$ of appropriate size, there is a large number of rows for which there exists exactly one non-zero entry per row in the columns indexed by $J$. When $np \le C \log n$ this simply does not hold for all $J \subset [n]$ as there are light columns. Moreover, for such choices of $p$ the Chernoff bound is too weak to yield any non-trivial bound on the number of rows with the desired property. Therefore we need to use several structural properties of our matrix $A_n$, derived in Lemma \ref{lem: typical structure}, to obtain a useful lower bound on the number of such rows.
\end{rmk}

 \begin{proof}[Proof of Lemma \ref{lem: pattern}]


 Fixing $\kappa \le c^\star_{\ref{lem: pattern}}(p \sqrt{pn})^{-1}$, for some constant $c^\star_{\ref{lem: pattern}}$ to be determined during the course of the proof, we let $J \in \binom{[n]}{\kappa}$. Let $I^{1}(J)$ be the set of all rows of $\fold(A_n)$ containing exactly one non-zero entry in the columns corresponding to $J$. More precisely,
 \[
 I^{1}(J):= \Big\{i \in [\gn]: \ga_{i,j_i}\ne 0 \text{ for some } j_i \in J, \text { and } \ga_{i,j}=0 \text{ for all } j \in J \setminus \{j_i\}\Big\}.\]
Similarly for a set $J' \in \binom{[n]}{\gm}$ we define
\[I^0(J'):= \Big\{ i \in [\gn]\setminus (\bar J \cup \bar J'): \ga_{i,j}=0 \text{ for all } j \in J'\Big\}. \]
Note that we have deleted the rows in $\bar J \cup \bar J'$ while defining $I^0(J')$. This is due to the fact that matrices satisfying Assumption \ref{ass:matrix-entry} allow some dependencies among its entries. Later, in the proof we will require $I^1(J)$ and $I^0(J')$ to be independent for disjoint $J$ and $J'$.

 To estimate $|I^{1}(J)|$ we let $\sE:=\left(\cup_{j \in J} \supp(\col_j(\fold(A_n)))\right)$ and define a function $f: \sE \to \N$ by
 \[
  f(i): = \sum_{j \in J} \bI\{i \in {\supp(\col_j(\fold(A_n)))}\}, \quad i \in \sE.
 \]
 We note that $I^{1}(J)=\{i \in \sE : \ f(i)=1 \}$. Hence,
 \beq\label{eq:I^1-lbd}
  |I^{1}(J)| \ge 2|\sE|-\sum_{i \in \sE} f(i)
  =\sum_{i \in \sE} f(i) - 2 \left( \sum_{i \in \sE} f(i)-|\sE| \right).
 \eeq
If $J \cap \cL(A_n) = \varnothing$ (recall that $\cL(A_n)$ is the set of light columns of $A_n$), then by property \eqref{item:diff} of the event $\Omega_{\ref{lem: typical structure}}$ we have
$$\sum_{i \in \sE} f(i)=\sum_{j \in J} |\supp(\col_j(\fold(A_n))) | \ge \sum_{j \in J} |\supp(\col_j(A_n))| - \f{\d_0}{8}np |J|\ge \f{7 \d_0}{8} np|J|.$$
Thus, by  property \eqref{item: extension} of the event $\Omega_{\ref{lem: typical structure}}$, it follows that
   \[
   \sum_{i \in \sE} f(i) - |\sE| =  \sum_{j \in J} |\supp(\col_j(A_n))| - |\sE|
    \le  \frac{\d_0}{16} np |J|.
   \]
   Therefore, from \eqref{eq:I^1-lbd} we deduce that
   \begin{equation}\label{eq: I^1(J)}
     |I^1(J)| \ge \f{\d_0}{2} np |J|
   \end{equation}
on the event $\Omega_{\ref{lem: typical structure}}$  for any $J \subset [n]$ such that $J \cap \cL(A_n)=\emptyset$.

Using the above lower bound on the cardinality of $I^1(J)$ we now show that it has a large intersection with $I^0(J')$. Therefore we can set the desired collection of rows to be the intersection of $I^1(J)$ and $I^0(J')$. However, the caveat with this approach is that the collection of rows just described does not satisfy the property \eqref{eq:row-does-not-sect}. To take care of this obstacle, we define
\[
\bar I^1(J) :=I^1(J)\setminus \sT(A_n), \quad \text{ where }\quad \sT(A_n):= \bigcup_{j \in \cL(A_n)} \supp(\col_j(\fold(A_n))) 
\]
From the definition of $\sT(A_n)$ it is evident that any subset $I \subset \bar I^1(J)$ now satisfies the property \eqref{eq:row-does-not-sect}. 
We further note that
\[
|I^1(J)\cap \sT(A_n)|  \le \sum_{j \in J} \left| \supp(\col_j(\fold(A_n))) \cap \sT(A_n)\right|
  \le \f{\d_0}{16}np |J|,
\]
where in the last step we have used the property \eqref{item: normal column} of $\Omega_{\ref{lem: typical structure}}$. Thus we proved that  on the event $\Omega_{\ref{lem: typical structure}}$,
\beq\label{eq: bar-I^1(J)}
|\bar I^1(J)| \ge \f{7 \d_0}{16} np |J|
\eeq
for any $J \subset [n]$ satisfying $J \cap \cL(A_n) = \varnothing$.

 It remains to show that $I^0(J') \cap \bar I^1(J)$ has a large cardinality, for any choice of $J'$ disjoint from $J$, with high probability.
To prove it, we recall that $I^0(J') \subset [\gn] \setminus (\bar J \cup \bar J')$. Thus, using Assumption \ref{ass:matrix-entry} we find that for any $J' \subset \binom{[n]}{\gm}$ and any $i \in [\gn]\setminus (\bar J \cup \bar J')$
 \[
 \P(i \in I^0(J') )  \ge (1-2p)^{|J'|} \ge 1-2p\gm.
 \]
  Hence, for a given $I \subset [\gn]$, $\E |I \setminus I^0(J')| \le 2 p \gm \cdot |I| \le |I|/4$ by the assumptions on $\kappa$ and $\gm$.
 So, by Chernoff's inequality (see e.g.,  \cite[Theorem 2.3.1]{V0})
 \[
  \P\left(|I \setminus I^0(J')| \ge \frac{1}{2}|I| \right)
  \le \exp\left( - 2 p \gm |I| - \f{|I|}{2} \log \left(\f{1}{4 p \gm e}\right) \right)
  \le \exp \left( - \frac{ |I|}{16} \log \left( \frac{1}{8 p \gm} \right)\right),
 \]
for any $c^\star_{\ref{lem: pattern}} \le 1/16$.
 Therefore, for any $I \subset [\gn]$ such that $|I| \ge \frac{\d_0}{4}\kappa np$, we deduce that
 \begin{align*}
  & \P \Big( \exists J' \in \binom{[n]}{m} \text{ such that }
         |I^0(J') \cap I| \le \frac{\d_0}{8} \kappa np \Big) \\
   \le& \sum_{J' \in \binom{[n]}{\gm}} \P (|I \setminus I^0(J')| \ge \frac{1}{2}|I| )
   \\
   \le& \binom{n}{ \gm} \cdot \exp \left( - \frac{ |I|}{16} \log \left( \frac{1}{8 p \gm} \right)\right) \\
   \le&  \exp \left( \gm \cdot \log \left( \frac{e n}{\gm} \right) - \frac{\d_0}{64}\kappa np \cdot \log \left( \frac{1}{8 p \gm} \right) \right)
    = \exp (- \kappa  np \cdot U),
 \end{align*}
 where
\[
 U:= \frac{\d_0}{64}\log\left(\frac{1}{8p \gm}\right) - \frac{\gm}{\kappa  n p} \log \left(\frac{e n}{\gm}\right).
 \]
We now need to find a lower bound on $U$ for which we split the ranges of $p$. First let us consider $p \in (0,1)$ such that $c^\star_{\ref{lem: pattern}} (p \sqrt{np})^{-1} \ge 1$. For such choices of $p$ we will show that for any $\kappa \le c^\star_{\ref{lem: pattern}} \cdot (p \sqrt{np})^{-1}$, with $c^\star_{\ref{lem: pattern}}$ sufficiently small, and $\gm = \kappa \sqrt{np}$ we have $U \ge 3$. To this end,
 denote
 \beq\label{eq:lbd-a}
   \alpha: = \frac{1}{8 \kappa p \sqrt{pn}} \ge \f{1}{8 c^\star_{\ref{lem: pattern}}}.
 \eeq
 Note that
 \begin{align*}
  U
  =  \frac{\d_0}{64}   \log \left( \frac{1}{8 \kappa p\sqrt{np}}\right)
    - \frac{1}{\sqrt{np}}  \log \left(\frac{en}{\kappa \sqrt{np}} \right) &=  \frac{\d_0}{64}  \log \alpha -  \frac{1}{\sqrt{np}}  \log (8e pn \alpha) \\
  &=\frac{\d_0}{64}\log\alpha-\frac{1}{\sqrt{np}}\log \alpha -\frac{1}{\sqrt{pn}}\big(\log(8e)+ \log (np)\big)\\
  & \ge \f{\d_0}{128} \log \alpha,
 \end{align*}
 for all large $n$, where the last step follows upon noting that by the assumption on $p$ we have $np \to \infty$ as $n \to \infty$, using the fact that $x^{-1/2}\log x \to 0$ as $x \to \infty$, and the lower bound on $\alpha$ (see \eqref{eq:lbd-a}). Choosing $c^\star_{\ref{lem: pattern}}$ sufficiently small and using \eqref{eq:lbd-a} again we deduce that $U \ge 3$, for all large $n$.

 Now let us consider $p \in (0,1)$ such that $c^\star_{\ref{lem: pattern}} (p \sqrt{np})^{-1} < 1$. For such choices of $p$ we have that $\kappa =1$ and $\gm= c^\star_{\ref{lem: pattern}} p^{-1}$. Therefore, recalling the definition of $U$ we note that
 \begin{align*}
 U = \f{\d_0}{64} \log \left( \f{1}{8 c^\star_{\ref{lem: pattern}}}\right) - \f{c^\star_{\ref{lem: pattern}}}{n p^2} \log (e(c^\star_{\ref{lem: pattern}})^{-1} np)  & \ge \f{\d_0}{64} \log \left( \f{1}{8 c^\star_{\ref{lem: pattern}}}\right) - (c^\star_{\ref{lem: pattern}})^{-1/3} n^{-1/3}  \log (e(c^\star_{\ref{lem: pattern}})^{-1} n)\\
 & \ge \f{\d_0}{128} \log \left( \f{1}{8 c^\star_{\ref{lem: pattern}}}\right) \ge 3,
 \end{align*}
 where the first inequality follows from the assumption that $c^\star_{\ref{lem: pattern}} (p \sqrt{np})^{-1} < 1$.

 This proves that, for any $p \in (0,1)$ such that $np \ge \log (1/\bar C p)$ and any $I \subset [\gn]$ with $|I|\ge \frac{\d_0}{4}\kappa  np$, we have
 \[
   \P \left( \exists J' \in \binom{[n]}{ \gm} \text{ such that } |I^0(J') \cap I| \le \frac{\d_0}{8} \kappa np \right)
   \le \exp (-3 \kappa np).
 \]
To finish the proof, for a set $J \in \binom{[n]}{\kappa}$ we define
 \begin{multline*}
  p_J:=\P \bigg( \left\{\exists J' \in \binom{[n]}{\gm} \text{ such that } J' \cap J= \emptyset, \
        |\bar I^{1}(J) \cap I^0(J')| < \frac{\d_0}{8} \kappa np \right\} \\
        \cap \{J \cap \cL(A_n)=\emptyset\}\cap  \Omega_{\ref{lem: typical structure}} \bigg).
 \end{multline*}
Since $J$ and $J'$ are disjoint and $I^0(J') \subset [\gn]\setminus (\bar J \cup \bar J')$, it follows from Assumption \ref{ass:matrix-entry} that the random subsets $\bar I^{1}(J)$ and $I^0(J')$ are independent. Using \eqref{eq: bar-I^1(J)} we obtain that for any $J \subset [n]$,
 \begin{align}
    p_J
    &\le
     \sum_{I \subset [\gn] }
        \P \left( \left\{\bar I^{1}(J)=I, \, J \cap \cL(A_n) = \emptyset\right\} \cap\Omega_{\ref{lem: typical structure}} \cap \left\{\exists J' \in \binom{[n]}{\gm} \text{ such that }
         |I^0(J') \cap I| \le \frac{\d_0}{8} \kappa np \right\} \right)
        \notag\\
    &\le
     \sum_{I \subset [\gn] ,\ |I| > \frac{\d_0}{4}\kappa np } \P(\bar I^{1}(J)=I)
        \P \Big( \exists J' \in \binom{[n]}{\gm} \text{ such that }
         |I^0(J') \cap I| \le \frac{\d_0}{8} \kappa np \Big)
        \notag\\
    &\le
     \exp(-3\kappa np) \sum_{I \subset [\gn] ,\ |I| > \frac{\d_0}{4}\kappa np } \P(\bar I^{1}(J)=I) \le \exp(-3 \kappa  n p),\label{eq:conc_large_set}
 \end{align}
for all large $n$.
%
%
The rest of the proof consists of taking union bounds.
 First, using the union bound over $J \in \binom{[n]}{\kappa}$ satisfying $J \cap \cL(A_n) = \varnothing$, setting ${c}_{\ref{lem: pattern}}=\d_0/16$, we get that
 \begin{align*}
  \P \left(\bigcup_{\substack{J \in \binom{[n]}{\kappa} \\ J \cap \cL(A_n) = \varnothing }} \ \bigcup_{J' \in \binom{[n]}{\gm}, \, J \cap J'=\emptyset} ({\mathcal{A}}^{J, J'}_{{c}_{\ref{lem: pattern}, \kappa}})^c
 \cap \Omega_{\ref{lem: typical structure}} \right)   \le \binom{n}{\kappa} \exp(-3 \kappa  n p)
 & \le \exp( \kappa \log n - 3 \kappa  n p)\\
  & \le \exp( -  \kappa  n p).
 \end{align*}
 Finally taking another union bound over $\kappa \le c^\star_{\ref{lem: pattern}} (p \sqrt{np})^{-1} \vee 1$ we obtain the desired result.
 \end{proof}

\vskip5pt

Note that in \eqref{eq: pattern} we could only consider $J \subset [n]$ such that $J \cap \cL(A_n)=\varnothing$. As we will see later, when we apply Lemma \ref{lem: pattern} to establish the invertibility over vectors that are close to sparse, we have to know that $\|A_n x_{[n]\setminus\cL(A_n)}\|_2$ is large for $x \in S^{n-1}$ close to very sparse vectors. So, one additionally needs to show that  $\|A_n x\|_2$ and $\|x_{[n]\setminus\cL(A_n)}\|_2$ cannot be small at the same time. The following lemma does this job. Its proof again uses the structural properties of $A_n$ derived in Lemma \ref{lem: typical structure}.

Before stating the next lemma let us recall that $\Omega_0^c$ is the event that the matrix $A_n$ has neither zero columns  nor zero rows (see \eqref{eq:Omega-0}).


\begin{lem}  \label{lem: normal coordinates}
Let $A_n$ satisfy Assumption \ref{ass:matrix-entry}. Fix a realization of $A_n$ such that the event $\Omega_0^c \cap \Omega_{\ref{lem: typical structure}}$ occurs. Let $x \in S^{n-1}$ be such that $\norm{A_n x}_2 < 1/4$.
  Then
  \[
   \norm{x_{[n] \setminus \cL(A_n)} }_2
   \ge \frac{1}{C_{\ref{lem: normal coordinates}}np},
  \]
  for some absolute constant $C_{\ref{lem: normal coordinates}}$.
\end{lem}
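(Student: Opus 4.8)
The plan is to argue by contradiction: suppose $\norm{x_{[n]\setminus\cL(A_n)}}_2 < \frac{1}{C np}$ for a constant $C$ to be chosen large. The idea is that almost all the mass of $x$ then sits on the light columns. Since the light columns have pairwise disjoint supports (property \eqref{item: disjoint support} of $\Omega_{\ref{lem: typical structure}}$), the contributions of distinct light columns to $A_n x$ land in disjoint coordinate blocks and therefore add up \emph{without cancellation} in $\ell_2$. More precisely, writing $y := x_{\cL(A_n)}$, I would bound $\norm{A_n x}_2 \ge \norm{A_n y}_2 - \norm{A_n x_{[n]\setminus \cL(A_n)}}_2$, and use $\norm{A_n}\le \norm{A_n}_{\mathrm{HS}} \lesssim \sqrt{np}\cdot\sqrt n$ or, better, the no-heavy-rows property \eqref{item: no heavy} to get $\norm{A_n x_{[n]\setminus\cL(A_n)}}_2 \lesssim \sqrt{np}\cdot \norm{x_{[n]\setminus\cL}}_2 \le \frac{1}{\sqrt{np}\,C'}$, which is negligible once $C$ is large enough.

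The core of the argument is the lower bound on $\norm{A_n y}_2$. For each light column $j\in\cL(A_n)$, the vector $y_j\,\col_j(A_n)$ is supported on $\supp(\col_j(A_n))$, and these supports are disjoint over $j$; also, since the event $\Omega_0^c$ holds, no column is zero, so $\col_j(A_n)$ has at least one nonzero (hence $\pm 1$) entry. Therefore
\[
\norm{A_n y}_2^2 \;=\; \sum_{j \in \cL(A_n)} y_j^2 \,\norm{\col_j(A_n)}_2^2 \;\ge\; \sum_{j\in\cL(A_n)} y_j^2 \;=\; \norm{y}_2^2,
\]
using that $\norm{\col_j(A_n)}_2^2 = |\supp(\col_j(A_n))| \ge 1$. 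Now $\norm{y}_2^2 = 1 - \norm{x_{[n]\setminus\cL(A_n)}}_2^2 \ge 1 - (Cnp)^{-2} \ge \tfrac12$ for large $n$, so $\norm{A_n y}_2 \ge \tfrac{1}{\sqrt 2}$. Combining, $\norm{A_n x}_2 \ge \tfrac{1}{\sqrt 2} - \tfrac{1}{C'\sqrt{np}} > \tfrac14$ once $C$ (and hence $C'$) is chosen large, contradicting $\norm{A_n x}_2 < 1/4$. Hence $\norm{x_{[n]\setminus\cL(A_n)}}_2 \ge \frac{1}{C_{\ref{lem: normal coordinates}} np}$ with $C_{\ref{lem: normal coordinates}} := C$.

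The step I expect to be the genuine (if modest) obstacle is controlling $\norm{A_n x_{[n]\setminus\cL(A_n)}}_2$ with the right dependence on $n$ and $p$: a crude Hilbert–Schmidt bound gives $\norm{A_n}\lesssim n\sqrt p$, which is \emph{not} small against $\frac{1}{Cnp}$. One needs the row-support bound \eqref{item: no heavy}, $|\supp(\row_i(A_n))|\le C_{\ref{lem: typical structure}} np$, to get $\norm{A_n z}_2^2 = \sum_i \langle \row_i(A_n), z\rangle^2 \le \sum_i |\supp(\row_i(A_n))|\,\norm{z}_2^2 \le C_{\ref{lem: typical structure}} n p\cdot n \norm{z}_2^2$ — still too weak by the same factor $\sqrt n$. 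The fix is to not bound the operator norm at all: instead split $A_n z = A_n^{\cL} z + A_n^{\mathrm{nor}} z$ is not needed; rather, note $z = x_{[n]\setminus\cL(A_n)}$ is itself supported on normal columns, and one should instead directly estimate, column-wise, $\norm{A_n z}_2 \le \sum_{j\notin\cL(A_n)} |z_j|\,\norm{\col_j(A_n)}_2 \le \sqrt{C_{\ref{lem: typical structure}} np}\,\norm{z}_1 \le \sqrt{C_{\ref{lem: typical structure}} np}\,\sqrt{n}\,\norm{z}_2$, which again loses $\sqrt n$. So the honest route is the one that avoids a worst-case vector entirely: absorb the error into the gap between $\tfrac{1}{\sqrt2}$ and $\tfrac14$ by instead proving $\norm{A_n y}_2 \ge \tfrac12$ directly and $\norm{A_n x}_2 \ge \norm{A_n y}_2 - \norm{A_n}\,\norm{x_{[n]\setminus\cL}}_2$ with the operator-norm bound $\norm{A_n}\le C_{\ref{lem: typical structure}} np$ coming from \eqref{item: no heavy} (every row and column has at most $C_{\ref{lem: typical structure}}np$ nonzeros, so $\norm{A_n}\le \max_i\norm{\row_i}_1^{1/2}\max_j\norm{\col_j}_1^{1/2}\le C_{\ref{lem: typical structure}}np$ by Schur's test); then $\norm{A_n}\,\norm{x_{[n]\setminus\cL}}_2 \le C_{\ref{lem: typical structure}}np\cdot \frac{1}{Cnp} = C_{\ref{lem: typical structure}}/C < \tfrac14$ for $C$ large, and the contradiction closes. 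This Schur-test bound $\norm{A_n}\le C_{\ref{lem: typical structure}}np$ is the one delicate point and is exactly what property \eqref{item: no heavy} is designed to supply.
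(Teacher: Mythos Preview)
Your final argument is correct and is genuinely different from the paper's route. You exploit property~\eqref{item: disjoint support} in a cleaner way: since light columns have pairwise disjoint supports they are orthogonal, and together with $\Omega_0^c$ this gives $\norm{A_n x_{\cL(A_n)}}_2 \ge \norm{x_{\cL(A_n)}}_2$ in one line. Combined with the Schur bound $\norm{A_n}\le C_{\ref{lem: typical structure}}np$ from property~\eqref{item: no heavy}, the triangle inequality closes the contradiction with $C_{\ref{lem: normal coordinates}}$ a fixed multiple of $C_{\ref{lem: typical structure}}$. (Note that on $\Omega_0^c\cap\Omega_{\ref{lem: typical structure}}$ one automatically has $np\ge 1/C_{\ref{lem: typical structure}}$, since every column is nonempty yet has support at most $C_{\ref{lem: typical structure}}np$; this makes the lower bound on $\norm{y}_2$ uniform.)

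The paper instead argues row-by-row: for each light $j$ it picks $i_j\in\supp(\col_j(A_n))$, splits $\cL(A_n)$ according to whether $|(A_nx)_{i_j}|\ge\tfrac12|x_j|$, and on the complementary set finds for each $j$ a normal index $k(j)$ with $|x_{k(j)}|\gtrsim |x_j|/(np)$; property~\eqref{item: bounded multiplicity} is then invoked to bound the multiplicity of $j\mapsto k(j)$. Your approach is shorter and uses only properties~\eqref{item: no heavy} and~\eqref{item: disjoint support}, bypassing~\eqref{item: bounded multiplicity} entirely. The paper's argument, on the other hand, is more ``local'' and would survive weakenings of the disjointness hypothesis (e.g.\ bounded overlap rather than exact disjointness), whereas your orthogonality step uses disjointness in full.
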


\begin{proof}
 We may assume that $\norm{x_{\cL(A_n)}}_2 \ge 3/4$, since otherwise there is nothing to prove.
 For any $j \in \cL(A_n)$, we choose $i:=i_j$ such that $a_{i_j,j}=1$. Such a choice is possible since we have assumed that $\Omega_0^c$ occurs. Using the property \eqref{item: disjoint support} of the event $\Omega_{\ref{lem: typical structure}}$ we see that any such function $i: \cL(A_n) \to [n]$ is an injection.

 Set
 \[J_0:=  \{j \in \cL(A_n): \ |(A_n x)_{i_j}| \ge (1/2) |x_j| \}.\]
 If $\norm{x_{J_0}}_2 \ge 1/2$, then
 \[
  \norm{A_n x}_2 \ge \left( \sum_{j \in J_0} ( (A_n x)_{i_j} )^2 \right)^{1/2} \ge \frac{1}{4},
 \]
 which contradicts our assumption $\|A_n x \|_2 < \f14$.
 Hence, denoting $J_1:=\cL(A_n) \setminus J_0$, we may assume that $\norm{x_{J_1}}_2 \ge 1/4$. We then observe that for any $j \in J_1$,
 \begin{align*}
   \frac{1}{2}|x_j|
   \ge |(A_nx)_{i_j}|
  & \ge |a_{i_j,j} x_j| - \left| \sum_{k \neq j} a_{i_j,k} x_k \right| \\
  & \ge |x_j|- |\supp(\row_{i_j}(A_n))| \cdot \max_{k \in \supp(\row_{i_j}(A_n)) \setminus \{j\} } |x_k| \\
    & \ge |x_j|- C_{\ref{lem: typical structure}}np\cdot \max_{k \in \supp(\row_{i_j}(A_n)) \setminus \{j\} } |x_k|,
 \end{align*}
 where the last inequality follows upon using the property \eqref{item: no heavy} of the event $\Omega_{\ref{lem: typical structure}}$.

This shows that for any $j \in J_1$ there exists $k \in [n]\setminus \{j\}$ such that $a_{i_j,k}=1$ and
 \[
  |x_k| \ge \frac{1}{2 C_{\ref{lem: typical structure}}np} |x_j|.
 \]
 Choose one such $k$ and denote it by $k(j)$. Using the property \eqref{item: disjoint support} of the event $\Omega_{\ref{lem: typical structure}}$ again, we deduce that $k(j) \in [n] \setminus \cL(A_n)$.
 Therefore,
 \[
   \frac{1}{16}
   \le \sum_{j \in J_1} x^2_j
   \le (2 C_{\ref{lem: typical structure}} np)^2 \sum_{j \in J_1} x^2_{k(j)}
   \le (2 C_{\ref{lem: typical structure}} np)^2 r_0 \sum_{k \in [n] \setminus \cL(A_n)} x^2_k.
 \]
 Here, the last inequality follows since by \eqref{item: bounded multiplicity} of the event $\Omega_{\ref{lem: typical structure}}$, we have that for any $k \in [n] \setminus \cL(A_n)$,
 \[
   | \{ j \in \cL(A_n): \ k(j)=k \} | \le r_0.
 \]
This finishes the proof of the lemma.
\end{proof}

\vskip10pt
We see that Lemma \ref{lem: normal coordinates} provides a lower bound on $\|x_{[n]\setminus \cL(A_n)}\|_2$ that deteriorates as $p$ increases. We show below that for large $p$ the set of light columns is empty with high probability. Hence, in that regime we can work with $x$ instead of $x_{[n]\setminus \cL(A_n)}$. Furthermore, during the course of the proof of Proposition \ref{l: sparse vectors-2} we will see that to deduce that $x_{[n]\setminus \cL(A_n)}$ itself is close to sparse vectors we need bounds on $|\cL(A_n)|$ for all $p$ satisfying $np \ge \log(1/p)$. Both these statements are proved in the following lemma.

\begin{lem}\label{lem:light-col-card}
Let $A_n$ satisfies Assumption \ref{ass:matrix-entry}. If $np \ge \log(1/\bar{C}p)$ for some $\bar{C}\ge 1$ then
\[
\P(|\cL(A_n)| \ge n^{\f13}) \le n^{-\f19},
\]
for all large $n$. Moreover, there exists an absolute constant $C_{\ref{lem:light-col-card}}$ such that if $np \ge C_{\ref{lem:light-col-card}} \log n$ then
\[
\P(\cL(A_n) =\emptyset) \ge 1- 1/n.
\]
\end{lem}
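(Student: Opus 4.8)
The plan is to derive both statements from Chernoff lower‑tail estimates for binomial random variables, fed into suitable union bounds. I would first record two elementary reductions. By Assumption \ref{ass:matrix-entry}(c), for fixed $j$ the off‑diagonal entries $\{a_{i,j}\}_{i\ne j}$ of the $j$‑th column are jointly independent $\dBer(p)$ variables, since each $a_{i,j}$ may depend on the rest of $A_n$ only through $a_{j,i}$, which lies in row $j$; hence $|\supp(\col_j(A_n))|\ge\sum_{i\ne j}a_{i,j}\sim\mathrm{Bin}(n-1,p)$. More generally, for any $S\subseteq[n]$ and $j\in S$ the restricted column sum $\Sigma^S_j:=\sum_{i\in[n]\setminus S}a_{i,j}$ is $\mathrm{Bin}(n-|S|,p)$‑distributed, and over $j\in S$ these are \emph{jointly independent}, because $a_{i,j}$ with $i\notin S$, $j\in S$ can depend on another entry only through $a_{j,i}$, whose column index $i$ lies outside $S$. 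Second, the hypothesis $np\ge\log(1/\bar Cp)$ forces $np\ge\log n-\log\log n-\log\bar C$ (so $np\to\infty$): writing $t:=np$ it reads $t+\log t+\log\bar C\ge\log n$, whence either $t>\log n$ or else $\log t\le\log\log n$ and then $t\ge\log n-\log\log n-\log\bar C$. Throughout I abbreviate $I(\tau):=1-\tau+\tau\log\tau$ for $\tau\in(0,1)$, the binomial lower‑tail rate, which is strictly decreasing on $(0,1)$ with $\P(\mathrm{Bin}(N,p)\le\tau Np)\le\exp(-I(\tau)Np)$.

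For part (ii) I would simply union over columns: $\P(\cL(A_n)\ne\varnothing)\le\sum_j\P(|\supp(\col_j(A_n))|\le\delta_0 np)\le n\,\P(\mathrm{Bin}(n-1,p)\le\delta_0 np)$. For $n\ge2$ one has $\delta_0 np\le 2\delta_0(n-1)p$ with $2\delta_0<1/5<1$, so Chernoff gives $\P(\mathrm{Bin}(n-1,p)\le\delta_0 np)\le\exp(-I(2\delta_0)(n-1)p)\le\exp(-c\,np)$ with $c:=\tfrac12 I(2\delta_0)>0$ an absolute constant. Hence $\P(\cL(A_n)\ne\varnothing)\le n\exp(-c\,np)\le n^{1-cC}$ whenever $np\ge C\log n$, and taking $C_{\ref{lem:light-col-card}}:=2/c$ (or anything larger) makes this at most $1/n$.

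For part (i) I would set $m:=\lceil n^{1/3}\rceil$ and, for $S\subseteq[n]$, let $E_S$ be the event that $\col_j(A_n)$ is light for every $j\in S$, so that $\{|\cL(A_n)|\ge n^{1/3}\}\subseteq\bigcup_{S:\,|S|=m}E_S$. Since $|\supp(\col_j(A_n))|\ge\Sigma^S_j$ and the $\Sigma^S_j$ ($j\in S$) are independent, $\P(E_S)\le\P(\mathrm{Bin}(n-m,p)\le\delta_0 np)^m$. As $\delta_0<1/10$ is fixed, choose $\delta_1:=\tfrac12(\delta_0+\tfrac1{10})<\tfrac1{10}$, so that $I(\delta_1)>I(\tfrac1{10})=\tfrac9{10}-\tfrac1{10}\log 10>\tfrac23$ (equivalently $3\log 10<7$); put $3\eta:=I(\delta_1)-\tfrac23>0$. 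For all large $n$ one has $\delta_0 np\le\delta_1(n-m)p$, hence $\P(\mathrm{Bin}(n-m,p)\le\delta_0 np)\le\exp(-I(\delta_1)(n-m)p)$, and combined with $\binom{n}{m}\le(en/m)^m\le(e\,n^{2/3})^m$ this gives
\[
\P\big(|\cL(A_n)|\ge n^{1/3}\big)\;\le\;\binom{n}{m}\exp\big(-I(\delta_1)(n-m)p\cdot m\big)\;\le\;\Big(e\,n^{2/3}\exp\big(-I(\delta_1)(n-m)p\big)\Big)^{m}.
\]
Now $(n-m)p=(1-o(1))\,np\ge(1-o(1))(\log n-\log\log n-\log\bar C)$, so for large $n$, $I(\delta_1)(n-m)p\ge(\tfrac23+2\eta)\log n$, while $\log(e\,n^{2/3})=1+\tfrac23\log n$; hence the base of the last power is at most $\exp(1-2\eta\log n)\le n^{-\eta}$ for large $n$. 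Raising to the power $m\ge n^{1/3}$ then yields $\P(|\cL(A_n)|\ge n^{1/3})\le n^{-\eta n^{1/3}}\le n^{-1/9}$ for all large $n$.

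The crux is part (i), and it is genuinely tight. The first‑moment bound $\P(|\cL(A_n)|\ge n^{1/3})\le n^{-1/3}\,\E|\cL(A_n)|$ is useless because $\E|\cL(A_n)|$ itself can be of order $n^{1/3}$ when $np\approx\log n$; likewise a union over individual columns only gives $n\,\P(\mathrm{Bin}(n-1,p)\le\delta_0 np)$, which need not be $o(1)$. The subset union bound succeeds only because two sources of slack happen to line up: the sharp estimate $\binom{n}{m}\le(en/m)^m$ charges merely $\tfrac23\log n$ (rather than $\log n$) per selected column when $m=n^{1/3}$, while the choice $\delta_0<1/10$ of the light‑column threshold makes the binomial lower‑tail rate $I(\delta_0)$ strictly exceed $2/3$, so the net exponent is negative but only by a constant margin. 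One therefore has to track carefully that $np$ can be as small as $\log n-\log\log n$ under the hypothesis, and one genuinely uses the exact independence of $\{\Sigma^S_j\}_{j\in S}$ to license the product bound on $\P(E_S)$ despite the transpose‑dependences permitted by Assumption \ref{ass:matrix-entry}.
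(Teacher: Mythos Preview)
Your proof is correct, but it takes a different route from the paper's for part (i). The paper simply uses Markov's inequality: from the same single-column Chernoff bound you use in part (ii) one gets $\E|\cL(A_n)|\le n\exp\big(-np[1-\delta_0 p-\delta_0\log(2e/\delta_0)]\big)$, and then splits according to whether $np\le 2\log n$ or $2\log n\le np\le C_{\ref{lem:light-col-card}}\log n$. In the former range, the hypothesis gives $np\ge(8/9)\log n$ for large $n$, so $n^{-1/3}\E|\cL(A_n)|\le n^{2/3-8/9+2\delta_0\log(2e/\delta_0)+o(1)}\le n^{-1/9}$ once $\delta_0$ is small enough; in the latter range the single-column probability is already at most $1/n$, and Markov again gives the bound.

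Your dismissal of the first-moment method is therefore mistaken: under the hypothesis one has $np\ge(1-\delta)\log n$ for every $\delta>0$ and large $n$, so $\E|\cL(A_n)|=n^{o(1)}$, never of order $n^{1/3}$. That said, your subset-union argument is perfectly valid and buys two things the paper's argument does not: it delivers a far stronger bound ($n^{-\eta n^{1/3}}$ rather than $n^{-1/9}$), and it works uniformly for every $\delta_0\in(0,1/10)$ via the numerical inequality $I(1/10)>2/3$, whereas the paper's Markov computation needs $\delta_0$ small enough that $2\delta_0\log(2e/\delta_0)<1/9$. The cost is a more intricate proof and the need to justify the joint independence of the restricted column sums $\Sigma_j^S$ across $j\in S$, which the first-moment bound avoids entirely.
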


\vskip10pt

Proof of Lemma \ref{lem:light-col-card} follows from standard concentration bounds and is postponed to Appendix \ref{sec:appendix}. Equipped with all the relevant ingredients we are now ready to state the main result of this section.

  \begin{prop}[Invertibility over very sparse vectors]    \label{l: sparse vectors-2}
   Let $A_n$ satisfies Assumption \ref{ass:matrix-entry} where $p$ satisfies the inequality
   \[
   np \ge \log(1/p).
   \]
  Fix $K, C_\star \ge 1$ and let
   \beq\label{eq:ell_0_dfn}
   \ell_0:= \left \lceil \frac{\log \left(\f{c^\star_{\ref{lem: pattern}}}{ p}\right)}{\log \sqrt{pn}} \right \rceil.
  \eeq
  Then there exist constants $0< c_{\ref{l: sparse vectors-2}}, \wt{c}_{\ref{l: sparse vectors-2}}< \infty$, depending only on $\delta_0$, and an absolute constant $\ol{c}_{\ref{l: sparse vectors-2}}$ such that for any $y_0 \in \R^n$ with $|\supp(y_0)| \le C_\star np$, we have
  \begin{multline*}
   \P\left(\left\{\exists x \in V_0 \text{ such that } \norm{A_n x -y_0}_2 \le \rho \sqrt{np} \text{ and } \norm{A_n-\E A_n} \le K \sqrt{np}\right\} \cap \Omega_0^c \cap \Omega_{\ref{lem: typical structure}}\right)
   \le n^{-\bar{c}_{\ref{l: sparse vectors-2}}}, 
  \end{multline*}
  for all large $n$,
  where
  \beq\label{eq:V-0}
  V_0:= \text{\rm Dom}\left(c^\star_{\ref{lem: pattern}}p^{-1}, c_{\ref{l: sparse vectors-2}}K^{-1}\right)\cup {\rm{Comp}}(c^\star_{\ref{lem: pattern}}p^{-1}, \rho), \quad \text{ and } \quad \rho:= (\wt c_{\ref{l: sparse vectors-2}}/K)^{2\ell_0+1}.
  \eeq
 \end{prop}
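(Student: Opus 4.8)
The proof splits into a deterministic statement and an easy probabilistic bookkeeping. Set
\[
\Omega_\bullet:=\Omega_0^c\cap\Omega_{\ref{lem: typical structure}}\cap\bigl\{\|A_n-\E A_n\|\le K\sqrt{np}\bigr\}\cap\mathcal E_{\ref{lem: pattern}}\cap\mathcal E_{\ref{lem:light-col-card}},
\]
where $\mathcal E_{\ref{lem: pattern}}$ is the event that all the good events in Lemma~\ref{lem: pattern} hold and $\mathcal E_{\ref{lem:light-col-card}}$ is the event carrying the cardinality bounds of Lemma~\ref{lem:light-col-card}. The plan is to show that \emph{on $\Omega_\bullet$ there is no $x\in V_0$ with $\|A_nx-y_0\|_2\le\rho\sqrt{np}$}. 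Granting this, the event in the proposition is contained in $(\mathcal E_{\ref{lem: pattern}}^c\cup\mathcal E_{\ref{lem:light-col-card}}^c)\cap\Omega_{\ref{lem: typical structure}}$, whose probability is at most the sum of the failure bounds in Lemmas~\ref{lem: pattern} and~\ref{lem:light-col-card}, i.e.\ $\le n^{-\bar c}$ for a suitable absolute $\bar c>0$, which we take to be $\ol c_{\ref{l: sparse vectors-2}}$.

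First I would preprocess. Deleting the at most $C_\star np$ rows of $A_n$ indexed by $\supp(y_0)$ and then folding produces a matrix $B:=\fold(\bar A_n)$ with $\|Bx\|_2\le\sqrt2\,\|A_nx-y_0\|_2\le\sqrt2\,\rho\sqrt{np}$, while on $\Omega_\bullet$ the folded mean matrix has operator norm $\le2p$, so $\|B\|\le 2\|A_n-\E A_n\|+2p\le 3K\sqrt{np}$. Since Lemmas~\ref{lem: typical structure}, \ref{lem: pattern} and~\ref{lem: normal coordinates} are stated under the weaker hypothesis $np\ge\log(1/\bar Cp)$, their conclusions apply verbatim to $\bar A_n$ (this is exactly the point of Remark~\ref{rmk:typical structure-rectangle}). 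Next I would reduce to the normal coordinates: if $np\ge C_{\ref{lem:light-col-card}}\log n$ then $\cL(A_n)=\varnothing$ on $\Omega_\bullet$ and one sets $y:=x$; otherwise $|\cL(A_n)|\le n^{1/3}$, and Lemma~\ref{lem: normal coordinates} gives $\|x_{[n]\setminus\cL(A_n)}\|_2\ge(C_{\ref{lem: normal coordinates}}np)^{-1}$, so $y:=x_{[n]\setminus\cL(A_n)}$ satisfies $\|y\|_2\ge(C_{\ref{lem: normal coordinates}}np)^{-1}$, $\supp(y)\cap\cL(A_n)=\varnothing$, and, because deleting the $\cL(A_n)$-coordinates cannot increase the distance to $c^\star_{\ref{lem: pattern}}p^{-1}$-sparse vectors (and after an elementary estimate for $\mathrm{Dom}$ vectors), $y$ lies within $\theta:=\max\{\rho,c_{\ref{l: sparse vectors-2}}K^{-1}\}$ of a $c^\star_{\ref{lem: pattern}}p^{-1}$-sparse vector. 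Discarding also the few rows meeting $\bigcup_{j\in\cL(A_n)}\supp(\col_j(\fold(A_n)))$ (few, because light columns have small pairwise-disjoint supports on $\Omega_{\ref{lem: typical structure}}$), it suffices to bound $\|By\|_2$ from below, the gain being that $J\cap\cL(A_n)=\varnothing$ is now automatic in Lemma~\ref{lem: pattern} at every scale.

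The core is a multiscale descent along $m_\ell:=\lceil(\sqrt{np})^\ell\rceil$, $\ell=0,\dots,\ell_0$, normalised so that $m_0=1$ and $m_{\ell_0}\asymp c^\star_{\ref{lem: pattern}}p^{-1}$: this is precisely the chain for which $\gm(m_\ell)=m_\ell\sqrt{np}\ge m_{\ell+1}$ and $m_\ell\le c^\star_{\ref{lem: pattern}}(p\sqrt{np})^{-1}$ for $\ell\le\ell_0-1$, so Lemma~\ref{lem: pattern} is available at each scale. At scale $\ell$ I would invoke $\cA^{J_\ell,J'_\ell}_{c_{\ref{lem: pattern}}}$ with $J_\ell$ the support of the $m_\ell$ largest coordinates of $y$ and $J'_\ell$ the support of the next $\gm(m_\ell)$ ones, obtaining $I_\ell$ with $|I_\ell|\ge c_{\ref{lem: pattern}}m_\ell np$ such that each $i\in I_\ell$ has a single $\pm1$ in $J_\ell$, nothing in $J'_\ell\cup\cL(A_n)$, hence $(By)_i=\pm y_{j_i}+r_i$ with $j_i\in J_\ell$ and $\sum_{i\in I_\ell}r_i^2\le(3K\sqrt{np})^2\|y_{[m_{\ell+1}+1:n]}\|_2^2$; combining with the per-column count $\ge c'np$ of rows hitting each $j\in J_\ell$ (which is what the proof of Lemma~\ref{lem: pattern} actually establishes, via the extension property~(5) of $\Omega_{\ref{lem: typical structure}}$) gives
\[
\sqrt2\,\rho\sqrt{np}\ \ge\ \|By\|_2\ \ge\ \sqrt{c'np}\,\|y_{[1:m_\ell]}\|_2-3K\sqrt{np}\,\|y_{[m_{\ell+1}+1:n]}\|_2 .
\]
Reading the resulting inequalities $\|y_{[1:m_\ell]}\|_2\le (c')^{-1/2}(\sqrt2\,\rho+3K\|y_{[m_{\ell+1}+1:n]}\|_2)$ downward from $\ell=\ell_0-1$, using $\|y_{[m_{\ell_0}+1:n]}\|_2\le\theta$ and the block identities, and choosing $c_{\ref{l: sparse vectors-2}},\wt c_{\ref{l: sparse vectors-2}}$ small depending only on $\delta_0$, one makes the loss accumulated over the $\ell_0$ scales come out as the factor $(\wt c_{\ref{l: sparse vectors-2}}/K)^{2\ell_0+1}$ in $\rho$; this pins $y$ down to a vector so concentrated on its top coordinate that, by $\Omega_0^c$ and the fact that a single folded column still has $\gtrsim np$ nonzero entries after the row deletions, $\|By\|_2\gtrsim\sqrt{np}\,\|y\|_2\ge c\sqrt{np}(C_{\ref{lem: normal coordinates}}np)^{-1}$ (when $\cL(A_n)=\varnothing$, $\|y\|_2=1$ and this is immediate), contradicting $\|By\|_2\le\sqrt2\,\rho\sqrt{np}$ because the definition of $\ell_0$ forces $\rho$ below $c(C_{\ref{lem: normal coordinates}}np)^{-1}(np)^{-1/2}$. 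This is the deterministic statement, hence the proposition.

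The main difficulty is the bookkeeping in the descent: Lemma~\ref{lem: pattern} only bounds $|I_\ell|$ and not the distribution of the hit columns $j_i$ over $J_\ell$, so to avoid a fatal loss of a factor like $(np)^{1/4}$ at each of the $\ell_0\asymp\log(1/p)/\log(np)$ scales one must either pull the sharper per-column bound out of the proof of Lemma~\ref{lem: pattern} or choose the sets $J_\ell$ so carefully (by first splitting the sorted coordinates into magnitude-homogeneous blocks) that the crude bound $\sum_{i\in I_\ell}y_{j_i}^2\ge|I_\ell|\min_{j\in J_\ell}y_j^2$ already suffices; simultaneously one must absorb the $\|B\|$-term, carry through the light-column truncation, and keep every constant depending only on $\delta_0$ and $K$ (not on $n$ or $p$), so that the descent closes uniformly over the whole range $np\ge\log(1/p)$ and the accumulated loss is exactly $(\wt c_{\ref{l: sparse vectors-2}}/K)^{2\ell_0+1}$. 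This is what ties together the definitions of $\ell_0$, $\rho$, and the small constants $\wt c_{\ref{l: sparse vectors-2}},c_{\ref{l: sparse vectors-2}}$.
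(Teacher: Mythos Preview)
Your preprocessing (deleting the $\supp(y_0)$ rows, folding, passing to the normal coordinates via Lemmas~\ref{lem: normal coordinates} and~\ref{lem:light-col-card}) matches the paper.  The gap is in the multiscale step.

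First, the per-column assertion is not what the proof of Lemma~\ref{lem: pattern} gives.  The extension property~\eqref{item: extension} of $\Omega_{\ref{lem: typical structure}}$ bounds the \emph{aggregate} overlap $\sum_{j\in J}|\supp(\col_j)|-|\bigcup_{j\in J}\supp(\col_j)|$, hence $|I^1(J)|\ge c\,|J|\,np$, but a single $j\in J$ could still have most of its support covered by the rest; one does not obtain $\ge c'np$ single-hit rows per column.  Consequently your displayed inequality $\|By\|_2\ge\sqrt{c'np}\,\|y_{[1:m_\ell]}\|_2-3K\sqrt{np}\,\|y_{[m_{\ell+1}+1:n]}\|_2$ is not available.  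What the block-splitting trick you mention \emph{does} deliver---and this is exactly the paper's computation \eqref{i: z_l}---is a lower bound by the \emph{shell} $\|y_{[m_{\ell-1}+1:m_\ell]}\|_2$, not the head $\|y_{[1:m_\ell]}\|_2$: one partitions $[1:m_\ell]$ into $\sqrt{np}$ consecutive blocks $J_s$ of size $\kappa=m_{\ell-1}$, applies Lemma~\ref{lem: pattern} to each $(J_s,\,[1:m_\ell]\setminus J_s)$, and uses $|I_s|\min_{J_s}y_j^2\ge c\kappa np\cdot y_{(s\kappa)}^2$; summing over $s$ and using monotonicity recovers only $\|z_\ell\|_2^2:=\|y_{[m_{\ell-1}+1:m_\ell]}\|_2^2$.

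Second, even with the correct shell inequality the ``downward descent'' does not chain as you describe: the inequality at scale $\ell$ bounds $\|z_\ell\|_2$ by $\rho+K\|y_{[m_\ell+1:n]}\|_2$, so iterating gives $\|y_{[m_{\ell-1}+1:n]}\|_2\le C'\|y_{[m_\ell+1:n]}\|_2+O(\rho)$ with $C'>1$ a fixed constant, and after $\ell_0$ steps one has only $\|y_{[2:n]}\|_2\lesssim(C')^{\ell_0}\rho$---this must then be compared against the lower bound $\|y\|_2\ge (C_{\ref{lem: normal coordinates}}np)^{-1}$ from Lemma~\ref{lem: normal coordinates}, and the resulting constraint on $\wt c_{\ref{l: sparse vectors-2}}$ is exactly the delicate balance you flag but do not carry out.

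The paper avoids this accumulation entirely by a pigeonhole at a \emph{single} scale.  Since $\sum_{\ell\le\ell_0}\|\hat z_\ell\|_2^2\ge 1-O(c_{\ref{l: sparse vectors-2}}^2K^{-2})$, there is a \emph{largest} index $\ell_\star\le\ell_0$ with $\|\hat z_{\ell_\star}\|_2\ge(c_{\ref{l: sparse vectors-2}}/K)^{\ell_\star}$.  Then every later shell satisfies $\|\hat z_\ell\|_2<(c_{\ref{l: sparse vectors-2}}/K)^\ell$, so the full tail $v=\sum_{\ell>\ell_\star}\hat z_\ell$ has $\|v\|_2\lesssim(c_{\ref{l: sparse vectors-2}}/K)^{\ell_\star+1}$.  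One applies Lemma~\ref{lem: pattern} \emph{once}, with $\kappa=(np)^{(\ell_\star-1)/2}$ and the moving-window blocks $J_s\subset[1:m_{\ell_\star}]$, $J_s'=[1:m_{\ell_\star}]\setminus J_s$, to get $\|\fold(\bar A_n)(u+w)\|_2\ge\sqrt{c_{\ref{lem: pattern}}np}\,\|\hat z_{\ell_\star}\|_2$; subtracting $\|\fold(\bar A_n)\|\cdot\|v\|_2$ gives $\ge(\wt c_{\ref{l: sparse vectors-2}}/K)^{\ell_\star}\sqrt{np}$ directly, with no iteration and no $(C')^{\ell_0}$ loss.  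This pigeonhole choice of $\ell_\star$ is the main idea your proposal is missing.
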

 
 \begin{rmk}
Let us add that during the proof of Proposition \ref{l: sparse vectors-2} it will be shown that on the event $\mathcal{E}_{\ref{lem: pattern}}^c \cap \mathcal{E}_{\ref{lem:light-col-card}}^c \cap \Omega_0^c \cap \Omega_{\ref{lem: typical structure}} \cap \left\{\norm{A_n-\E A_n} \le K \sqrt{np}\right\}$, for any $y_0 \in \R^n$ with $|\supp(y_0)| \le C_\star np$, one deterministically has that $\|A_n x - y_0\|_2 > \rho \sqrt{np}$ for all $x \in V_0$, where $\mathcal{E}_{\ref{lem: pattern}}$ and  $\mathcal{E}_{\ref{lem:light-col-card}}$ are the {\em bad events} as identified in Lemmas \ref{lem: pattern} and \ref{lem:light-col-card}, respectively. The probability bound in Proposition \ref{l: sparse vectors-2} is a consequence of probability bounds obtained in those two lemmas.
 \end{rmk}

 \vskip10pt

 \begin{proof}[Proof of Proposition \ref{l: sparse vectors-2}]
Since we do not have any control on the vector $y_0$ except the cardinality of its support, to remove the effect of $y_0$ we will show that the $\ell_2$-norm of the vector $A_n x - y_0$ restricted to the complement of $\supp(y_0)$ has a uniform bound over $x \in V_0$. To this end, for ease of writing, we define $\bar A_n$ to be the sub-matrix of $A_n$ of dimension $\bar n \times n$, where $\bar n := n - |\supp(y_0)|$, obtained by deleting the rows indexed by $ \supp(y_0)$. We have that $\|\bar A_n x\|_2 \le \|A_n x - y_0\|_2$.

Next we observe that for any $x \in \R^n$ an application of the triangle inequality implies that
\[
\|\fold(\bar A_n) x \|_2^2 \le 2 \|\bar A_n x \|_2^2.
\]
Furthermore
\begin{align}
 \|\fold (\bar A_n)\| & \le \|\bar A_{n,1} - \E \bar A_{n,1}\|+\|\bar A_{n,2} - \E \bar A_{n,2}\| + \|\E \bar A_{n,1} - \E \bar A_{n,2}\| \notag\\
 & \le 2 \|A_n- \E A_n\| + \|\E A_{n,1} - \E A_{n,2}\| \le 2 \|A_n- \E A_n\| + 2\sqrt{np}, \label{eq:fold-norm-0}
\end{align}
where in the last step we have used the fact that
\beq\label{eq:fold-norm}
\|\E  A_{n,1} -  \E A_{n,2} \| \le \sqrt{2n} \cdot p \le 2\sqrt{np}.
\eeq
To establish \eqref{eq:fold-norm} we note that Assumption \ref{ass:matrix-entry} implies that there at most two non-zero entries  per row in the matrix $\E {A}_{n,1} - \E {A}_{n,2}$ each of which has absolute value less than or equal to $p$. Therefore, each of the entries of $(\E  A_{n,1} - \E A_{n,2}) (\E  A_{n,1} - \E  A_{n,2})^*$ is bounded by $2p^2$ and hence by the Gershgorin circle theorem we deduce \eqref{eq:fold-norm}.

Therefore, in light of \eqref{eq:fold-norm-0}, recalling $K \ge 1$, it is enough to find a bound on the probability of the event
\[
\gV_{V_0}:=\left\{\exists x \in V_0: \norm{\fold(\bar A_n) x}_2 \le 2 (\wt c_{\ref{l: sparse vectors-2}}/K)^{2 \ell_0+1} \sqrt{np} \right\}\cap \Omega_K \cap \Omega_0^c \cap \Omega_{\ref{lem: typical structure}},
\]
where
\[
\Omega_K:= \left\{\norm{\fold(A_n)} \le 4 K \sqrt{np}\right\}.
\]
We will show that
 \begin{multline}\label{eq:dom-bd-1}
   \P\bigg(\left\{\exists x \in \text{\rm Dom}\left(c^\star_{\ref{lem: pattern}}p^{-1}, c_{\ref{l: sparse vectors-2}}K^{-1}\right):  \norm{\fold(\bar A_n) x }_2 \le (\wt c_{\ref{l: sparse vectors-2}}/K)^{2 \ell_0} \sqrt{np}\right\}\\
   \cap \Omega_K \cap \Omega_0^c \cap \Omega_{\ref{lem: typical structure}}\bigg)
   \le n^{-\bar{c}_{\ref{l: sparse vectors-2}}}.
   \end{multline}
 First let us show that $\P(\gV_{V_0}) \le n^{-\bar{c}_{\ref{l: sparse vectors-2}}}$ assuming \eqref{eq:dom-bd-1}. To this end, denoting $m:=c^\star_{\ref{lem: pattern}}p^{-1}$, we note that for any $x \in \text{\rm Comp}(m,\rho)$
  \begin{align}\label{eq:dom-to-comp}
\left| \|\fold(\bar A_n) x \|_2- \norm{\fold(\bar A_n) \f{x_{[1:m]}}{\|x_{[1:m]}\|_2}}_2\right|
& \le  \|\fold(\bar A_n)\| \cdot \left( \norm{x_{[1:m]}-\f{x_{[1:m]}}{\|x_{[1:m]}\|_2}}_2  +  \|x_{[m+1:n]}\|_2\right)\notag\\
&  \le 4 K\sqrt{np} \cdot \left( 1- \|x_{[1:m]}\|_2  +   \|x_{[m+1:n]}\|_2\right)\notag\\
& \le 8K \rho \sqrt{np} =8 \wt c_{\ref{l: sparse vectors-2}} \cdot (\wt c_{\ref{l: sparse vectors-2}}/K)^{2\ell_0} \sqrt{np},
 \end{align}
 on the event $\Omega_K$. For $x \in V_0$ we have that $x_{[1:m]}/\|x_{[1:m]}\|_2 \in {\rm Sparse}(m) \cap S^{n-1} \subset \text{\rm Dom}\left(m, c_{\ref{l: sparse vectors-2}}K^{-1}\right)$ we see from \eqref{eq:dom-bd-1} that
 \[
 \norm{\fold(\bar A_n) \f{x_{[1:m]}}{\|x_{[1:m]}\|_2}}_2 \ge (\wt c_{\ref{l: sparse vectors-2}}/K)^{2\ell_0} \sqrt{np}
 \]
 with the desired high probability. Therefore, upon shrinking $\wt c_{\ref{l: sparse vectors-2}}$ such that $10 \wt c_{\ref{l: sparse vectors-2}} \le 1$, and recalling that $K \ge 1$, we deduce from \eqref{eq:dom-to-comp} that $\P(\gV_{V_0}) \le n^{-\bar{c}_{\ref{l: sparse vectors-2}}}$. Thus, it now suffices to prove \eqref{eq:dom-bd-1}.


 Turning to this task, we split the proof into three parts depending on the sparsity level of the matrix $A_n$, determined by $p$. First let us consider the case $\log(1/p) \le np \le C_{\ref{lem:light-col-card}} \log n$.

Fix  $ x \in \text{\rm Dom}\left(m, c_{\ref{l: sparse vectors-2}}K^{-1}\right)$ and define $\tilde x \in \R^n$ to be vector obtained from $x$ after setting the coordinates belonging to the set $\cL(\bar A_n)$ to be zero. That is,
\[
\tilde x_i := {x_i}\cdot \bI(i \in [n]\setminus \cL(\bar A_n)), \, i \in [n].
\]
Further set $\hat x$ to be the normalized version of $\tilde x$. So
\[
\hat x:= \tilde x /{\|{x_{[n]\setminus \cL(\bar A_n)}}\|_2}.
\]
During the course of the remainder of the proof we will see that to prove \eqref{eq:dom-bd-1} it suffices to consider only those $x$ for which ${\|{x_{[n]\setminus \cL(\bar A_n)}}\|_2} \ne 0$. Therefore, $\hat x$ is well defined.

Let us rearrange the magnitudes of the coordinates of $\hat x$ and group them in blocks of lengths $(pn)^{\ell/2}$, where $\ell=1 \etc \ell_0$.
  More precisely, set
  \beq\label{eq:hatz-1}
    \hat z_\ell:=\hat x_{[(pn)^{(\ell-1)/2}+1: (pn)^{\ell/2}]}\footnote{when $\ell =1$ by a slight abuse of notation we take $\hat z_1=x_{[1: \sqrt{np}]}$.},
   \eeq
   and
   \beq\label{eq:hatz-2}
    \hat z_{\ell_0+1}:=\hat x_{[(pn)^{\ell_0/2}+1 : n]}.
   \eeq
 For clarity of presentation, let us assume that $m=(pn)^{\ell_0/2}$, i.e.~the integer part in the definition of $\ell_0$ is redundant. Recalling the definition of $\tilde x$ we see that it matches with $x$ except $|\cL(\bar A_n)|$ coordinates. Therefore, for any $x \in \text{\rm Dom}(m, c_{\ref{l: sparse vectors-2}}K^{-1})$ we find that
\begin{align*}
\|\tilde x_{[m+1:n]}\|_2 \le  \|x_{[m+1:n]}\|_2  \le \, & c_{\ref{l: sparse vectors-2}}K^{-1} \sqrt{m} \|x_{[m+1:n]}\|_\infty \\
\le \, &  \sqrt{\f{m}{m/2- |\cL(\bar A_n)|}} \cdot c_{\ref{l: sparse vectors-2}}K^{-1} \|x_{[m/2+|\cL(\bar A_n)|+1:m]}\|_2 \\
\le \, &  2 c_{\ref{l: sparse vectors-2}}K^{-1} \|\tilde x_{[m/2+1:m]}\|_2,
\end{align*}
on the event
\[
\Omega_\cL:= \left\{ |\cL(\bar A_n)| \le n^{1/3}\right\},
\]
for all large $n$, where in the last step we have used the fact that for $p = O(\frac{\log n}{n})$ we have $n^{1/3} = o(m)$. This further implies that
\beq\label{eq:ell-0-ell-0+1}
\|\hat z_{\ell_0+1}\|_2 \le 2 c_{\ref{l: sparse vectors-2}}K^{-1} \|\hat x_{[m/2+1:m]}\|_2 \le 2 c_{\ref{l: sparse vectors-2}}K^{-1} \|\hat z_{\ell_0}\|_2,
\eeq
on the event $\Omega_\cL$, where the last inequality is a consequence of the fact that the condition $np \to \infty$ as $n \to \infty$ implies that the support of $\hat z_{\ell_0}$ contains that of $\hat x_{[m/2+1:m]}$.

Since $\sum_{\ell=1}^{\ell_0+1}\|\hat z_\ell\|_2^2 =1$, we deduce from \eqref{eq:ell-0-ell-0+1} that
\[
\sum_{\ell=1}^{\ell_0} \|\hat z_\ell\|_2^2 \ge 1 - 4 c_{\ref{l: sparse vectors-2}}^2K^{-2}.
\]
Hence, choosing $c_{\ref{l: sparse vectors-2}}$ sufficiently small we obtain that there exists $\ell \le \ell_0$ such that $\norm{\hat z_\ell}_2 \ge (c_{\ref{l: sparse vectors-2}}/K)^{\ell}$. Let $\ell_\star$ be the largest index having this property, and set $u:=\sum_{\ell=1}^{\ell_\star} \hat z_\ell, \ v=:\sum_{\ell=\ell_\star+1}^{\ell_0+1} \hat z_\ell$. First consider the case when $\ell_\star < \ell_0$. Then by the triangle inequality we have that
  \beq\label{eq:bound v}
   \norm{v}_2 \le \sum_{\ell=\ell_\star+1}^{\ell_0+1} \norm{\hat z_m}_2 \le 4 (c_{\ref{l: sparse vectors-2}}/K)^{(\ell_\star+1)},
  \eeq
  where we have used the inequality \eqref{eq:ell-0-ell-0+1}.

Let $\kappa=(np)^{(\ell_\star-1)/2}$.
Note that
    \[
     \kappa \le (np)^{(\ell_0-1)/2} \le \frac{1}{c^\star_{\ref{lem: pattern}} p  \sqrt{pn}}.
    \]
%
To finish the proof we now apply Lemma \ref{lem: pattern} with this choice of $\kappa$.  Using the fact that $|\supp(y_0)| \le  C_\star np$ we see that
\beq\label{eq:np-lbd}
\bar n p \ge \log(1/\bar C p),
\eeq
for some large constant $\bar C$, whenever $p \le c$ for some small constant $c$. Therefore, we can apply Lemma \ref{lem: pattern} to the rectangular matrix $\bar A_n$ to find the desired uniform bound on $\| \fold(\bar A_n) x\|_2$. To this end, we split the support of $u$ into $\sqrt{np}$ blocks of equal size $\kappa$ and define $L_{\ell_\star}:= \pi_{\hat x}^{-1}([1, (np)^{\ell_\star/2}])$, where $\pi_{\hat x}$ is the permutation of absolute values of the coordinates of $\hat x$ in an non-increasing order. For $s \in [\sqrt{np}]$, define $J_s:=\pi_{\hat x}^{-1}([(s-1)\kappa+1, s\kappa])$, and set $J_s':=L_{\ell_\star} \setminus J_s$.
Using Lemma \ref{lem: pattern}, for any $s \in [\sqrt{np}]$, we will show that there is a substantial number of rows of $\bar A_n$ which have one non-zero entry in the block $J_s$ and no such entries in $J_s'$. Let us check it.
On the event $\Omega_\cL$,
\beq\label{eq:L_star-new}
|L_{\ell_\star}| \le \frac{1}{c^\star_{\ref{lem: pattern}} p} \le \f{2 n}{c^\star_{\ref{lem: pattern}}\log n} \le n - n^{1/3} \le n - |\cL(\bar  A_n)|,
\eeq
 where the second inequality uses  assumption \eqref{eq:np-lbd}, which in particular implies that $p \ge \log n /(2n)$. Since $\hat x_{\cL(\bar A_n)} =0$, and $L_{{\bm \ell}_\star} $ contain the coordinates of $\hat x$ with the largest absolute value, it implies that $L_{{\bm \ell}_\star}\cap \cL(\bar A_n)=\emptyset$. Otherwise $|L_{\ell_\star}| > |\cL(\bar A_n)^c|$, yielding a contradiction to \eqref{eq:L_star-new}. Hence we also get that $J_s \cap \cL(\bar A_n)=\emptyset$. Moreover, $|J_s'| \le |L_{\ell_\star}| = \kappa \sqrt{pn}$. Therefore we now apply Lemma \ref{lem: pattern} to get a set $\cA$ such that $\cA^c \cap \Omega_{\ref{lem: typical structure}}$ has a small probability and on $\cA \cap \Omega_\cL$ there exist subsets of rows $I_s \subset [\bar n]$ with $|I_s| \ge {c}_{\ref{lem: pattern}} \kappa np$ for all $s \in [\sqrt{pn}]$, such that for every $i \in I_s$, we have $|\ga_{i,j_i}|  = 1$ for only one index $j_i \in J_s$ and $\ga_{i,j}=0$ for all $j \in (J_s \cup J_s' )\setminus \{j_0\}$. This means that $I_1,I_2,\ldots,I_{\sqrt{pn}}$ are disjoint subsets. Moreover $\{I_s\}_{s \in [\sqrt{np}]}$ satisfy the property \eqref{eq:row-does-not-sect}. That is,
\beq
\supp(\row_i(\fold(\bar A_n))) \cap \cL(\bar A_n) = \emptyset,  \quad \text{ for all } i \in I_s, \text{ and } s \in [\sqrt{np}].
\eeq

Therefore, for $s \in [\sqrt{np}]$ and $i \in I_s$,
\[
\left(\fold (\bar A_n) \cdot (x_{\cL(\bar A_n)})\right)_i =0
\]
and thus denoting $w:= \f{x_{\cL(\bar A_n)}}{\|x_{[n]\setminus \cL(\bar A_n)}\|_2}$ we deduce that
\[
\left| \left(\fold(\bar A_n) \cdot\left(u+ w \right)\right)_i\right|=|(\fold(\bar A_n) u)_i| = |u_{j_i}| \ge |\hat x(\pi_{\hat x}^{-1}(s \kappa))|,
\]
where the inequality follows from the monotonicity of the sequence $\{|\hat x(\pi_{\hat x}^{-1}(k))|\}_{k=1}^n$.
Hence
  \begin{align}
    \norm{\fold(\bar A_n) \cdot (u+w)}_2^2
    \ge \sum_{s=1}^{\sqrt{np}} \sum_{i \in I_s} \big( (\fold(A_n) u)_i \big)^2
   & \ge {{c}_{\ref{lem: pattern}}np} \sum_{s=1}^{(pn)^{1/2}}\kappa (\hat x(\pi_{\hat x}^{-1}(s\kappa)))^2 \notag\\
    &\ge {{c}_{\ref{lem: pattern}}np} \sum_{k=(np)^{(\ell_\star-1)/2}}^{(np)^{\ell_\star/2}} (\hat x(\pi_{\hat x}^{-1}(k)))^2 \notag \\
    &= {{c}_{\ref{lem: pattern}}np}\norm{\hat z_{\ell_\star}}_2^2  \ge {{c}_{\ref{lem: pattern}}np} \cdot (c_{\ref{l: sparse vectors-2}}/K)^{2 \ell_\star}. \label{i: z_l}
\end{align}
Note that all but the last step above continues to hold even when $\ell_\star=\ell_0$. Combining \eqref{i: z_l} with the bound on $\norm{v}_2$ (see \eqref{eq:bound v}), we deduce that
  \begin{align}
      \norm{\fold(\bar A_n) \cdot  \f{ x}{\|x_{[n]\setminus \cL(\bar A_n)}\|_2}}_2
      &\ge     \norm{\fold(\bar A_n) (u+w)}_2 - \norm{\fold (\bar A_n)} \cdot \norm{v}_2  \notag\\
      &\ge \sqrt{{{c}_{\ref{lem: pattern}}np} } \cdot (c_{\ref{l: sparse vectors-2}}/K)^{ \ell_\star}-  4 K\sqrt{np} \cdot 4 (c_{\ref{l: sparse vectors-2}}/K)^{(\ell_\star+1)}  \ge (\wt c_{\ref{l: sparse vectors-2}}/K)^{\ell_\star}\sqrt{np}, \label{eq:lbd-ell_0-1}
  \end{align}
on the set $\cA \cap \Omega_\cL \cap \Omega_K \cap \Omega_{\ref{lem: typical structure}}$, where the last inequality follows upon choosing $c_{\ref{l: sparse vectors-2}}$ and $\wt c_{\ref{l: sparse vectors-2}}$ sufficiently small (independently of $\ell_\star$).

 Now it remains to consider the case $\ell_\star=\ell_0$. Proceeding similarly as in \eqref{i: z_l} we have that
\[\norm{\fold(\bar A_n) \cdot (u+w) }_2 \ge \sqrt{{c}_{\ref{lem: pattern}}np } \cdot \norm{\hat z_{\ell_0}}_2,\]
and from \eqref{eq:ell-0-ell-0+1}, we have $\norm{v}_2=\norm{\hat z_{\ell_0+1}}_2 \le 2 c_{\ref{l: sparse vectors-2}}K^{-1}  \norm{\hat z_{\ell_0}}_2$. Therefore proceeding  as before, on $\cA \cap \Omega_K \cap \Omega_\cL \cap \Omega_{\ref{lem: typical structure}}$, we obtain
\beq\label{eq:lbd-ell_0-2}
\norm{\fold(\bar A_n) \cdot \f{x}{\| x_{[n]\setminus \cL(\bar A_n)}\|_2}}_2 \ge (\wt c_{\ref{l: sparse vectors-2}}/K)^{\ell_\star}\sqrt{np}.
\eeq
Since $np \le C_{\ref{lem:light-col-card}} \log n$, using Lemma \ref{lem: normal coordinates} we also have that
\[
\|x_{[n]\setminus \cL(A_n)}\|_2 \ge \f{1}{C_{\ref{lem: normal coordinates}} np} \ge \f{1}{C_{\ref{lem: normal coordinates}} C_{\ref{lem:light-col-card}} \log n} \ge (c_{\ref{l: sparse vectors-2}}/K)^{\ell_\star},
\]
on the set $\Omega_0^c \cap \Omega_{\ref{lem: typical structure}}$, for all large $n$. This lower bound on $\|x_{[n]\setminus \cL(A_n)}\|_2$ shows in particular  in particular that $\hat x$ is well defined.

Combining \eqref{eq:lbd-ell_0-1}-\eqref{eq:lbd-ell_0-2}, and using Lemma \ref{lem: pattern} and Lemma \ref{lem:light-col-card} we establish \eqref{eq:dom-bd-1} for all $p \in (0,1/2]$ such that $\log(1/p) \le np \le C_{\ref{lem:light-col-card}} \log n$.

Next we consider the case when $C_{\ref{lem:light-col-card}} \log n \le np \le (c^\star_{\ref{lem: pattern}} n)^{2/3}$. For such choices of $p \in (0,1)$ we use Lemma \ref{lem:light-col-card} to obtain that $\{\cL(\bar A_n) = \emptyset\}$ with high probability. Using this fact one proceeds similarly as in the previous case to arrive at \eqref{eq:dom-bd-1}. Below is a brief outline.

Similarly to $\{\hat z_\ell\}_{\ell=1}^{\ell_0+1}$ defined in \eqref{eq:hatz-1}-\eqref{eq:hatz-2}, we first define $\{z_\ell\}_{\ell=1}^{\ell_0+1}$ by rearranging the magnitudes of the coordinates of $x$ and grouping them in blocks of length $(np)^{\ell/2}$ for $\ell=1,2,\ldots, \ell_0$ and $z_{\ell_0+1}$ being the remaining block. Next, we  define $\ell_\star$ to be the largest $\ell\le  \ell_0$ such that $\norm{z_\ell}_2 \ge (c_{\ref{l: sparse vectors-2}}/K)^{\ell}$. Equipped with the definition of $\ell_\star$ we then define $\{J_s, J_s'\}_{s \in [\sqrt{np}]}$ similarly as in the previous case. On the event $\{\cL(\bar A_n)=\emptyset\}$ the requirement that $J_s \cap \cL(\bar A_n)=\emptyset$ trivially follows. Since $np \ge C_{\ref{lem:light-col-card}} \log n$ by Lemma \ref{lem:light-col-card} we have that $\cL(\bar A_n)= \emptyset$ with high probability. This allows us to use Lemma \ref{lem: pattern} to find disjoint subsets of rows $\{I_s\}_{s \in [\sqrt{np}]}$ with the desired properties and hence by repeating the same computations as in the previous case we arrive at \eqref{eq:lbd-ell_0-2}. Now noting that $\|x_{[n]\setminus \cL(\bar A_n)}\|_2=1$, on a set with high probability, we obtain the desired bound in \eqref{eq:dom-bd-1}.

It remains to provide a proof of \eqref{eq:dom-bd-1} for $p \in (0,1/2]$ such that $np \ge (c^\star_{\ref{lem: pattern}} n)^{2/3}$. For this range of $p$ we do not need the elaborate chaining argument of the previous two cases. It follows from the following simpler argument.

Fixing $x \in \text{\rm Dom}\left(m, c_{\ref{l: sparse vectors-2}}K^{-1}\right)$, for $k \in \supp(x_{[1:m]})$ we define $J_k = \{k\}$ and $J_k' = \supp(x_{[1:m]})\setminus \{k\}$. Applying Lemma \ref{lem: pattern} with $\kappa=1$ and $\gm=m$ we find disjoint subsets of rows $\{I_k\}_{k \in \supp(x_{[1:m]})}$ such that $|I_k| \ge c_{\ref{lem: pattern}} np$ for all $k \in \supp(x_{[1:m]})$ (note that by Lemma \ref{lem:light-col-card} $\cL(\bar A_n) =\emptyset$ with high probability). Therefore, proceeding similarly as in  \eqref{i: z_l} we obtain that
\begin{align*}
 \left\| \fold(\bar A_n) \cdot \f{x_{[1:m]}}{\|x_{[1:m]}\|_2}\right\|_2^2 & \ge \sum_{k \in \supp(x_{[1:m]})} \sum_{i \in I_k} \left(\f{(\fold(\bar A_n) x_{[1:m]})_i}{\|x_{[1:m]}\|_2}\right)^2 \\
 & \ge c_{\ref{lem: pattern}} np \sum_{k \in \supp(x_{[1:m]})} \f{|x_k|^2}{\|x_{[1:m]}\|_2} = c_{\ref{lem: pattern}} np
 \end{align*}
 on set $\cA \cap \{\cL(\bar A_n) = \emptyset\}\cap \Omega_{\ref{lem: typical structure}}$ such that $\cA^c \cap \Omega_{\ref{lem: typical structure}}$ has a small probability. Using the fact that $x \in \text{\rm Dom}\left(m, c_{\ref{l: sparse vectors-2}}K^{-1}\right)$ we observe that
 \[
 \left\| \f{x_{[1:m]}}{\|x_{[1:m]}\|_2} - x_{[1:m]}\right\|_2 = 1- \|x_{[1:m]}\|_2 \le \|x_{[m+1:n]}\|_2 \le c_{\ref{l: sparse vectors-2}}K^{-1} \sqrt{m} \|x_{[m+1:n]}\|_\infty \le c_{\ref{l: sparse vectors-2}}K^{-1}.
 \]
 Thus applying the triangle inequality we deduce that for any $x \in \text{\rm Dom}\left(m, c_{\ref{l: sparse vectors-2}}K^{-1}\right)$
 \begin{align*}
 \|\fold(\bar A_n) x\|_2 & \ge \left\| \fold(\bar A_n) \f{x_{[1:m]}}{\|x_{[1:m]}\|_2}\right\|_2 - 4 K \sqrt{np} \left(  \left\| \f{x_{[1:m]}}{\|x_{[1:m]}\|_2} - x_{[1:m]}\right\|_2 + \|x_{[m+1:n]}\|_2  \right) \\
 &\ge  \sqrt{c_{\ref{lem: pattern}} np} - 8 c_{\ref{l: sparse vectors-2}} \sqrt{np} \ge \sqrt{\f{c_{\ref{lem: pattern}} np}{2}},
 \end{align*}
 on the event $\cA \cap \Omega_K \cap \{\cL(\bar A_n)=\emptyset\}$, whenever $c_{\ref{l: sparse vectors-2}} \le \f{1}{16} \sqrt{c_{\ref{lem: pattern}}}$.
 This together with Lemma \ref{lem:light-col-card} proves \eqref{eq:dom-bd-1} for all $p \in (0,1/2]$ such that $np \ge (c^\star_{\ref{lem: pattern}} n)^{2/3}$ and it finishes the proof of the proposition.
 \end{proof}

 \subsection{Invertibility over vectors close to moderately sparse}\label{sec:sparse-2}
 In this section we extend the uniform bound of Proposition \ref{l: sparse vectors-2} for vectors close to moderately sparse vectors.  The following is the main result of this section.
 \begin{prop}\label{p: spread vectors}
 Let A$_n$ be as in Assumption \ref{ass:matrix-entry}, $V_0$,and $\rho$ be as in Proposition \ref{l: sparse vectors-2}, and $p \ge c_1 \f{\log n}{n}$ for some constant $c_1>0$. Fix $K\ge 1$. Then there exist constants $0< c_{\ref{p: spread vectors}}, \wt{c}_{\ref{p: spread vectors}}, {c}^*_{\ref{p: spread vectors}}, \bar{c}_{\ref{p: spread vectors}}< \infty$, depending only on $K$, such that for any $M$ with $p^{-1} \le M \le {c}^*_{\ref{p: spread vectors}}n$ and $y_0 \in \R^n$ we have
  \begin{align*}
   \P&\Big(\exists x \in \text{\rm Dom}\left(M, c_{\ref{p: spread vectors}}K^{-1}\right)\cup {\rm{Comp}}(M, \rho )\setminus V_0  \text{ such that } \norm{A_n x -y_0}_2 \le \wt{c}_{\ref{p: spread vectors}}\rho \sqrt{np} \\
   &\hskip 3in \text{ and } \norm{A_n- \E A_n} \le K \sqrt{np}\Big) \le \exp(-\bar{c}_{\ref{p: spread vectors}}n).
  \end{align*}
 \end{prop}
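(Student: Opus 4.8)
The plan is to reduce, via the folding trick, to a uniform lower bound on $\|\fold(A_n)x-\fold(y_0)\|_2$ over the set in question, and then to combine a small‑ball estimate for a fixed spread vector with a net argument. First I would record the pointwise inequality $\|A_nx-y_0\|_2\ge \tfrac12\|\fold(A_n)x-\fold(y_0)\|_2$ together with the fact that, on the event $\{\|A_n-\E A_n\|\le K\sqrt{np}\}$, one has $\|\fold(A_n)\|\le 4K\sqrt{np}$ — exactly as in the proof of Proposition \ref{l: sparse vectors-2}, using $\|\E A_{n,1}-\E A_{n,2}\|\le 2\sqrt{np}$. The point of folding is that the relevant operator now lives at scale $\sqrt{np}$ rather than $np$, so a net whose scale is proportional to the small‑ball scale is affordable; since $\fold(y_0)$ is a fixed vector it only translates the target, and no deletion of rows (as in the very sparse case) is needed.

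The structural input is that any $x\notin V_0$ fails to lie in $\mathrm{Comp}(m_0,\rho)$ and in $\mathrm{Dom}(m_0,c_{\ref{l: sparse vectors-2}}K^{-1})$, where $m_0:=c^\star_{\ref{lem: pattern}}p^{-1}$. The first failure gives $\|x_{[m_0+1:n]}\|_2>\rho$; the second gives $\|x_{[m_0+1:n]}\|_\infty<\frac{K}{c_{\ref{l: sparse vectors-2}}\sqrt{m_0}}\|x_{[m_0+1:n]}\|_2$. Writing $\sigma:=\supp(x_{[m_0+1:n]})$, this produces a \emph{flat} spread component: $|\sigma|$ is at least a $K$‑dependent constant times $p^{-1}$, $\|x_\sigma\|_2>\rho$, and $\|x_\sigma\|_\infty$ is at most a $K$‑dependent constant times $\sqrt p\,\|x_\sigma\|_2$. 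It is this last bound — rather than the cruder $\|x_\sigma\|_\infty\le\sqrt p$ — that makes the fourth‑moment computation succeed. For the small‑ball estimate I would then examine a single row $\gamma_i$ of $\fold(A_n)$: discarding the $O(1)$ coordinates of $\sigma$ whose entries are linked (under Assumption \ref{ass:matrix-entry}) to other entries of $\gamma_i$, the variables $\{\gamma_{ij}\}_{j\in\sigma}$ are independent and symmetric, so $Y_i:=\sum_{j\in\sigma}\gamma_{ij}x_j$ is symmetric with $\E Y_i^2$ of order $p\|x_\sigma\|_2^2$ and, by flatness, $\E Y_i^4$ at most a $K$‑dependent constant times $(\E Y_i^2)^2$. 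Paley–Zygmund gives $\P(|Y_i|\ge c\sqrt p\,\|x_\sigma\|_2)\ge c_0$ with $c,c_0$ depending only on $K$; symmetry of $Y_i$ upgrades this (via $\cL(Z,t)\le\max(\P(|Z|\le 2t),\tfrac12)$ for symmetric $Z$), and since the Lévy concentration of a sum of independents is dominated by that of one summand this yields $\cL\big(\langle\gamma_i,x\rangle-(\fold(y_0))_i,\ c'\sqrt p\,\|x_\sigma\|_2\big)\le 1-c_0'$ for each $i$. Because $|\sigma|\le M\le c^*_{\ref{p: spread vectors}}n$, the row‑indices whose rows of $\fold(A_n)$ are mutually dependent form a set of size at most $|\sigma|$, so a subcollection of order $n$ rows is jointly independent; Chernoff's inequality applied to the number of such rows that escape the ball gives, for a fixed $x$, $\P\big(\|\fold(A_n)x-\fold(y_0)\|_2\le \wt c_{\ref{p: spread vectors}}\rho\sqrt{np}\big)\le e^{-\bar c_{\ref{p: spread vectors}}n}$.

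It remains to run a union bound, and this is where $M\le c^*_{\ref{p: spread vectors}}n$ enters and where the main difficulty lies. A $\nu$‑net of $(\mathrm{Dom}(M,c_{\ref{p: spread vectors}}K^{-1})\cup\mathrm{Comp}(M,\rho))\setminus V_0$ with $\nu$ proportional to $\rho/K$ transfers the per‑point estimate (using $\|\fold(A_n)\|\le 4K\sqrt{np}$) and has cardinality at most $\binom{n}{M}(C/\nu)^M$, whose logarithm is of order $M\log(n/M)+M\log(1/\rho)$. The first term is at most $c^*_{\ref{p: spread vectors}}n\log(1/c^*_{\ref{p: spread vectors}})$, negligible against $\bar c_{\ref{p: spread vectors}}n$ once $c^*_{\ref{p: spread vectors}}$ is small; the second term is the genuine obstacle, since $\rho$ can be super‑polynomially small when $p$ is near $\tfrac{\log n}{n}$. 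To handle it I would not net at a single scale but, mirroring the proof of Proposition \ref{l: sparse vectors-2}, decompose each $x$ into the ranked blocks $\hat z_\ell$ of lengths $(np)^{\ell/2}$, $\ell\le\ell_1$, where $(np)^{\ell_1/2}$ is of order $M$; the bound $M\le c^*_{\ref{p: spread vectors}}n$ forces $\ell_1\le\ell_0+O(1)$, so only boundedly many block‑levels are added beyond the very sparse range and the scale one must reach changes only by a $K$‑dependent factor. One then locates the largest $\ell_\star$ with $\|\hat z_{\ell_\star}\|_2\ge (c/K)^{\ell_\star}$, runs the small‑ball estimate above with $\sigma$ taken to be the (automatically flat) block $\hat z_{\ell_\star}$, absorbs the geometric tail $\sum_{\ell>\ell_\star}\hat z_\ell$ via $\|\fold(A_n)\|\le 4K\sqrt{np}$, and unions over the $O(1)$ choices of $\ell_\star$ and, for each, over a net of $(np)^{\ell_\star/2}$‑sparse vectors at scale of order $(c/K)^{\ell_\star}$ — whose entropy of order $(np)^{\ell_\star/2}\log\!\big(n/(np)^{\ell_\star/2}\big)$ does close against $e^{-\bar c_{\ref{p: spread vectors}}n}$ once $c^*_{\ref{p: spread vectors}}$ is chosen small in terms of $K$. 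Carrying out this block bookkeeping while keeping every constant dependent on $K$ alone is the technical heart of the argument.
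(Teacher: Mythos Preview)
Your small-ball step is essentially the paper's: Lemma~\ref{l: spread vector-levy} packages the Paley--Zygmund/tensorization argument and is applied to an off-diagonal block $A_n^{2,1}$ of $A_n$ (which has i.i.d.\ entries under Assumption~\ref{ass:matrix-entry}) rather than to $\fold(A_n)$; the rank-one defect $p{\bm J}_nx=\langle x,{\bm 1}\rangle\, p{\bm 1}$ is then absorbed by a one-dimensional net over $\{y_0+\lambda{\bm 1}:|\lambda|\le\sqrt{n}\,p\}$ in the target. You also correctly identify the obstruction in the union bound: a plain net at mesh proportional to $\rho$ over $M$-sparse vectors costs $(C/\rho)^M$, and $M\log(1/\rho)$ is not $o(n)$ when $M$ is proportional to $n$ and $\rho=(\wt c/K)^{2\ell_0+1}$.

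The gap is in your proposed fix. Block-chaining does not close the union bound: a net of $(np)^{\ell_\star/2}$-sparse vectors at mesh $(c/K)^{\ell_\star}$ has log-cardinality of order
\[
(np)^{\ell_\star/2}\Bigl[\log\bigl(n/(np)^{\ell_\star/2}\bigr) + \ell_\star\log(K/c)\Bigr],
\]
and your stated entropy omits the second bracket. With $(np)^{\ell_\star/2}$ as large as $c^\ast n$ and $\ell_\star$ as large as $\ell_0$ (of order $\log n/\log\log n$ at the bottom of the range of $p$), that term is of order $c^\ast n\cdot\ell_0$, and since $\ell_0\to\infty$ no constant $c^\ast$ can make it smaller than $\bar c n$. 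Your ``$O(1)$ choices of $\ell_\star$'' and ``automatically flat'' claims are also unjustified: nothing forces $\ell_\star>\ell_0$, and for $\ell_\star\le\ell_0$ the block $\hat z_{\ell_\star}$ is supported on at most $m$ coordinates, so $\|\hat z_{\ell_\star}\|_\infty/\|\hat z_{\ell_\star}\|_2\le\alpha\sqrt p$ need not hold. The paper's remedy is different and simpler. Fact~\ref{fact:net-bd} nets the head $u_x=x_{[1:m]}$ at scale $\vep$ proportional to $\rho$, but nets only the \emph{direction} $v_x/\|v_x\|_2$ of the spread part $v_x=x_{[m+1:M]}$ at a \emph{constant} scale $\tau$, together with the scalar $\|v_x\|_2$. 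Because the small-ball bound \eqref{eq:levy-spread} scales with $\|x_{[m+1:n]}\|_2$ rather than with its lower bound $\rho$, the $\tau$-error is absorbed into the target (see \eqref{eq:net-triangle}); the resulting cardinality is at most $\binom{n}{m}\binom{n}{M}(C/\vep)^{m+1}(C/\tau)^{M-m}$, so $\rho^{-1}$ appears only with exponent $m+1$ where $m=c^\star_{\ref{lem: pattern}}p^{-1}$, and the paper verifies $m\log(np/\rho)=o(n)$ directly. The $(C/\tau)^{M}$ factor is then harmless once $c^\ast$ is chosen small.
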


 As outlined in Section \ref{sec:prelim} the key to the proof of Proposition \ref{p: spread vectors} will be to obtain an estimate on the small ball probability. This will be achieved by deriving bounds on the L\'{e}vy concentration function (recall Definition \ref{dfn:levy}). The necessary bound is derived in the lemma below.
 \begin{lem}     \label{l: spread vector-levy}
 Let $C_n$ be a $n_1 \times n_2$ matrix, where $n_1,n_2 \ge \gn$ (recall $\gn:=\lfloor n/2 \rfloor$), with i.i.d.~$\dBer(p)$ entries.  Then  for any $\alpha >1$, there exist $\be, \gamma >0$, depending only on $\alpha$ such that for $x \in \R^{n_2}$, satisfying
  $\norm{x}_\infty/\norm{x}_2 \le \alpha \sqrt{p}$, we have
 \[
   \cL \left(C_n x, \beta \cdot \sqrt{np} \norm{x}_2\right)
   \le \exp (-\gamma n ).
 \]
 \end{lem}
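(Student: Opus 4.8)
The plan is to bound the L\'evy concentration function of a single coordinate of $C_n x$ and then upgrade this to the full vector by a tensorization argument.

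\medskip

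\textbf{Step 1: Small ball for one linear form.} Fix a row of $C_n$, call it $c = (c_1,\ldots,c_{n_2})$ with i.i.d.\ $\dBer(p)$ entries, and consider the random variable $S := \pr{c}{x} = \sum_j c_j x_j$. I would first compute $\E S = p \sum_j x_j$ and, more importantly, $\Var(S) = p(1-p)\norm{x}_2^2$. The spread condition $\norm{x}_\infty \le \alpha\sqrt{p}\,\norm{x}_2$ is exactly what guarantees no single summand dominates: for any $u \in \R$, I want to show $\P(|S - u| \le t\sqrt{np}\,\norm{x}_2) \le 1 - \eta$ for suitable small absolute $t$ and $\eta = \eta(\alpha) > 0$. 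This is a Paley--Zygmund-type / anti-concentration statement. One clean route: apply the Berry--Ess\'een theorem (as the paper does elsewhere, cf.\ Lemma~\ref{lem:bound-levy}) or, more elementarily, a second-moment argument. Actually the robust approach here is to center and symmetrize: let $S' = \sum_j \eps_j c_j x_j$ with $\eps_j$ independent signs; by a standard symmetrization inequality $\cL(S,\cdot)$ is controlled by $\cL(S - S', \cdot)$ up to constants, and $S-S'$ is a sum of independent symmetric terms $\eta_j x_j$ where $\eta_j := c_j - c_j'$ takes values $\pm 1$ each with probability $p(1-p)$ and $0$ otherwise. Then I would invoke a Kolmogorov--Rogozin / Esseen-type bound: $\cL(\sum_j \eta_j x_j, t) \lesssim t / \sqrt{\sum_j \Var(\eta_j x_j) \wedge(\text{scale})}$, which given $\Var(\eta_j x_j) \asymp p\, x_j^2$ and $\sum_j x_j^2 = \norm{x}_2^2$ yields $\cL(S, t\sqrt{np}\,\norm{x}_2) \le C\alpha\, t$ provided the individual atoms are not too large --- and the bound $\norm x_\infty \le \alpha \sqrt p\,\norm x_2$ is precisely what makes the relevant truncation work. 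Choosing $t = \beta_0$ small enough depending on $\alpha$ gives $\cL(S, \beta_0 \sqrt{np}\,\norm{x}_2) \le 1/2$, say.

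\medskip

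\textbf{Step 2: Tensorization.} The rows of $C_n$ are independent, so the coordinates $(C_n x)_1, \ldots, (C_n x)_{n_1}$ of the vector $C_n x$ are i.i.d.\ copies of $S$. For any fixed $u \in \R^{n_1}$,
\[
\P\bigl(\norm{C_n x - u}_2 \le \beta \sqrt{np}\,\norm{x}_2\bigr)
\le \prod_{i=1}^{n_1} \P\Bigl(\text{something}\Bigr)
\]
is not quite right as stated, so I would instead use the standard tensorization lemma (e.g.\ as in \cite{RV1}): if $\cL(S, s) \le 1 - q$ for all $s \le s_0$ then for a product of $n_1$ independent copies and any target vector $u$,
\[
\P\Bigl(\sum_{i=1}^{n_1} |S_i - u_i|^2 \le \tfrac{1}{2} q\, n_1 s_0^2\Bigr) \le \exp(-c\, q\, n_1),
\]
for an absolute $c > 0$. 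Applying this with $s_0 = \beta_0 \sqrt{np}\,\norm{x}_2$ and $q = 1/2$ from Step~1, and using $n_1 \ge \gn \ge n/3$, gives $\P(\norm{C_n x - u}_2 \le \beta \sqrt{np}\,\norm{x}_2) \le \exp(-\gamma n)$ with $\beta = \beta_0/2$ and $\gamma$ depending only on $\alpha$. Taking the supremum over $u$ is harmless since the bound is uniform.

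\medskip

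\textbf{Main obstacle.} The crux is Step~1: getting a \emph{constant-order} (independent of $n$, $p$, and $x$) upper bound strictly below $1$ on $\cL(\pr c x, \beta_0\sqrt{np}\,\norm x_2)$ using only the flatness hypothesis $\norm x_\infty/\norm x_2 \le \alpha\sqrt p$. The delicate point is that $p$ may be as small as $\asymp \log n / n$, so $np$ is not large and one cannot naively appeal to a central limit theorem with good error terms; one must track that the normalization $\sqrt{np}\,\norm x_2$ matches the standard deviation of $\pr c x$ and that the flatness condition prevents the distribution from concentrating on a short arithmetic progression. I expect the cleanest execution uses the symmetrization-plus-Esseen route sketched above, carefully checking that after symmetrization the atoms $\eta_j x_j$ have sum of variances $\asymp p\norm x_2^2$ and that $\max_j |\eta_j x_j| \le \alpha\sqrt p\,\norm x_2$, so the Esseen inequality gives the claimed constant. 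Everything after that is routine.
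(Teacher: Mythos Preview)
Your two-step scheme (single-row anti-concentration, then tensorization over $n_1\ge\gn$ independent rows) is exactly right and is what the paper does via \cite[Lemma~3.5--Corollary~3.7]{BR-invertibility}. But there is a real gap in Step~1: the Berry--Ess\'een/Esseen route cannot deliver a bound below $1$ once $\alpha$ is large. Lemma~\ref{lem:bound-levy} gives $\cL(S,\varepsilon\sqrt{p(1-p)}\,\|x\|_2)\le C(\varepsilon+\alpha/\sqrt{1-p})$, and for the $\alpha>1$ allowed by the hypothesis the second term alone can exceed $1$. The Kolmogorov--Rogozin/truncated-second-moment form fares no better: after symmetrization to $\sum_j\eta_j x_j$ with $\eta_j\in\{-1,0,1\}$ one gets $\cL(\cdot,\lambda)\le C\lambda/\sqrt{p\sum_j\min(x_j^2,\lambda^2)}$, and the flatness bound $x_j^2\le\alpha^2 p\|x\|_2^2$ only forces $\sum_j\min(x_j^2,\lambda^2)\ge(\lambda/(\alpha\sqrt p\,\|x\|_2))^2\|x\|_2^2$, so the estimate collapses to $\le C\alpha$ regardless of $\lambda$. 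This is not slack in the method: if $x$ has $k=\lfloor 1/(\alpha^2 p)\rfloor$ equal nonzero coordinates (saturating the hypothesis) then $\P(S=0)=(1-p)^k\approx 1-1/\alpha^2$, so the best possible single-row bound is $1-c/\alpha^2$, not a fixed constant like $1/2$. (There is also a scale slip: the single-row estimate must live at level $\sqrt{p}\,\|x\|_2$, the standard deviation of $S$, not $\sqrt{np}\,\|x\|_2$; the extra $\sqrt n$ enters only through tensorization.)

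The fix is precisely your throwaway ``second-moment argument'': apply Paley--Zygmund to $Z:=(S-S')^2$. One has $\E Z=2p(1-p)\|x\|_2^2$, and since $\sum_j x_j^4\le\|x\|_\infty^2\|x\|_2^2\le\alpha^2 p\,\|x\|_2^4$ a direct expansion gives $\E Z^2\le C(1+\alpha^2)(\E Z)^2$. Hence $\P(Z\ge\tfrac12\E Z)\ge c/(1+\alpha^2)$, which via $\cL(S,t)^2\le\P(|S-S'|\le 2t)$ yields $\cL(S,\beta_0\sqrt{p}\,\|x\|_2)\le 1-c'/(1+\alpha^2)$. Feeding $q=c'/(1+\alpha^2)$ into your tensorization then gives the lemma with $\beta,\gamma$ depending only on $\alpha$. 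This is exactly the content of \cite[Lemma~3.5]{BR-invertibility}, and the paper explicitly flags the argument as ``a consequence of Paley--Zygmund inequality and a standard tensorization argument.''
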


\vskip10pt
Lemma \ref{l: spread vector-levy} is a consequence of \cite[Corollary 3.7]{BR-invertibility}. The difference between Lemma \ref{l: spread vector-levy} and \cite[Corollary 3.7]{BR-invertibility} is that the latter has been proved for matrices whose entries have zero mean and obey a certain product structure. The key to the proof of \cite[Corollary 3.7]{BR-invertibility} is \cite[Lemma 3.5]{BR-invertibility}. Upon investigating the proof of \cite[Lemma 3.5]{BR-invertibility} it becomes evident that neither the zero mean condition nor the product structure of the entries are essential to its proof. So, repeating the proof of \cite[Lemma 3.5]{BR-invertibility} under the current set-up and following the proofs of \cite[Lemma 3.6, Corollary 3.7]{BR-invertibility} we derive Lemma \ref{l: spread vector-levy}. Further details are omitted.

We additionally borrow the following fact from the proof of \cite[Lemma 3.8]{BR-invertibility}.
\begin{fact}\label{fact:net-bd}
Fix $M_1 <M_2 < n$ and for any $x \in S^{n-1}$ define
\[
u_x:=u(x,M_1):= x_{[1:M_1]}, \quad v_x:=v(x,M_1,M_2):= x_{[M_1+1:M_2]},  \text{ and } r_x:=r(x,M_2):= x_{[M_2+1:n]}.
\]
Then, given any $\vep, \tau >0$ and a set $\cS \subset S^{n-1}$ there exists a set $\cM \subset \cS$ such that given any $x \in \cS$ there exists a $\bar{x} \in \cM$ such that
\beq\label{eq:net-approx}
\|u_x -u_{\bar{x}}\|_2 \le \vep, \quad \norm{\f{v_x}{\|v_x\|_2}- \f{v_{\bar{x}}}{\|v_{\bar{x}}\|_2}}_2 \le \tau, \quad \text{ and } \quad |\|v_x\|_2 - \|v_{\bar{x}}\|_2| \le \vep.
\eeq
and
\beq\label{eq:cM-bd}
|\cM| \le  {n \choose M_1}{n-M_1 \choose M_2-M_1} \cdot \left(\f{6}{\vep}\right)^{M_1+1} \cdot \left(\f{6}{\tau}\right)^{M_2-M_1}.
\eeq
\end{fact}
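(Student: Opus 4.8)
The statement is a routine volumetric $\vep$-net construction for unit vectors decomposed into a ``head'' block $u_x$ (the top $M_1$ coordinates in absolute value), a ``middle'' block $v_x$ (the coordinates ranked $M_1+1$ through $M_2$), and a tail $r_x$; it matches the construction in the proof of \cite[Lemma 3.8]{BR-invertibility}, and I would follow that template. The plan is to build an abstract net $\mathcal N$ whose elements record (i) the \emph{combinatorial type} of $x$, i.e.\ the ordered pair of supports $(\supp(u_x),\supp(v_x))$, which is a subset $S_1\subset[n]$ of size $M_1$ together with a disjoint subset $S_2\subset[n]\setminus S_1$ of size $M_2-M_1$ (well defined once a rearrangement permutation $\pi_x$ is fixed, even in the presence of ties); and (ii) a discretization of the three analytic pieces, namely $u_x$ in the unit ball of $\R^{S_1}$, the scalar $\|v_x\|_2\in[0,1]$, and the direction $v_x/\|v_x\|_2$ in the unit sphere of $\R^{S_2}$, at scales $\vep/2$, $\vep/2$, $\tau/2$ respectively. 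Assuming (without loss of generality) $\vep,\tau\le 2$, standard volumetric bounds give at most $(1+4/\vep)^{M_1}\le(6/\vep)^{M_1}$ points for the net of the unit ball of $\R^{S_1}$, at most $6/\vep$ points for the interval $[0,1]$, and at most $(1+4/\tau)^{M_2-M_1}\le(6/\tau)^{M_2-M_1}$ points for the sphere; multiplying these by the $\binom{n}{M_1}\binom{n-M_1}{M_2-M_1}$ choices of $(S_1,S_2)$ produces exactly the right-hand side of \eqref{eq:cM-bd} as an upper bound for $|\mathcal N|$.

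To obtain $\cM\subset\cS$ (which matters in the applications, where one needs net points to retain the structural properties of $\cS$), I would \emph{not} use $\mathcal N$ directly. Instead, for each ``cell'' of $\mathcal N$ — a combinatorial type together with a choice of discretization point for each of the three analytic pieces — that is the cell of at least one $x\in\cS$, I would select a single representative $\bar x\in\cS$ from that cell, and let $\cM$ be the collection of all such representatives; then $\cM\subset\cS$ and $|\cM|\le|\mathcal N|$, giving \eqref{eq:cM-bd}. Given $x\in\cS$, it lies in some cell, hence in the same cell as its chosen representative $\bar x\in\cM$. Since $u_x$ and $u_{\bar x}$ are each within $\vep/2$ of the common head discretization point, $\|v_x\|_2$ and $\|v_{\bar x}\|_2$ are each within $\vep/2$ of the common scalar point, and $v_x/\|v_x\|_2$ and $v_{\bar x}/\|v_{\bar x}\|_2$ are each within $\tau/2$ of the common direction point, the triangle inequality delivers all three inequalities of \eqref{eq:net-approx}.

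Since this is a standard construction there is no genuine obstacle; the only points requiring a little care are the bookkeeping of the three different approximation scales, the choice of half-scales $\vep/2$, $\tau/2$ so that the ``two half-steps'' triangle inequality in the previous paragraph lands within the prescribed tolerances while the constants $6/\vep,6/\tau$ in \eqref{eq:cM-bd} are still met, and the mild degenerate case $v_x=0$ — which does not occur for the sets $\cS$ to which the Fact is applied (those consist of vectors with a large spread component) and can in any case be handled by discarding the middle requirement in \eqref{eq:net-approx} for such $x$.
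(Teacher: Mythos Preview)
Your proposal is correct and follows essentially the same approach as the paper. The paper gives only a one-paragraph sketch (``fix the choice of the supports of $u_x$ and $v_x$, construct standard nets for $u_x$, $v_x/\|v_x\|_2$, and $\|v_x\|_2$ of desired precision, then take a union over the choices of supports''); you have carried out exactly that construction, with the additional care of selecting a representative from $\cS$ in each occupied cell so that $\cM\subset\cS$, and of working at half-scale so the triangle inequality delivers the stated tolerances.
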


\vskip10pt

The proof of Fact \ref{fact:net-bd} follows from volumetric estimates. Indeed, one first fixes the choice of the supports of $u_x$ and $v_x$, and constructs standard nets for $u_x$, $v_x/\|v_x\|_2$, and  $\|v_x\|_2$ of desired precision. Then bounds on the cardinality of follows by taking a union over the set of all possible choices of the supports of $u_x$ and $v_x$. We omit further details.

Now we are ready to prove Proposition \ref{p: spread vectors}.

\begin{proof}[Proof of Proposition \ref{p: spread vectors}]
Fix $M$ with $p^{-1} \le M \le c^*_{\ref{p: spread vectors}} n$ and for ease of writing let us denote
 \beq\label{eq:def-V_1}
 V_1:= \text{\rm Dom}\left(M, c_{\ref{p: spread vectors}}K^{-1}\right)\cup {\rm{Comp}}(M, \rho),
 \eeq
 where $c_{\ref{p: spread vectors}}$ and $c^*_{\ref{p: spread vectors}}$ to be determined during the course of the proof. We will show that for any $y \in \R^n$
\begin{align}\label{eq:moderate-pre-prop}
\P\left(\left\{\exists x \in V_1\setminus V_0: \norm{(A_n-p {\bm J}_n ) x -y}_2 \le 2\wt{c}_{\ref{p: spread vectors}}\rho \sqrt{np}\right\} \cap \Omega_K^0\right) \le \exp(-2\bar{c}_{\ref{p: spread vectors}}n),
\end{align}
where ${\bm J}_n$ is the $n \times n$ matrix of all ones and
\beq\label{eq:Omega-K-0}
\Omega_K^0:=\left\{ \norm{A_n- \E A_n} \le K \sqrt{np}\right\}.
\eeq
First let us show that the proposition follows from \eqref{eq:moderate-pre-prop}. To this end, denote
\[
\cY_n:= \{ y \in \R^n: y = y_0+ \lambda {\bm 1}, |\lambda| \le \sqrt{n} p\}.
\]
It easily follows that $\cY_n$ has a net $\cY_n'$ of mesh size $\wt{c}_{\ref{p: spread vectors}}\rho \sqrt{np}$ with cardinality at most $O(\sqrt{p}/\rho) = \exp(O(\log n))$. Therefore, noting that for any $y_1, y_2 \in \R^n$
\[
\left| \inf_{x \in V_1 \setminus V_0} \|(A_n -p {\bm J}_n ) x - y_1\|_2 -  \inf_{x \in V_1 \setminus V_0} \|(A_n -p {\bm J}_n ) x - y_2\|_2 \right| \le \|y_1 - y_2\|_2,
\]
taking a union over $y \in \cY_n'$ we deduce from \eqref{eq:moderate-pre-prop} that
\beq\label{eq:moderate-pre-prop-1}
\P\left(\left\{ \inf_{x \in V_1\setminus V_0, y \in \cY_n} \|(A_n - {\bm J}_np)x - y\|_2 \le \wt{c}_{\ref{p: spread vectors}}\rho \sqrt{np}\right\} \cap \Omega_K^0\right) \le \exp(-\bar{c}_{\ref{p: spread vectors}}n).
\eeq
Since $y_0 -{\bm J}_n x \in \cY_n$ for all $x \in S^{n-1}$ we further note that
\beq\label{eq:moderate-pre-prop-2}
\inf_{x \in V_1\setminus V_0, y \in \cY_n} \|(A_n - {\bm J}_np)x - y\|_2 \le \inf_{x \in V_1 \setminus V_0} \|A_n x - y_0\|_2.
\eeq
This together with \eqref{eq:moderate-pre-prop-1} yields the desired conclusion. Thus, it remains to establish \eqref{eq:moderate-pre-prop}.

To this end, we fix any $x \in V_1\setminus V_0, y \in \R^n$ and write
\[
A_n:=\begin{bmatrix} A_n^{1,1} & A_n^{1,2}\\ A_n^{2,1} & A_n^{2,2} \end{bmatrix}, \quad x:=\begin{pmatrix} x_1 \\ x_2 \end{pmatrix}, \quad \text{ and } \quad y:=\begin{pmatrix} y_1 \\ y_2 \end{pmatrix},
\]
where $A_n^{1,1}$ and $A_n^{2,2}$ are $\gn \times \gn$ and $(n-\gn) \times (n-\gn)$ matrices, respectively, $A_n^{1,2}$, and $(A_n^{2,1})^*$ are $\gn \times (n-\gn)$ matrices, $x_1, y_1$ are vectors of length $\gn$, and $x_2,y_2$ are vectors of length $(n-\gn)$. Similarly we define $\{{\bm J}_n^{i,j}\}_{i,j=1}^2$. With these notations we see that
\begin{multline*}
\|(A_n- p {\bm J}_n) x -y\|_2^2 =\|(A_n^{1,1}- p {\bm J}_n^{1,1}) x_1 + (A_n^{1,2}- p {\bm J}_n^{1,2}) x_2 - y_1\|_2^2 \\
+ \|(A_n^{2,1}-p {\bm J}_n^{2,1}) x_1 + (A_n^{2,2}- p {\bm J}_n^{2,2}) x_2 - y_2\|_2^2.
\end{multline*}
Also note that by Assumption \ref{ass:matrix-entry} both $A_n^{1,2}$ and $A_n^{2,1}$ are matrices with i.i.d.~Bernoulli entries independent of $A_n^{1,1}$ and $A_n^{2,2}$, respectively. Since $x \notin V_0$ we have $\|x_{[m+1:n]}\|_\infty/ \|x_{[m+1:n]}\|_2 \le c_{\ref{l: sparse vectors-2}}^{-1}K m^{-1/2}$. Further, for $j \in [n]$, let us define
\[
x_1'(j):= x_{[m+1:n]}(j)\cdot \bI(j \in [\gn]), \quad \text{ and } x_2'(j):= x_{[m+1:n]}(j)\cdot \bI(j \notin [\gn]).
\]
Therefore there exists $i \in \{1,2\}$ such that $\|x_i'\|_2 \ge \|x_{[m+1:n]}\|_2/\sqrt{2}$. Without loss of generality let us assume $i=1$. This implies that
\(
\|x_1'\|_\infty/ \|x_1'\|_2 \le 2 c_{\ref{l: sparse vectors-2}}^{-1} K m^{-1/2} = 2 c_{\ref{l: sparse vectors-2}}^{-1} (c^\star_{\ref{lem: pattern}})^{-1/2} K \sqrt{p}
\).
Hence applying Lemma \ref{l: spread vector-levy} we see that for a sufficiently small  $\wt{c}_{\ref{p: spread vectors}}$, we have
\begin{align}\label{eq:levy-spread}
\P\left(\|(A_n - p {\bm J}_n) x - y\|_2 \le  4\wt{c}_{\ref{p: spread vectors}} \|x_{[m+1:n]}\|_2 \sqrt{np}\right) \le \cL\left(A_n^{2,1}x_1', 8 \wt{c}_{\ref{p: spread vectors}}\|x_1'\|_2 \sqrt{np}\right) \le \exp(-3 \ol{c} n),
\end{align}
for some $\ol{c}>0$.

To finish the proof we now use a $\vep$-net argument. Applying Fact \ref{fact:net-bd} we see that there exists $\cM \subset V_1 \setminus V_0$ such that for any $x \in V_1\setminus V_0$, there exists $\bar{x} \in \cM$ so that \eqref{eq:net-approx} holds. Thus
\begin{align}\label{eq:v-diff}
 \norm{v_x-v_{\bar{x}}}_2 \le  \norm{\f{v_x}{\norm{v_x}_2}-\f{v_{\bar{x}}}{\norm{v_{\bar{x}}}_2}}_2 \norm{v_{\bar{x}}}_2 + \norm{v_{{x}}}_2 \left| 1- \frac{\norm{v_{\bar{x}}}_2}{\norm{v_{{x}}}_2}\right| \le \vep +\tau \norm{v_{\bar{x}}}_2.
\end{align}
Since $\bar x \in V_1$ we also observe that
\beq\label{eq:r-v-relation}
\|r_{\bar{x}}\|_2\le c_{\ref{p: spread vectors}} K^{-1} \sqrt{M} \|r_{\bar{x}}\|_\infty \le 2c_{\ref{p: spread vectors}} K^{-1} \|v_{\bar{x}}\|_2,
\eeq
where the last inequality follows from the facts that the coordinates of $r_{\bar{x}}$ have smaller magnitudes than the non-zero coordinates of $v_{\bar{x}}$ and $m \le M/2$. Since $\bar x \notin V_0$, we have
\beq\label{eq:r-v-relation-2}
\|\bar x_{[m+1:n]}\|_2 = \sqrt{\|v_{\bar{x}}\|_2^2+\|r_{\bar{x}}\|_2^2} \ge \rho.
\eeq
Therefore, it follows from above that $\|v_{\bar{x}}\|_2 \ge \|r_{\bar{x}}\|_2$, whenever $c_{\ref{p: spread vectors}}$ chosen sufficiently small, which further implies that $\|v_{\bar{x}}\|_2 \ge \rho/\sqrt{2}$. Hence, choosing $\vep \le \rho/\sqrt{2}$, and proceeding similarly as in \eqref{eq:r-v-relation} we deduce
\beq\label{eq:r-v-relation-1}
\norm{r_x}_2 \le  2c_{\ref{p: spread vectors}} K^{-1} (\norm{v_{\bar{x}}}_2+\vep) \le 4c_{\ref{p: spread vectors}} K^{-1}  \norm{v_{\bar{x}}}_2.
   \eeq
 Further note that
 \[
 \|A_n - p {\bm J}_n\| \le \|A_n - \E A_n\| + \| \E A_n - p {\bm J}_n\| \le \|A_n - \E A_n\| +1,
 \]
 where the last step follows from Assumption \ref{ass:matrix-entry}. So, using the triangle inequality, \eqref{eq:net-approx}, \eqref{eq:v-diff}-\eqref{eq:r-v-relation}, and \eqref{eq:r-v-relation-1} we deduce
\begin{align}\label{eq:net-triangle}
\|(A_n - p {\bm J}_n) \bar{x} - y\|_2  & \le  \|(A_n- p {\bm J}_n) x - y\|_2 \notag\\
& \qquad \qquad \qquad + \norm{A_n- p {\bm J}_n}  \Big( \norm{u_x-u_{\bar{x}}}_2 +
           \norm{v_x-v_{\bar{x}}}_2 +\norm{r_x}_2 +\norm{r_{\bar{x}}}_2  \Big) \notag\\
           & \le  \|(A_n- p {\bm J}_n) x - y\|_2 + 4K \sqrt{np}\cdot  \vep + 2K \sqrt{np} \cdot \tau \cdot \|v_{\bar x}\|_2 + 12 c_{\ref{p: spread vectors}} \sqrt{np} \cdot  \norm{v_{\bar{x}}}_2.
\end{align}
Thus setting
\beq\label{eq:choose-vep-tau}
\vep=\f{c_{\ref{p: spread vectors}}\rho}{4 K} \quad \text{ and } \quad \tau= \f{c_{\ref{p: spread vectors}}}{2K},
\eeq
and shrinking $c_{\ref{p: spread vectors}}$ further, from \eqref{eq:levy-spread} and \eqref{eq:r-v-relation-2} we derive that
\begin{align}\label{eq:inf-to-union-moderate}
&\P\left(\exists {x} \in V_1\setminus V_0: \|(A_n- p {\bm J}_n) {x} -y \|_2 \le 2\wt{c}_{\ref{p: spread vectors}}\rho \sqrt{np} \right) \notag\\
\le \, &  \P\left(\exists \bar{x} \in \cM: \|(A_n- p{\bm J}_n) \bar{x} -y \|_2 \le 4 \wt{c}_{\ref{p: spread vectors}} \|\bar{x}_{[m+1:n]}\|_2 \sqrt{np} \right) \le |\cM| \cdot \exp(-3\bar{c} n).
\end{align}
With the above choices of $\vep$ and $\tau$, and any $M \le c^*_{\ref{p: spread vectors}} n$, from \eqref{eq:cM-bd} we have that
  \begin{align*}
  |\cM| \le  \bar{C}^{2M} \binom{n}{m}\binom{n}{M}\ \cdot \left(\f{1}{\rho}\right)^{m+1}  & \le \bar{C}^{2c^*_{\ref{p: spread vectors}} n}\Big(\f{en}{m}\Big)^m \Big(\f{en}{M}\Big)^M \left(\f{1}{\rho}\right)^{m+1} \\
  & \le ({e (c^\star_{\ref{lem: pattern}})^{-1}np})^{m} \Big(\bar{C}^2 \f{e}{c^*_{\ref{p: spread vectors}}}\Big)^{c^*_{\ref{p: spread vectors}} n}\left(\f{1}{\rho}\right)^{m+1} ,
   \end{align*}
 for some constant $\bar{C}$ depending only on $K$.  Recalling the definition of $\rho$ and $m$, it is easy to note that
 \[
m\log \left( \f{np}{\rho}\right) = o(n),
 \]
 for all $p$ satisfying $np\ge \f{\log n}{\sqrt{\log \log n}}$. This implies that for $c^*_{\ref{p: spread vectors}}$ sufficiently small, if $M \le c^*_{\ref{p: spread vectors}}n$ then we have $|\cM| \le \exp(\bar{c}n)$. In combination with \eqref{eq:inf-to-union-moderate}, this yields \eqref{eq:moderate-pre-prop}. The proof of the proposition is complete.
\end{proof}

\subsection{Invertibility over sparse vectors with a large spread component}\label{sec:sparse-3}
Combining Proposition \ref{l: sparse vectors-2} and Proposition \ref{p: spread vectors} we see that we have a uniform lower bound on $\|A_n x\|_2$ for $x \in V_1$ (recall the definition $V_1$ from \eqref{eq:def-V_1}) with $M= c^*_{\ref{p: spread vectors}}n$. As seen from the proof of Proposition \ref{p: spread vectors}, the positive constant $c^*_{\ref{p: spread vectors}}$ is  small. On the other hand, as we will see in Section \ref{sec:incomp}, to obtain a uniform lower bound on $\|A_n x\|_2$ over incompressible and non-dominated vectors $x$ in the case when $A_n$ is the adjacency matrix of a directed Erd\H{o}s-R\'{e}nyi graph, we first need to prove a uniform lower bound on the same for
$x \in V_{c^*, c}$, where
\beq\label{eq:V-c-*-c}
V_{c^*, c}:=\text{\rm Dom}(c^* n, c K^{-1}) \cup \text{\rm Comp}(c^* n, \rho),
\eeq
with the constant $c^*$ close to one (in fact $c^* >\f34$ will do) and $c>0$ some another constant. This is not immediate from Proposition \ref{p: spread vectors} and it will be the main result of this short section.

\begin{prop}\label{p: spread vectors-1}
Let A$_n$ be as in Assumption \ref{ass:matrix-entry}, $\rho$ as in Proposition \ref{l: sparse vectors-2}, and $p \ge c_1 \f{\log n}{n}$ for some constant $c_1 \in (0,1)$. Fix $K\ge 1$ and $c_0^* \in (0,1)$. Let $M_0:=\f{n \sqrt{\log \log n}}{\log n}$. Then there exist constants $0< c_{\ref{p: spread vectors-1}}, \wt{c}_{\ref{p: spread vectors-1}}, \bar{c}_{\ref{p: spread vectors-1}}< \infty$, depending only on $c_0^*$ and $K$, such that for any $y \in \R^n$ we have
\begin{multline}\label{eq:spread-prop-pre-bd}
\P\left(\left\{\exists x \in V_{c_0^*,{c}_{\ref{p: spread vectors-1}}}\setminus V_{M_0}: \norm{(A_n-p {\bm J}_n ) x -y}_2 \le 4\wt{c}_{\ref{p: spread vectors-1}}\|x_{[M_0+1:c_0^*n]}\|_2 \sqrt{np}\right\} \cap \Omega_K^0\right) \\
\le \exp(-2\bar{c}_{\ref{p: spread vectors-1}}n),
\end{multline}
 for all large $n$, where
  \beq\label{eq:V_M-0}
V_{M_0}:= \text{\rm Dom}(M_0, {c}_{\ref{p: spread vectors}} K^{-1}) \cup \text{\rm Comp}(M_0, \rho).
\eeq
Consequently, for any $y_0 \in \R^n$ we have
  \begin{multline}\label{eq:spread-prop-bd}
   \P\Big(\exists x \in V_{c_0^*, c_{\ref{p: spread vectors-1}}}\setminus V_{M_0} \text{ such that } \norm{A_n x -y_0}_2 \le \wt{c}_{\ref{p: spread vectors-1}}\rho \sqrt{np} \\
    \text{ and } \norm{A_n- \E A_n} \le K \sqrt{np}\Big) \le \exp(-\bar{c}_{\ref{p: spread vectors-1}}n).
  \end{multline}
\end{prop}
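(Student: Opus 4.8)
The plan is to rerun the argument of Proposition~\ref{p: spread vectors}, the only genuinely new issue being that the outer sparsity level is now $c_0^*n$ with $c_0^*$ a fixed constant in $(0,1)$, rather than the small fraction $c^*_{\ref{p: spread vectors}}n$. As in Proposition~\ref{p: spread vectors}, I would first derive the consequence \eqref{eq:spread-prop-bd} from \eqref{eq:spread-prop-pre-bd}: replace $A_n$ by $A_n-p{\bm J}_n$ (since $p{\bm J}_nx=p\langle{\bm 1},x\rangle{\bm 1}$ traces out a one-parameter family), take a union over a net $\cY_n'$ of $\cY_n=\{y_0+\lambda{\bm 1}:|\lambda|\le\sqrt np\}$ of mesh $\wt c_{\ref{p: spread vectors-1}}\rho\sqrt{np}$ and cardinality $\exp(O(\log n))$, and use that every $x\in V_{c_0^*,c_{\ref{p: spread vectors-1}}}\setminus V_{M_0}$ satisfies $\norm{x_{[M_0+1:c_0^*n]}}_2\ge \bar c(c_0^*)\rho$. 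The latter is elementary: $x\notin\text{Comp}(M_0,\rho)$ gives $\norm{x_{[M_0+1:n]}}_2>\rho$, while membership in $\text{Dom}(c_0^*n,\cdot)\cup\text{Comp}(c_0^*n,\rho)$ forces $\norm{x_{[c_0^*n+1:n]}}_2$ to be at most a fixed multiple of $\norm{x_{[M_0+1:c_0^*n]}}_2$ (in the $\text{Dom}$ case because $M_0=o(n)$ and the entries on $[c_0^*n+1:n]$ do not exceed those on $[M_0+1:c_0^*n]$; in the $\text{Comp}$ case because $\norm{x_{[c_0^*n+1:n]}}_2\le\rho$ together with the same monotonicity and the fact that $|[M_0+1:c_0^*n]|$ is at least a constant multiple of $|[c_0^*n+1:n]|$). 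Hence $4\wt c_{\ref{p: spread vectors-1}}\norm{x_{[M_0+1:c_0^*n]}}_2\sqrt{np}\ge 4\bar c(c_0^*)\wt c_{\ref{p: spread vectors-1}}\rho\sqrt{np}$, and \eqref{eq:spread-prop-pre-bd}, applied to each $y\in\cY_n'$ after rescaling $\wt c_{\ref{p: spread vectors-1}}$, gives \eqref{eq:spread-prop-bd}.

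For \eqref{eq:spread-prop-pre-bd}, fix $x\in V_{c_0^*,c_{\ref{p: spread vectors-1}}}\setminus V_{M_0}$. Discarding $V_{M_0}$ is what makes the tail of $x$ genuinely flat: since $x\notin\text{Dom}(M_0,c_{\ref{p: spread vectors}}K^{-1})$ we get $\norm{x_{[M_0+1:n]}}_\infty/\norm{x_{[M_0+1:n]}}_2<K/(c_{\ref{p: spread vectors}}\sqrt{M_0})$, and since $M_0p\ge c_1\sqrt{\log\log n}\to\infty$ this is $\le\alpha_n\sqrt p$ with $\alpha_n\to0$; membership in $V_{c_0^*,c_{\ref{p: spread vectors-1}}}$ then yields $\norm{x_{[M_0+1:c_0^*n]}}_2\ge\norm{x_{[M_0+1:n]}}_2/\sqrt2$, so $x_{[M_0+1:c_0^*n]}$ is flat at the same vanishing scale. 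Splitting $[n]$ into its two halves, one part $x'$ of $x_{[M_0+1:c_0^*n]}$ satisfies $\norm{x'}_2\ge\norm{x_{[M_0+1:c_0^*n]}}_2/\sqrt2$ and $\norm{x'}_\infty/\norm{x'}_2\le2\alpha_n\sqrt p$; conditioning on all of $A_n$ except the i.i.d.\ Bernoulli off-diagonal block that multiplies $x'$ and invoking Lemma~\ref{l: spread vector-levy} (and the refinement of its proof that tracks the vanishing flatness $\alpha_n=o(1)$) gives, for suitably small $\wt c_{\ref{p: spread vectors-1}}$,
\[
 \P\bigl(\norm{(A_n-p{\bm J}_n)x-y}_2\le4\wt c_{\ref{p: spread vectors-1}}\norm{x_{[M_0+1:c_0^*n]}}_2\sqrt{np}\bigr)\le\exp(-\gamma_n n).
\]

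Next one constructs a net $\cM\subset V_{c_0^*,c_{\ref{p: spread vectors-1}}}\setminus V_{M_0}$ via Fact~\ref{fact:net-bd} with $M_1=M_0$, $M_2=c_0^*n$, approximating the head $x_{[1:M_0]}$ and the scalar $\norm{x_{[M_0+1:c_0^*n]}}_2$ to precision $\vep\sim\wt c_{\ref{p: spread vectors-1}}\rho/K$, the direction $x_{[M_0+1:c_0^*n]}/\norm{x_{[M_0+1:c_0^*n]}}_2$ to precision $\tau$, and simply discarding the tail $x_{[c_0^*n+1:n]}$ (whose contribution on $\Omega_K^0$ is $\le\norm{A_n-p{\bm J}_n}\norm{x_{[c_0^*n+1:n]}}_2\le C\wt c_{\ref{p: spread vectors-1}}\sqrt{np}\,\norm{x_{[M_0+1:c_0^*n]}}_2$). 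The head contributes only $\exp(o(n))$ to $|\cM|$, and this is precisely where the choice $M_0=n\sqrt{\log\log n}/\log n$ is used: then $M_0\log(n/M_0)$ and $M_0\log(1/\rho)$ (recall $\log(1/\rho)=O(\ell_0)=O(\log(1/p)/\log(np))$) are both $o(n)$ for every $p\ge c_1\log n/n$. The main obstacle is that, unlike in Proposition~\ref{p: spread vectors}, where the middle block had length only $c^*_{\ref{p: spread vectors}}n$ with $c^*_{\ref{p: spread vectors}}$ small, the middle block now has length of order $n$, so its contribution $\binom{n-M_0}{c_0^*n-M_0}(6/\tau)^{c_0^*n-M_0}=\exp(\Theta(n))$ to $|\cM|$ is genuinely exponential and must be beaten by the small-ball exponent $\gamma_n$ of the previous step. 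To do so I would first use Proposition~\ref{p: spread vectors} (with $M=c^*_{\ref{p: spread vectors}}n$) to discard, up to probability $\exp(-\bar c_{\ref{p: spread vectors}}n)$, all $x$ that are already compressible or dominated at the level $c^*_{\ref{p: spread vectors}}n$; this reduces to $x$ whose mass on the coordinates ranked in $[c^*_{\ref{p: spread vectors}}n+1:c_0^*n]$ is nondominated and of size comparable to $\rho$, i.e.\ carries a genuinely large flat block. Over such a block the covering is organized so that, together with the gain in $\gamma_n$ coming from the $o(\sqrt p)$ flatness forced above, the union bound $|\cM|\exp(-\gamma_n n)\le\exp(-\bar c_{\ref{p: spread vectors-1}}n)$ closes once $\wt c_{\ref{p: spread vectors-1}}$ and $\tau$ are fixed small enough in terms of $c_0^*$ and $K$.

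Finally, passing from $\cM$ back to all of $V_{c_0^*,c_{\ref{p: spread vectors-1}}}\setminus V_{M_0}$ is the standard triangle-inequality step \eqref{eq:net-triangle} on $\Omega_K^0$ (where $\norm{A_n-p{\bm J}_n}\le K\sqrt{np}+1$): the head, middle-direction and discarded-tail errors are each at most $C(\vep+\tau+\wt c_{\ref{p: spread vectors-1}})\sqrt{np}\,\norm{x_{[M_0+1:c_0^*n]}}_2$, which with the above choices of $\vep,\tau$ is a small multiple of the target scale $4\wt c_{\ref{p: spread vectors-1}}\norm{x_{[M_0+1:c_0^*n]}}_2\sqrt{np}$. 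This yields \eqref{eq:spread-prop-pre-bd} and completes the proof. I expect the delicate point to be, as flagged above, the quantitative balance between the $\exp(\Theta(n))$ metric entropy of the order-$n$ middle block and the anti-concentration exponent: getting these to compensate for every fixed $c_0^*\in(0,1)$ is exactly why one cannot simply quote Proposition~\ref{p: spread vectors} and why the large scale $M_0=n\sqrt{\log\log n}/\log n$ — rather than a multiple of $p^{-1}$ — is forced.
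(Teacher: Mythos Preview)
Your reduction of \eqref{eq:spread-prop-bd} to \eqref{eq:spread-prop-pre-bd} and your net construction via Fact~\ref{fact:net-bd} are fine, but the heart of the argument has a genuine gap: you invoke Lemma~\ref{l: spread vector-levy} and assert that a ``refinement of its proof that tracks the vanishing flatness $\alpha_n=o(1)$'' yields an exponent $\gamma_n\to\infty$. This is not true. Lemma~\ref{l: spread vector-levy} is proved via Paley--Zygmund, which for a single row gives $\cL(\langle a_i,x\rangle,\beta\sqrt p\,\|x\|_2)\le 1-c$ with $c\in(0,1)$ \emph{bounded away from $1$} however flat $x$ is (the kurtosis of a centered Bernoulli sum over a flat vector stabilizes at $3$, so Paley--Zygmund saturates). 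After tensorization the exponent $\gamma=-\log(1-c)$ is therefore bounded, and cannot beat a net of size $\exp(\Theta(n))$ for arbitrary $c_0^*\in(0,1)$. Your fallback of first discarding $V_{c^*_{\ref{p: spread vectors}}n}$ via Proposition~\ref{p: spread vectors} does not help: you are left with exactly the range $[c^*_{\ref{p: spread vectors}}n,c_0^*n]$, whose entropy is still $\exp(\Theta(n))$, and you still have no growing small-ball exponent to balance it. The paper is explicit on this point (Section~\ref{sec:sparse-3}): ``the small ball probability estimate derived in Lemma~\ref{l: spread vector-levy} will be insufficient to accommodate a union bound.''

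The missing idea is to replace Lemma~\ref{l: spread vector-levy} by the Berry--Ess\'een bound Lemma~\ref{lem:bound-levy}, which carries a \emph{free scale parameter} $\vep_0$. The paper fixes $\bar c_0\in(c_0^*,1)$ and sets $\cI:=\supp(x_{[M_0+1:(1-\bar c_0)n]})$, so that $|\cI|\le(1-\bar c_0)n$; by Assumption~\ref{ass:matrix-entry} the entries $\{a_{i,j}\}_{i\notin\cI,\,j\in\cI}$ are jointly i.i.d.\ $\dBer(p)$, giving $|\cI^c|\ge\bar c_0 n$ independent rows (your two-block split would only yield $n/2$ rows, which fails once $c_0^*>1/2$). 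Lemma~\ref{lem:bound-levy} applied to each such row gives per-row probability $\le C\vep_0$, the Berry--Ess\'een error being $O((\log\log n)^{-1/4})=o(1)$ thanks to the choice of $M_0$; tensorizing yields $(C_0\vep_0)^{\bar c_0 n}$. The net from Fact~\ref{fact:net-bd} has cardinality $\le C_\star^n(1/\vep_0)^{c_0^*n+1}$ (the head contributes $\exp(o(n))$, as you correctly noted). The union bound then reads
\[
C_\star^n\,\vep_0^{-c_0^*n-1}\cdot(C_0\vep_0)^{\bar c_0 n}
\;\le\;(C_\star C_0^{\bar c_0})^n\,\vep_0^{(\bar c_0-c_0^*)n-1},
\]
and since $\bar c_0>c_0^*$ one takes $\vep_0$ a small enough constant (depending only on $c_0^*$ and $K$) to make this $\le\exp(-2\bar c_{\ref{p: spread vectors-1}}n)$. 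The tunability of $\vep_0$ is what substitutes for the growth of $\gamma_n$ that your approach would need but cannot supply.
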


\vskip10pt
As the set  of sparse vectors that have a large spread component  has a higher metric entropy compared to that of the set of vectors considered in Section \ref{sec:sparse-2}, the small ball probability estimate derived in Lemma \ref{l: spread vector-levy} will be insufficient to accommodate a union bound.
To obtain a useful bound on the small ball probability we use the following result. Before stating the lemma let us introduce a notation: for any $v \in \R^n$ and $J \subset [n]$ we write $v_J$ to denote the vector in $\R^n$ obtained from $v$ by setting $v_i=0$ for all $i \in J^c$.
\begin{lem}[Bound on L\'{e}vy concentration function]\label{lem:bound-levy}
Let $v \in \R^n$ be a fixed vector and ${\bm x} \in \R^n$ be a random vector with i.i.d.~$\dBer(p)$ for some $p \in (0,1)$. Then there exists an absolute constant $C_{\ref{lem:bound-levy}}$ such that for every $\vep >0$ and $J \in [n]$,
\begin{align*}
\cL\left(\langle {\bm x}, v\rangle, p^{1/2}(1-p)^{1/2}\|v_J\|_2 \vep  \right) & \le \cL\left(\langle {\bm x}_J, v_J\rangle, p^{1/2}(1-p)^{1/2}\|v_J\|_2 \vep  \right)\\
& \le C_{\ref{lem:bound-levy}} \left( \vep + \f{\|v_J\|_{\infty}}{{p^{1/2}(1-p)^{1/2}}\|v_J\|_2}\right).
\end{align*}
\end{lem}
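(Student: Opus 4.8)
The first inequality is essentially trivial: writing $\langle {\bm x}, v\rangle = \langle {\bm x}_J, v_J\rangle + \langle {\bm x}_{J^c}, v_{J^c}\rangle$ and conditioning on the coordinates ${\bm x}_{J^c}$, the conditional distribution of $\langle {\bm x}, v\rangle$ is a translate of $\langle {\bm x}_J, v_J\rangle$, so its Lévy concentration function at any radius is at most $\cL(\langle {\bm x}_J, v_J\rangle, \cdot)$; averaging over ${\bm x}_{J^c}$ preserves the bound. Thus it suffices to prove the second inequality, and by renaming we may assume $J=[n]$, i.e. bound $\cL(\langle {\bm x}, v\rangle, p^{1/2}(1-p)^{1/2}\|v\|_2 \vep)$.

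The main tool is the Berry–Esseen theorem. The plan is to center: set $b_i := x_i - p$, so that the $b_i$ are independent, mean zero, with $\E b_i^2 = p(1-p) =: \sigma^2$ and $\E|b_i|^3 \le \sigma^2$ (since $|b_i|\le 1$). Then $S := \sum_i b_i v_i = \langle {\bm x}, v\rangle - p\sum_i v_i$ is a sum of independent mean-zero variables with variance $\Sigma^2 := \sigma^2 \|v\|_2^2$ and third absolute moment bounded by $\sum_i |v_i|^3 \sigma^2 \le \sigma^2 \|v\|_\infty \|v\|_2^2$. The (non-uniform, or simply the classical) Berry–Esseen inequality gives that the distribution function of $S/\Sigma$ is within $C \rho_3/\Sigma^3$ of the standard normal in sup-norm, where $\rho_3 = \sum_i \E|b_i v_i|^3$. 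Hence for any $u\in\R$ and any $t>0$,
\[
\P\bigl(|S - u| \le t\bigr) \le \P\bigl(|\cN(0, \Sigma^2) - u| \le t\bigr) + \frac{C\rho_3}{\Sigma^3} \le \frac{C' t}{\Sigma} + \frac{C'\rho_3}{\Sigma^3},
\]
using the elementary bound on the density of a Gaussian. Now $\rho_3/\Sigma^3 \le \sigma^2 \|v\|_\infty\|v\|_2^2 / (\sigma^3\|v\|_2^3) = \|v\|_\infty/(\sigma \|v\|_2)$, and since $\langle {\bm x}, v\rangle$ is a translate of $S$, $\cL(\langle {\bm x}, v\rangle, t) = \cL(S,t) = \sup_u \P(|S-u|\le t)$. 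Taking $t = \Sigma \vep = \sigma\|v\|_2\vep$ gives exactly
\[
\cL\bigl(\langle {\bm x}, v\rangle, \sigma \|v\|_2 \vep\bigr) \le C'\vep + \frac{C'\|v\|_\infty}{\sigma\|v\|_2},
\]
which is the claimed bound with $C_{\ref{lem:bound-levy}} = C'$.

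I do not expect any serious obstacle here; the only points requiring a little care are (i) making sure the Berry–Esseen constant is absolute and does not degenerate when some $v_i$ vanish (it does not, since the bound only involves the ratio $\rho_3/\Sigma^3$, which is scale-invariant and controlled by $\|v\|_\infty/(\sigma\|v\|_2)$), and (ii) handling the degenerate cases — if $v_J = 0$ the statement is vacuous, and when $p$ is close to $0$ or $1$ the factor $\sigma = p^{1/2}(1-p)^{1/2}$ already appears correctly on both sides, so no separate argument is needed. One should also note that Berry–Esseen as usually stated requires the summands to have finite third moment, which holds trivially here since the $b_i$ are bounded. Everything else is a routine substitution.
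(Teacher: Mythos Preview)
Your proposal is correct and matches the paper's approach: the paper omits details, stating only that the lemma is ``a simple consequence of the well known Berry--Ess\'{e}en theorem'' in the spirit of \cite[Proposition 3.2]{LPRT}, which is precisely the centering-plus-Berry--Esseen argument you carry out. The conditioning step for the first inequality and the routine third-moment bound $\rho_3/\Sigma^3 \le \|v\|_\infty/(\sigma\|v\|_2)$ are exactly what is needed.
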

\vskip10pt

 The proof of Lemma \ref{lem:bound-levy} is a simple consequence of the well known Berry-Ess\'{e}en theorem and is similar to that of \cite[Proposition 3.2]{LPRT}. Hence further details are omitted.

To utilize the bound from Lemma \ref{lem:bound-levy} we recall that any vector belonging to the third set has a large spread component. This means that one can find a $J \subset [n]$ such that $\|v_J\|_\infty/\|v_J\|_2$ is small with the Euclidean norm of $v_J$ being not too small. 

The proof of Proposition \ref{p: spread vectors-1} is similar to that of Proposition \ref{p: spread vectors}. Recall a key to the proof of Proposition \ref{p: spread vectors} is the anti-concentration bound of Lemma \ref{l: spread vector-levy} where the latter is a consequence of Paley-Zygmund inequality (see the proof of \cite[Corollary 3.7]{BR-invertibility}). To prove Proposition \ref{p: spread vectors-1} we need a better anti-concentration bound.
To this end, we note that any $x \notin V_{M_0}$ has a large spread component, i.e.~a large non-dominated part. It allows us to use Lemma \ref{lem:bound-levy} instead of Paley-Zygmund inequality. For matrices with independent rows, this together with standard tensorization techniques produces a sharp enough anti-concentration probability bound suitable for the proof of Proposition \ref{p: spread vectors-1}. For matrices satisfying Assumption \ref{ass:matrix-entry} we additionally need to show that one can find a sub-matrix of $A_n$ with jointly independent entries, such that the coordinates of $x$ which correspond to the columns of this sub-matrix  form a vector with a large spread component and a sufficiently large norm to carry out the scheme described above. Since the proof of Proposition \ref{p: spread vectors-1} is an adaptation of that of Proposition \ref{p: spread vectors} with these couple of modifications it is deferred to Appendix \ref{sec:invert-large-spread}.

\begin{rmk}\label{rmk:wt-V}
Proposition \ref{p: spread vectors-1} shows that $\|A_n x\|_2$ has uniform lower bound when $x \in V_{c_0^*, c_{\ref{p: spread vectors-1}}}\setminus V_{M_0}$. Its proof reveals that the same bound continues to hold when $V_{M_0}$ is replaced by
\[
\bar V_{M_0}:= {\rm Dom}(M_0, c K^{-1}) \cup {\rm Comp}(M_0,\rho),
\]
for some small constant $c < {c}_{\ref{p: spread vectors}}$. Changing the constant ${c}_{\ref{p: spread vectors}}$ to $c$ only shrinks the constants $c_{\ref{p: spread vectors-1}}, \wt c_{\ref{p: spread vectors-1}}, \bar c_{\ref{p: spread vectors-1}}$. We will use this generalization in the proof of Lemma \ref{lem:non-dominated-J}.
\end{rmk}

\begin{rmk}\label{rmk:prop-non-square}
Propositions \ref{l: sparse vectors-2}, \ref{p: spread vectors}, and \ref{p: spread vectors-1} have been proved for $n \times n$ matrices.  It can be checked that the conclusions of these propositions continue to hold for $(n-1) \times n$ matrices, with slightly worse constants. In particular, they hold for the matrix $\wt A_n$ such that its rows are any $(n-1)$ columns of the matrix $A_n$ satisfying Assumption \ref{ass:matrix-entry}. We will need this generalization to prove the desired lower bound on the smallest singular value of the adjacency matrix of a random bipartite graph or equivalently for the random matrix with i.i.d.~Bernoulli entries (as noted in Remark \ref{eq:iid-bipartite}). To keep the presentation of this paper simple we refrain from providing the proof for this generalization. It follows from a simple adaptation of the proof of the same for square matrices.
\end{rmk}

\subsection{Structure of $A_n^{-1} u$}\label{sec:kernel-unstructured}
As mentioned in Section \ref{sec:prelim}, to deduce invertibility over non-dominated and incompressible vectors we also need to show that, given any $u \in \R^n$, the random vector $A_n^{-1} u$ must be non-dominated and incompressible with high probability. Since we will apply this result with coordinates of $u$ being i.i.d.~$\dBer(p)$, we may and will assume that $u$ does not have a large support. With some additional work, the results of Sections \ref{sec:sparse-1}-\ref{sec:sparse-3} yield this.

Moreover, as we will see in Section \ref{sec:incomp}, to treat the non-dominated and incompressible vectors when $A_n$ is the adjacency matrix of a directed Erd\H{o}s-R\'{e}nyi graph, we further need to establish that
given any $J \subset [n]$ with $|J| \approx \f{n}{2}$, one can find $I \subset J$ such that the vector $(A_n^{-1} u)_I$ contains a considerable proportion the non-dominated and incompressible components of the random vector $A_n^{-1} u$. The proof of the latter crucially uses Proposition \ref{p: spread vectors-1}. These two results are the content of this section.

We first begin with the corollary which shows that $A_n^{-1} u$ is neither compressible nor dominated with high probability.


\begin{cor}\label{cor:combine}
Let A$_n$ be as in Assumption \ref{ass:matrix-entry}, where $p$ satisfies the inequality
   \[
   np \ge \log(1/p),
   \]
and $\rho$ be as in Proposition \ref{l: sparse vectors-2}. Fix $K\ge 1$, $c_0^* \in (0,1)$, and $y_0 \in \R^n$ such that $|\supp(y_0)| \le C_\star np$ for some $C_\star >0$. Then there exist constants $0< \wt{c}_{\ref{cor:combine}}, \bar{c}_{\ref{cor:combine}}< \infty$, depending only on $c_0^*$ and $K$, such that
  \begin{align*}
   \P&\Big(\Big\{\exists x \in \R^n \text{ such that } {x}/{\|x\|_2} \in V_{c_0^*, c_{\ref{p: spread vectors-1}}}, \, \norm{A_n x -y_0}_2 \le \wt{c}_{\ref{cor:combine}}\rho \sqrt{np}\cdot  \|x\|_2 \\
   &\hskip 3in \text{ and } \norm{A_n- \E A_n} \le K \sqrt{np}\Big\} \cap \Omega_0^c\Big) \le n^{-\bar{c}_{\ref{cor:combine}}},
  \end{align*}
  where we recall the definition of $V_{c_0^*, c_{\ref{p: spread vectors-1}}}$ from \eqref{eq:V-c-*-c} and the definition of $\Omega_0$ from \eqref{eq:Omega-0}.
  \end{cor}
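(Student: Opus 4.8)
The plan is to combine the three invertibility results of the preceding subsections --- Proposition \ref{l: sparse vectors-2} (very sparse vectors), Proposition \ref{p: spread vectors} (moderately sparse vectors up to sparsity $c^*_{\ref{p: spread vectors}}n$), and Proposition \ref{p: spread vectors-1} (vectors sparse up to $c_0^* n$ but with a large spread component) --- into a single uniform lower bound over the whole set $V_{c_0^*, c_{\ref{p: spread vectors-1}}}$, and then transfer this bound to the non-normalized vector $x$. First I would observe that by homogeneity it suffices to prove the estimate for $x \in S^{n-1} \cap V_{c_0^*, c_{\ref{p: spread vectors-1}}}$ and replace $\wt c_{\ref{cor:combine}}\rho\sqrt{np}\,\|x\|_2$ by $\wt c_{\ref{cor:combine}}\rho\sqrt{np}$; the scaling is exact because both $\|A_n x - y_0\|_2$ (with the convention that $y_0$ is divided by $\|x\|_2$, which does not enlarge its support) and the event $\{x/\|x\|_2 \in V_{c_0^*,c_{\ref{p: spread vectors-1}}}\}$ are unaffected. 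Actually, more carefully: the statement is about $x$ with $x/\|x\|_2 \in V$, so write $x = t\,\hat x$ with $\hat x \in S^{n-1}\cap V$ and $t = \|x\|_2$; then $\|A_n x - y_0\|_2 \le \wt c_{\ref{cor:combine}}\rho\sqrt{np}\, t$ is equivalent to $\|A_n \hat x - y_0/t\|_2 \le \wt c_{\ref{cor:combine}}\rho\sqrt{np}$, and $y_0/t$ still has support of size at most $C_\star np$, so it suffices to bound the probability that $\exists \hat x \in S^{n-1}\cap V_{c_0^*,c_{\ref{p: spread vectors-1}}}$ and $\exists y \in \R^n$ with $|\supp(y)|\le C_\star np$ such that $\|A_n\hat x - y\|_2 \le \wt c_{\ref{cor:combine}}\rho\sqrt{np}$, intersected with $\Omega_0^c$ and $\Omega_K^0 := \{\|A_n - \E A_n\|\le K\sqrt{np}\}$. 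But since $y_0$ is fixed here, there is no union over $y$ to take --- one just applies the three propositions with that fixed $y_0$.

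The main work is a decomposition of $V_{c_0^*, c_{\ref{p: spread vectors-1}}}$ into pieces each covered by one proposition. Set $m := c^\star_{\ref{lem: pattern}} p^{-1}$, $M_0 := \frac{n\sqrt{\log\log n}}{\log n}$, and $M_1 := c^*_{\ref{p: spread vectors}} n$. I would argue as follows. Any $\hat x \in V_{c_0^*, c_{\ref{p: spread vectors-1}}}$ is in $\text{Dom}(c_0^* n, c_{\ref{p: spread vectors-1}} K^{-1}) \cup \text{Comp}(c_0^* n, \rho)$. If $\hat x \in V_0$ (the set from Proposition \ref{l: sparse vectors-2}), apply Proposition \ref{l: sparse vectors-2} directly with this $y_0$ (noting $np \ge \log(1/p)$ is exactly the hypothesis), getting probability at most $n^{-\bar c_{\ref{l: sparse vectors-2}}}$ on $\Omega_0^c \cap \Omega_{\ref{lem: typical structure}}$, and absorb $\P(\Omega_{\ref{lem: typical structure}}^c) \le n^{-\bar c_{\ref{lem: typical structure}}}$ from Lemma \ref{lem: typical structure}. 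If $\hat x \in (V_1 \setminus V_0)$ where $V_1 = \text{Dom}(M_1, c_{\ref{p: spread vectors}} K^{-1}) \cup \text{Comp}(M_1, \rho)$, apply Proposition \ref{p: spread vectors} with $M = M_1$ (its hypothesis $p \ge c_1 \frac{\log n}{n}$ follows from $np \ge \log(1/p)$, which forces $np \gtrsim \log n / \sqrt{\log\log n} \ge c_1 \log n$ for large $n$ --- I should check this threshold matches what Proposition \ref{p: spread vectors}'s proof actually needs, namely $np \ge \frac{\log n}{\sqrt{\log\log n}}$, which indeed holds). If $\hat x \in V_{c_0^*, c_{\ref{p: spread vectors-1}}} \setminus V_{M_0}$, apply Proposition \ref{p: spread vectors-1} (with $c_0^*$ in the role of $c_0^*$), again with this $y_0$. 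The point is that $V_{c_0^*, c_{\ref{p: spread vectors-1}}} \subseteq V_0 \cup (V_1\setminus V_0) \cup (V_{c_0^*, c_{\ref{p: spread vectors-1}}}\setminus V_{M_0})$ provided the constants are chosen so that the Dom/Comp parameters are monotone; concretely, $V_{M_0} \subseteq V_1$ when $M_0 \le M_1$ (true for large $n$) and $c_{\ref{p: spread vectors-1}} \le c_{\ref{p: spread vectors}}$ (arrange this, or invoke Remark \ref{rmk:wt-V}), and $V_1 \subseteq V_{c_0^*, c_{\ref{p: spread vectors-1}}}$ needs $M_1 \le c_0^* n$ --- this holds because $c^*_{\ref{p: spread vectors}}$ is small while $c_0^*$ is arbitrary in $(0,1)$, but if $c_0^*$ were tiny I would instead just shrink $M_1$ in the application of Proposition \ref{p: spread vectors} (any $M \le c^*_{\ref{p: spread vectors}} n$ is allowed there, so take $M = \min(c^*_{\ref{p: spread vectors}}, c_0^*)\, n / 2$, say). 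So the covering is: $V_{M_0} \setminus V_0$ handled by Proposition \ref{p: spread vectors} with a suitable $M$, $V_0$ handled by Proposition \ref{l: sparse vectors-2}, and $V_{c_0^*, c_{\ref{p: spread vectors-1}}}\setminus V_{M_0}$ handled by Proposition \ref{p: spread vectors-1}. Then a union bound over these three (finitely many) events, choosing $\wt c_{\ref{cor:combine}}$ to be the smallest of the constants $\wt c_{\ref{p: spread vectors}}$, $\wt c_{\ref{p: spread vectors-1}}$, and the implicit constant $(\wt c_{\ref{l: sparse vectors-2}}/K)^{2\ell_0+1}/\rho = 1$ coming from Proposition \ref{l: sparse vectors-2}, gives the bound $n^{-\bar c_{\ref{cor:combine}}}$; the exponential bounds $\exp(-\bar c n)$ from Propositions \ref{p: spread vectors} and \ref{p: spread vectors-1} are of course much smaller than $n^{-\bar c_{\ref{cor:combine}}}$, so the dominant term is the polynomial one from Proposition \ref{l: sparse vectors-2} together with $\P(\Omega_{\ref{lem: typical structure}}^c)$.

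The step I expect to require the most care is not any single estimate but the \emph{bookkeeping of the constants and sparsity parameters}: making sure the Dom/Comp radii ($c_{\ref{l: sparse vectors-2}}K^{-1}$, $c_{\ref{p: spread vectors}}K^{-1}$, $c_{\ref{p: spread vectors-1}}K^{-1}$) and the sparsity thresholds ($m = c^\star_{\ref{lem: pattern}}p^{-1}$, $M_0$, and the $M$ used in Proposition \ref{p: spread vectors}) are chosen in a compatible, monotone way so that the three sets genuinely cover $V_{c_0^*, c_{\ref{p: spread vectors-1}}}$ with the required small constant $\wt c_{\ref{cor:combine}}$ in front of $\rho\sqrt{np}$, and that every proposition's hypothesis on $p$ (in particular $p \ge c_1\frac{\log n}{n}$ versus $np \ge \log(1/p)$, which are compatible for large $n$) is verified. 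One subtlety worth flagging: Proposition \ref{l: sparse vectors-2} and Proposition \ref{p: spread vectors} both include the event $\{\|A_n - \E A_n\|\le K\sqrt{np}\}$ (or $\Omega_K$, $\Omega_K^0$) in their conclusions, so intersecting with $\Omega_K^0$ as in the statement of the corollary is exactly what is needed; no separate control of $\|A_n - \E A_n\|$ (i.e., no appeal to Theorem \ref{thm:s-max-general}) is required at this stage. Once the covering and the constant-chasing are done, the proof is just the union bound spelled out above.
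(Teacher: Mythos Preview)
Your decomposition of $V_{c_0^*,c_{\ref{p: spread vectors-1}}}$ into the ranges covered by Propositions~\ref{l: sparse vectors-2}, \ref{p: spread vectors}, and~\ref{p: spread vectors-1} is the right skeleton, and the constant-bookkeeping you flag is indeed routine. But the homogeneity reduction at the start contains a genuine gap: you write $x=t\hat x$ with $t=\|x\|_2$ and correctly note that $\|A_n x-y_0\|_2\le \wt c\rho\sqrt{np}\,t$ becomes $\|A_n\hat x-y_0/t\|_2\le \wt c\rho\sqrt{np}$, but then claim ``since $y_0$ is fixed here, there is no union over $y$ to take.'' This is false: $t=\|x\|_2$ is the variable being quantified over, so $y_0/t$ ranges over the entire ray $\{\lambda y_0:\lambda>0\}$, and applying the three propositions with the single fixed $y_0$ does not cover the event.

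The paper repairs this in two different ways depending on the piece. For $\hat x\in V_0$ it goes back inside the proof of Proposition~\ref{l: sparse vectors-2} and observes that what was actually established there is a lower bound on $\|\bar A_n\hat x\|_2$ (with $\bar A_n$ obtained by deleting the rows in $\supp(y_0)$), which depends on $y_0$ only through $|\supp(y_0)|$; since $\|A_n x-y_0\|_2\ge\|\bar A_n x\|_2=t\,\|\bar A_n\hat x\|_2$, the scaling goes through with no union over $y$. For $\hat x\in V_{c_0^*,c_{\ref{p: spread vectors-1}}}\setminus V_0$ the paper first uses the triangle inequality on $\Omega_K^0$ to get $\|y_0\|_2/\|x\|_2\le 2Knp$, so after passing to $(A_n-p{\bm J}_n)\hat x$ the target $y$ lives in the bounded two-dimensional set $\cY_\star=\{\gamma\, y_0/\|y_0\|_2+\lambda{\bm 1}:|\gamma|\le 2Knp,\ |\lambda|\le\sqrt{n}p\}$; it then takes a net of $\cY_\star$ of cardinality $O(((np)^2/\rho)^2)$ and applies the \emph{exponential} bounds \eqref{eq:moderate-pre-prop} and \eqref{eq:spread-prop-pre-bd} (which hold for an arbitrary fixed $y$) uniformly over the net. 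Without this netting step --- or some substitute for it --- your argument for the $V_1\setminus V_0$ and $V_{c_0^*,c_{\ref{p: spread vectors-1}}}\setminus V_{M_0}$ pieces does not close.
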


\begin{proof}
Recalling the definition of $V_0$ from \eqref{eq:V-0}, we first show that
 \begin{multline}\label{eq:bound-on-V-0-new}
   \P\left(\left\{\exists x \in \R^n \text{ such that } {x}/{\|x\|_2} \in V_0, \, \norm{A_n x -y_0}_2 \le \wt{c}_{\ref{cor:combine}}\rho \sqrt{np}\cdot  \|x\|_2 \right\} \cap \Omega_K^0 \cap \Omega_0^c\right)\\ \le n^{-2\bar{c}_{\ref{cor:combine}}},
  \end{multline}
  for all large $n$, where we recall the definition of $\Omega_K^0$ from \eqref{eq:Omega-K-0}. We remind the reader that to prove Proposition \ref{l: sparse vectors-2} we defined $\bar A_n$ to be the sub-matrix of $A_n$ obtained upon deleting the rows in $\supp(y_0)$ and showed that $\|\bar A_n x\|_2$ is uniformly bounded below, with high probability, for all $x \in V_0$. As $\|\bar A_n x\|_2 \le \|A_n x - y_0\|_2$ this yielded the desired result. Since the proof does not involve $y_0$, except for the cardinality of its support, we therefore can carry out the exact same steps and use the bound on the probability of $\Omega_{\ref{lem: typical structure}}^c$, derived in Lemma \ref{lem: typical structure}, to obtain \eqref{eq:bound-on-V-0-new}.

It remains to show that
 \begin{multline}\label{eq:bound-on-V-1-new}
   \P\left(\left\{\exists x \in \R^n \text{ such that } {x}/{\|x\|_2} \in V_{c_0^*, c_{\ref{p: spread vectors-1}}}\setminus V_0, \, \norm{A_n x -y_0}_2 \le \wt{c}_{\ref{cor:combine}}\rho \sqrt{np}\cdot  \|x\|_2 \right\} \cap \Omega_K^0\right) \\
   \le \exp(-\bar{c} n),
  \end{multline}
  for some $\bar c >0$. If there exists an $x \in \R^n$ such that $\|A_n x - y_0\| \le \wt{c}_{\ref{cor:combine}}\rho \sqrt{np}\cdot  \|x\|_2$, then using triangle inequality we find that
 \[
 \frac{\|y_0\|_2}{\|x\|_2} \le \|A_n -p {\bm J}_n\|  + p \|{\bm J}_n\|+ \f{\|A_n x - y_0\|_2}{\|x\|_2} \le 2K np,
 \]
  on $\Omega_K^0$. Further let us recall that for any $x \in S^{n-1}$, we have $p {\bm J}_n x = \lambda {\bm 1}$ for some $\lambda \in \R$ with $|\lambda| \le \sqrt{n} p$. Therefore, using triangle inequality once more we see that
 \begin{multline*}
 \left\{ \exists x \in \R^n \text{ such that } {x}/{\|x\|_2} \in V_{c_0^*, c_{\ref{p: spread vectors-1}}}\setminus V_0, \, \norm{A_n x -y_0}_2 \le \wt{c}_{\ref{cor:combine}}\rho \sqrt{np}\cdot  \|x\|_2 \right\}\cap \Omega_K^0 \\
 \subset \left\{\inf_{y \in \cY_\star} \inf_{x \in V_{c_0^*, c_{\ref{p: spread vectors-1}}}\setminus V_0} \norm{(A_n - p {\bm J}_n) x - y}_2 \le \wt{c}_{\ref{cor:combine}}\rho \sqrt{np}\right\} \cap \Omega_K^0,
 \end{multline*}
 where
 \[
 \cY_\star:= \left\{ \gamma \cdot \f{y_0}{\|y_0\|_2} + \lambda {\bf 1}; \ \gamma, \lambda \in \R \text{ with } |\gamma|\le 2K {np}, |\lambda| \le \sqrt{n} p  \right\}.
 \]
 Since $\cY_\star$ admits a net $\cN_\star$ of mesh size $\wt{c}_{\ref{cor:combine}}\rho \sqrt{np}$ with cardinality at most $(8K\sqrt{np}/\wt{c}_{\ref{cor:combine}}\rho)^2$, by a union bound we see that it suffices to show that
 \beq\label{eq:bound-on-V-1-new-1}
  \P\left(\left\{\exists x \in V_{c_0^*, c_{\ref{p: spread vectors-1}}}\setminus V_0, \, \text{ such that } \norm{(A_n - p {\bm J}_n) x -y}_2 \le 2 \wt{c}_{\ref{cor:combine}}\rho \sqrt{np} \right\} \cap \Omega_K^0\right) \le \exp(-2\bar{c} n),
 \eeq
 for any $y \in \R^n$. Arguing similarly as in \eqref{eq:r-v-relation}, we note that for any $x \in {\rm Dom}(c_0^*n, c_{\ref{p: spread vectors-1}}K^{-1})$
 \(
 \|x_{[M_0+1: c_0^*n]}\|_2 \ge \|x_{[c_0^*n+1:n]}\|_2,
 \)
and hence, for $x \in {\rm Dom}(c_0^*n, c_{\ref{p: spread vectors-1}}K^{-1})\setminus V_{M_0}$ we obtain that $\|x_{[M_0+1: c_0^*n]}\|_2 \ge \rho/\sqrt{2}$. Therefore, \eqref{eq:bound-on-V-1-new-1} follows from \eqref{eq:moderate-pre-prop} and \eqref{eq:spread-prop-pre-bd}. This yields \eqref{eq:bound-on-V-1-new} and combining this with \eqref{eq:bound-on-V-0-new} now finishes the proof of the corollary.
\end{proof}

Building on Corollary \ref{cor:combine} we now prove that for any  $J \subset [n]$ with $|J| \approx \f{n}{2}$, there exists a large set $I \subset J$ such that $(A_n^{-1} u)_I$ has non-dominated tails and a substantial Euclidean norm.


\begin{lem}\label{lem:non-dominated-J}
Let A$_n$ be as in Assumption \ref{ass:matrix-entry}, where $p$ satisfies the inequality
   \[
   np \ge \log(1/p).
   \]
Fix $K \ge 1$, $J \subset [n]$ such that $\f{3n}{8} \le |J| \le \f{5n}{8}$, and $y_0 \in \R^n$ with $\|y_0\|_2 \in [1, \bar C np]$ and $|\supp(y_0)| \le C_\star np$ for some $\bar C, C_\star >0$. Then there exist constants $0< c_{\ref{lem:non-dominated-J}}, \bar c_{\ref{lem:non-dominated-J}} < \infty$, depending only on $K$, such that
\begin{multline*}
\P\left(\left\{\exists x \in \R^n: A_n x =y _0 \text{ and either } \f{\|x_{[\f{n}{4}+1:n]\cap J}\|_\infty}{\|x_{[\f{n}{4}+1:n]\cap J}\|_2} \ge \f{1}{c_{\ref{lem:non-dominated-J}} \sqrt{n}} \text{ or } \frac{\|x_{[\f{n}{4}+1:n]\cap J}\|_2}{\norm{x}_2} \le \rho \right\} \cap \Omega_K^0 \cap \Omega_0^c \right)\\
 \le n^{-\bar{c}_{\ref{lem:non-dominated-J}}},
\end{multline*}
for all large $n$.
\end{lem}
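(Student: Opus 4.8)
The plan is to reduce the statement about solutions $x$ of $A_n x = y_0$ to an application of (the generalization described in Remark \ref{rmk:wt-V} of) Proposition \ref{p: spread vectors-1}, via a dichotomy on the size of $\|x\|_2$ together with the already-established invertibility of $A_n$ over the compressible and dominated vectors. First I would observe that on $\Omega_K^0$ any solution $x$ of $A_n x = y_0$ satisfies a two-sided norm bound: since $\|A_n - p{\bm J}_n\| \le Knp + 1$ and $p\|{\bm J}_n\| \le np$, and since $\|y_0\|_2 \ge 1$, one gets $\|x\|_2 \ge 1/(2Knp)$ on the event that $x/\|x\|_2 \notin V_{c_0^*, c_{\ref{p: spread vectors-1}}}$ — but more usefully, if $x/\|x\|_2 \in V_{c_0^*, c_{\ref{p: spread vectors-1}}}$ then Corollary \ref{cor:combine} (applied with $c_0^* = 3/4$, say) shows $\|A_n x - y_0\|_2 > \wt c_{\ref{cor:combine}} \rho \sqrt{np}\,\|x\|_2$, contradicting $A_n x = y_0$, outside an event of probability at most $n^{-\bar c_{\ref{cor:combine}}}$. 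Hence, with the required probability, any solution $x$ has $x/\|x\|_2 \in \text{Incomp}(c_0^* n, \rho) \cap \text{Dom}(c_0^* n, c_{\ref{p: spread vectors-1}}K^{-1})^c$; in particular $x/\|x\|_2$ has a genuinely spread, non-dominated tail.

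Next I would upgrade this ``global'' non-dominatedness to a ``local'' statement about the coordinates indexed by $[\tfrac n4+1:n]\cap J$. The point is that $x/\|x\|_2$ being incompressible and non-dominated at level $c_0^* n$ forces a lower bound on $\|x_{[\f n4 + 1 : n]}\|_2 / \|x\|_2$ (the top $n/4$ coordinates cannot carry almost everything, because the tail beyond $c_0^* n$ is comparable in $\ell_2$ to a genuine spread piece); combined with $|J| \ge 3n/8$ and a pigeonhole/averaging argument over which half of $[\f n4+1:n]$ lies in $J$, this yields $\|x_{[\f n4+1:n]\cap J}\|_2 \ge c\|x\|_2$ for an absolute constant $c$, so in particular $\ge \rho$ after renormalizing; this kills the second alternative in the lemma. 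For the first alternative — the spikiness ratio $\|x_{[\f n4+1:n]\cap J}\|_\infty / \|x_{[\f n4+1:n]\cap J}\|_2 \ge (c_{\ref{lem:non-dominated-J}}\sqrt n)^{-1}$ — I would argue by contradiction: if this ratio is large, then $x/\|x\|_2$ restricted to $[\f n4+1:n]\cap J$ is close to sparse, and since the top $n/4$ coordinates are already a sparse block, the whole vector $x/\|x\|_2$ would be compressible or dominated at level, say, $c_0^* n$ with $c_0^* = 3/4$ — contradicting the conclusion of the previous paragraph. Quantitatively: a vector whose restriction to a coordinate block $B$ has $\|\cdot\|_\infty \ge \eta \sqrt{|B|}^{-1}\|\cdot\|_B\|_2$ is within $O(\eta^{-1/2})$ (or a comparable quantity) of a vector supported on $[n]\setminus B$ plus a few coordinates; choosing $c_{\ref{lem:non-dominated-J}}$ small relative to $c_{\ref{p: spread vectors-1}}$ makes this precise.

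I expect the main obstacle to be the second step: carefully transferring the non-dominatedness of $x/\|x\|_2$ at scale $c_0^* n$ to the restricted index set $[\f n4+1:n]\cap J$ while controlling all the constants, since $J$ is adversarial and the definition of $\text{Dom}(m,\alpha)$ refers to the monotone rearrangement of the \emph{whole} vector, not of a sub-block. The cleanest route is probably to phrase everything in terms of Proposition \ref{p: spread vectors-1} (with $M_0 = \tfrac{n\sqrt{\log\log n}}{\log n}$) applied directly: the event that there is a solution $x$ with $x_{[M_0+1:c_0^*n]\cap J}$ either too spiky or too small feeds into \eqref{eq:spread-prop-pre-bd} after reducing to a net over $y$ as in the proof of Corollary \ref{cor:combine}, since $\|(A_n - p{\bm J}_n)x - y\|_2 = \|y_0 - \lambda{\bm 1} - y\|_2$ is small for an appropriate $y$ in a polynomial-size net $\cY_\star$. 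Then a union bound over that net, together with the $n^{-\bar c_{\ref{cor:combine}}}$ contribution from Corollary \ref{cor:combine} and the $\exp(-cn)$ contributions from Propositions \ref{p: spread vectors} and \ref{p: spread vectors-1}, gives the stated bound $n^{-\bar c_{\ref{lem:non-dominated-J}}}$; the index $[\f n4+1:n]\cap J$ versus $[M_0+1:c_0^*n]\cap J$ discrepancy is harmless because $M_0 = o(n)$ and $c_0^* \in (3/4,1)$ can be chosen so that $[M_0+1:c_0^*n]\cap J \subset [\f n4+1:n]\cap J$ up to negligibly many coordinates, after a final pigeonhole on which quarter of $[n]$ meets $J$.
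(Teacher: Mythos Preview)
Your proposal identifies the right ingredients (Corollary \ref{cor:combine}, Proposition \ref{p: spread vectors-1}, and a net over shifts of $y_0$), and the treatment of the second alternative $\|x_{[\frac n4+1:n]\cap J}\|_2\le\rho$ is essentially correct: the pigeonhole observation $|J\cap\{\pi_x>\tfrac n4\}|\ge n/8$ gives $\|x_{J\cap[\frac n4+1:n]}\|_2\ge\|x_{[\frac{7n}{8}+1:n]}\|_2$, and then Corollary \ref{cor:combine} with $c_0^*=7/8$ finishes. This matches the paper.

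The gap is in the spikiness alternative. Your step~3 argues that if $z:=x_{J\cap[\frac n4+1:n]}$ has $\|z\|_\infty/\|z\|_2\ge (c_{\ref{lem:non-dominated-J}}\sqrt n)^{-1}$, then $x/\|x\|_2$ itself lands in $V_{c_0^*,c_{\ref{p: spread vectors-1}}}$, to which Corollary \ref{cor:combine} applies. But the spikiness inequality, combined with $\|z\|_\infty\le |x(\pi_x^{-1}(\tfrac n4+1))|\le 2\|x\|_2/\sqrt n$, only yields $\|z\|_2\le O(c_{\ref{lem:non-dominated-J}})\|x\|_2$. So $x/\|x\|_2$ is merely $O(c_{\ref{lem:non-dominated-J}})$-close to the $\tfrac{7n}{8}$-sparse vector $w:=x_{J^c\cup[1:n/4]}$ --- not $\rho$-close, and $\rho$ is exponentially small. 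Hence $x/\|x\|_2\notin\text{Comp}(\tfrac{7n}{8},\rho)$ in general, and one checks similarly that the dominated condition at any level $m$ fails too (the tail $\|x_{[m+1:n]}\|_2$ is not controlled by $\|z\|_2$ when $J^c$ contributes to the tail). Your ``cleanest route'' has the mirror problem: solutions of $A_nx=y_0$ are precisely the vectors \emph{outside} $V_{c_0^*,c_{\ref{p: spread vectors-1}}}$, so they do not feed into \eqref{eq:spread-prop-pre-bd} directly.

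The missing idea is to apply Proposition \ref{p: spread vectors-1} not to $x/\|x\|_2$ but to $w^\star:=w/\|w\|_2$. The point is that $w^\star$ is \emph{exactly} in $\text{Sparse}(\tfrac{7n}{8})\cap S^{n-1}$, and since $w_{[M_0+1:n/4]}=x_{[M_0+1:n/4]}$ it inherits a spread tail from $x$, so $w^\star\notin\wt V_{M_0}$ (this uses Corollary \ref{cor:combine} once more, applied to $x$). The spikiness bound $\|z\|_2\le 3c_{\ref{lem:non-dominated-J}}\|x_{[M_0+1:n/4]}\|_2$ then converts $A_nx=y_0$ into
\[
\|A_n w^\star - y^\star\|_2 \le O(c_{\ref{lem:non-dominated-J}})K\sqrt{np}\,\|w^\star_{[M_0+1:n/4]}\|_2,
\]
where $y^\star=(y_0-p{\bm J}_n z)/\|w\|_2$ lies in a two-parameter family $\cY=\{\lambda y_0+\gamma{\bm 1}\}$ with $\lambda,\gamma$ bounded (here the hypothesis $\|y_0\|_2\ge 1$ is used to get $\|w\|_2\ge 1/(4Knp)$). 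Now \eqref{eq:spread-prop-pre-bd} with $c_0^*=7/8$, a polynomial-size net on $\cY$, and a small enough $c_{\ref{lem:non-dominated-J}}$ give the contradiction.
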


\begin{proof}
Let $x \in \R^n$ be such that $A_n x =y_0$.
Let us show first that the event $ \|x_{[\f{n}{4}+1:n]\cap J}\|_2 \le \rho$ can occur with probability at most $n^{-\bar c}$ for some constant $\bar c >0$.
Since $|J| \ge \f{3n}{8}$, we have
\[
\|x_{J\cap[\f{n}{4}+1:n]}\|_2 \ge \|x_{[\f{7}{8}n+1: n]}\|_2,
\]
where by a slight abuse of notation, for $m < m' <n$, we write
\[
x_{J \cap[m:m']}(i) = x_{[m;m']}(i) \cdot {\bm 1}(i \in J).
\]
{Applying Corollary \ref{cor:combine} with $c_0^*=\f78$ we note that $\|x_{[\f{7}{8}n+1: n]}\|_2 {\le \rho \norm{x}_2}$ with probability at least $1 - n^{-\bar c}$. Thus our claim on the upper bound on  $ \|x_{[\f{n}{4}+1:n]\cap J}\|_2$ follows.}

Next,  assume that
\beq\label{eq:x-non-dominated-J}
\left\| x_{J \cap [\f{n}{4}+1:n]}\right\|_2  \le c_{\ref{lem:non-dominated-J}} \sqrt{n} \left\| x_{J \cap [\f{n}{4} +1:n]}\right\|_\infty.
\eeq
We will prove that if $c_{\ref{lem:non-dominated-J}}$ is chosen sufficiently small then this can hold only on a set of small probability as well. This will complete the proof.

Denote $w:= x_{J^c \cup [1: \frac{n}{4}]}$ and $z := x - w  = x_{J \cap [\frac{n}{4}+1:n]}$.
Then $w \neq 0$ and the assumption $|J| \ge \f{3n}{8}$ implies that  $w \in  {\rm Sparse}({7n}/{8})$.
We will show that the vector $A_n w/\norm{w}_2$ is close to some set $\cY$ having a small $\varepsilon$-net. As $w /\|w\|_2 \in {\rm Sparse}({7n}/{8}) \cap S^{n-1}$, the desired probability estimate will then follow from Proposition \ref{p: spread vectors-1} and the union bound over the net.

Turning to carry out the above task, we note that the inequality \eqref{eq:x-non-dominated-J} shows that
\[
\|z\|_2=\left\| x_{J \cap [\f{n}{4}+1:n]}\right\|_2 \le 3 c_{\ref{lem:non-dominated-J}} \norm{x_{[M_0+1:\frac{n}{4}]}}_2,
\]
where we recall $M_0:= \f{n\sqrt{\log \log n}}{\log n}$.
 Since $A_n x =y_0$,  this implies
\[
\|A_n w - y_0+ p{\bm J}_n  z \|_2 = \| (A_n - p{\bm J}_n) z\|_2 \le \| A_n - p {\bm J}_n \| \cdot \|z\|_2 \le 6 c_{\ref{lem:non-dominated-J}} K \sqrt{np} \cdot \norm{x_{[M_0+1:\frac{n}{4}]}}_2,
\]
on the event $\Omega_K^0$, where we recall its definition from \eqref{eq:Omega-K-0}.

For ease of writing let us denote $w^\star:=w/\norm{w}_2$ and set $y^\star:=(y_0- p{\bm J}_n  z)/\norm{w}_2$.
With this notation, the previous inequality reads
\beq\label{eq:w-bd-2}
\|A_n w^\star - y^\star\|_2
\le 6 c_{\ref{lem:non-dominated-J}} K \sqrt{np} \cdot \f{\|x_{[M_0+1:\f{n}{4}]}\|_2}{\|w\|_2}\\
 =  6 c_{\ref{lem:non-dominated-J}} K \sqrt{np} \cdot \norm{w^\star_{[M_0+1:\f{n}{4}]}}_2,
\eeq
where we used that  $w_{[M_0+1:n/4]}=x_{[M_0+1:n/4]}$ to derive the last equality.

The inequality \eqref{eq:w-bd-2} already shows that $A_n w/\|w\|_2$ is close to $y^\star$. From the definition of $y^\star$ we further note that $y^\star = \lambda y_0 + \gamma {\bm 1}$ for some $\lambda, \gamma \in \R$. This indicates that the natural choice for the set $\cY$ is the collection of all vectors of the form $ \lambda y_0 + \gamma {\bm 1}, \lambda, \gamma \in \R$. To show that $\cY$ admits a net of small cardinality we need bounds on $\lambda$ and $\gamma$.

We claim that $y^\star \in \cY$, where
\[
\cY:= \{y \in \R^n: y = \lambda y_0 + \gamma {\bm 1}, \text{ for some } \lambda \in (0, 4Knp] \text{ and } \gamma \in [-3\sqrt{n}p, 3\sqrt{n}p]\}.
\]
To see this we observe that the assumption $A_n x=y_0$ implies that
\[
\|x\|_2 \ge \f{\|A_n x\|_2}{\|A_n\|} \ge \f{\|y_0\|_2}{2K np} \ge \f{1}{2K np},
\]
as $\|y_0\|_2 \ge 1$ and $\|A_n\| \le 2K np$ on the event $\Omega_K^0$. Therefore,
\[
\|w\|_2 \ge \|x_{[1:\frac{n}{4}]}\|_2 \ge \f12 \|x\|_2 \ge 1/(4K np).
\]
From this and the inequality $\|z\|_2 \le 2 \|w\|_2$ the required claim follows.

Since $\|y_0\|_2 \le \bar C np$ it is also immediate that the set $\cY$ admits a $(\wt{c}_{\ref{p: spread vectors-1}}\rho \sqrt{np})$-net $\mathcal{N}$ of cardinality at most $O(((np)^2/\rho)^2)$.

We next claim that $w^\star \notin \wt V_{M_0}$, with high probability, where
\beq\label{eq:wt-V}
\wt V_{M_0}:= {\rm Dom} (M_0, \f{{c}_{\ref{p: spread vectors}}}{3}K^{-1}) \cup {\rm Comp}(M_0, \rho).
\eeq
Proving \eqref{eq:wt-V} will put us in a position to apply Proposition \ref{p: spread vectors-1}.

To this end, using Corollary \ref{cor:combine}, we can assume that $x/\|x\|_2 \notin V_{15/16,c_{\ref{p: spread vectors-1}}}$, with high probability. Hence, recalling the definition of $w$ and using the monotonicity of the non-zero coordinates of $x_{[M_0+1:n]}$ we have
\beq\label{eq:w-comp-bd}
\f{\|w_{[M_0+1:n]}\|_2}{\|w\|_2} \ge \f12 \cdot \f{\|x_{[n/8+1:n/4]}\|_2}{\|x\|_2} \ge \f{\|x_{[15n/16+1:n]}\|_2}{\|x\|_2} \ge \rho.
\eeq
Moreover, $w_{[M_0+1:n/4]}=x_{[M_0+1:n/4]}$, and $\|x_{[M_0+1:n/4]}\|_2 \ge \frac{1}{3} \|x_{[M_0+1:n]}\|_2$, so
\beq\label{eq:w-dom-bd}
\f{\|w_{[M_0+1:n]}\|_\infty}{\|w_{[M_0+1:n]}\|_2} \le \f{\|x_{[M_0+1:n]}\|_\infty}{\|x_{[M_0+1:n/4]}\|_2} \le 3 \cdot \f{\|x_{[M_0+1:n]}\|_\infty}{\|x_{[M_0+1:n]}\|_2} \le \f{3 K}{{c}_{\ref{p: spread vectors}} \sqrt{M_0}},
\eeq
where we applied Corollary \ref{cor:combine} again in assuming that $x/\|x\|_2 \notin V_{M_0}$, with high probability, with $V_{M_0}$ as in \eqref{eq:V_M-0}. Inequalities \eqref{eq:w-comp-bd} and \eqref{eq:w-dom-bd}  confirm that $w^\star \notin \wt V_{M_0}$ with high probability.

Now, setting $c_0^*=\f78$ in Proposition \ref{p: spread vectors-1} (see also Remark \ref{rmk:wt-V}), combining  \eqref{eq:spread-prop-pre-bd} with the union bound over the net $\mathcal{N} \subset \cY$, and applying triangle inequality we  derive that
\begin{multline*}
\P\left( \left\{\inf_{y \in \cY} \inf_{w^\star \in {\rm Sparse}({7n}/{8}) \cap S^{n-1}\setminus \wt V_{M_0}} \|(A_n -p {\bm J}_n) w_\star - y\|_2 \le 4 \wt{c}_{\ref{p: spread vectors-1}}\|w^\star_{[M_0+1:\f{7n}{8}]}\|_2 \sqrt{np} \right\} \cap \Omega_K^0 \right) \\
\le \exp(-\bar{c}_{\ref{p: spread vectors-1}}n).
\end{multline*}
Thus recalling \eqref{eq:w-bd-2}, and as $y^\star \in \cY$, we see that for a sufficiently small $c_{\ref{lem:non-dominated-J}}$, the inequality \eqref{eq:x-non-dominated-J} can hold only on a set of small probability.
This finishes the proof of the lemma.
\end{proof}

 \section{Invertibility over incompressible and non-dominated vectors}\label{sec:incomp}
 In this section our goal is to obtain a uniform lower bound on $\|A_n x \|_2$ over non-dominated and incompressible
 vectors $x$, with large probability. 
 As the set of such vectors possesses a large metric entropy, one cannot replicate the approach of Section \ref{sec:inv-dom-comp}. As outlined in Section \ref{sec:prelim}, we find a uniform lower bound over the set of such vectors by relating it to the average of the distance of a column of $A_n$ from the subspace spanned by the rest of the columns. To this end, we use the following Lemma from \cite{RV1} (see Lemma 3.5 there).
 \begin{lem}[Invertibility via distance]  \label{l: via distance}
     For $j \in [n]$, let $\tilde{A}_{n,j} \in \R^n$ be the $j$-th column of $\tilde{A}_n$, and let $\tilde H_{n,j}$ be the subspace of $\R^n$ spanned by $\{\tilde{A}_{n,i}, i \in [n]\setminus\{j\}\}$. Then for any $\vep, \rho>0$, and $M<n$,
   \beq\label{eq:invertibility_distance}
    \P\left(  \inf_{x \in  \text{\rm Incomp}(M, \rho)} \norm{\tilde{A}_nx}_2 \le \vep \rho^3 \sqrt{\frac{p}{n}} \right)
    \le \frac{1}{M} \sum_{j=1}^n \P \left( \dist(\tilde{A}_{n,j},\tilde H_{n,j}) \le \rho^2\sqrt{p} \vep \right).
   \eeq
 \end{lem}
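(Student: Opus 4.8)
This is the "invertibility via distance" lemma from Rudelson-Vershynin; the plan is to follow their argument adapted to the rectangular/dependent setting here. Let me think about this carefully.

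We want to bound $\P(\inf_{x \in \text{Incomp}(M,\rho)} \|\tilde A_n x\|_2 \le \epsilon \rho^3 \sqrt{p/n})$. The key structural fact about incompressible vectors is: if $x \in \text{Incomp}(M,\rho)$, then $x$ has "many" coordinates of size comparable to $1/\sqrt{n}$. Precisely, there is a set of indices (of size at least $\rho^2 M/2$ or so) on which $|x_j|$ lies between $\rho/\sqrt{2n}$ and $1/\sqrt{\rho^2 M}$ — this is the standard spreading lemma. Actually the cleanest version: for $x \in \text{Incomp}(M,\rho)$, the set $\sigma(x) := \{j : \rho/\sqrt{2n} \le |x_j| \le 1/\sqrt{\rho^2 M}\}$ has cardinality $|\sigma(x)| \ge \rho^2 M/2$. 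This is Lemma 3.4-ish in RV.

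Then the distance argument: for any $j$, write $\tilde A_n x = \sum_i x_i \tilde A_{n,i}$, and note $\text{dist}(x_j \tilde A_{n,j}, \tilde H_{n,j}) \le \|x_j \tilde A_{n,j} + \sum_{i \ne j} x_i \tilde A_{n,i}\|_2 = \|\tilde A_n x\|_2$ since the sum over $i \ne j$ lies in $\tilde H_{n,j}$. Wait — more carefully, $\text{dist}(x_j \tilde A_{n,j}, \tilde H_{n,j}) = |x_j| \cdot \text{dist}(\tilde A_{n,j}, \tilde H_{n,j})$, and the displacement $\tilde A_n x$ differs from $x_j \tilde A_{n,j}$ by the element $\sum_{i\ne j} x_i \tilde A_{n,i} \in \tilde H_{n,j}$, hence $\text{dist}(x_j \tilde A_{n,j}, \tilde H_{n,j}) \le \|\tilde A_n x\|_2$. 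So $|x_j| \, \text{dist}(\tilde A_{n,j}, \tilde H_{n,j}) \le \|\tilde A_n x\|_2$ for every $j$.

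**The averaging step and chaining it together.** Suppose the event $\{\inf_{x \in \text{Incomp}(M,\rho)} \|\tilde A_n x\|_2 \le \epsilon \rho^3 \sqrt{p/n}\}$ holds, witnessed by some $x$. Let $\sigma(x)$ be the spread set; for $j \in \sigma(x)$ we have $|x_j| \ge \rho/\sqrt{2n}$, so $\text{dist}(\tilde A_{n,j}, \tilde H_{n,j}) \le \|\tilde A_n x\|_2 / |x_j| \le \epsilon \rho^3 \sqrt{p/n} \cdot \sqrt{2n}/\rho = \sqrt 2 \, \epsilon \rho^2 \sqrt p \le 2\epsilon \rho^2 \sqrt p$ (rescale $\epsilon$ — or match the constant $\rho^2 \sqrt p \epsilon$ in the statement by absorbing the $\sqrt 2$). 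Hence on this event, at least $\rho^2 M/2$ indices $j$ satisfy $\text{dist}(\tilde A_{n,j}, \tilde H_{n,j}) \le \rho^2 \sqrt p \epsilon$. Therefore
\[
\mathbf{1}\Big\{\inf_{x} \|\tilde A_n x\|_2 \le \epsilon \rho^3 \sqrt{p/n}\Big\} \le \frac{2}{\rho^2 M} \sum_{j=1}^n \mathbf{1}\{\text{dist}(\tilde A_{n,j}, \tilde H_{n,j}) \le \rho^2 \sqrt p \epsilon\}.
\]
Taking expectations gives the bound with $2/(\rho^2 M)$; to match the clean $1/M$ in the statement one uses the sharper constant from the spreading lemma (the set $\{|x_j| \ge \rho/\sqrt{2n}\}$ actually has size $\ge M$ when one uses $\text{Incomp}(M, \rho)$ with the normalization as defined — or one just tracks that RV's Lemma 3.5 is stated this way and cites it). I would state the spreading lemma precisely with the constant that yields exactly $1/M$, or else note the discrepancy is harmless and absorbed.

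**Main obstacle.** The only real subtlety is the exact form of the spreading/regularity lemma for incompressible vectors and getting the constants to line up with the displayed $1/M$ and the powers $\rho^2, \rho^3$ — in RV this is bookkeeping, and since the lemma is quoted verbatim from \cite{RV1} (Lemma 3.5 there), the honest move is simply to cite it: \emph{the statement and proof are identical to \cite[Lemma 3.5]{RV1}; the only change is cosmetic, replacing the i.i.d.\ structure with the columns $\tilde A_{n,i}$, since the proof uses only linear algebra (the distance inequality $|x_j|\,\mathrm{dist}(\tilde A_{n,j},\tilde H_{n,j}) \le \|\tilde A_n x\|_2$) and the deterministic spreading property of incompressible vectors, neither of which refers to the joint law of the entries}. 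I expect the write-up here to be one short paragraph deferring to \cite{RV1}, exactly as the excerpt does for several other imported lemmas.
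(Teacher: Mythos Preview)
Your proposal is correct and matches the paper's treatment exactly: the paper does not prove this lemma but simply imports it from \cite{RV1} (Lemma 3.5 there), and your sketch reproduces the standard Rudelson--Vershynin argument (spreading lemma for incompressible vectors, the deterministic inequality $|x_j|\,\mathrm{dist}(\tilde A_{n,j},\tilde H_{n,j})\le\|\tilde A_n x\|_2$, then averaging). Your closing paragraph anticipating that the write-up defers to \cite{RV1} is precisely what the paper does.
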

 \begin{rmk}\label{rmk:l via distance}
 Lemma \ref{l: via distance} can be extended to the case when the event in the \abbr{LHS} of \eqref{eq:invertibility_distance} is intersected with an event $\Omega$. In that case Lemma \ref{l: via distance} continues to hold if the \abbr{RHS} of \eqref{eq:invertibility_distance} is replaced by intersecting each of the event under the summation sign with the same event $\Omega$. In the proof of Theorem \ref{thm:s-min-graphs}, we will use this slightly more general version of Lemma \ref{l: via distance}. Since the proof of this general version of Lemma \ref{l: via distance} is a straightforward adaptation of the proof of \cite[Lemma 3.5]{RV1}, we omit the details.
 \end{rmk}
 Lemma \ref{l: via distance} shows that it is enough to find bounds on $\dist(A_{n,j}, H_{n,j})$ for $j \in [n]$, where $A_{n,j}$ is the $j$-th column of $A_n$ and $H_{n,j}$ is the subspace spanned by the rest of the columns. Furthermore, from the assumption on the entries of $A_n$ it follows that one only needs to consider $j=1$. For $j \in [n]\setminus\{1\}$ one can essentially repeat the same argument.

 For  a matrix $A_n$ of i.i.d.~Bernoulli entries, the first column $A_{n,1}$ is independent of $H_{n,1}$, so the desired bound on the distance  essentially follows from Berry-Ess\'{e}en theorem (see Lemma \ref{lem:bound-levy}), 
 upon showing that any vector in the kernel of a random matrix must be both non-dominated and incompressible. This easier case is therefore deferred to Section \ref{sec:proof-main-thm} and is dealt with during the course of the proof of Theorem \ref{thm:s-min-graphs}. Here we will only obtain a bound on $\dist(A_{n,1}, H_{n,1})$ when $A_n$ is the adjacency matrix of either a directed or a undirected Erd\H{o}s-R\'{e}nyi graph.

To obtain a bound on $\dist(A_{n,1}, H_{n,1})$ we derive an alternate expression for the same which is more tractable. This is done in the following extension of  \cite[Proposition 5.1]{V}.

 \begin{prop}[Distance via quadratic forms]\label{prop:distance-quadratic}
 Let $ \tilde A_n, \tilde A_{n,j}$ and $\tilde H_{n,j}$ be as in Lemma \ref{l: via distance}. Denote by $C_n$  the $(n-1)\times (n-1)$ sub-matrix of $\tilde A_n^{{\sf T}}$ obtained after removing the first row and column of $\tilde A_n$. Furthermore, let ${\bm x}^{\sf T}, {\bm y} \in \R^{n-1}$ denote the first row and column of $\tilde A_n$ with the first entry $a_{11}$ removed, respectively. Then we have the following:

\begin{enumeratei}

\item If $C_n$ is non-invertible then
\[
\dist(\tilde A_{n,1}, \tilde H_{n,1}) \ge \sup_{v \in {\rm Ker}(C_n) \cap S^{n-1}} |\langle {\bm y}, v\rangle |,
\]
where ${\rm Ker}(C_n):= \{u \in \R^{n-1}: C_n u =0\}$.

\item If $C_n$ is invertible then
 \beq\label{eq:dist-quad}
 \dist(\tilde A_{n,1},\tilde H_{n,1}) = \f{\left|\langle C_n^{-1} {\bm x}, {\bm y}\rangle-a_{11} \right|}{\sqrt{1+\left\|C_n^{-1}{\bm x}\right\|_2^2}}.
 \eeq

 \end{enumeratei}
 \end{prop}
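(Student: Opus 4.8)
The plan is to reduce the distance computation to elementary linear algebra after setting up the right block decomposition of $\tilde A_n$. Write $\tilde A_{n,1}$ for the first column of $\tilde A_n$ and recall $\tilde H_{n,1}=\Span\{\tilde A_{n,2}\etc \tilde A_{n,n}\}$. The key observation is that $\dist(\tilde A_{n,1},\tilde H_{n,1})$ can be computed through the unique vector orthogonal to $\tilde H_{n,1}$: if $w\in\R^n$ is a unit vector with $w\perp \tilde H_{n,1}$ (equivalently, $w$ lies in the kernel of the $(n-1)\times n$ matrix whose rows are the transposes of $\tilde A_{n,2}\etc\tilde A_{n,n}$), then $\dist(\tilde A_{n,1},\tilde H_{n,1}) = |\langle \tilde A_{n,1},w\rangle|$. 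So the first step is to identify this $w$ explicitly in terms of $C_n$, ${\bm x}$, and ${\bm y}$.

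Here is how I would carry that out. Writing the first row of $\tilde A_n$ as $(a_{11},{\bm x}^{\sf T})$ and splitting accordingly, the columns $\tilde A_{n,2}\etc \tilde A_{n,n}$ are, in block form, $\begin{pmatrix}{\bm x}^{\sf T}\\ C_n^{\sf T}\end{pmatrix}$ acting on coordinates $2\etc n$; more precisely the $(n-1)\times n$ matrix $B$ whose rows are $\tilde A_{n,j}^{\sf T}$, $j\ge 2$, has block structure $B=[\,{\bm x}\ \ C_n^{\sf T}\,]$ where the first column is ${\bm x}\in\R^{n-1}$ and $C_n^{\sf T}$ is the remaining $(n-1)\times(n-1)$ block. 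A vector $w=(w_1,\bar w)^{\sf T}\in\R\times\R^{n-1}$ satisfies $Bw=0$ iff $w_1{\bm x} + C_n^{\sf T}\bar w = 0$. \emph{Case (ii):} if $C_n$ is invertible then so is $C_n^{\sf T}$, and we may take $w_1=1$, $\bar w = -(C_n^{\sf T})^{-1}{\bm x} = -(C_n^{-1})^{\sf T}{\bm x}$; then $\dist(\tilde A_{n,1},\tilde H_{n,1}) = |\langle \tilde A_{n,1},w\rangle|/\|w\|_2$. Now $\tilde A_{n,1}=(a_{11},{\bm y})^{\sf T}$ by definition of ${\bm y}$ as the first column with $a_{11}$ removed, so $\langle \tilde A_{n,1},w\rangle = a_{11} - \langle {\bm y},(C_n^{-1})^{\sf T}{\bm x}\rangle = a_{11}-\langle C_n^{-1}{\bm y},{\bm x}\rangle$. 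One must be mildly careful about which of ${\bm x},{\bm y}$ plays which role: transposing, $\langle C_n^{-1}{\bm y},{\bm x}\rangle = \langle {\bm y},(C_n^{-1})^{\sf T}{\bm x}\rangle = \langle {\bm y}, (C_n^{\sf T})^{-1}{\bm x}\rangle$, and since $C_n$ is the sub-matrix of $\tilde A_n^{\sf T}$ (not $\tilde A_n$), chasing the definitions gives exactly $\langle C_n^{-1}{\bm x},{\bm y}\rangle$ in \eqref{eq:dist-quad}. Finally $\|w\|_2^2 = 1 + \|(C_n^{-1})^{\sf T}{\bm x}\|_2^2 = 1+\|C_n^{-1}{\bm x}\|_2^2$ (the $\ell_2$ norm is invariant under transpose of the operator only in the sense $\|(C_n^{-1})^{\sf T}{\bm x}\|$ need not equal $\|C_n^{-1}{\bm x}\|$ — so here I would instead directly verify that the correct normalization constant, tracking the $\tilde A_n$ vs $\tilde A_n^{\sf T}$ bookkeeping, is $\sqrt{1+\|C_n^{-1}{\bm x}\|_2^2}$ as written). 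This yields \eqref{eq:dist-quad}.

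\emph{Case (i):} if $C_n$ is not invertible, I would argue by a projection/lower-bound estimate rather than an identity. For any unit $v\in{\rm Ker}(C_n)$, consider the vector $\hat v=(0,v)^{\sf T}\in\R^n$ (zero in the first coordinate). Since $C_n$ is the sub-matrix of $\tilde A_n^{\sf T}$ obtained by deleting the first row and column, $C_n v = 0$ says precisely that $\hat v$ is orthogonal to each of $\tilde A_{n,2}\etc\tilde A_{n,n}$ after deleting their first coordinates — but $\hat v$ has first coordinate $0$, so in fact $\langle \hat v,\tilde A_{n,j}\rangle=0$ for all $j\ge 2$, i.e. $\hat v\perp \tilde H_{n,1}$. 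Hence $\dist(\tilde A_{n,1},\tilde H_{n,1}) \ge |\langle \tilde A_{n,1},\hat v\rangle| = |\langle {\bm y},v\rangle|$ (the first coordinate $a_{11}$ drops out against the zero entry of $\hat v$). Taking the supremum over $v\in{\rm Ker}(C_n)\cap S^{n-1}$ gives (i).

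The main obstacle I anticipate is purely bookkeeping: the statement carefully distinguishes $\tilde A_n$ from $\tilde A_n^{\sf T}$, and $C_n$ is defined as a block of the \emph{transpose}, while ${\bm x}$ and ${\bm y}$ come from the original $\tilde A_n$. Getting the inner product in \eqref{eq:dist-quad} to come out as $\langle C_n^{-1}{\bm x},{\bm y}\rangle$ rather than $\langle C_n^{-1}{\bm y},{\bm x}\rangle$, and confirming the normalization is $\sqrt{1+\|C_n^{-1}{\bm x}\|_2^2}$ and not $\sqrt{1+\|(C_n^{-1})^{\sf T}{\bm x}\|_2^2}$, requires writing the block decomposition of $\tilde A_n^{\sf T}$ (so that its columns, which are the rows of $\tilde A_n$, appear naturally) and carefully matching indices. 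Once the decomposition is written with $\tilde A_n^{\sf T}$ as the primary object — columns of $\tilde A_n$ being rows of $\tilde A_n^{\sf T}$ and vice versa — the computation is the standard Schur-complement / orthogonality argument and everything falls into place. No probabilistic input is needed; this is a deterministic identity for each fixed realization.
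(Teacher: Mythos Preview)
Your approach matches the paper's exactly: for (i) the paper also embeds $v\in{\rm Ker}(C_n)$ as $(0,v)^{\sf T}$ and notes it is normal to $\tilde H_{n,1}$; for (ii) the paper simply cites the symmetric case in \cite{V} and says the general case follows by the same computation you outline. Your transpose worry dissolves once you record that the bottom-right $(n-1)\times(n-1)$ block of $\tilde A_n$ equals $C_n^{\sf T}$ (since $C_n$ is defined as a block of $\tilde A_n^{\sf T}$), so the orthogonality equation reads $w_1{\bm x}+C_n\bar w=0$, giving $\bar w=-C_n^{-1}{\bm x}$ and hence both $\langle C_n^{-1}{\bm x},{\bm y}\rangle$ and the normalization $\sqrt{1+\|C_n^{-1}{\bm x}\|_2^2}$ directly, with no stray transpose.
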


  \begin{proof}
 It follows from the definition that
 \[
 \dist(\tilde A_{n,1}, \tilde H_{n,1}) \ge \sup_{{\bm h}} |\langle \tilde A_{n,1}, {\bm h} \rangle |,
 \]
 where the supremum is taken over all unit vectors ${\bm h}$ that are normal to the subspace $\tilde H_{n,1}$. To prove part (i) we only need to show that if $v \in {\rm Ker}(C_n)$ then the vector $\binom{0}{v}$ is a vector normal to $\tilde H_{n,1}$. This is immediate from the definition of $C_n$ and $\tilde H_{n,1}$.

 When $C_n$ is invertible and $\tilde A_n$ is a symmetric matrix,  Proposition \ref{prop:distance-quadratic} was proved in \cite{V}. A simple adaptation of this proof yields Proposition \ref{prop:distance-quadratic} for any square matrix $\tilde A_n$. We omit the details here.
 \end{proof}


From Proposition \ref{prop:distance-quadratic} we see that the relevant distance
has two different expressions depending on whether the $(n-1)\times (n-1)$ sub-matrix of $A_n$ obtained upon removing the first row and column is invertible or not.
In the latter case one can again use Lemma \ref{lem:bound-levy} to deduce the desired bound. Hence the treatment of that case is postponed to Section \ref{sec:proof-main-thm}.

 Thus the main technical result of this section is the following. For ease of writing we formulate and prove the relevant result for $(n+1) \times (n+1)$ matrices. With no loss of generality this extends to $n \times n$ matrices, possibly with slightly worse constants.

 \begin{prop}[Distance bound]\label{prop:dist-bd}
Let ${\sf A}_n$, a matrix of size $(n+1)\times (n+1)$, be the adjacency matrix of either a directed or an undirected Erd\H{o}s-R\'{e}nyi graph. Let $A_n$ be the $n \times n$ sub-matrix of ${\sf A}_n$ obtained upon deleting the first row and column of ${\sf A}_n$. Denote ${\bm x}^{\sf T}$ and ${\bm y}$ to be the first row and column of ${\sf A}_n$ with the first common entry removed. Define
\[
\Omega_+:=\{A_n \text{ is invertible}\},
\]
and fixing $K \ge 1$ we let
\[
\Omega_K^0:=\left\{\|A_n -  \E A_n \| \le K \sqrt{np}\right\}.
\]
Then there exist an absolute constant $c_{\ref{prop:dist-bd}}$ and another large constant $C_{\ref{prop:dist-bd}}$, depending only on $K$, such that for any $u \in \R$ and $\vep >0$ we have
\beq\label{eq:dist-bd}
\P \left(\left\{\f{\left| \langle A_n^{-1} {\bm x}, {\bm y}\rangle -u\right|}{\sqrt{1+\|A_n^{-1} {\bm x}\|_2^2}} \le c_{\ref{prop:dist-bd}}\vep \rho^2 \sqrt{p} \right\} \cap \Omega_K^0 \cap \Omega_+  \right) \le  \vep^{1/5} + \f{C_{\ref{prop:dist-bd}}}{\sqrt[4]{np}}.
\eeq
\end{prop}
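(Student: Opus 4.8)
The plan is to control the L\'evy concentration function of the quadratic form $\langle A_n^{-1}{\bm x},{\bm y}\rangle$, but the immediate difficulty is that ${\bm x}$ and ${\bm y}$ are \emph{dependent} for the symmetric and directed Erd\H{o}s--R\'enyi adjacency matrices (for the undirected model ${\bm x}={\bm y}$; for the directed model the entries satisfy the coupling in \eqref{eq:dir-erdos}), and moreover both are correlated with $A_n$ in the symmetric case. I would first observe that it suffices to bound $\cL(\langle A_n^{-1}{\bm x},{\bm y}\rangle,\,\cdot\,)$ on the event $\Omega_+\cap\Omega_K^0$, after conditioning on $A_n$: indeed on $\Omega_K^0$ we have $\|A_n^{-1}{\bm x}\|_2\le \|A_n^{-1}\|\cdot\|{\bm x}\|_2$, and we will show $\|A_n^{-1}{\bm x}\|_2$ is not too large so that the denominator $\sqrt{1+\|A_n^{-1}{\bm x}\|_2^2}$ can be absorbed. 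So the core task reduces to: conditionally on $A_n$, show that $v_\star := A_n^{-1}{\bm x}$ is (with high probability over the randomness of ${\bm x}$ and the part of $A_n$ it may see) a vector with a large spread component, and then feed this into a decoupled anti-concentration estimate for $\langle v_\star,{\bm y}\rangle$.

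Concretely, the steps in order would be: (1) Apply a decoupling argument in the spirit of \cite{V} to replace the dependent pair $({\bm x},{\bm y})$ by an independent pair $(\widehat{\bm x},\widehat{\bm y})$ — splitting the index set $[n]$ into two halves and using that the entries of $A_n$ relevant to $\widehat{\bm x}$ and those relevant to $\widehat{\bm y}$ become independent (this is exactly where Assumption \ref{ass:matrix-entry}(c) and the representation \eqref{eq:dir-erdos} are used); this costs a constant power, so an $\vep$-loss becomes an $\vep^{1/5}$-type loss. (2) Show that the random vector $v_\star = A_n^{-1}\widehat{\bm x}$ (equivalently, the solution of $A_n v = \widehat{\bm x}$) has a large spread component: since $\widehat{\bm x}$ has i.i.d.\ $\dBer(p)$ coordinates on its support with $|\supp(\widehat{\bm x})|\lesssim np$, the bound $\|\widehat{\bm x}\|_2\sim\sqrt{np}$ holds with high probability, and Lemma \ref{lem:non-dominated-J} (applied with $y_0=\widehat{\bm x}$, which satisfies $\|y_0\|_2\in[1,\bar C np]$ and $|\supp(y_0)|\le C_\star np$ with high probability) gives that there is a set $J$ with $|J|\approx n/2$ on which $v_\star$ restricted to the tail coordinates is non-dominated and of substantial norm — i.e.\ $\|(v_\star)_J\|_\infty/\|(v_\star)_J\|_2 \le 1/(c\sqrt n)$ with $\|(v_\star)_J\|_2$ bounded below, off an event of probability $n^{-\bar c}$. (3) Conditionally on $v_\star$ lying in this good set and on $A_n$, apply Lemma \ref{lem:bound-levy} with $v = v_\star$ and this set $J$: this gives
\[
\cL\!\left(\langle \widehat{\bm y}, v_\star\rangle,\; p^{1/2}(1-p)^{1/2}\|(v_\star)_J\|_2\,\vep\right)\le C_{\ref{lem:bound-levy}}\left(\vep + \frac{\|(v_\star)_J\|_\infty}{p^{1/2}(1-p)^{1/2}\|(v_\star)_J\|_2}\right)\le C\left(\vep + \frac{1}{\sqrt{np}}\right).
\]
(4) Translate this Lévy bound back through the decoupling and through the denominator: divide the threshold by $\sqrt{1+\|A_n^{-1}{\bm x}\|_2^2}$, using that on $\Omega_K^0$ one has $\|A_n^{-1}{\bm x}\|_2 = \|v_\star\|_2$, and $p^{1/2}\|(v_\star)_J\|_2 / \sqrt{1+\|v_\star\|_2^2}\gtrsim \rho^2\sqrt p$ once we know $\|(v_\star)_J\|_2$ is a fixed fraction of $\|v_\star\|_2$ (which again is part of the conclusion of Lemma \ref{lem:non-dominated-J}, since its $J$ carries a definite proportion of the mass). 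Collecting the exceptional probabilities — $n^{-\bar c}$ from Lemma \ref{lem:non-dominated-J}, the decoupling loss turning $\vep$ into $\vep^{1/5}$, and the $C/\sqrt[4]{np}$ from the anti-concentration term after the decoupling square-root — yields the claimed bound $\vep^{1/5} + C_{\ref{prop:dist-bd}}/\sqrt[4]{np}$.

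The main obstacle I expect is Step (1)–(2) together: making the decoupling precise when ${\bm x}\neq{\bm y}$ (unlike the purely symmetric case in \cite{V}) and simultaneously verifying that the decoupled $\widehat{\bm x}$ still satisfies the hypotheses of Lemma \ref{lem:non-dominated-J} ($\|y_0\|_2\in[1,\bar C np]$ and small support) on the relevant high-probability event, while keeping track of which entries of $A_n$ remain independent of $\widehat{\bm y}$ so that $\langle\widehat{\bm y},v_\star\rangle$ is genuinely a sum of independent terms conditionally on $v_\star$. The denominator bookkeeping in Step (4) and the case split according to whether $A_n$ is invertible (the non-invertible case being handled separately via Proposition \ref{prop:distance-quadratic}(i) and Lemma \ref{lem:bound-levy}, as the excerpt notes) are comparatively routine.
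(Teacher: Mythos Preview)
Your outline has the right ingredients but a genuine gap in the order of operations, and it is not a bookkeeping issue. The denominator $\sqrt{1+\|A_n^{-1}{\bm x}\|_2^2}$ depends on the \emph{same} random vector ${\bm x}$ that you are trying to decouple. You cannot condition on $A_n$, pass to the L\'evy concentration function $\cL(\langle A_n^{-1}{\bm x},{\bm y}\rangle,\cdot)$, and carry an ${\bm x}$-dependent threshold along for the ride; the decoupling lemma (Lemma \ref{lem:decoupling}) requires a threshold that is fixed once $A_n$ is fixed. Your Step (4) tries to repair this after the fact by writing ``$\|A_n^{-1}{\bm x}\|_2=\|v_\star\|_2$'', but $v_\star=A_n^{-1}\widehat{\bm x}$ (in fact $A_n^{-1}({\bm x}_{J^c}-{\bm x}'_{J^c})$ after the actual decoupling), which is a different vector than $A_n^{-1}{\bm x}$; the two norms are not equal and there is no reason for them to be comparable.

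The paper resolves this by \emph{first} replacing the denominator with $p^{1/2}\|A_n^{-1}\|_{\rm HS}$, a quantity measurable with respect to $A_n$ alone, and only \emph{then} decoupling. This replacement (the content of \eqref{eq:ell-2-to-hs} and \eqref{eq:dist-bound-split-1}) is not free: the upper bound $\|A_n^{-1}{\bm x}\|_2\lesssim \vep_1^{-1/2}p^{1/2}\|A_n^{-1}\|_{\rm HS}$ comes from splitting ${\bm x}=\bar{\bm x}+p{\bf 1}$, handling the centered part by Markov (Lemma \ref{lem:prep-bounds}(i)), and handling the mean part $p\,A_n^{-1}{\bf 1}$ via Lemma \ref{lem:large-eig}, which shows the top eigenvector of $A_n$ is close to $n^{-1/2}{\bf 1}$. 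You do not mention this eigenvector step at all, and without it the nonzero mean of the entries is unaccounted for. After decoupling, one further needs the \emph{lower} bound $\|A_n^{-1}({\bm x}_{J^c}-{\bm x}'_{J^c})\|_2\gtrsim \vep_1 p^{1/2}\rho\|A_n^{-1}\|_{\rm HS}$ (the event $\widehat\cG_n$, via Lemma \ref{lem:prep-bounds}(ii)) to convert the HS-norm threshold into one proportional to $\|v_\star\|_2$ so that Lemma \ref{lem:bound-levy} applies. The exponent $1/5$ is then produced by optimizing the auxiliary parameter $\vep_1$ (the Markov/comparison loss) against the $\vep\,\vep_1^{-3/2}$ coming out of the Berry--Ess\'een step after the decoupling square, not directly from the decoupling alone as you suggest. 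Finally, note that Lemma \ref{lem:non-dominated-J} as stated gives $\|x_{[n/4+1:n]\cap J}\|_2\ge\rho$ for the \emph{unnormalized} solution, not that this is a fixed fraction of $\|x\|_2$; the paper applies it to the unit vector ${\bm\xi}=v_\star/\|v_\star\|_2$, which is why the HS-norm replacement has to happen first.
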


\begin{rmk}\label{eq:sharper-bd}
It is believed that the optimal exponent of $\vep$ in the \abbr{RHS} of \eqref{eq:dist-bd} is one. As ${\bm x}$ and ${\bm y}$ are not independent, to obtain a bound on the probability of the event on the \abbr{LHS} of \eqref{eq:dist-bd}, we need to use a decoupling argument (see Lemma \ref{lem:decoupling} below). Even in the case of independent ${\bm x}$ and ${\bm y}$, to apply Lemma, \ref{lem:bound-levy} one still needs to replace the denominator by some constant multiple of $\|A_n^{-1} {\bm x}\|_2$. This amounts to showing that  $\|A_n^{-1} {\bm x}\|_2 \ge c$ for some $c >0$. As the entries of $A_n$ have a non-zero mean this poses an additional technical difficulty. These two steps together result a sub-optimal exponent of $\vep$ in the \abbr{RHS} of \eqref{eq:dist-bd}.

It is further believed that the second term in the probability bound of \eqref{eq:dist-bd} can be improved to $\exp(-\bar c np)$ for some $\bar c>0$. To improve this bound, one needs to obtain a strong estimate of the L\'{e}vy concentration function of  $A_n v$  for $v \in S^{n-1}$. Such an estimate is impossible for a vector with rigid arithmetic structure. On the other hand the set of such vectors has a low metric entropy. Therefore, one needs to show that this metric entropy precisely balances the estimate on the L\'{e}vy concentration function. Putting these two pieces together, the desired better bound on the probability was obtained in \cite{BR-invertibility} for sparse matrices with i.i.d.~entries, for $np \ge C \log n$, for some large $C>1$, and in \cite{wei} for symmetric sparse matrices when $p \ge n^{-c}$ for some $c \in (0,1)$. To achieve the same here for all $p$ satisfying $np \ge \log(1/p)$ one requires new ideas. We refrain from pursuing this direction.
\end{rmk}

\begin{rmk}
We point out to the reader that results analogous to Proposition \ref{prop:dist-bd} were used in \cite{V} and \cite{wei} to control the invertibility over incompressible vectors for dense and sparse symmetric random matrices, respectively. Here to prove Proposition \ref{prop:dist-bd} we encounter additional technical difficulties to tackle the adjacency matrix of the directed Erd\H{o}s-R\'{e}nyi graph and also to handle the non-zero mean assumption on the entries.
\end{rmk}

\vskip10pt
Before proceeding to the proof of Proposition \ref{prop:dist-bd} let us describe the idea behind it. We note that if ${\bm x}$ and ${\bm y}$ were independent vectors with i.i.d.~Bernoulli entries and if the vector $A_n^{-1}{\bm x}$ was neither dominated nor compressible then, on the event that $A_n$ is invertible, the probability of the event
\beq\label{eq:non-dom-levy-form}
\left\{\f{|\langle A_n^{-1} {\bm x}, {\bm y} \rangle - u |}{\|A_n^{-1} {\bm x}\|_2} \le \vep \rho^2 \sqrt{p}\right\}
\eeq
would have been a consequence of Lemma \ref{lem:bound-levy}. Therefore, applying Proposition \ref{prop:distance-quadratic}(ii) we see that it is enough to reduce the \abbr{RHS} of \eqref{eq:dist-quad} to an expression similar to the above. This consists of several critical steps. The first is a decoupling argument. This is done via the following lemma.

 \begin{lem}[Decoupling]\label{lem:decoupling}
 Fix any $n \times n$ matrix $B_n$. Suppose ${\bm z}$ 
 and $\hat {\bm z}$ 
 are random vectors of length $n$, with independent coordinates but not necessarily independent of each other. Further assume that for every $ J \subset [n]$, ${\bm z}_J$ is independent of $\hat {\bm z}_{J^c}$. Let $({\bm z}', \hat{\bm z}')$ be an independent copy of $({\bm z}, \hat{\bm z})$. Then, for any $J \subset[n]$,
 \[
 \cL\left(\langle B_n {\bm z}, \hat{\bm z}\rangle, \vep\right)^2 \le \P\left(\left| \langle B_n({\bm z}_{J^c} -{\bm z}'_{J^c}),  \hat{\bm z}_J\rangle +  \langle B_n^*(\hat{\bm z}_{J^c} -\hat{\bm z}'_{J^c}), {\bm z}_J\rangle- v\right| \le 2 \vep \right), 
 \]
 where $v$ is some random vector depending on the $J^c \times J^c$ minor of $B_n$, and the random vectors ${\bm z}_{J^c}, {\bm z}'_{J^c}, \hat{\bm z}_{J^c}$, and $\hat{\bm z}'_{J^c}$.
 \end{lem}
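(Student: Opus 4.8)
The plan is to run the one--block decoupling scheme of \cite[Proposition 5.1]{V}, adapted to the present situation where the left and right vectors ${\bm z}$ and $\hat{\bm z}$ need not coincide. Write $Q:=\langle B_n{\bm z},\hat{\bm z}\rangle$ and, for the fixed set $J$, split ${\bm z}={\bm z}_J+{\bm z}_{J^c}$, $\hat{\bm z}=\hat{\bm z}_J+\hat{\bm z}_{J^c}$. Expanding the bilinear form and using the identity $\langle B_n{\bm z}_J,\hat{\bm z}_{J^c}\rangle=\langle B_n^*\hat{\bm z}_{J^c},{\bm z}_J\rangle$ gives
\[
Q=\langle B_n{\bm z}_J,\hat{\bm z}_J\rangle+\langle B_n{\bm z}_{J^c},\hat{\bm z}_J\rangle+\langle B_n^*\hat{\bm z}_{J^c},{\bm z}_J\rangle+\langle B_n{\bm z}_{J^c},\hat{\bm z}_{J^c}\rangle .
\]
First I would condition on $\cF:=\sigma({\bm z}_J,\hat{\bm z}_J)$. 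By the independence assumptions the block $({\bm z}_{J^c},\hat{\bm z}_{J^c})$ keeps its unconditional law given $\cF$; hence, given $\cF$, the first term above is a constant, the two middle (``cross'') terms are linear in $({\bm z}_{J^c},\hat{\bm z}_{J^c})$ with frozen coefficient vectors $B_n^*\hat{\bm z}_J$ and $B_n{\bm z}_J$, and the last term $\langle B_n{\bm z}_{J^c},\hat{\bm z}_{J^c}\rangle$ depends only on the $J^c\times J^c$ minor of $B_n$ and on $({\bm z}_{J^c},\hat{\bm z}_{J^c})$.

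Next I would symmetrize in the standard way. Fix $u\in\R$. By Jensen's inequality,
\[
\P(|Q-u|\le\vep)^2=\big(\E\,\P(|Q-u|\le\vep\mid\cF)\big)^2\le\E\big[\P(|Q-u|\le\vep\mid\cF)^2\big].
\]
Since $({\bm z}',\hat{\bm z}')$ is independent of $\cF$ and has the same law as $({\bm z},\hat{\bm z})$, conditionally on $\cF$ the pair $({\bm z}'_{J^c},\hat{\bm z}'_{J^c})$ is an independent copy of $({\bm z}_{J^c},\hat{\bm z}_{J^c})$. Writing $Q'$ for $Q$ with $({\bm z}_{J^c},\hat{\bm z}_{J^c})$ replaced by $({\bm z}'_{J^c},\hat{\bm z}'_{J^c})$, we get
\[
\P(|Q-u|\le\vep\mid\cF)^2=\P(|Q-u|\le\vep,\ |Q'-u|\le\vep\mid\cF)\le\P(|Q-Q'|\le2\vep\mid\cF).
\]
In the difference $Q-Q'$ the $\cF$--measurable constant cancels and only the increments ${\bm z}_{J^c}-{\bm z}'_{J^c}$, $\hat{\bm z}_{J^c}-\hat{\bm z}'_{J^c}$ survive inside the linear terms, so
\[
Q-Q'=\langle B_n({\bm z}_{J^c}-{\bm z}'_{J^c}),\hat{\bm z}_J\rangle+\langle B_n^*(\hat{\bm z}_{J^c}-\hat{\bm z}'_{J^c}),{\bm z}_J\rangle-v,
\]
with $v:=\langle B_n{\bm z}'_{J^c},\hat{\bm z}'_{J^c}\rangle-\langle B_n{\bm z}_{J^c},\hat{\bm z}_{J^c}\rangle$ a random variable depending only on the $J^c\times J^c$ minor of $B_n$ and on ${\bm z}_{J^c},{\bm z}'_{J^c},\hat{\bm z}_{J^c},\hat{\bm z}'_{J^c}$, as required. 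Taking $\E[\cdot]$ of both sides, combining with the Jensen bound, and then taking the supremum over $u$ yields the claimed bound on $\cL(Q,\vep)^2$.

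The step I expect to need the most care is the conditioning in the second paragraph: one must be sure that conditioning on $({\bm z}_J,\hat{\bm z}_J)$ does not perturb the distribution of $({\bm z}_{J^c},\hat{\bm z}_{J^c})$, so that the cross terms genuinely become linear with frozen coefficients and the independent copy $({\bm z}',\hat{\bm z}')$ supplies a legitimate conditionally independent replacement of the $J^c$--block. This rests on the joint independence of the blocks $({\bm z}_J,\hat{\bm z}_J)$ and $({\bm z}_{J^c},\hat{\bm z}_{J^c})$ --- equivalently, the mutual independence of the pairs $\{({\bm z}_j,\hat{\bm z}_j)\}_{j\in[n]}$ --- which holds in both settings where the lemma is applied: for the undirected Erd\H{o}s-R\'enyi graph ${\bm z}=\hat{\bm z}$, and for the directed one the pairs $(a_{1,j},a_{j,1})$ are independent over $j$ by the representation \eqref{eq:dir-erdos}. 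Everything else --- the expansion of the bilinear form, the Cauchy--Schwarz/Jensen symmetrization, and keeping track of which terms cancel and which are absorbed into the shift $v$ --- is routine bookkeeping.
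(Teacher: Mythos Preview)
Your proposal is correct and follows essentially the same route the paper indicates: the Jensen/Cauchy--Schwarz symmetrization trick of \cite[Lemma~14]{costello} as used in \cite[Proposition~5.1]{V}, adapted from the symmetric quadratic form to the bilinear form $\langle B_n{\bm z},\hat{\bm z}\rangle$. Your closing remark that the conditioning step really uses the joint independence of the blocks $({\bm z}_J,\hat{\bm z}_J)$ and $({\bm z}_{J^c},\hat{\bm z}_{J^c})$ --- which holds in both applications via the independence of the pairs $\{(z_j,\hat z_j)\}_j$ --- is a useful clarification that the paper's omitted proof would also need.
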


 \vskip10pt
Using \cite[Lemma 14]{costello},  in \cite{V} (see Proposition 5.1 there), a version of Lemma \ref{lem:decoupling} was proved when $B_n$ a symmetric matrix and ${\bm x}={\bm y}$. The same proof, with appropriate changes, works for a general matrix $B_n$ and with the stated assumptions on the joint law of ${\bm x}$ and ${\bm y}$. We omit the details.

Recall that in Section \ref{sec:inv-dom-comp} the invertibility over compressible and dominated vectors was proved under the general Assumption \ref{ass:matrix-entry}, and as seen in Remark \ref{rmk:skew symm}, the assumption can be further relaxed to include skew-symmetric matrices. Since skew-symmetric matrices of odd dimension are always singular, one cannot expect to have a unified proof for all matrices satisfying this general assumption. As we will see below, the proofs for the directed and the undirected Erd\H{o}s-R\'{e}nyi graphs differ in choosing $J \subset [n]$ in Lemma \ref{lem:decoupling}.

So, first let us consider the case when $A_n$ is the adjacency matrix of a directed Erd\H{o}s-R\'{e}nyi graph. We see that to apply Lemma \ref{lem:decoupling} one needs to condition on $A_n$ (notice that the matrix $B_n$ in Lemma \ref{lem:decoupling} is a deterministic matrix). Once we show that
\beq\label{eq:ell-2-to-hs}
1+ \|A_n^{-1} {\bm x}\|_2 \sim  p^{1/2} \|A_n^{-1}\|_{{\rm HS}} = \Omega(1),
\eeq
with large probability, we can replace the denominator of the \abbr{RHS} of \eqref{eq:dist-quad} by $p^{1/2} \|A_n^{-1}\|_{{\rm HS}}$. This allows us to condition on $A_n$ and then apply the decoupling lemma with an appropriate choice of the set $J \subset [n]$.

To this end, we first show that \eqref{eq:ell-2-to-hs} holds when ${\bm x}$ is replaced by its centered version $\bar{\bm x}$.
To tackle the additional difficulty of the non-zero mean we then show that the eigenvector corresponding to the largest eigenvalue of $A_n$ is close to the vector of all ones so that $|\|A_n^{-1} {\bm x}\|_2- \|A_n^{-1} \bar{\bm x}\|_2|$ is small.

Let us state the lemma showing that $\|A_n^{-1} \bar{\bm x}\|_2 \sim p^{1/2} \|A_n^{-1}\|_{{\rm HS}}$.

\begin{lem}\label{lem:prep-bounds}
Let $A_n$ satisfies Assumption \ref{ass:matrix-entry}, with $p$ such that \(
   np \ge \log(1/p)
   \). Let ${\bm x} \in \R^n$ be a random vector with i.i.d.~$\dBer(p)$ entries, and ${\bm x}'$ be an independent copy of ${\bm x}$. Denote $\bar {\bm x} : = {\bm x} - \E{\bm x}$.Then we have the following:

\begin{enumerate}[(i)]
\item For every $\vep_\star >0$,
\[
\P_{{\bm x}}\left(\|A_n^{-1} \bar{\bm x}\|_2 \le \vep_\star^{-1/2}p^{1/2} (1-p)^{1/2}  \|A_n^{-1} \|_{{\rm HS}} \right) \ge 1-\vep_\star,
\]
where $\P_{{\bm x}}(\cdot)$ denotes the probability under the law of ${\bm x}$.

\item Fix $K \ge 1$. Then, for every $\vep_\star >0$,
\beq\label{eq:HS_lbd}
\P \left(\left\{\|A_n^{-1}({\bm x} - {\bm x}')\|_2 \le \vep_\star p^{1/2} \rho \|A_n^{-1}\|_{{\rm HS}}\right\} \cap \Omega_K^0 \right) \le 4 C_{\ref{lem:bound-levy}} \left( \vep_\star + \f{K c_{\ref{p: spread vectors-1}}^{-1} }{\sqrt{np}}\right) + 2 n^{-\bar{c}_{\ref{cor:combine}}},
\eeq
where $\rho$ as in Proposition \ref{l: sparse vectors-2}.
\end{enumerate}
\end{lem}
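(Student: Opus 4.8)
For part (i), I would condition on $A_n$ and treat $\bar{\bm x}$ as the only source of randomness. Writing the spectral/singular-value decomposition of $A_n^{-1}$, or more simply using linearity of expectation on coordinates, I would compute $\E_{\bm x}\|A_n^{-1}\bar{\bm x}\|_2^2 = \sum_{i,j} (A_n^{-1})_{ij}^2 \operatorname{Var}(x_j) = p(1-p)\|A_n^{-1}\|_{\rm HS}^2$, using that the coordinates of $\bar{\bm x}$ are independent, centered, with variance $p(1-p)$. Then Markov's inequality applied to the nonnegative random variable $\|A_n^{-1}\bar{\bm x}\|_2^2$ gives
\[
\P_{\bm x}\left(\|A_n^{-1}\bar{\bm x}\|_2^2 \ge \vep_\star^{-1} p(1-p)\|A_n^{-1}\|_{\rm HS}^2\right) \le \vep_\star,
\]
which is exactly the claimed bound after taking square roots. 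This part is routine and the statement of $\P_{\bm x}$ makes clear that the conditioning on $A_n$ is intended.

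For part (ii), the key observation is that ${\bm x}-{\bm x}'$ is a symmetric random vector with i.i.d.\ coordinates, so I would first reduce the event to a Lévy concentration statement for a linear form. Conditionally on $A_n$ (hence on $A_n^{-1}$), the vector $v_\star := (A_n^{-1})^* e$ for a generic direction — more precisely, I want to bound $\cL(\langle A_n^{-1}({\bm x}-{\bm x}'), \cdot\rangle)$, but really the relevant quantity is $\|A_n^{-1}({\bm x}-{\bm x}')\|_2$ being comparable to $\|A_n^{-1}\|_{\rm HS}$. The natural route: write $A_n^{-1}({\bm x}-{\bm x}') = A_n^{-1}{\bm x} - A_n^{-1}{\bm x}'$, and note that $\|A_n^{-1}({\bm x}-{\bm x}')\|_2 \ge$ (the component of $A_n^{-1}{\bm x}$ along a well-spread direction). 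Concretely, I expect the argument to go through the rows of $A_n^{-1}$: if $w^{(k)}$ denotes the $k$-th row of $A_n^{-1}$, then the $k$-th coordinate of $A_n^{-1}({\bm x}-{\bm x}')$ is $\langle w^{(k)}, {\bm x}-{\bm x}'\rangle$, and I would apply Lemma \ref{lem:bound-levy} with $v = w^{(k)}$ and a cleverly chosen $J = J_k \subset [n]$ to get
\[
\cL\left(\langle w^{(k)}, {\bm x}-{\bm x}'\rangle,\ p^{1/2}(1-p)^{1/2}\|w^{(k)}_{J_k}\|_2\,\vep_\star\right) \le 2C_{\ref{lem:bound-levy}}\left(\vep_\star + \frac{\|w^{(k)}_{J_k}\|_\infty}{p^{1/2}(1-p)^{1/2}\|w^{(k)}_{J_k}\|_2}\right),
\]
where the factor $2$ accounts for the difference ${\bm x}-{\bm x}'$ of two i.i.d.\ copies.

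The crux — and the step I expect to be the main obstacle — is to guarantee that for a typical (in fact, most) rows $w^{(k)}$ of $A_n^{-1}$ one can choose $J_k$ so that $\|w^{(k)}_{J_k}\|_2$ is comparable to $\|w^{(k)}\|_2$ while $\|w^{(k)}_{J_k}\|_\infty / \|w^{(k)}_{J_k}\|_2 \lesssim K c_{\ref{p: spread vectors-1}}^{-1} (np)^{-1/2}$, i.e.\ that the rows of $A_n^{-1}$ are well-spread (non-dominated, incompressible). This is precisely where Corollary \ref{cor:combine} enters: the rows of $A_n^{-1}$ are (up to transposition and the choice of $u$ in the solved system) of the form $A_n^{-1} u$ for standard basis vectors $u$, which have $|\operatorname{supp}(u)| = 1 \le C_\star np$, so Corollary \ref{cor:combine} tells us that $A_n^{-1} u / \|A_n^{-1} u\|_2 \notin V_{c_0^*, c_{\ref{p: spread vectors-1}}}$ with probability at least $1 - n^{-\bar c_{\ref{cor:combine}}}$; in particular such a row is not dominated with parameter $M_0 \sim n\sqrt{\log\log n}/\log n$, so its tail $w^{(k)}_{[M_0+1:n]}$ has $\ell_\infty/\ell_2$ ratio at most $\sim K c_{\ref{p: spread vectors-1}}^{-1}/\sqrt{M_0}$, and choosing $J_k = [M_0+1:n]$ (in the rearranged coordinates) gives both a non-negligible norm $\|w^{(k)}_{J_k}\|_2 \ge \rho \|w^{(k)}\|_2$ and the desired spread. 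Summing the $k$-th coordinate contributions: since $\|w^{(k)}_{J_k}\|_2 \ge \rho\|w^{(k)}\|_2$ for most $k$, a tensorization/union over coordinates then yields $\|A_n^{-1}({\bm x}-{\bm x}')\|_2 \gtrsim \vep_\star p^{1/2}\rho\|A_n^{-1}\|_{\rm HS}$ outside an event of probability $4C_{\ref{lem:bound-levy}}(\vep_\star + Kc_{\ref{p: spread vectors-1}}^{-1}(np)^{-1/2})$, and the extra $2n^{-\bar c_{\ref{cor:combine}}}$ absorbs the (few) rows that fail the Corollary \ref{cor:combine} conclusion together with the event $\Omega_0$. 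I would need to be careful that the $\ell_\infty/\ell_2$ bound is applied to the right normalization and that the $\sqrt{M_0} \gg \sqrt{np}$ only makes the bound stronger, so the stated $(np)^{-1/2}$ is safely dominated; and to handle the intersection with $\Omega_K^0$ throughout so that $\|A_n\| \le 2Knp$ is available wherever a crude operator-norm bound is needed.
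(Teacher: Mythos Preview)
Your argument for part (i) is correct and matches the paper's proof exactly: compute $\E_{\bm x}\|A_n^{-1}\bar{\bm x}\|_2^2 = p(1-p)\|A_n^{-1}\|_{\rm HS}^2$ and apply Markov.

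For part (ii) you have correctly identified the two main ingredients --- Corollary \ref{cor:combine} to show that each normalized row $w_k := (A_n^{-1})^{\sf T} e_k / \|(A_n^{-1})^{\sf T} e_k\|_2$ is incompressible and non-dominated, and Lemma \ref{lem:bound-levy} for the small-ball bound on $\langle w_k, {\bm x}-{\bm x}'\rangle$ --- but your combination step is a genuine gap. The coordinates $\langle w^{(k)}, {\bm x}-{\bm x}'\rangle$ are \emph{not} independent across $k$: they all depend on the same random vector ${\bm x}-{\bm x}'$, so neither a tensorization lemma (which needs independence across the index you tensorize over) nor a union bound (which goes the wrong direction --- you want a sum of squares to be large, not each term) applies. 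The paper closes this gap with a weighted-average trick (\cite[Lemma 8.3]{V}): writing $p_k := \|(A_n^{-1})^{\sf T} e_k\|_2^2 / \|A_n^{-1}\|_{\rm HS}^2$ so that $\sum_k p_k = 1$, one has the deterministic Markov-type inequality
\[
\P\Big(\sum_k p_k \langle w_k, {\bm x}-{\bm x}'\rangle^2 \le t\Big) \le 2\sum_k p_k \,\P\big(\langle w_k, {\bm x}-{\bm x}'\rangle^2 \le 2t\big),
\]
which reduces the problem to bounding the individual small-ball probabilities uniformly in $k$. This is the missing idea in your sketch.

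A smaller quantitative point: the paper applies Corollary \ref{cor:combine} with $c_0^* = \tfrac12$, so the spread part of $w_k$ lives on $[n/2+1:n]$ and the $\ell_\infty/\ell_2$ ratio there is $\lesssim K c_{\ref{p: spread vectors-1}}^{-1} n^{-1/2}$, giving exactly the $(np)^{-1/2}$ term after dividing by $p^{1/2}(1-p)^{1/2}$. Your choice $J_k$ at level $M_0 = n\sqrt{\log\log n}/\log n$ would instead produce a ratio $\lesssim (p M_0)^{-1/2}$, which is \emph{larger} than $(np)^{-1/2}$ by a factor $\sim (\log n)^{1/2}(\log\log n)^{-1/4}$; your claim that ``$\sqrt{M_0} \gg \sqrt{np}$ only makes the bound stronger'' has the inequality backwards, since what enters Lemma \ref{lem:bound-levy} is $\|v_J\|_\infty /(p^{1/2}\|v_J\|_2)$, not $\|v_J\|_\infty/\|v_J\|_2$ alone.
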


\vskip10pt
The proof of Lemma \ref{lem:prep-bounds} is deferred to the end of this section. The next lemma shows that $A_n$ has a large eigenvalue and the eigenvector corresponding to that eigenvalue is close to the vector of all ones.

\begin{lem}\label{lem:large-eig}
Let $A_n$ be an (possibly random) $n \times n$ matrix and for $K \ge 1$, let
\[
\bar\Omega_K:= \left\{\|A_n - p {\bm J}_n\| \le  K \sqrt{np}\right\},
\]
for some $p \in (0,1/2]$ such that $np \to \infty$ as $n \to \infty$, where we recall that ${\bm J}_n$ is the $n \times n$ matrix of all ones. Then on the event $\bar\Omega_K$, for all large $n$, the following hold:

\begin{enumeratei}

\item There exists a real eigenvalue $\lambda_0$ of $A_n$ such that $|\lambda_0| \ge  \f{np}{2}$.

\item We further have
\[
  \norm{v_0- \frac{1}{\sqrt{n}} \mathbf{1} }_2 \le \frac{16K}{\sqrt{np}},
 \]
where $v_0 \in S^{n-1}$ is the eigenvector corresponding to the eigenvalue $\lambda_0$.
\end{enumeratei}
\end{lem}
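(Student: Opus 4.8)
\medskip
\noindent\textbf{Proof strategy.} The plan is to treat $A_n$ as a small perturbation of the rank-one matrix $p{\bm J}_n = np\,uu^{\sf T}$, where $u:=\tfrac{1}{\sqrt n}\mathbf{1}$ and, on $\bar\Omega_K$, $E_n := A_n - p{\bm J}_n$ satisfies $\|E_n\|\le K\sqrt{np}$. The spectrum of $p{\bm J}_n$ consists of the simple eigenvalue $np$ (eigenvector $u$) and the eigenvalue $0$ of multiplicity $n-1$, with spectral gap $np$ that dwarfs $\|E_n\|=O(\sqrt{np})$ since $np\to\infty$. I will work on $\bar\Omega_K$ throughout and take $n$ large enough that $2K\sqrt{np}<np$.

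For part (i) I would isolate the dominant eigenvalue via a Riesz spectral-projection argument. Let $\Gamma$ be the positively oriented circle of radius $np/2$ about $np$. On $\Gamma$ one has $\dist(z,\{0,np\})\ge np/2$, hence $\|(z-p{\bm J}_n)^{-1}\|\le 2/(np)$, so $z-A_n=(z-p{\bm J}_n)\bigl(I-(z-p{\bm J}_n)^{-1}E_n\bigr)$ is invertible on all of $\Gamma$ and, for large $n$, $\|(z-A_n)^{-1}\|\le 4/(np)$ there. Writing $P_0,P$ for the Riesz projections of $p{\bm J}_n,A_n$ associated with $\Gamma$ and using the resolvent identity $(z-A_n)^{-1}-(z-p{\bm J}_n)^{-1}=(z-A_n)^{-1}E_n(z-p{\bm J}_n)^{-1}$, one obtains an estimate of the shape $\|P-P_0\|\le \tfrac{1}{2\pi}(\pi np)\cdot\tfrac{8K}{(np)^{3/2}}=\tfrac{4K}{\sqrt{np}}<1$ for large $n$; hence $\rank P=\rank P_0=1$, so $A_n$ has exactly one eigenvalue $\lambda_0$ (algebraic multiplicity one) enclosed by $\Gamma$, and $|\lambda_0-np|<np/2$ forces $|\lambda_0|\ge np/2$. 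Finally, since $A_n$ is real and $\Gamma$ is symmetric about the real axis, $\overline{\lambda_0}$ would also lie inside $\Gamma$ if $\lambda_0$ were non-real; uniqueness then forces $\lambda_0\in\R$, whose one-dimensional real eigenspace contains a real unit eigenvector $v_0$, unique up to sign.

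For part (ii) I would argue directly from the eigenvalue equation rather than perturb eigenvectors. Decompose $v_0=\alpha u+w$ with $w\perp u$, $\alpha=\pr{v_0}{u}$, $\alpha^2+\|w\|_2^2=1$, choosing the sign of $v_0$ so that $\alpha\ge 0$. Since $p{\bm J}_n v_0=np\,\alpha\,u$, the relation $A_n v_0=\lambda_0 v_0$ reads $np\,\alpha\,u+E_n v_0=\lambda_0(\alpha u+w)$; projecting onto $u^\perp$ gives $\lambda_0 w=(I-uu^{\sf T})E_n v_0$, so $\|w\|_2\le\|E_n\|/|\lambda_0|\le 2K/\sqrt{np}$. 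For large $n$ this is $<1$, hence $\alpha=\sqrt{1-\|w\|_2^2}>0$ and $1-\alpha\le\|w\|_2^2$, which yields
\[
\Bigl\|v_0-\tfrac{1}{\sqrt n}\mathbf{1}\Bigr\|_2^2=(1-\alpha)^2+\|w\|_2^2\le 2\|w\|_2^2\le\frac{8K^2}{np},
\]
a bound comfortably stronger than the claimed $16K/\sqrt{np}$.

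The one step that is not routine is the existence of a \emph{real} eigenvalue of order $np$: because $A_n$ need not be symmetric (the directed Erd\H{o}s--R\'enyi case), Weyl's inequality is unavailable, and I expect the contour-integral/multiplicity count above — equivalently, Bauer--Fike together with a continuity-of-eigenvalues argument — to be the crux. Once that is in place, everything else is elementary linear algebra and the bound $\|E_n\|\le K\sqrt{np}$.
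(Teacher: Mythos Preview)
Your argument is correct and takes a genuinely different route from the paper. The paper constructs the eigenvector directly via a fixed-point argument: it sets $W=\{w:\langle w,\mathbf{e}\rangle=1,\ \|w-\mathbf{e}\|_2\le 8K/\sqrt{np}\}$ with $\mathbf{e}=\tfrac{1}{\sqrt n}\mathbf 1$, defines $F(x)=A_n x/\langle A_n x,\mathbf e\rangle$, checks $F(W)\subset W$ using the bound on $\|A_n-p{\bm J}_n\|$, and invokes the Brouwer fixed point theorem to produce $w\in W$ with $A_n w=\langle A_n w,\mathbf e\rangle\,w$; the eigenvalue bound $|\lambda_0|\ge np/2$ and the eigenvector estimate then follow from the same computations. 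Your approach instead isolates $\lambda_0$ first via Riesz projections / contour integration, uses the conjugate-pair symmetry to force $\lambda_0\in\R$, and then reads off the eigenvector bound from the orthogonal decomposition of $v_0$. The paper's method is more elementary in that it avoids complex analysis and resolvent estimates, at the cost of importing Brouwer; your method is the standard perturbation-theory toolkit, handles the non-symmetric case cleanly, and yields a sharper constant ($2\sqrt 2 K/\sqrt{np}$ rather than $16K/\sqrt{np}$) in part~(ii). Either argument is fine here.
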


\vskip10pt
Equipped with Lemmas \ref{lem:prep-bounds} and \ref{lem:large-eig}, one obtains \eqref{eq:ell-2-to-hs}, which together with Lemma \ref{lem:decoupling} implies that one has to find a probability of the event
\beq\label{eq:intermediate-outline}
\left\{\f{|\langle A_n^{-1} ({\bm x}_{J^c} - {\bm x}'_{J^c}), {\bm y}_J \rangle  + \langle (A_n^{-1})^* ({\bm y}_{J^c} - {\bm y}'_{J^c}), {\bm x}_J \rangle- v |}{\|A_n^{-1}\|_{{\rm HS}}} \le c \vep \rho^2p\right\},
\eeq
{for some appropriate choice of $J \subset [n]$. Here the set $J$ is at our disposal,  $v$ is as in Lemma \ref{lem:decoupling}, $({\bm x}', {\bm y}')$ is an independent copy of $({\bm x}, {\bm y})$ and $c$ is some small constant}. As  $A_n$ is not symmetric and the first row and column, after removing the first diagonal entry, are dependent, to obtain  a bound on the probability of the event in \eqref{eq:intermediate-outline} we need a bound on the L\'{e}vy concentration function of a sum of two correlated random variables. A natural solution would be to take a $J \subset J_0 \subset [n]$ such that ${\bm x}_{J_0} \equiv 0$ so that second term in the numerator of \eqref{eq:intermediate-outline} vanishes.
Having used this trick, in order to be able to apply Lemma \ref{lem:bound-levy} we finally need to show that $v^\star_J$ has a large spread component and $\|v^\star_J\|_2$ is not too small, where
\[
v^\star:= \f{A_n^{-1}({\bm x}_{J^c} - {\bm x}'_{J^c})}{\|A_n^{-1}({\bm x}_{J^c} - {\bm x}'_{J^c})\|_2}.
\]
The existence of a $J \subset J_0 \subset [n]$ so that $v^\star$ has the desired properties is guaranteed by Lemma \ref{lem:non-dominated-J}. Putting these pieces together one then completes the proof. Below we expand on this idea to complete the proof of Proposition \ref{prop:dist-bd} for the adjacency matrix of a directed Erd\H{o}s-R\'{e}nyi graph.

\begin{proof}[Proof of Proposition \ref{prop:dist-bd} for directed Erd\H{o}s-R\'{e}nyi graph]
As mentioned above, to apply Lemma \ref{lem:decoupling} we need to show that \eqref{eq:ell-2-to-hs} holds with high probability. To this end, we begin by noting that
\beq
\sqrt{1+\|A_n^{-1} {\bm x}\|_2^2} \le  2\sqrt{1+\|A_n^{-1} \bar{\bm x}\|_2^2} + 2 p \|A_n^{-1} {\bm 1}\|_2, \notag
\eeq
where $\bar{\bm x}= \{\bar x_i\}_{i=1}^n \in \R^n$ and $\bar x_i = x_i - \E x_i$ for $i \in [n]$. Therefore denoting
\[
\cF_n:= \left\{ p^{1/2} \|A_n^{-1} {\bm 1} \|_2\le \vep_1^{-1/2} \|A_n^{-1} \|_{{\rm HS}}\right\}
\]
and
\[
\cG_n:=\left\{ \|A_n^{-1} \bar{\bm x}\|_2 \le \vep_1^{-1/2} p^{1/2}(1-p)^{1/2} \|A_n^{-1}\|_{{\rm HS}}\right\},
\]
we have that
\beq\label{eq:norm-triangle}
\sqrt{1+\|A_n^{-1} {\bm x}\|_2^2}  \le 4 \vep_1^{-1/2} p^{1/2} \|A_n^{-1}\|_{{\rm HS}} +2
\eeq
 on the event $\cF_n \cap \cG_n$,
where
 $\vep_1>0$ to be determined later during the course of the proof.

We claim that $\Omega_K^0 \subset \cF_n$. Indeed, using Lemma \ref{lem:large-eig} we have that
\begin{align}\label{eq:F-n-whp}
p^{1/2}\|A_n^{-1} {\bm 1}\|_2  & \le \sqrt{np} \cdot \|A_n^{-1} v_0\|_2 + \sqrt{np} \cdot \norm{A_n^{-1} \left(\f{1}{\sqrt{n}}{\bm 1} -v_0\right)}_2\\
& \le \f{\sqrt{np}}{|\lambda_0|} + \sqrt{np} \cdot \|A_n^{-1}\|_{{\rm HS}} \cdot \norm{\f{1}{\sqrt{n}}{\bm 1} -v_0}_2 \le \f{2}{\sqrt{np}} +  32 K \|A_n^{-1}\|_{{\rm HS}}, \notag
\end{align}
where in the last step we have used the fact that $\Omega_K^0 \subset \bar \Omega_{2K}$ for all large $n$.
Using Jensen's inequality applied to the empirical measure of the square of the singular values of $A_n$ (or equivalently using \abbr{AM-HM} inequality) we see that
\[
\|A_n\|_{{\rm HS}}^2 \cdot \|A_n^{-1}\|_{{\rm HS}}^2 \ge n^2.
\]
Since
\[
\|A_n\|_{{\rm HS}}^2 \le 2 \|A_n - \E A_n\|_{{\rm HS}}^2 + 2 \|\E A_n \|_{{\rm HS}}^2 \le 2n \cdot (K^2 np ) + 2 (np)^2 \le 4 K^2 n^2 p,
\]
on the event $\Omega_K^0$, we deduce that
\beq\label{eq:HS-lbd}
\|A_n^{-1}\|_{{\rm HS}} \ge 1/ (2K p^{1/2}).
\eeq
Plugging this bound in \eqref{eq:F-n-whp} and setting $\vep_1 \le 10^{-4} K^{-2}$ we derive that $\Omega_K^0 \subset \cF_n$.

Hence, estimating $\P (\cG_n) $ by Lemma \ref{lem:prep-bounds}(i) and using \eqref{eq:HS-lbd} again, we obtain from \eqref{eq:norm-triangle} that
\begin{multline}\label{eq:dist-bound-split-1}
\P \left(\left\{\f{\left| \langle A_n^{-1} {\bm x}, {\bm y}\rangle -u\right|}{\sqrt{1+\|A_n^{-1} {\bm x}\|_2^2}} \le \vep \rho^2 p^{1/2}\right\} \cap \Omega_K^0  \right) \\
\le \P \left(\left\{{\left| \langle A_n^{-1} {\bm x}, {\bm y}\rangle -u\right|} \le  5 \vep \vep_1^{-1/2} p \rho^2 \|A_n^{-1}\|_{{\rm HS}}\right\} \cap \Omega_K^0 \cap \cG_n  \right) + \vep_1.
\end{multline}
Therefore, to complete the proof it remains to find a bound on the first term in the \abbr{RHS} of \eqref{eq:dist-bound-split-1}.

Now we will apply Lemma \ref{lem:decoupling}. Recalling the fact that ${\bm x}^{\sf T}$ and ${\bm y}$ are the first row and column of ${\sf A}_n$, after removing the first diagonal entry, using the representation \eqref{eq:dir-erdos} we note that
\[
x_i = \theta_i \cdot \gamma_i \qquad \text{ and } \qquad y_i = (1-\theta_i) \cdot \varpi_i,
\]
where $\{\gamma_i\}, \{\varpi_i\}$, and $\{\theta_i\}$ are sequences of independent $\dBer(2p)$, $\dBer(2p)$, and $\dBer(1/2)$ random variables, respectively.  Set $J:= \{i \in [n]: \theta_i =0\}$. Upon conditioning on ${\bm \theta}:=\{\theta_i\}_{i \in [n]}$ we see that $ {\bm x}_J \equiv 0$, and $ {\bm y}_J$ and $ {\bm x}_{J^c}$ are distributed as i.i.d.~sequence of $\dBer(2p)$ random variables.
Denote
\[
\Upsilon_n:=\P \left(\left\{{\left| \langle A_n^{-1} {\bm x}, {\bm y}\rangle -u\right|} \le  5 \vep \vep_1^{-1/2} p\rho^{2} \|A_n^{-1}\|_{{\rm HS}}\right\} \cap \Omega_K^0 \right).
\]
An application  of Jensen's inequality and Lemma \ref{lem:decoupling} yields that
\begin{multline}\label{eq:upsilon-bound}
\Upsilon_n^2 \le \E\left[ \cL\left(\langle A_n^{-1} {\bm x}, {\bm y}\rangle, 5 \vep \vep_1^{-1/2} p \rho^2 \|A_n^{-1}\|_{{\rm HS}} \rangle \; \Big| \; A_n, {\bm \theta}\right)^2\bI_{\Omega_K^0}\right] \\
\le \E\left[\P\left( \left|\langle A_n^{-1} ({\bm \gamma} - {\bm \gamma}'), {\bm \varpi}\rangle - v\right| \le 10 \vep \vep_1^{-1/2} p \rho^2 \|A_n^{-1}\|_{{\rm HS}} \; \Big| \;  A_n, {\bm \theta}\right) \bI_{\Omega_K^0}\right],
\end{multline}
where
\[
{\bm \varpi}:= \left\{\begin{array}{ll} \varpi_i & i \in J\\ 0 & i \in J^c \end{array} \right. , \qquad {\bm \gamma}:= \left\{\begin{array}{ll} \gamma_i & i \in J^c\\ 0 & i \in J \end{array} \right. ,
\]
${\bm \gamma}'$ an independent copy of ${\bm \gamma}$, and $v$ is some random vector depending only the $J^c \times J^c$ minor of $A_n^{-1}$, ${\bm \gamma}$, and ${\bm \gamma}'$.

Estimating the \abbr{RHS} of \eqref{eq:upsilon-bound} relies on Lemma \ref{lem:bound-levy}. To apply it, we need to  bound the probability appearing there by the L\'evy concentration function from this Lemma. We show that this can be done after discarding two events of a small probability.

To this end, denoting
\[
\widehat \cG_n:= \left\{\|A_n^{-1}({\bm \gamma} - {\bm \gamma}')\|_2 \ge \vep_1 p^{1/2} \rho \|A_n^{-1}\|_{{\rm HS}}\right\},
\]
we see that
\beq\label{eq:hat-G-n}
\left\{ \left|\langle A_n^{-1} ({\bm \gamma} - {\bm \gamma}'), {\bm \varpi}\rangle - v\right| \le 10 \vep \vep_1^{-1/2} p\rho^{2} \|A_n^{-1}\|_{{\rm HS}} \right\} \cap  \widehat \cG_n \subset \left\{ \left|\langle {\bm \xi} , {\bm \varpi} \rangle - \wt v \right| \le 10 \vep \vep_1^{-3/2} p^{1/2}\rho\right\},
\eeq
where
\[
{\bm \xi}:= \f{A_n^{-1} ({\bm \gamma} - {\bm \gamma}')}{\|A_n^{-1} ({\bm \gamma} - {\bm \gamma}')\|_2} \in S^{n-1} \qquad \text{ and } \qquad \wt v:= \f{v}{\|A_n^{-1} ({\bm \gamma} - {\bm \gamma}')\|_2}.
\]
As ${\bm \varpi}_{J^c} \equiv 0$, to be able to apply Lemma \ref{lem:bound-levy}, we have to select a set  $I \subset J$ such that ${\bm \xi}_I$ has a substantial Euclidean norm and is non-dominated, with large probability.

So, we define
\[
\wt \cG_n:= \left\{\f{\|{\bm \xi}_{[\f{n}{4}+1:n]\cap J}\|_\infty}{\|{\bm \xi}_{[\f{n}{4}+1:n]\cap J}\|_2} \le \f{1}{c_{\ref{lem:non-dominated-J}} \sqrt{n}}, \, \|{\bm \xi}_{[\f{n}{4}+1:n]\cap J}\|_2 \ge \rho \right\}.
\]
As the coordinates of ${\bm \varpi}_J$ are i.i.d.~$\dBer(2p)$, setting $I:= \supp({\bm \xi}_{[\f{n}{4}+1:n] \cap J})$, and
using Lemma \ref{lem:bound-levy} we find that
\begin{multline}
\P\left(\left|\langle {\bm \xi} , {\bm \varpi} \rangle - \wt v \right| \le 10 \vep \vep_1^{-3/2} p^{1/2} \rho \; \Big| \; A_n, {\bm \theta}, {\bm \gamma}, {\bm \gamma'}\right) \bI_{\wt \cG_n} \\
\le \cL\left(\langle {\bm \xi}_I, {\bm \varpi}_I\rangle, \ 20 \vep \vep_1^{-3/2} p^{1/2}(1-p)^{1/2}\|{\bm \xi}_I\|_2  \right)\bI_{\wt \cG_n}
 \le 20 C_{\ref{lem:bound-levy}} \left( \vep \vep_1^{-3/2}+ \f{1}{c_{\ref{lem:non-dominated-J}}\sqrt{np}}\right).
\end{multline}
To complete the proof it remains to show that both $\wt \cG_n$ and $\widehat \cG_n$ have large probabilities. First let us show that $\P(\wt \cG_n^c)$ is small. By Chernoff's bound, there exists a set $\Omega_\gamma$ such that on that set $|\supp({\bm \gamma} - {\bm \gamma}')| \le C_\star np$ and $\|{\bm \gamma} - {\bm \gamma}'\|_2 \in [1, \bar C np]$ for some $C_\star, \bar C >0$, with $\P(\Omega_\gamma) \ge 1- \exp(-\bar c np)$, for some constant $\bar c >0$. Moreover, by Chernoff's bound again, there exists a set $\Omega_\theta$ with probability at least  $1 - \exp(-c_* n)$, for some $c_*>0$, such that $\f{3n}{8} \le |J| \le \f{5n}{8}$ on $\Omega_\theta$. Hence, applying Lemma \ref{lem:non-dominated-J} we find that
\begin{multline}
\P(\wt \cG_n^c \cap \Omega_K^0) \le \E\left[\P\left(\wt \cG_n^c \cap \Omega_K^0 \; \Big| \; {\bm \theta}, {\bm \gamma}, {\bm \gamma}'\right) {\bI}_{\Omega_\gamma \cap \Omega_\theta} \right] + \P(\Omega_\gamma^c) + \P(\Omega_\theta^c) \\
\le n^{-\bar c_{\ref{lem:non-dominated-J}}}+ \exp(-\bar c np) + \exp(-c_* n).
\end{multline}
Next, let us show that $\widehat \cG_n $ has a large probability.
Recall that $\bm \gamma$ is a random vector with independent $\dBer (p)$ coordinates, and $\bm \gamma'$ is an independent copy of $\bm \gamma$.
 Using Lemma \ref{lem:prep-bounds}(ii) we  obtain that
\begin{multline}\label{eq:hat-G-n-pr}
\P(\widehat \cG_n^c \cap \Omega_K^0)= \P\left(\left\{\|A_n^{-1}({\bm x} - {\bm x}')\|_2 \le \vep_1 p^{1/2} \rho \|A_n^{-1}\|_{{\rm HS}}\right\} \cap \Omega_K^0\right) \\
\le 8 C_{\ref{lem:bound-levy}} \left( \vep_1 + \f{K c_{\ref{p: spread vectors-1}}^{-1}}{\sqrt{np}}\right) + 2 n^{-\bar{c}_{\ref{cor:combine}}},
\end{multline}
where  ${\bm x}'$ is an independent copy of ${\bm x}$, establishing $\widehat \cG_n$ has a large probability.

Now, combining \eqref{eq:hat-G-n}-\eqref{eq:hat-G-n-pr}, from \eqref{eq:upsilon-bound} we derive that
\[
\Upsilon_n^2 \le C(\vep_1 + \vep \vep_1^{-3/2}) + \f{\bar C}{\sqrt{np}}+n^{-c},
\]
for some large constants $C, \bar C$, and some small constant $c$. This together with \eqref{eq:dist-bound-split-1} now implies that
\[
\P \left(\left\{\f{\left| \langle A_n^{-1} {\bm x}, {\bm y}\rangle -u\right|}{\sqrt{1+\|A_n^{-1} {\bm x}\|_2^2}} \le \vep \rho^2 p^{1/2}\right\} \cap \Omega_K^0  \right) \le C(\vep_1 + \vep_1^{1/2} + \vep^{1/2} \vep_1^{-3/4}) + \f{\bar C}{(np)^{1/4}}+ n^{-\f{c}{2}}.
\]
Finally choosing $\vep_1 = \vep^{\f{2}{5}}$ and replacing $\vep$ by $\vep/C^5$ the proof completes.
\end{proof}

Next we carry out the proof for the adjacency matrix of a undirected Erd\H{o}s-R\'{e}nyi graph. It follows from simple modification of the same for the directed case. Hence, we only provide an outline indicating the necessary changes.

\begin{proof}[Proof of Proposition \ref{prop:dist-bd} for undirected Erd\H{o}s-R\'{e}nyi graph]
Since in the undirected case ${\bm x}={\bm y}$, proceeding similarly to the steps leading to \eqref{eq:dist-bound-split-1} we derive that
\begin{multline}\label{eq:dist-bound-split-1-und}
\P \left(\left\{\f{\left| \langle A_n^{-1} {\bm x}, {\bm x}\rangle -u\right|}{\sqrt{1+\|A_n^{-1} {\bm x}\|_2^2}} \le \vep \rho^2 p^{1/2}\right\} \cap \Omega_K^0  \right) \\
\le \P \left(\left\{{\left| \langle A_n^{-1} {\bm x}, {\bm x}\rangle -u\right|} \le  5 \vep \vep_1^{-1/2} p \rho^2 \|A_n^{-1}\|_{{\rm HS}}\right\} \cap \Omega_K^0 \cap \cG_n  \right) + \vep_1.
\end{multline}
Next set $J:=\{i \in [n]: \gd_i = 0\}$ where $\{\gd_i\}_{i \in [n]}$ are i.i.d.~$\dBer(\f12)$. Using this choice of $J$ we then apply Lemma \ref{lem:decoupling} to see that
\begin{equation}\label{eq:upsilon-bound-und}
\wt \Upsilon_n^2
\le \E\left[\P\left( \left|\langle A_n^{-1} ({\bm x}_{J^c} - {\bm x}'_{J^c}), {\bm x}_J\rangle - v\right| \le 5 \vep \vep_1^{-1/2} p \rho^2 \|A_n^{-1}\|_{{\rm HS}} \, \Big| \, A_n, J \right) \bI_{\Omega_K^0}\right],
\end{equation}
where
\[
\wt \Upsilon_n:=\P \left(\left\{{\left| \langle A_n^{-1} {\bm x}, {\bm x}\rangle -u\right|} \le  5 \vep \vep_1^{-1/2} p\rho^{2} \|A_n^{-1}\|_{{\rm HS}}\right\} \cap \Omega_K^0 \right),
\]
and $v$ is some vector depending on the $J^c \times J^c$ sub-matrix of $A_n$, ${\bm x}_{J^c}$, and ${\bm x}'_{J^c}$. As the entries of the random vector ${\bm x}_{J^c}$ are i.i.d.~$\dBer(\f{p}{2})$, using Lemma \ref{lem:prep-bounds}(ii) we find that
\begin{equation}\label{eq:hat-G-n-pr-1}
\P(\overline \cG_n^c \cap \Omega_K^0)
\le 16 C_{\ref{lem:bound-levy}} \left( \vep_1 + \f{K c_{\ref{p: spread vectors-1}}^{-1}}{\sqrt{np}}\right) + 2 n^{-\bar{c}_{\ref{cor:combine}}},
\end{equation}
where
\[
\overline \cG_n:= \left\{\|A_n^{-1}({\bm x}_{J^c} - {\bm x}_{J^c}')\|_2 \ge \vep_1 p^{1/2} \rho \|A_n^{-1}\|_{{\rm HS}}\right\}.
\]
As
\beq
\left\{ \left|\langle A_n^{-1} ({\bm x}_{J^c} - {\bm x}'_{J^c}), {\bm x}_J\rangle - v\right| \le 5 \vep \vep_1^{-1/2} p\rho^{2} \|A_n^{-1}\|_{{\rm HS}} \right\} \cap  \overline \cG_n \subset \left\{ \left|\langle \overline{\bm \xi} , {\bm x}_J \rangle - \overline v \right| \le 5 \vep \vep_1^{-3/2} p^{1/2}\rho\right\}, \notag
\eeq
where
\[
\overline {\bm \xi}:= \f{A_n^{-1} ({\bm x}_{J^c} - {\bm x}'_{J^c})}{\|A_n^{-1} ({\bm x}_{J^c} - {\bm x}'_{J^c})\|_2} \in S^{n-1} \qquad \text{ and } \qquad \overline v:= \f{v}{\|A_n^{-1} ({\bm x}_{J^c} - {\bm x}'_{J^c})\|_2},
\]
proceeding similarly as in the proof in the directed case the remainder of this proof can be completed. We leave the details to the reader.
\end{proof}

We end this section with proofs of Lemmas \ref{lem:prep-bounds} and \ref{lem:large-eig}.

\begin{proof}[Proof of Lemma \ref{lem:prep-bounds}]
The proof of part (i) is essentially an application of Markov's inequality. To this end, we note that
\beq\label{eq:norm-split}
\| A_n^{-1} \bar{\bm x}\|_2^2 = \sum_{k=1}^n \langle A_n^{-1}\bar{\bm x}, e_k \rangle^2 = \sum_{k=1}^n \langle (A_n^{-1})^{\sf T} e_k, \bar{\bm x} \rangle^2 = \sum_{k=1}^n \| (A_n^{-1})^{\sf T} e_k\|_2^2 \langle w_k, \bar{\bm x}\rangle^2,
\eeq
where
\[
w_k:= \f{(A_n^{-1})^{\sf T} e_k}{\| (A_n^{-1})^{\sf T} e_k\|_2^2},
\]
and $e_k$ is the $k$-th canonical basis vector. Since $\|w_k\|_2=1$ and the random vector $\bar{\bm x}$ has zero mean with i.i.d.~coordinates we have
\[
\E_{{\bm x}} \left[ \langle w_k, \bar{\bm x}\rangle^2 \right] = \Var_{{\bm x}} \left( \langle w_k, {\bm x}\rangle \right) = \Var (x_1)=p(1-p),
\]
which in turn implies that
\[
\E_{\bm x} \left[ \| A_n^{-1} \bar{\bm x}\|_2^2\right] = p(1-p) \sum_{k=1}^n \| (A_n^{-1})^{\sf T} e_k\|_2^2= p(1-p) \|A_n^{-1}\|_{{\rm HS}}^2,
\]
where $\E_{{\bm x}}$ and $\Var_{{\bm x}}$ denote the expectation and the variance with respect to the randomness of ${\bm x}$. The conclusion of part (i) now follows upon using Markov's inequality.

Turning to prove (ii), we denote $p_k := \|(A_n^{-1})^{\sf T} e_k\|_2^2/\|A_n^{-1}\|_{{\rm HS}}^2$. As $\sum_{k=1}^n p_k=1$, proceeding as in \eqref{eq:norm-split}, and applying \cite[Lemma 8.3]{V} we note that
\begin{multline}\label{eq:HS-split}
\P \left(\|A_n^{-1}({\bm x} - {\bm x}')\|_2 \le \vep_\star p^{1/2} \rho \|A_n^{-1}\|_{{\rm HS}} \; \Big| \; A_n\right) = \P\left(\sum_{k=1}^n p_k \langle w_k, {\bm x} - {\bm x}'\rangle^2 \le \vep_\star^2 p \rho^2 \; \Big| \; A_n\right) \\
\le 2 \sum_{k=1}^n p_k \P\left( \langle w_k, {\bm x} - {\bm x}'\rangle^2 \le 2\vep_\star^2 p \rho^2 \; \Big| v\;  A_n\right).
\end{multline}
As $\sum_{k=1}^n p_k=1$, the advantage of working with the \abbr{RHS} of \eqref{eq:HS-split} is that it is enough to find the maximum of the probabilities under the summation. To find such a bound we would like to use Lemma \ref{lem:bound-levy}. This  requires to show that $w_k$ is neither dominated nor compressible with high probability.

Turning to this task, recall that $w_k = \f{(A_n^{-1})^{\sf T} e_k}{ \|(A_n^{-1})^{\sf T} e_k\|_2} $.
Since $A_n^{\sf T}$ also satisfies Assumption \ref{ass:matrix-entry}, applying Corollary \ref{cor:combine} with $c_0^*=1/2$, we obtain that
\begin{multline}\label{eq:levy-bd-condition}
\P\left( \left\{\langle w_k, {\bm x} - {\bm x}'\rangle^2 \le 2\vep_\star^2 p \rho^2\right\} \cap \Omega_K^0 \right) \\
\le \E\left[\P\left( \langle w_k, {\bm x} - {\bm x}'\rangle^2 \le 2\vep_\star^2 p \rho^2 \Big| A_n\right)\bI(w_k \notin V_{1/2, c_{\ref{p: spread vectors-1}}})\right] + n^{-\bar{c}_{\ref{cor:combine}}}.
\end{multline}
If $w_k \notin V_{1/2, c_{\ref{p: spread vectors-1}}}$ then
\[
\|(w_k)_{[n/2+1:n]}\|_2 \ge \rho \qquad \text{ and } \qquad \f{\|(w_k)_{[n/2+1:n]}\|_\infty}{\|(w_k)_{[n/2+1:n]}\|_2} \le \f{2K}{c_{\ref{p: spread vectors-1}}\sqrt{n}}.
\]
So now we apply Lemma \ref{lem:bound-levy} to find that
\[
\P\left(\left| \langle w_k, {\bm x} - {\bm x}' \rangle\right| \le 2 \vep_\star p^{1/2}\rho \Big| A_n \right) \bI(w_k \notin V_{1/2, c_{\ref{p: spread vectors-1}}}) \le 4 C_{\ref{lem:bound-levy}} \left( \vep_\star + \f{K c_{\ref{p: spread vectors-1}}^{-1} }{\sqrt{np}}\right),
\]
where we have used the fact that $p \le \f34$. This, together with \eqref{eq:levy-bd-condition}, upon taking an average over $A_n$, in \eqref{eq:HS-split}, such that $\Omega_K^0$ holds, yields the bound \eqref{eq:HS_lbd}. This completes the proof of the lemma.
\end{proof}

\begin{proof}[Proof of Lemma \ref{lem:large-eig}]
Denote
 \[
  W:=\left \{ w \in \R^n: \ \norm{w- \mathbf{e}}_2 \le \frac{8K}{\sqrt{np}} \quad \text{ and } \quad \ \pr{w}{\mathbf{e}}=1 \right \},
 \]
 where for brevity we write ${\bf e}:= \f{1}{\sqrt{n}}{\bm 1}$. Define the function $F: W \to \R^n$ by
 \[
  F(x):= \frac{A_nx}{\pr{A_n x}{\mathbf{e}}}.
 \]
 We claim that $F(W) \subset W$. We will see below that proving this claim will imply that $A_n$ has a large eigenvalue and  the eigenvector corresponding to that large eigenvalue is close to ${\bf e}$.

 To check the claim, note that for any $x \in \R^n$ we have
 \(
 \pr{F(x)}{{\bf e}}=1
 \). Therefore it remains to show that
 \beq\label{eq: first-0}
 \|F(x) - {\bf e}\|_2 \le \frac{8K}{\sqrt{np}}, \quad \text{ for all } x \in W.
 \eeq
 To this end, for any $x \in W$ we write $x=\mathbf{e}+y$ where from the definition of the set $W$ it follows that $\norm{y}_2 \le  \frac{8K}{\sqrt{np}}$. As $\pr{x}{{\bf e}}=1$ and $\norm{{\bf e}}_2=1$ we further have that $\pr{y}{{\bf e}}=0$, which in turn implies that
 \(
 A_n y = (A_n - p {\bm J}_n)y.
 \)
 As
 \[
 (A_n - p {\bm J}_n) {\bf e} = A_n {\bf e} - np {\bf e},
 \]
 we deduce that
 \[
 \| A_n {\bf e} - np {\bf e}\|_2 \le K \sqrt{np}
 \]
 on the event $\bar \Omega_K$. So we obtain that
  \begin{equation} \label{eq: first}
 \norm{A_n x- np \mathbf{e}}_2
 \le \norm{A_n \mathbf{e}- np \mathbf{e}}_2+\norm{A_n - p {\bm J}_n} \cdot \norm{y}_2
 \le K \sqrt{np} \left(1+\frac{8K}{\sqrt{np}} \right)
 \end{equation}
 on the event $\bar\Omega_K$. Thus using Cauchy-Schwarz inequality
 \beq\label{eq: first-1}
  |\pr{A_n x}{\mathbf{e}}-np| \le K \sqrt{np}\left(1+\frac{8K}{\sqrt{np}} \right).
 \eeq
 Using the fact that $np \to \infty$ as $n \to \infty$, and the triangle inequality we also see from above that
 \beq\label{eq: first-2}
 |\pr{A_n x}{{\bf e}}| \ge \f{np}{2},
 \eeq
 for all large $n$. Combining \eqref{eq: first}-\eqref{eq: first-2}, and using the triangle inequality once more, we derive that on the event $\bar\Omega_K$,
 \begin{align*}
  \norm{F(x)-\mathbf{e}}_2 \le \f{\|A_n x - np {\bf e}\|_2}{|\pr{A_n x}{{\bf e}}|} + \f{\|(\pr{A_n x}{{\bf e}} - np) {\bf e}\|_2}{|\pr{A_n x}{{\bf e}}|} \le \f{4K}{\sqrt{np}}\left( 1+ \f{8K}{\sqrt{np}}\right) \le \f{8K}{\sqrt{np}},
 \end{align*}
 for all large $n$. This proves \eqref{eq: first-0} and hence we have the claim that $F(W) \subset W$.

Now to show that the claim implies the existence of a real large eigenvalue we apply Brouwer fixed point theorem. It implies that there exists $w \in W$ such that
 \[
 A_n w= \pr{A_n w}{{\bf e}} w.
 \]
 Equivalently, $w$ is an eigenvector of $A_n$ corresponding to the eigenvalue $\lambda_0:= \pr{A_n w}{{\bf e}}$. The lower bound on $|\lambda_0|$ follows from \eqref{eq: first-2}. To complete the proof of the lemma we note that
 \[
\left| \|w\|_2-1\right| \le \|w- {\bf e}\|_2 \le \f{8K}{\sqrt{np}}
 \]
Therefore setting $v_0:=w/\|w\|_2$ we obtain
\[
\|v_0 - {\bf e}\|_2 \le \|w - {\bf e}\|_2 + \|w\|_2 \cdot \left| \f{1}{\|w\|_2} -1 \right| \le \f{16K}{\sqrt{np}}.
\]
This finishes the proof of the lemma.
\end{proof}

\section{Proofs of Theorems \ref{thm:bernoulli} and \ref{thm:s-min-graphs}}\label{sec:proof-main-thm}

In this section we prove Theorems \ref{thm:bernoulli} and \ref{thm:s-min-graphs}. First let us prove part (ii) of Theorem \ref{thm:s-min-graphs}. We will show that the conclusion of Theorem \ref{thm:s-min-graphs}(ii) holds under a more general set-up, namely when the entries of $A_n$ satisfy Assumption \ref{ass:matrix-entry}.

\begin{proof}[Proof of Theorem \ref{thm:s-min-graphs}(ii)]
The proof of (ii) is standard and is provided for a reader's convenience.
We begin by noting that if $A_n$ satisfies Assumption \ref{ass:matrix-entry} it is enough to show that
\beq\label{eq:Omega-0-bd}
\P(\Omega_{0, \col}^c) \le \f{\bar{C}_{\ref{thm:s-min-graphs}}}{2\log n},
\eeq
where $\Omega_{0,\col}$ is the event that there exists zero columns in $A_n$.

To prove \eqref{eq:Omega-0-bd} we use Chebychev's inequality. We will show that $\Var(\sN) \approx \E [\sN]$, where $\sN$ is the number of zero columns in $A_n$. This observation, together with the fact $\E[\sN] \to \infty$ as $n \to \infty$, whenever $np \le \log (1/p)$, will show that $\sN$ cannot deviate too much from its expectation with large probability. Then, noting that $\Omega_{0,\col}^c = \{\sN = 0\}$, the desired probability bound on $\Omega_{0,\col}^c$ follows. Below we carry out this task.

To this end, denote $\bI_i:=\bI_i(A_n)$ to be the indicator of the event that the $i$-th column of $A_n$ is zero and therefore $\sN=\sum_{i=1}^n \bI_i$. It is easy to note that under Assumption \ref{ass:matrix-entry} we have
\beq\label{eq:Omega-0-exp}
\E\left[\sN\right] = n \P(\bI_1=1) \ge  n(1-p)^n.
\eeq
On the other hand, we see that
\beq\label{eq:Omega-0-var-1}
\Var(\bI_i) \le  \E\, \bI_i  \le (1-p)^{n-1}, \quad i \in [n].
\eeq
Using the fact that the entries of $A_n$ satisfy Assumption \ref{ass:matrix-entry} we further observe that for any $i \ne j \in [n]$ the entries of the sub-matrix of $A_n$ with rows $([n]\setminus\{i,j\})$ and columns $\{i,j\}$ are i.i.d.~$\dBer(p)$ random variables. Therefore
\begin{align}\label{eq:Omega-0-var-2}
\Cov(\bI_i,  \bI_j) & = \E(\bI_i \bI_j) - \E(\bI_i) \cdot \E(\bI_j) \notag\\
& \le \P\left(a_{k,\ell}=0, (k,\ell) \in ([n]\setminus\{i,j\}\right) \times \{i,j\}) - (1-p)^{2n} \notag\\
& = (1-p)^{2(n-2)} - (1-p)^{2n} \le C p (1-p)^{2n},
\end{align}
for some absolute constant $C$, whenever $p \le 1/2$. Thus combining \eqref{eq:Omega-0-exp}-\eqref{eq:Omega-0-var-2} and using Chebychev's inequality we deduce that
\begin{align}\label{eq:Omega-0-chebychev}
\P\left( \left| \sN - \E \sN\right| \ge \f12 \E \sN\right)  & \le 4\f{n(1-p)^{n-1}+C n^2 p(1-p)^{2n}}{(\E \sN)^2} \notag\\
& \le \f{4}{(1-p)\cdot \E [\sN]} + C p \le \f{4(1+C e^{-1})}{(1-p)\cdot \E [\sN]},
\end{align}
where the last step follows from the fact that
\[
p(1-p) \E [\sN] \le np (1-p)^n \le np e^{-np} \le \sup_{x \in (0,\infty)} x e^{-x} = e^{-1}.
\]

To complete the argument it remains to find a suitable lower bound on $\E[\sN]$. To this end, we note that the assumption $np \le \log(1/p)$ implies that $p \le 2\log n/n$. Therefore using the inequality $\log(1-x) \ge - x-x^2$ for $x \in (0,1/2]$ we obtain that
\[
 \E[\sN] \ge n(1-p)^{n} \ge  np \cdot  e^{-n(p+p^2)} \cdot p^{-1} \ge np \cdot e^{-\f{4(\log n)^2}{n}} \ge np \left(1-\f{4(\log n)^2}{n}\right) \ge \f{np}{2},
\]
for all large $n$, where in the third inequality above we have again used the assumption $np \le \log(1/p)$. Thus noting that
\[
\Omega_{0,\col}^c = \{\sN = 0\} \subset \left\{  \left| \sN - \E \sN\right| \ge \f12 \E \sN\right\},
\]
and using \eqref{eq:Omega-0-chebychev} we arrive at \eqref{eq:Omega-0-bd} when $p \ge \f{\log n}{2n}$. If $p \le \f{\log n}{2n}$, we use a different bound on $\E[\sN]$:
\[
\E [\sN] \ge np \cdot e^{-n(p+p^2)} \cdot p^{-1}  \ge np \cdot \f{2n}{\log n} \cdot \f{1}{2\sqrt{n}} \ge \f{\sqrt{n}}{\log n}.
\]
Proceeding as above and combining these two cases completes the proof.
\end{proof}

\vskip5pt

Next combining results of Sections \ref{sec:inv-dom-comp}-\ref{sec:incomp} we finish the proof of Theorem \ref{thm:s-min-graphs}(i). Upon recalling Remark \ref{eq:iid-bipartite} we note that Theorem \ref{thm:s-min-graphs}(i) for $A_n = \Adj(\bGraph(n,p) $ follows from Theorem \ref{thm:bernoulli}. Therefore we prove Theorem \ref{thm:s-min-graphs}(i) only $A_n=\Adj(\Graph(n,p_n))$ or   $\Adj(\dGraph(n,p_n))$.

\begin{proof}[Proof of Theorem \ref{thm:s-min-graphs}(i)]
Recalling that $\Omega_K^0= \{\norm{{A}_n - \E A_n} \le K \sqrt{np}\}$, we note that for any $\vartheta>0$,
 \begin{align}\label{eq:inf_split}
 & \P\Big( \{s_{\min}({A}_n) \le \vartheta\}  \cap  \Omega_K^0 \Big) \notag\\
  \le& \,  \P\Big( \Big\{\inf_{x \in V^c} \norm{{A}_nx}_2 \le \vartheta \Big\}  \cap\Omega_K^0 \Big)
  + \P\Big( \Big\{\inf_{x \in V}\norm{{A}_nx}_2 \le \vartheta  \Big\} \cap  \Omega_K^0 \Big),
 \end{align}
 where
  \beq\label{eq:def-V}
  V:=S^{n-1} \setminus \Big( \text{Comp}(n/2, \rho) \cup \text{Dom}(n/2, (c_{\ref{p: spread vectors-1}}K^{-1}) \Big),
 \eeq
 and $\rho$ as in Proposition \ref{l: sparse vectors-2}. Since $A_n$, the adjacency matrix of any of the three random graph models under consideration, satisfies Assumption \ref{ass:matrix-entry}, using Propositions \ref{l: sparse vectors-2}, \ref{p: spread vectors}, and \ref{p: spread vectors-1}, setting $y_0=0$,  we obtain that
 \beq\label{eq:V^c-inf-bd}
   \P\Big( \inf_{x \in V^c} \norm{{A}_nx}_2 \le \min\{\wt c_{\ref{p: spread vectors}}, \wt c_{\ref{p: spread vectors-1}}\} \rho \sqrt{np},  \,  \norm{{A}_n - \E A_n} \le K \sqrt{np} \Big)
    \le 3 n^{-\bar c_{\ref{l: sparse vectors-2}}}.
 \eeq
Hence, it remains to find an upper bound on the second term in the \abbr{RHS} of \eqref{eq:inf_split}. Using Lemma \ref{l: via distance}, we see that to find an upper bound of
\[
 \P\Big( \Big\{\inf_{x \in V} \norm{{A}_nx}_2 \le c_{\ref{thm:s-min-graphs}}\vep \rho^3 \sqrt{\frac{p}{n}}\Big\} \cap \Omega_K^0\Big)
 \]
it is enough to find the same for
 \begin{equation}\label{eq:dist-new1}
 \P \Big( \Big\{\dist({A}_{n,j},H_{n,j}) \le c_{\ref{thm:s-min-graphs}}\rho^2 \sqrt{p} \vep\Big\} \cap \Omega_K^0 \Big) \text{ for a fixed } j,
 \end{equation}
 where ${A}_{n,j}$ are now columns of ${A}_n$ and $H_{n,j}:={\rm Span}\{A_{n,i}, \, i \in[n]\setminus \{ j\}\}$ (see also Remark \ref{rmk:l via distance}). As $A_n$ satisfies Assumption \ref{ass:matrix-entry}, it suffices consider only  $j=1$.

 Turning to bound $\dist(A_{n,1}, H_{n,1})$, we denote $C_n$ to be the $(n-1) \times (n-1)$ matrix obtained from $A_n^{\sf T}$ upon deleting its first row and column. For the adjacency matrices of directed and undirected Erd\H{o}s-R\'{e}nyi graphs our strategy will change depending on whether $C_n$ is invertible or not.

Using Proposition \ref{prop:distance-quadratic}(i) we see that
 \begin{multline}\label{eq:Omega-+-1}
 \P \left( \left\{ \dist(A_{n,1}, H_{n,1}) \le c_{\ref{thm:s-min-graphs}}\rho^2 \sqrt{p} \vep\right\} \cap \Omega_K^0 \cap \Omega_+ \right)\\
 =\P\left( \left\{ \f{\left|\langle C_n^{-1} {\bm x}, {\bm y}\rangle-a_{11} \right|}{\sqrt{1+\left\|C_n^{-1}{\bm x}\right\|_2^2}} \le c_{\ref{thm:s-min-graphs}}\rho^2 \sqrt{p} \vep \right\} \cap \Omega_K^0 \cap \Omega_+\right),
 \end{multline}
 where by a slight abuse of notation we write $\Omega_+:=\{C_n \text{ is invertible}\}$, ${\bm x}^{\sf T}$, ${\bm y}$ are the first row and column of $A_n$, respectively, with the $(1,1)$-th entry $a_{11}$ removed. As $\|C_n - \E C_n\| \le \|A_n - \E A_n\|$, using Proposition \ref{prop:dist-bd}, setting $c_{\ref{thm:s-min-graphs}} \le c_{\ref{prop:dist-bd}}$, we see that the \abbr{RHS} of \eqref{eq:Omega-+-1} is bounded by
 \beq\label{eq:Omega-+}
 \vep^{1/5} + \f{C_{\ref{prop:dist-bd}} }{\sqrt[4]{np}}.
 \eeq
 This yields the desired bound on the event that $\dist(A_{n,1}, H_{n,1})$ is small on the event $\Omega_+$. It remains to find the same on the event $\Omega_+^c$. Turning to do this task, we apply Proposition \ref{prop:distance-quadratic}(i) to obtain that
 \begin{multline}\label{eq:Omega--1}
 \P \Big( \Big\{\dist({A}_{n,1},H_{n,1}) \le c_{\ref{thm:s-min-graphs}}\rho^2 \sqrt{p} \vep\Big\} \cap \Omega_K^0 \cap \Omega_+^c \Big) \\
 \le \P\Big(\Big\{\exists v \in {\rm Ker}(C_n) \cap S^{n-2}: \ |\pr{\bm y}{v}| \le c_{\ref{thm:s-min-graphs}} \vep \rho^2 \sqrt{p}\Big\}\cap  \Omega_K^0 \Big)
 \end{multline}
 As $A_n$ satisfies Assumption \ref{ass:matrix-entry}, so does $C_n$. Therefore using Propositions \ref{l: sparse vectors-2}, \ref{p: spread vectors}, and \ref{p: spread vectors-1} again we obtain that
 \beq\label{eq:Omega--2}
 \P\left(\{\exists v \in S^{n-2} \cap {\rm Ker}(C_n): v \in V^c\} \cap \Omega_K^0\right) \le 3 n^{-\bar c_{\ref{l: sparse vectors-2}}},
 \eeq
 where we recall the definition of $V$ from \eqref{eq:def-V}. Note that to obtain \eqref{eq:Omega--2} we need to apply the propositions for a $(n-1) \times (n-1)$ matrix. This only slightly worsens the constants.

 Next, by Assumption \ref{ass:matrix-entry} the matrix $C_n$ and the random vector ${\bm y}$ are independent and the coordinates of ${\bm y}$ are i.i.d.~$\dBer(p)$. Moreover, if $v \in V$ then from the definition of $V$ it follows that $v$ is neither dominated nor compressible. Hence, conditioning on such a realization of $v \in S^{n-2} \cap {\rm Ker}(C_n)$, applying Lemma \ref{lem:bound-levy}, and finally taking an average over such choices of $v$ we obtain
 \beq\label{eq:Omega--3}
 \P\Big(\Big\{\exists v \in {\rm Ker}(C_n) \cap V: \ |\pr{\bm y}{v}| \le c_{\ref{thm:s-min-graphs}} \vep \rho^2 \sqrt{p}\Big\}\cap  \Omega_K^0 \Big) \le C_{\ref{lem:bound-levy}} \left( \vep + \f{2K}{c_{\ref{p: spread vectors-1}}\sqrt{np}}\right).
 \eeq
 To complete the proof for the adjacency matrices of the directed and undirected Erd\H{o}s-R\'{e}nyi graphs we simply take $c_0=\f12$ and $C_0=1$ in Theorem \ref{thm:s-max-general} and then set $K= C_{\ref{thm:s-max-general}}$. Now combining \eqref{eq:V^c-inf-bd}-\eqref{eq:Omega--3}, applying Theorem \ref{thm:s-max-general}, and substituting the bounds in \eqref{eq:inf_split} we arrive at \eqref{eq:s-min-bd} when $A_n=\Adj(\Graph(n,p_n))$ or   $\Adj(\dGraph(n,p_n))$.
\end{proof}

We end this section with the proof of Theorem \ref{thm:bernoulli}. 

\begin{proof}[Proof of Theorem \ref{thm:bernoulli}]
Let $A_n$ be the matrix with i.i.d.~$\dBer(p)$ entries. Recall from the above that it suffices to derive the desired bound for \eqref{eq:dist-new1} for $j=1$. Note that 
 $\dist(A_{n,1}, H_{n,1}) \ge |\pr{A_{n,1}}{v}|$ for any $v \in {\rm Ker}(\wt A_n) \cap S^{n-1}$ where $\wt A_{n}$ is the $(n-1) \times n$ matrix whose rows are the columns $A_{n,2}, \ldots, A_{n,n}$. Since the entries of $A_n$ are independent, we apply Propositions \ref{l: sparse vectors-2} and \ref{p: spread vectors} for the matrix $\wt A_n$ (although these were proved for square matrices, they have a simple extension for $\wt A_n$; see also {Remark \ref{rmk:prop-non-square}}) to conclude that any $v \in {\rm Ker}(\wt A_n) \cap S^{n-1}$ must be in $\wt V$ with probability at least $1 - n^{-c}$, for some $c >0$, where 
 \[
 \wt V:= S^{n-1} \setminus \Big( \text{Comp}(C p^{-1}, \rho) \cup \text{Dom}(C p^{-1}, (c_{\ref{p: spread vectors-1}}K^{-1}) \Big),
 \] 
 and $C < \infty$ is some constant, to be specified below. Upon using Theorem \ref{thm:s-max-general} we observe that it remains to argue that 
\begin{equation}\label{eq:Omega-new1}
 \P\Big(\Big\{\exists v \in {\rm Ker}(\wt A_n) \cap \wt V: \ |\pr{A_{n,1}}{v}| \le c_{\ref{thm:s-min-graphs}} \vep \rho^2 \sqrt{p}\Big\}\cap  \Omega_K^0 \Big) \le \vep + n^{-c_\star},
 \end{equation}
for some $c_\star >0$. To this end, we borrow ideas from the proof of \cite[Theorem 1.1]{BR-invertibility}. Since $np \ge \log(1/p)$ we have that $np \ge \log n/2$. From \cite[Proposition 4.1]{BR-invertibility} it follows that for such choices of $p$, with probability at least $1- e^{-c'n}$, for some $c'>0$, we have that $D(v) \ge \exp(-c'' np)$, where $D(v)$ the least common denominator, as defined \cite[Definition 2.6]{BR-invertibility}, of the normal vector $v$ and $c''>0$ is some constant (We point out to the reader that \cite{BR-invertibility} considers the case $np \ge \ol C \log n$, for some large constant $\ol C < \infty$. However, one can check that \cite[Proposition 4.1]{BR-invertibility} holds for all $p$ such that $np \ge c_0 \log n$ for any $c_0>0$). Conditioning on a $v$ such that $D(v) \ge \exp(-c'' np)$ we now apply \cite[Proposition 4.2]{BR-invertibility} to deduce that \eqref{eq:Omega-new1} holds for such a vector $v$. 
 This concludes the proof of \eqref{eq:Omega-new1} and therefore the proof of the theorem is now complete.
\end{proof}

\section{Bound on the spectral norm}\label{sec:spectral-norm}
In this short section we prove Theorem \ref{thm:s-max-general} which yields the desired bound on $\|A_n - \E A_n\|$. 

\begin{proof}[Proof of Theorem \ref{thm:s-max-general}]
The proof consists of two parts. We will show that $\|A_n -\E A_n\|$ concentrates near its mean and then  find bounds on $\E\|A_n -\E A_n\|$. First let us derive the concentration of $\|A_n -\E A_n\|$. Since $a_{i,j}$ may depend on $a_{j,i}$ we split $A_n$ into its upper and lower triangular part (excluding the diagonal), denoted hereafter by $A_n^U$ and $A_n^L$, respectively, and work with them separately.

The function $\|A_n^U - \E A_n^U\|$ when viewed as a function from $\R^{{n(n-1)}/{2}}$ to $\R$ is a $1$-Lipschitz, quasi-convex function. So using Talagrand's inequality (see \cite[Theorem 7.12]{BLM}) we obtain that for any $t >0$,
\beq\label{eq:conc-T}
\P\left(\left|\|A_n^U -\E A_n^U\| -\M_n\right|\ge t\right) \le 4 \exp(-t^2/4),
\eeq
where $\M_n$ is the median of $\|A_n^U - \E A_n^U\|$. Using integration by parts from \eqref{eq:conc-T} it also follows that $| \E\|A_n^U - \E A_n^U\| -\M_n| \le C^*$ for some absolute constant $C^*$. Since $A_n$ satisfies Assumption \ref{ass:matrix-entry}, so does $A_n^{\sf T}$. Hence, proceeding similarly as above, we find that same holds for $A_n^L$. As the entries of $A_n$ are $\{0,1\}$-valued it follows that $\|A_n^D - \E A_n^D\| \le 1$, where $A_n^D$ is the diagonal part of $A_n$. Hence, using the triangle inequality and the condition $np \ge c_0 \log n$, we deduce that
\beq\label{eq:conc-T-2}
\P\left( \|A_n - \E A_n\| \le \E \|A_n^U - \E A_n^U\| + \E \|A_n^L - \E A_n^L\| +\wt{C}\sqrt{np}\right) \le \exp(-C_0\log n),
\eeq
for some large constant $\wt{C}$.

Now it remains to show that $\E\|A_n^\dagger -\E A_n^\dagger\| \le \bar{C}\sqrt{np}$ for $\dagger \in \{U,L\}$. To this end, let $A_n'$ be an independent copy of $A_n$ and $R_n$ be a $n \times n$ symmetric matrix consisting of independent Rademacher random variables. Since, the entries of $A_n-A_n'$ have a symmetric distribution,  applying Jensen's inequality we obtain that,
\beq\label{eq:op-norm-ineq}
\E\|A_n^U -\E A_n^U\| \le \E\|A_n^U -A_n'^U\| = \E\|D_n^U\odot R_n\|,
\eeq
where we denote $D_n:=A_n-A_n'$, and $D_n \odot R_n$ denotes the Hadamard product of $D_n$ and $R_n$. Next, let us denote $G_n$ to be a $n \times n$ symmetric matrix with independent standard Gaussian random variables and $|G_n|$ to be the matrix constructed from $G_n$ by taking absolute value of each of its entries. We write $\E_r$ and $\E_g$ to denote the expectations with respect to $R_n$ and $G_n$ respectively. Therefore, applying Jensen's inequality again
\beq\label{eq:norm-bound-r}
\E_r\|D_n^U \odot R_n\| = \sqrt{\f{\pi}{2}} \E_r \|D_n^U \odot R_n \odot \E |G_n|\| \le \sqrt{\f{\pi}{2}} \E_r \E_g \|D_n^U \odot R_n \odot  |G_n|\| =\sqrt{\f{\pi}{2}} \E_g\|D_n^U \odot G_n\|.
\eeq
This implies that it is enough to bound $\E\|D_n^U \odot G_n\|$. Using \cite[Theorem 1.1]{BH} we obtain that
\beq\label{eq:norm-bound-g}
\E_g\|D_n^U \odot G_n\| \le C\left[ \sigma + \sqrt{\log n}\right],
\eeq
where
\[
\sigma:= \max\left\{\max_i \sqrt{\sum_j\gd_{i,j}^2}, \max_j \sqrt{\sum_i\gd_{i,j}^2} \right\},
\]
$\gd_{i,j}$ is the $(i,j)$-th entry of $D_n$, and $C$ is an absolute constant. Using Chernoff bound and the union bound we note that there exists a constant $C'$ depending only $c_0$ (recall $p\ge c_0\f{\log n}{n}$), such that
\[
\P(\Omega) \ge 1 - n^{-2}, \quad \text{ where } \quad \Omega:= \{\sigma \le C' \sqrt{np}\}.
\]
Therefore fixing a realization of $D_n$ such that $\sigma \in \Omega$ from \eqref{eq:norm-bound-r}-\eqref{eq:norm-bound-g} we find
\[
\E_r\|D_n^U \odot R_n \| \le \f{\bar{C}}{2} \sqrt{np},
\]
for some constant $\bar{C}$, depending only on $c_0$. On the other hand noting that the entries of $D_n \odot R_n$  are $\{-1,0,1\}$ valued it is easily follows that $\|D_n \odot R_n\| \le n$. So
\[
\E \|D_n^U \odot R_n\| \le \E\left[\bI(\Omega) \E_r\|D_n^U \odot R_n\|\right] + n \P(\Omega^c) \le \bar{C}\sqrt{np}.
\]
Hence, from \eqref{eq:op-norm-ineq} we now have
\[
\E\|A_n^U -\E A_n^U\| \le \bar{C}\sqrt{np}.
\]
Same bound holds for $\E\|A_n^L -\E A_n^L\|$.  Therefore the proof now finishes from \eqref{eq:conc-T-2}.
\end{proof}

\appendix

\section{Structural properties of the adjacency matrices of sparse graphs}\label{sec:appendix}
In this section we prove that certain structural properties of $A_n$, as listed in Lemma \ref{lem: typical structure}, hold with high probability when $A_n$ satisfies Assumption \ref{ass:matrix-entry} with $p$ such that $np \ge \log (1/\bar C p)$, for some $\bar C \ge 1$. We also show that under the same assumption we have bounds on the number of light columns of $A_n$, namely we prove Lemma \ref{lem:light-col-card}.

First let us provide the proof of Lemma \ref{lem:light-col-card}.

\begin{proof}[Proof of Lemma \ref{lem:light-col-card}]
The proof is a simple application of Chernoff bound and Markov's inequality.

Since the entries of $A_n$ satisfies Assumption \ref{ass:matrix-entry}, using Stirling's approximation we note that
\begin{align}\label{eq:light-col-bd}
\P(\col_j(A_n) \text{ is light})   \le \sum_{\ell=0}^{\delta_0 np }\binom{n-1}{\ell} p^{\ell} (1-p)^{n-1-\ell} & \le 2\delta_0 np \left(\frac{e}{\d_0}\right)^{\d_0 np} \cdot \exp(-p(n-\d_0 np)) \notag\\
& \le \exp\left(-np \left[1-\delta_0 p - \d_0 \log \left(\frac{2e}{\d_0} \right) \right]\right),
\end{align}
where in the second inequality we have used the fact that $p \le 1/4$. Therefore, for $np \ge C \log n$, with $C$ large, using the union bound we find $\E[ |\cL(A_n)| ]<1/n$. Hence by Markov's inequality we deduce that
\[
\P(\cL(A_n) \ne \emptyset) = \P(|\cL(A_n)| \ge 1) \le \E[|\cL(A_n)|] \le 1/n.
\]
To prove the upper bound on the cardinality of $\cL(A_n)$ we note that the assumption $np \ge \log(1/\bar C p)$ implies that $np \ge (1-\delta) \log n$, for any $\delta >0$, for all large $n$. Therefore, using \eqref{eq:light-col-bd} and Markov's inequality, setting $\delta=\f19$, we find that for $np \le 2 \log n$,
\[
\P(|\cL(A_n)| \ge n^{\f13}) \le n^{-{\f13}} \E |\cL(A_n)| \le n^{\f23} \cdot n^{-\f89} \cdot n^{2 \delta_0 p +2 \d_0 \log \left(\frac{2e}{\d_0} \right)} \le n^{-\f19},
\]
for all large $n$, whenever $\delta_0$ is chosen sufficiently small. For $p$ such that $2 \log n  \le np \le  C_{\ref{lem:light-col-card}} \log n$ we note from \eqref{eq:light-col-bd} that
\[
\P(\col_j(A_n) \text{ is light}) \le \f{1}{n}, \qquad j \in [n].
\]
Therefore, an union bound followed by Markov's inequality yield the desired result.
\end{proof}

\begin{proof}[Proof of Lemma \ref{lem: typical structure}]
We will show that each of the six properties of the event $\Omega_{\ref{lem: typical structure}}$ hold with probability at least $1 - Cn^{-2\bar c_{\ref{lem: typical structure}}}$, for some constant $C >0$. Then, taking a union bound the desired conclusion would follow.

First let us start with the proof of \eqref{item: no heavy}. Since the inequality $np \ge \log(1/\bar C p)$ implies that $np \ge \log n/2$, for all large $n$, it follows from Chernoff bound that property \eqref{item: no heavy} of the event $\Omega_{\ref{lem: typical structure}}$ holds with probability at least $1-1/n$, for all large $n$. We omit the details.

Next let us prove that property \eqref{item: disjoint support} of $\Omega_{\ref{lem: typical structure}}$ holds with high probability. For $(i,j) \in \binom{[n]}{2}$ and $k \in [n]$ denote by $\Omega_{(i,j),k}$ the event that the columns $\col_i(A_n), \col_j(A_n)$ are light and $a_{k,i}, a_{k,j} \ne 0$. Note that the event that two light columns intersect is contained in the event $\cup_{i,j,k} \Omega_{(i,j),k}$. Therefore, we need to find bounds $\P(\Omega_{(i,j),k})$.
Since the entry $a_{i,j}$ may depend on $a_{j,i}$ we need to consider the cases $k \in [n]\setminus \{i,j\}$ and $k \in \{i,j\}$ separately.

 First let us fix $k \in [n]\setminus \{i,j\}$. We note that
 \[
 \Omega_{(i,j),k} \subset \left\{a_{k,i}=a_{k,j} =1, \, |\supp(\col_i(A_n))\setminus\{i,j\}|, |\supp(\col_j(A_n))\setminus\{i,j\}| \le \delta_0 np\right\}.
 \]
 Therefore, recalling that under Assumption \ref{ass:matrix-entry} the entries of the sub-matrix of $A_n$ indexed by $([n]\setminus\{i,j\}) \times \{i,j\}$ are i.i.d.~$\dBer(p)$ we obtain that
 \[
   \P (\Omega_{(i,j), k}) \le p^2 \exp \left( -2np \left[1-\d_0 p - \d_0 \log \left(\frac{2e}{\d_0} \right) \right] \right)=:q,
 \]
for all large $n$, where we have proceeded similarly as in  \eqref{eq:light-col-bd} to bound the probability of the event
 \[
 \left\{ |\supp(\col_i(A_n))\setminus\{i,j\}|, |\supp(\col_j(A_n))\setminus\{i,j\}| \le \delta_0 np \right\}.
 \]
 Since $np \ge \log (1/\bar C p)$ an application of the union bound shows that
 \begin{align}\label{eq:prop2-1}
  \P\left(\bigcup_{i \ne j \in [n], k \notin \{i,j\}}\Omega_{(i,j),k} \right) \le n \cdot \binom{n}{2} q
 & \le \frac{p^{-1}}{2} e^{-np} \cdot (np)^3 \cdot
  \exp \left( -np \left[1-2\d_0 p - 2\d_0 \log \left(\frac{2e}{\d_0} \right) \right] \right)\notag\\
 & \le \f{\bar C}{2} \cdot (np)^3 \cdot  \exp \left( -np \left[1-2\d_0 p - 2\d_0 \log \left(\frac{2e}{\d_0} \right) \right] \right) \le n^{-c},
 \end{align}
 for some absolute constant $c$ and all large $n$, where we use that $np \ge \log n/2$, which as already seen is a consequence of the assumption $np \ge \log (1/\bar C p)$.

 Next let us consider the case $k \in \{i,j\}$. Without loss of generality, let us assume that $k=i$. We see that
 \[
 \Omega_{(i,j),i} \subset \left\{a_{i,j} =1, \, |\supp(\col_i(A_n))\setminus\{i,j\}|, |\supp(\col_j(A_n))\setminus\{i,j\}| \le \delta_0 np\right\}.
 \]
 Hence proceeding same as above we deduce
 \begin{align}\label{eq:prop2-2}
  \P\left(\bigcup_{i \ne j \in [n], k \in \{i,j\}}\Omega_{(i,j),k} \right) & \le 2 \cdot \binom{n}{2}  \cdot p \cdot
  \exp \left( -2np \left[1-\d_0 p - \d_0 \log \left(\frac{2e}{\d_0} \right) \right] \right)\notag\\
 & \le p^{-1} e^{-np} \cdot (np)^2 \cdot \exp \left( -np \left[1-2\d_0 p - 2\d_0 \log \left(\frac{2e}{\d_0} \right) \right] \right) \le n^{-c}.
 \end{align}
 So combining the bounds of \eqref{eq:prop2-1}-\eqref{eq:prop2-2} we conclude that property \eqref{item: disjoint support} of $\Omega_{\ref{lem: typical structure}}$ holds with probability at least $1- n^{-2 \bar c_{\ref{lem: typical structure}}}$.

 Now let us prove that \eqref{item: bounded multiplicity} holds with high probability. We let $j \in [n]$, $I =(i_1 \etc i_{r_0}) \in\binom{[n]\setminus \{j \}}{r_0}$, and $k_1 \etc k_{r_0} \in [n]$, for some absolute constant $r_0$ to be determined during the course of the proof. Denote by $\Omega_{j,I,(k_1 \etc k_{r_0})}$ the event that all the columns indexed by $I$ are light, and for any $i_\ell \in I$, $k_\ell \in \supp(\col_{i_\ell}(A_n)) \cap  \supp(\col_j(A_n))$. Equipped with this notation we see that the event that there exists a column such that its support intersects with the supports of at least $r_0$ light columns is contained in the event $\cup_{j; I; k_\ell, \ell \in [r_0]} \Omega_{j,I, (k_1,k_2,\ldots,k_{r_0})}$.

Since all the columns indexed by $I$ are light, applying property \eqref{item: disjoint support} it follows that $\{k_\ell\}_{\ell=1}^{r_0}$ are distinct. Therefore, for matrices with independent entries \eqref{item: bounded multiplicity} follows upon bounding the probability of the events
 \[
 \left|\supp(\col_{i_\ell}(A_n))\setminus \{k_\ell'\}_{\ell'=1}^{r_0}\right| \le \delta_0 np, \quad \ell \in [r_0]
 \]
 and
 \[
 a_{k_\ell, j} = a_{k_\ell, i_\ell} =1, \qquad \ell \in [r_0],
 \]
 followed a union bound. Recall that under Assumption \ref{ass:matrix-entry} the entry $a_{i,j}$ may only depend on $a_{j,i}$ for $i,j \in [n]$. Therefore, to carry out this scheme for matrices satisfying Assumption \ref{ass:matrix-entry} we additionally need to show that the support of $\col_j(A_n)$ is almost disjoint from the set of light columns with high probability, so that we can omit the relevant diagonal block to extract a sub-matrix with jointly independent entries.

 To this end, we claim that
 \beq\label{eq:prop-2-pre}
 \P\left(\exists j \in [n]: |\supp(\col_j(A_n)) \cap \cL(A_n)| \ge 3\right) \le n^{-c'},
 \eeq
for some $c' >0$. To establish \eqref{eq:prop-2-pre} we fix $j \in [n]$ and note that
 \begin{align*}
& \left\{|\supp(\col_j(A_n)) \cap \cL(A_n)| \ge 3, |\supp(\col_j(A_n))| \le C_{\ref{lem: typical structure}}np  \right\} \\
\subset &  \big\{\exists k \text{ with } 2 \le k \le C_{\ref{lem: typical structure}}np, \text{ and } i_1, i_2, \ldots, i_k \in [n] \setminus\{j\} \text{ distinct } \text{such that } \\
& \qquad \qquad \qquad \qquad |\supp(\col_{i_\ell}(A_n))\setminus \{i_1,i_2\ldots, i_k,j\}| \le \delta_0 np, \ell=1,2,\ldots,k\big\}.
 \end{align*}
For ease of writing, let us denote
\[
q':= \exp\left(-np \left[1-\delta_0 p - C_{\ref{lem: typical structure}}p- \d_0 \log \left(\frac{2e}{\d_0} \right) \right]\right).
\]
By Assumption \ref{ass:matrix-entry} the entries $\{a_{i',j'}\}$ for $(i',j') \in \{i_\ell\}_{\ell=1}^k \times ([n]\setminus (\{i_\ell\}_{\ell=1}^k \cup\{j\})$ are jointly independent $\dBer(p)$ random variables.
Therefore
applying Stirling's approximation once more, and proceeding similarly as in \eqref{eq:light-col-bd} we find that
\begin{align*}
  & \P\left(|\supp(\col_j(A_n)) \cap \cL(A_n)| \ge 3, |\supp(\col_j(A_n))| \le C_{\ref{lem: typical structure}}np \Big| \col_j(A_n)\right) \\
 \le & \sum_{k=2}^{C_{\ref{lem: typical structure}}np} \binom{C_{\ref{lem: typical structure}}np}{k}q'^k  \le  \sum_{k \ge 2} \left(\f{e C_{\ref{lem: typical structure}}np}{k}\right)^k \cdot q'^k  \\
   &  \qquad \qquad \qquad \qquad  \le e^{-np} p^{-1}\cdot p \cdot \left({e C_{\ref{lem: typical structure}}np}\right)^2 \cdot \exp\left(- np \left[1-2\delta_0 p -  2 C_{\ref{lem: typical structure}}p- 2 \d_0 \log \left(\frac{2e}{\d_0} \right) \right]\right).
 \end{align*}
 Since by Lemma \ref{lem:light-col-card} we see that $\cL(A_n) =\emptyset$ with high probability when $p \ge \f{C_{\ref{lem:light-col-card}}\log n}{n}$. Without loss of generality, we therefore assume that $p \le \f{C_{\ref{lem:light-col-card}}\log n}{n}$. So, by the union bound over $j$, using the fact that $np \ge \log (1/\bar C p)$ and property \eqref{item: no heavy} of the event $\Omega_{\ref{lem: typical structure}}$ we have that, for all large $n$,
 \begin{align*}
  \P\left(\exists j \in [n]: |\supp(\col_j(A_n)) \cap \cL(A_n)| \ge 3\right)  &  \le  \bar C \cdot  \left({e C_{\ref{lem: typical structure}}np}\right)^3 \cdot \exp\left(- np (1-\delta)\right) +1/n\\
  & \le2 \exp(-np(1-2\delta)),
 \end{align*}
 for some $\delta >0$. This establishes the claim \eqref{eq:prop-2-pre}.

Equipped with \eqref{eq:prop-2-pre} we turn to proving \eqref{item: bounded multiplicity}. Using \eqref{eq:prop-2-pre} we see that excluding a set of probability at most $n^{-c'}$, for any $j,I,(k_1 \etc k_{r_0})$ such that
 $\Omega_{j,I,(k_1 \etc k_{r_0})}$ occurs, we can find $\ell_{1},\ldots, \ell_{{r_0-3}}$ with $k_{\ell_s} \in [n]\setminus (\cL(A_n) \cup \{j\}) \subset [n] \setminus (I \cup\{j\})$ for all $s=1,2,\ldots,r_0-3$.
 For such $k_{\ell_s}$, all events $|\supp(\col_{i_{\ell_s}}(A_n))\setminus (I \cup \{j\})| \le \delta_0 np$ and $a_{k_{\ell_s}, j} = a_{k_{\ell_s}, i_{\ell_s}}=1$ with $s=1,2,\ldots,r_0-3$ are independent.
  Denote for brevity
\[
\bar{q}:= \exp\left(-np \left[1-\delta_0 p - \d_0 \log \left(\frac{2e}{\d_0} \right) \right]\right).
\]
Note that under the assumption $np \ge \log(1/\bar C p)$ we have $\bar q \le \exp( - \log n/2)$ for all large $n$.
Hence, recalling Assumption \ref{ass:matrix-entry}, using \eqref{eq:prop-2-pre} and property \eqref{item: disjoint support} of $\Omega_{\ref{lem: typical structure}}$, and proceeding similarly as in \eqref{eq:light-col-bd} once again we see that
\begin{align}\label{eq:prop-3-1}
& \P\left(\bigcup_{\substack{j, k_1,\etc, k_{r_0} \in [n]\\ I \in \binom{[n]\setminus \{j\}}{r_0}}} \Omega_{j,I, (k_1,\etc, k_{r_0})} \right) \\
\le  &  \sum_{\substack{j, k_1,\etc, k_{r_0} \in [n]\\ I \in \binom{[n]\setminus \{j\}}{r_0}}} \prod_{s=1}^{r_0-3}\P\left(|\supp(\col_{i_{\ell_s}}(A_n))\setminus (I \cup \{j\})| \le \delta_0 np \right)  \cdot \P\left(a_{k_{\ell_s}, j} = a_{k_{\ell_s}, i_{\ell_s}}=1 \right)+ n^{-c_0} \notag\\
\le &   n^{r_0+1} \binom{n-1}{r_0} p^{2(r_0-3)} \cdot \bar q^{r_0-3} +n ^{-c'}  \le (np)^{2(r_0-3)} \cdot n^7 \cdot \bar q^{r_0-3} + n^{-c'}\le n^{-\bar c} + n^{-c_0}, \notag
\end{align}
for some $\bar c, c_0 >0$, where the last step follows upon choosing $r_0$ such that $r_0 - 3 > 15$. This completes the proof of property \eqref{item: bounded multiplicity}.

Next let us show that \eqref{item: normal column} holds with high probability. First we will prove that for any $j \in [n]$ such that $\col_j(A_n)$ is normal we have
\beq\label{eq:prop-4-not-fold}
 \left|\supp(\col_j(A_n)) \cap  \left( \bigcup_{i \in \cL(A_n)} \supp(\col_i(A_n)) \right) \right|
        \le \frac{\delta_0}{64} np,
\eeq
with high probability. Note that the difference between \eqref{eq:prop-4-not-fold} and property \eqref{item: normal column} of $\Omega_{\ref{lem: typical structure}}$ is that in \eqref{eq:prop-4-not-fold} it is claimed that for any $j \in [n]$ such that $\col_j(A_n)$ is normal its support does not have a large intersection with that of light columns. To establish property \eqref{item: normal column} we need to strengthen the above to deduce that one can replace the matrix $A_n$ by its folded version on the \abbr{LHS} of \eqref{eq:prop-4-not-fold} with the loss of factor of four in its \abbr{RHS}.

Turning to prove \eqref{eq:prop-4-not-fold}, we see that if \eqref{item: bounded multiplicity} holds then given any $j \in [n]$ there exists only $r_0$ light columns $\col_{i_1}(A_n), \etc, \col_{i_{r_0}}(A_n)$ such that their supports intersect that of $\col_j(A_n)$. Hence,
\begin{align}\label{eq:conn-3-to-4}
&\left\{ \exists j \in [n]\setminus \cL_n(A): \left|\supp(\col_j(A_n)) \cap \left(\bigcup_{i \in \cL(A_n)} \supp(\col_i(A_n))\right)  \right| \ge \f{\d_0}{64}np\right\}\cap \left\{ \eqref{item: bounded multiplicity} \text{ holds}\right\} \notag\\
& \qquad \qquad \qquad \subset \left\{\exists i \ne j \in [n]: \left|\supp(\col_j(A_n)) \cap  \supp(\col_i(A_n))  \right| \ge \f{\d_0}{64 r_0}np\right\}.
\end{align}
Since by \eqref{item: no heavy} we have that $|\supp(\col_j(A_n))| \le C_{\ref{lem: typical structure}} np$, using Stirling's approximation and a union bound we show that the event on the \abbr{RHS} of \eqref{eq:conn-3-to-4} holds with small probability.

Indeed, for $i \ne j \in [n]$, denoting
\[
\bar \Omega_{i,j}: = \left\{\left|\supp(\col_j(A_n)) \cap  \supp(\col_i(A_n))  \right| \ge \f{\d_0}{64 r_0}np\right\},
\]
and using the fact that property \eqref{item: no heavy} holds with high probability we deduce that
\begin{align}\label{eq:conn-3-to-4-1}
\P\left(\bigcup_{i \ne j \in [n]} \bar \Omega_{i,j} \right)  & \le \sum_{i \ne j \in [n]} \E\left[\P\left(\bar \Omega_{i,j} \cap \left\{|\supp(\col_j(A_n))| \le C_{\ref{lem: typical structure}}np \right\} \Big| \col_j(A_n)\right)\right] + n^{-1}\notag\\
& \le \binom{n}{2} \cdot \binom{C_{\ref{lem: typical structure}}np}{\f{\delta_0}{64r_0} np} p^{\f{\delta_0}{64r_0} np} + n^{-1} \le n^2 \cdot \left(\f{e C_{\ref{lem: typical structure}} 64 r_0 p}{\delta_0} \right)^{\f{\delta_0}{64r_0} np} + n^{-1} \le 2 n^{-1},
\end{align}
for all large $n$. Thus combining \eqref{eq:conn-3-to-4}-\eqref{eq:conn-3-to-4-1} and applying property \eqref{item: no heavy} of the event $\Omega_{\ref{lem: typical structure}}$ we  establish that \eqref{eq:prop-4-not-fold}  holds with probability at least $1- n^{-\wt c}$ for some $\wt c >0$.

As mentioned above, to show that property \eqref{item: normal column} holds with high probability we need to strengthen \eqref{eq:prop-4-not-fold}. To this end, recalling the definition of the folded matrix (see Definition \ref{dfn:folded}) we note that $k \in \supp(\col_i(\fold(A_n)) \cap \supp(\col_j(\fold(A_n))$ implies that
\[
k  \in \supp_{\gu}(\col_i(A_n)) \cap \supp_{\gv}(\col_j(A_n))
\]
for some $\gu, \gv \in \{1,2\}$, where for any $\ell \in [n]$.
\[
\supp_1(\col_\ell(A_n)) := \supp(\col_\ell(A_n)) \cap [\gn],
\]
\[
\supp_2(\col_\ell(A_n)) := (\supp(\col_\ell(A_n)) \cap [\gn+1, 2\gn]) - \gn,
\]
and for any set $S \subset [n]$ and $k \in \Z$ we denote $S+k :=\{x+k: x \in S\}$. Using the observation we see that it suffices to show that
\beq\label{eq:prop-4-not-fold-1}
 \left|\supp_\gu(\col_j(A_n)) \cap  \left( \bigcup_{i \in \cL(A_n)} \supp_\gv(\col_i(A_n)) \right) \right|
        \le \frac{\delta_0}{64} np,
\eeq
with high probability, for all $\gu, \gv \in \{1,2\}$. If $\gu=\gv$ then \eqref{eq:prop-4-not-fold-1} is an immediate consequence of \eqref{eq:prop-4-not-fold}. It remains to prove \eqref{eq:prop-4-not-fold-1} for $\gu \ne \gv$. Let us consider the case $\gu=1$ and $\gv=2$. From \eqref{eq:prop-2-pre} we have
 \beq
 \P\left(\exists j \in [n]: |\supp_1(\col_j(A_n)) \cap \cL(A_n) | \ge 3\right) \le n^{-c'}. \notag
 \eeq
 Therefore, proceeding similarly as in the steps leading to \eqref{eq:prop-3-1} we deduce that, with the desired high probability, for any $j \in [n]$, such that $\col_j(A_n)$ is a normal column, there are at most $r_0$ light columns $\{\col_{i_\ell}(A_n)\}_{\ell=1}^{r_0}$ so that $\supp_1(\col_j(A_n)) \cap \supp_2(\col_{i_\ell}(A_n)) \ne \varnothing$. Now arguing similarly as in the proof of \eqref{eq:prop-4-not-fold} we derive \eqref{eq:prop-4-not-fold-1} for $\gu=1$ and $\gv=2$. The proof of the other case is similar and hence is omitted.

Next we show that \eqref{item: extension} holds with high probability. We first fix an $I \subset [n]$ with $2 \le |I| \le c_{\ref{lem: typical structure}} p^{-1}$ and derive that \eqref{item: extension} holds with certain probability for each such choice of $I$ and then take an union over $I$.

Since the entry $a_{i,j}$ may depend on $a_{j,i}$, for $i \ne j$, to derive that \eqref{item: extension} holds with the desired probability we need to split it into two cases. Namely, the off-diagonal and the diagonal blocks require separate arguments. First we consider the off-diagonal block.

To this end, define the random variables
\[
\eta_i:=\max(|\{j\in I:\,\ga_{i,j}\ne 0\}|-1,0), \quad\quad i\in [\gn]\setminus \bar I,
\]
where we recall $\bar I:=\bar I(I):=\{j \in [\gn]: j \in I \text{ or } j +\gn \in I\} \subset [\gn]$, $\gn:=\lfloor n/2 \rfloor$, and $\ga_{i,j}$ denotes the $(i,j)$-th entry of $\fold(A_n)$. Observe that
\[\left|\bigcup_{j\in I}\left(\supp(\col_j(\fold(A_n)))\setminus \bar I\right)\right|=
\sum_{j\in I}\left|\supp(\col_j(\fold(A_n)))\setminus \bar I\right|-\sum_{i \in [\gn]\setminus \bar I} \eta_i.
\]
To prove \eqref{item: extension} we need to show that $\sum \eta_i$ cannot be too large with large probability. To show the latter we use the standard Laplace transform method. 

Note that
\[
\ga_{i,j} = \xi_{i,j} \cdot \delta_{i,j}, \quad i \in [\gn]\setminus \bar I, j \in I,
\]
where $\{\xi_{i,j}\}$ are i.i.d.~Rademacher random variables, $\delta_{i,j}$ are  i.i.d.~$\dBer(\gp)$ random variables, and $\gp:=2p(1-p)$. Therefore,
\[
\P\{\eta_i=\ell\}\leq {|I|\choose \ell+1}\gp^{\ell+1}, \quad \ell\in\N.
\]
Thus, for any $\lambda>0$ such that $e^\lambda \gp|I|\leq 1$, we have
\[
\E \left(e^{\lambda \eta_i}\right)
\leq 1+\sum_{\ell=1}^\infty \big(e^\lambda\big)^{\ell}\,\gp^{\ell+1}|I|^{\ell+1}((\ell+1)!)^{-1}\leq 1+e \gp|I|,
\]
and hence
\[
\P\left\{\sum_{i \in [\gn]\setminus \bar I} \eta_i \geq t\right\}\leq \frac{\big(1+e\gp|I|\big)^{|[\gn]\setminus \bar I|}}{\exp(\lambda t)},\quad\quad t>0.
\]
In particular, taking $t:=\f{\d_0}{32} n p|I|$ and $\lambda:=\log\frac{1}{\gp|I|}$, we get
\begin{align}\label{eq:cB-1-bd}
\P\left\{\sum_{i\in [\gn]\setminus \bar I} \eta_i\geq \f{\d_0}{32} np |I|\right\}
\leq \exp\left(e\gp\gn|I|-\lambda \f{\d_0}{32} np |I|\right)& \leq \exp\left(-\lambda \f{\d_0}{64} np |I|\right) \notag \\
& \le \exp\left( - \log\left(\f{1}{2 c_{\ref{lem: typical structure}}}\right) \cdot \f{\d_0}{64}np |I|\right) \le n^{-2|I|},
\end{align}
where the second and the third inequalities follow from recalling that $ p |I| \le c_{\ref{lem: typical structure}}$ for some sufficiently small constant $c_{\ref{lem: typical structure}}$, depending only on $\delta_0$, and the last inequality follows from our assumption that $np \ge \log n/2$ and shrinking $c_{\ref{lem: typical structure}}$ even further, if necessary.

To complete the proof of the fact that \eqref{item: extension} holds with high probability, we show that
\beq\label{eq:cB-2-bd}
\P\left( \sum_{j\in I}\left|\supp(\col_j(\fold(A_n)))\cap \bar I\right| \ge \f{\d_0}{32} np |I| \right) \le 2 n^{-2|I|}.
\eeq
Now the proof finishes from \eqref{eq:cB-1-bd}-\eqref{eq:cB-2-bd} by first taking a union over $I \in \binom{[n]}{k}$ followed by a union over $k=2,3,\ldots, c_{\ref{lem: typical structure}} p^{-1}$. We omit the details.

Turning to prove \eqref{eq:cB-2-bd}, we denote \(
\hat I(I):=\hat I:= \cup_{i \in \bar I} \{ i , \gn+i\}
\).
As the entries of $A_n$ are $\{0,1\}$-valued,
we see that
\[
\supp (\col_j(\fold(A_n))) \cap \bar I \subset \supp(\col_j(A_n)) \cap \hat I.
\]
Moreover, $I \subset \hat I$. Therefore, it is enough to show that
\beq\label{eq:cB-2-bd-1}
\P \left( \sum_{j\in \hat I}\left|\supp(\col_j(A_n))\cap \hat I\right| \ge \f{\d_0}{32} np |I|\right) \le 2 n^{-2|I|}.
\eeq
Since $A_n$ satisfies Assumption \ref{ass:matrix-entry} we have that the upper triangular part of the sub-matrix of $A_n$ induced by the rows and columns indexed by $\hat I$ consists of independent~$\{0,1\}$-valued random variables stochastically dominated by i.i.d.~$\dBer(p)$ variables. So does the lower triangular part of that sub-matrix.

For ease of writing let us write
\[
\sX_L:= \sum_{i \ge j \in \hat I} a_{i,j}  \quad \text{ and } \quad \sX_U:=\sum_{i \le j \in \hat I} a_{i,j}
\]
and note $\sX_U$ and $\sX_L$ has the same law. Thus 
to establish \eqref{eq:cB-2-bd-1} it suffices to show that
\beq\label{eq:cB-2-bd-2}
\P(\sX_U \ge \f{\delta_0}{64} np |I|) \le n^{-2|I|}.
\eeq
The above is obtained by using the Laplace transform method as above. Indeed, we note that
\[
\P(\sX_U = \ell) \le \binom{|\hat I|^2}{\ell} p^\ell, \quad \ell \in \N \cup \{0\}
\]
and therefore
\[
\E \left[ \exp\left(\lambda X_U\right)\right] \le \exp \left( e^\lambda p |\hat I|^2 \right) \le \exp (4 |I|),
\]
where $\lambda= \log \f{1}{p |I|}$ and we have used the fact that $|\hat I| \le 2 |I|$. Hence, upon using Markov's inequality and proceeding similarly as in \eqref{eq:cB-1-bd} we deduce \eqref{eq:cB-2-bd-2}. It completes the proof of \eqref{eq:cB-2-bd-1}.

Now it remains to prove that property \eqref{item:diff} holds with high probability.  Recalling the definition of the folded matrix again we note that $|\supp(\col_j(\fold(A_n)))| \le |\supp(\col_j(A_n))|$. To show that the cardinality of the support of $\col_j(\fold(A_n))$ is not too small compared to its unfolded version we observe that if $k \in \supp(\col_j(A_n))$ but $k \notin \supp(\col_j(\fold(A_n)))$ then we must have that $a_{k,j}=a_{k,\gn+j}=1$. Using estimates on the binomial probability and Chernoff bound we show that number of such $k$ is small.

To carry out the above heuristic, we fix $j \in [n]$ and since the entries of $A_n$ are $\{0,1\}$ valued we note that
\[
|\supp(\col_j(\fold(A_n))) | = \sum_{i \in [\gn]} \left[a_{i,j} \cdot (1-a_{i+\gn, j}) + a_{i+\gn,j} \cdot (1-a_{i, j})\right]
\]
Further observe that
\begin{align*}
\left|\supp(\col_j(A_n))\cap [\gn]\right| = \sum_{i=1}^{\gn} a_{i,j} = \sum_{i=1}^{\gn} a_{i,j} \cdot a_{i+\gn, j} + \sum_{i=1}^\gn a_{i,j} \cdot (1-a_{i+\gn, j})
\end{align*}
and
\[
\left|\supp(\col_j(A_n))\cap ([2\gn]\setminus [\gn])\right| = \sum_{i=\gn+1}^{2\gn} a_{i,j} = \sum_{i=1}^{\gn} a_{i,j} \cdot a_{i+\gn, j} + \sum_{i=1}^\gn a_{i+\gn,j} \cdot (1-a_{i, j}).
\]
Therefore,
\[
\left| \left|\supp(\col_j(A_n))\right| - \left|\supp(\col_j(\fold(A_n))) \right| \right| \le 2 \sum_{i=1}^{\gn} a_{i,j} \cdot a_{i+\gn, j} +1.
\]
Denoting
\[
\Delta_j:= \sum_{i=1}^{\gn} a_{i,j} \cdot a_{i+\gn, j},
\]
we see that $\Delta_j$ is stochastically dominated by $\dBin(\gn, p^2)$. To finish the proof we need to find bounds on $\Delta_j$.

First let us consider the case $p \le n^{-5/12}$. For any $k_0 \in \N$, sufficiently large, we see that
\beq\label{eq:delta_j-1}
\P(\Delta_j \ge k_0) \le \binom{\gn}{k_0} p^{2k_0} \le (np^2)^{k_0} \le n^{-k_0/6} \le n^{-2}.
\eeq
For $ n^{-5/12} \le p \le c$, for some small $c >0$ depending on $\delta_0$, we use Chernoff bound to deduce that
\beq\label{eq:delta_j-2}
\P\left(\Delta_j \ge \f{\d_0}{16}np\right) \le \P\left(  \Delta_j \ge 2 p^{-1/2} \cdot \gn p^2  \right) \le \exp\left( -\f13 p^{-1/2} \cdot \gn p^2  \right) \le \exp\left(-\f19 n^{3/8} \right).
\eeq
Combining \eqref{eq:delta_j-1}-\eqref{eq:delta_j-2} and taking an union over $j \in [n]$ we show that property \eqref{item:diff} holds with high probability. This completes the proof of the lemma.
 \end{proof}

 \section{Proof of invertibility over sparse vectors with a large spread component}\label{sec:invert-large-spread}

 In this section we prove Proposition \ref{p: spread vectors-1}. As already mentioned in Section \ref{sec:sparse-3} the proof is similar to that of Proposition \ref{p: spread vectors}. There are two key differences. Since our goal is to find a uniform bound on $\|A_n x \|_2$ for $x$'s with a large spread component, unlike in the proof of Proposition \ref{p: spread vectors-1}, we use  Lemma \ref{lem:bound-levy} to estimate the small ball probability. Moreover, as noted earlier, Assumption \ref{ass:matrix-entry} allows some dependencies among its entries. Therefore, to tensorize the small ball probability we need to extract a sub-matrix of $A_n$ with jointly independent entries such that the coordinates of $x$ corresponding to the columns of this chosen sub-matrix form a vector with a large spread component and a sufficiently large norm. Below we make this idea precise.

 \begin{proof}[Proof of Proposition \ref{p: spread vectors-1}]
First, let us show that \eqref{eq:spread-prop-pre-bd} implies \eqref{eq:spread-prop-bd}. To this end, we begin by noting that if $c_{\ref{p: spread vectors-1}} < \f12$ then for any $x \in {\rm Dom}(c_0^*n, c_{\ref{p: spread vectors-1}}K^{-1})$ we have that $\|x_{[M_0+1: c_0^*n]}\|_2 \ge \|x_{[c_0^*n+1:n]}\|_2$ (see also \eqref{eq:r-v-relation}). Hence, for $x \notin V_{M_0}$ we obtain that $\|x_{[M_0+1: c_0^*n]}\|_2 \ge \rho/\sqrt{2}$. Therefore, \eqref{eq:spread-prop-pre-bd} implies that
\begin{equation}
\P\left(\left\{\exists x \in V_{c_0^*,{c}_{\ref{p: spread vectors-1}}}\setminus V_{M_0}: \norm{(A_n-p {\bm J}_n ) x -y}_2 \le 2\wt{c}_{\ref{p: spread vectors-1}}\rho \sqrt{np}\right\} \cap \Omega_K^0\right) \le \exp(-2\bar{c}_{\ref{p: spread vectors-1}}n), \notag
\end{equation}
where we recall the definition of $\Omega_K^0$ from \eqref{eq:Omega-K-0}.
Hence, proceeding as in the steps leading to \eqref{eq:moderate-pre-prop-2} we deduce \eqref{eq:spread-prop-bd} upon assuming \eqref{eq:spread-prop-pre-bd}.

So, to complete the proof of the proposition it remains to establish \eqref{eq:spread-prop-pre-bd}. To prove it, we fix $x \notin V_{M_0}$. Then
\[
\|x_{[M_0+1:n]}\|_2 \ge \rho \quad \text{ and } \quad \f{\|x_{[M_0+1:n]}\|_\infty}{\|x_{[M_0+1:n]}\|_2} \le \f{K}{c_{\ref{p: spread vectors}}}\cdot \sqrt{\f{\log n}{n \sqrt{\log \log n}}}.
\]
Fixing $\bar c_0 \in (c_0^*,1)$, as $M_0 \le \f{1-\bar c_0}{2}n$ for all large $n$, recalling the fact that the non-zero entries of $x_{[m_1: m_2]}$, for $m_1 <m_2$, are the coordinates of $x$ that  take places from $m_1$ to $m_2$ in the non-increasing arrangement according their absolute values, we note that
\beq\label{eq:small-comp-part}
\|x_{[M_0+1:n]}\|_2^2 = \|x_{[M_0+1:(1-\bar c_0)n]}\|_2^2 + \|x_{[(1-\bar c_0)n+1:n]}\|_2^2 \le \f{1+\bar c_0}{1- \bar c_0} \cdot \|x_{[M_0+1:(1-\bar c_0)n]}\|_2^2.
\eeq
Therefore
\[
\|x_{[M_0+1:(1-\bar c_0)n]}\|_2 \ge \rho \cdot \sqrt{\f{1-\bar c_0}{1+\bar c_0}} \quad \text{ and } \quad \f{\|x_{[M_0+1:(1-\bar c_0)n]}\|_\infty}{\|x_{[M_0+1:(1-\bar c_0)n]}\|_2} \le \f{K}{c_{\ref{p: spread vectors}}} \cdot \sqrt{\f{1+\bar c_0}{1-\bar c_0}}\cdot \sqrt{\f{\log n}{n\sqrt{\log \log n}}}.
\]
Note that this shows $x_{[M_0+1:(1-\bar c_0)n]}$ has a large spread part and a large norm. Denoting $\cI :=\cI(x):= \supp(x_{[M_0+1:(1-\bar c_0)n]})$ we note that Assumption \ref{ass:matrix-entry} implies that the entries $\{a_{i,j}\}_{j \in \cI, i \notin \cI}$ are i.i.d.~$\dBer(p)$. So, now we can carry out the scheme that was outlined above by using the joint independence of $\{a_{i,j}\}_{j \in \cI, i \notin \cI}$.

Indeed, using Lemma \ref{lem:bound-levy} we find that for any $i \notin \cI$, $y \in \R^n$, and $\vep_0>0$ we have
\begin{multline}\label{eq:levy-bd-1}
 \P\left( \left|((A_n - p {\bm J}_n) x)_i -y_i\right| \le p^{1/2} (1-p)^{1/2} \|x_{[M_0+1:(1-\bar c_0)n]}\|_2 \cdot \bar c_0^{-1/2}\vep_0 \right)  \\
\le   \cL \left( \langle {\bm a}_i, x\rangle, p^{1/2} (1-p)^{1/2} \|x_{[M_0+1:(1-\bar c_0)n]}\|_2 \cdot \bar c_0^{-1/2}\vep_0 \right) \\
\le C_{\ref{lem:bound-levy}} \left( \f{\vep_0}{\sqrt{\bar c_0}} + \f{2K}{c_{\ref{p: spread vectors}}} \cdot \sqrt{\f{1+\bar c_0}{1-\bar c_0}}\cdot \sqrt{\f{\log n}{np\sqrt{\log \log n}}} \right) \le 2 C_{\ref{lem:bound-levy}} \f{\vep_0}{\sqrt{\bar c_0}},
\end{multline}
for all sufficiently large $n$ (depending only on $\vep_0$), where ${\bm a}_i$ is the $i$-th row of $A_n$ and we have used the fact that $np \ge c_1 \log n$ for some $c_1 >0$.
We will choose $\vep_0$ as a small constant  during the course of the proof.

Since the entries $\{a_{i,j}\}_{j \in \cI, i \notin \cI}$ are i.i.d.~$\dBer(p)$, we apply a standard tensorization argument, for example \cite[Lemma 5.4]{R}, to deduce from \eqref{eq:levy-bd-1} that for any $x \notin V_{M_0}$
\begin{multline}\label{eq:levy-tensorize-1}
\P\left(\|(A_n - p {\bm J}_n) x - y\|_2 \le \sqrt{np(1-p)}  \|x_{[M_0+1:(1-\bar c_0)n]}\|_2  \vep_0\right)  \\
 \le \P\left( \sum_{i \notin \cI}\left|((A_n - p {\bm J}_n) x)_i -y_i\right|^2 \le p(1-p)  \|x_{[M_0+1:(1-\bar c_0)n]}\|_2^2 \vep_0^2 \bar c_0^{-1}\cdot |\cI^c|\right) \le \left(C_0 \cdot \vep_0\right)^{\bar c_0 n},
\end{multline}
for some constant $C_0$, depending only on $\bar c_0$, where the last two steps follow from the fact that $|\cI^c| \ge \bar c_0 n$ and upon choosing $\vep_0$ such that $ C_0 \cdot \vep _0 \le \f12$.

To complete the proof we use an $\vep$-net similar to the proof of Proposition \ref{p: spread vectors}. First, setting
\beq\label{eq:vep-tau-spread}
\vep=\f{\rho}{2}\tau= \f{\vep_0 \rho}{448 K} \cdot \sqrt{\f{1-\bar c_0}{1+\bar c_0}},
\eeq
and using  Fact \ref{fact:net-bd} we obtain a net $\wt \cM $ in $V_{c_0^*,{c}_{\ref{p: spread vectors-1}}} \setminus V_{M_0}$ with
\[
|\wt \cM| \le \bar C^n {n \choose M_0}{n \choose c_0^* n}  \left(\f{1}{\vep_0}\right)^{c_0^*n+1} \left(\f{1}{\rho}\right)^{M_0+1} \le \bar C^n \left(\f{en}{M_0}\right)^{M_0} \left(\f{e}{c_0^*}\right)^{c_0^* n} \left(\f{1}{\vep_0}\right)^{c_0^*n+1}  \left(\f{1}{\rho}\right)^{M_0+1},
\]
for some $\bar C$, depending only on $\bar c_0$ and $c_0^*$. Recalling that $M_0 = \f{n \sqrt{\log \log n}}{\log n}$ and the definition of $\rho$ we observe that
\[
\left(\f{n}{M_0\rho^2}\right)^{M_0} = \exp(o(n)),
\]
for $p \in(0,1/2]$ satisfying $np \ge c_1 \log n$. Therefore, we further have that
\beq\label{eq:wt-cM-bd}
|\wt \cM| \le C_\star^n \cdot \left(\f{1}{\vep_0}\right)^{c_0^*n+1},
\eeq
for some other constant $C_\star$, depending only on $c_0^*$ and $\bar c_0$. Next proceeding as in the steps leading to \eqref{eq:net-triangle} we obtain that for any $x \in V_{c_0^*,{c}_{\ref{p: spread vectors-1}}} \setminus V_{M_0}$ there exists $\bar x \in \wt \cM$ such that for any $y \in \R^n$
\begin{multline*}
\|(A_n - p {\bm J}_n) \bar{x} - y\|_2   \le  \|(A_n- p {\bm J}_n) x - y\|_2 + 4K \sqrt{np}\cdot  \vep + 2K \sqrt{np} \cdot \tau \cdot \|v_{\bar x}\|_2 + 12 c_{\ref{p: spread vectors-1}} \sqrt{np} \cdot  \norm{v_{\bar{x}}}_2.
\end{multline*}
Since $\|\bar{x}_{[M_0+1:c_0^*n]}\|_2 = \|v_{\bar x}\|_2 \ge \rho/\sqrt{2}$, using \eqref{eq:vep-tau-spread} and setting
\beq\label{eq:vep-tau-spread-1}
c_{\ref{p: spread vectors-1}} \le  \f{\vep_0}{56} \cdot \sqrt{\f{1-\bar c_0}{1+\bar c_0}},
\eeq
we deduce from above that any $x \in V_{c_0^*,{c}_{\ref{p: spread vectors-1}}} \setminus V_{M_0}$ there exists $\bar x \in \wt \cM$ such that for any $y \in \R^n$
\[
\|(A_n - p {\bm J}_n) \bar{x} - y\|_2   \le  \|(A_n- p {\bm J}_n) x - y\|_2 +\f{\vep_0}{7} \cdot \sqrt{\f{1-\bar c_0}{1+\bar c_0}} \|\bar{x}_{[M_0+1:c_0^*n]}\|_2 \cdot \sqrt{np}.
\]
Furthermore, by our construction of the net $\wt \cM$,
\[
\|{x}_{[M_0+1:c_0^*n]}\|_2 \le \|\bar{x}_{[M_0+1:c_0^*n]}\|_2+ \vep \le \left(1 + \f{\vep_0}{224 K}\right) \cdot \|\bar{x}_{[M_0+1:c_0^*n]}\|_2.
\]
Therefore, upon assuming $p \le \f14$ and recalling \eqref{eq:small-comp-part}, this further yields that
\begin{multline}\label{eq:inf-to-union-spread}
\P\left(\exists {x} \in V_{c_0^*,{c}_{\ref{p: spread vectors-1}}} \setminus V_{M_0}: \|(A_n- p {\bm J}_n) {x} -y \|_2 \le \f{\vep_0}{4}\|{x}_{[M_0+1:c_0^*n]}\|_2 \cdot \sqrt{\f{1-\bar c_0}{1+\bar c_0}}\cdot \sqrt{np} \right) \\
\le   \P\left(\exists \bar{x} \in \wt \cM: \|(A_n- p{\bm J}_n) \bar{x} -y \|_2 \le  \sqrt{np(1-p)}  \|\bar x_{[M_0+1:(1-\bar c_0)n]}\|_2 \vep_0 \right)\\
 \le |\wt \cM| \cdot \left(C_0 \cdot \vep_0\right)^{\bar c_0 n} \le C_0^{\bar c_0 n} C_\star^n \vep_0^{-1}\vep_0^{(\bar c_0 - c_0^*)n} \le \vep_0^{\f{\bar c_0- c_0^*}{2} n},
\end{multline}
where the second last step follows from \eqref{eq:wt-cM-bd} and the last step follows upon using the fact that $\bar c_0 > c_0^*$ and choosing $\vep_0$ sufficiently small. This yields \eqref{eq:spread-prop-pre-bd} and hence the proof of the proposition is complete.
\end{proof}



\begin{thebibliography}{99}




\bibitem{AE}L.~Addario-Berry and L.~Eslava. 
\newblock Hitting Time Theorems for Random Matrices. 
\newblock {\em Combinatorics, Probability and Computing}, 23(5), 635--669, 2014.

\bibitem{BS}Z.~D.~Bai, and J.~W.~Silverstein.
\newblock {\em Spectral analysis of large dimensional random matrices}.
\newblock 2nd ed. Springer Series in Statistics. Dordrecht:~Springer, 2010.

\bibitem{BH}A.~S.~Banderia and R.~van Handel.
\newblock Shrap nonasymptotic bounds on the norm of random matrices with independent entries.
\newblock {\em Ann.~Probab.}, 44(4), 2479--2506, 2016.

\bibitem{BCZ}A.~Basak, N.~Cook, and O.~Zeitouni.
\newblock Circular law for the sum of random permutations.
\newblock {\em Elec.~J.~Probab.}, paper no.~33, 51 pp, 2018.

\bibitem{BD}A.~Basak and A.~Dembo.
\newblock Limiting spectral distribution of sum of unitary and orthogonal matrices.
\newblock {\em Elec.~Comm.~Probab.}, article 69, 2013

\bibitem{BR-invertibility}A.~Basak and M.~Rudelson.
\newblock Invertibility of sparse non-Hermitian matrices.
\newblock {\em Adv.~Math.}, 310, 426--483, 2017.

\bibitem{BR-circular}A.~Basak and M.~Rudelson.
\newblock The circular law for sparse non-Hermitian matrices.
\newblock {\em Ann.~Probab.}, 47(4), 2359--2416, 2019. 


\bibitem{bcc}C.~Bordenave, P.~Caputo, and D.~Chafa\"{i}.
\newblock Circular Law Theorem for Random Markov Matrices.
\newblock {\em Prob.~Th. ~Rel.~Field.}, 152 (3-4), 751-779, 2012.

\bibitem{bordenave_chafai}C.~Bordenave and D.~Chafa\"{i}.
\newblock Around the circular law.
\newblock {\em Probab.~Surv.}, 9(0), 1-89, 2012.

\bibitem{BLM}S.~Boucheron, G.~Lugosi, and P.~Massart.
\newblock {\em Concentration inequalities: ~A nonasymptotic theory of independence}.
\newblock Oxford university press, 2013.


\bibitem{bvw}J.~Bourgain, V.~Vu, and P.~M.~Wood.
\newblock On the singularity probability of discrete random matrices.
\newblock {\em J.~Funct.~Anal.}, 258(2), 559--603, 2010.

\bibitem{C2}N.~Cook.
\newblock On the singularity of adjacency matrices for random regular digraphs.
\newblock {\em Probab.~Theo.Rel.~Flds.}, 167(1--2), 143--200, 2017.

\bibitem{C1}N.~Cook.
\newblock The circular law for random regular digraphs.
\newblock {\em Ann.~Inst.~H.~Poincar\'{e} Probab.~Statist.}, 55(4), 2111--2167, 2019.

\bibitem{costello}K.~P.~Costello.
\newblock Bilinear and quadratic variants on the Littlewood-Offord problem.
\newblock {\em Isr.~J.~Math.}, 194(1), 359--394, 2013.

\bibitem{CV1}K.~P.~Costello and V.~H.~Vu.
\newblock The rank of random graphs.
\newblock {\em Rand.~Struct.~\& Algo.}, 33(3), 269--285, 2008.

\bibitem{CV2}K.~P.~Costello and  V.~Vu.
\newblock On the rank of random sparse matrices.
\newblock {\em Combin., Prob.~\& Comput.}, 19(3), 321--342, 2010.

\bibitem{edelman}A.~Edelman.
\newblock Eigenvalues and condition numbers of random matrices.
\newblock {\em SIAM J.~Matrix~Anal.~Appl.}, 9, 543--560, 1988.




\bibitem{frieze}A.~Frieze.
\newblock Random structures and algorithms.
\newblock In {\em Proceedings of the International Congress of Mathematicians -- Seoul 2014}. Vol.~1, pages 311--340. Kyung Moon Sa, Seoul, 2014.




\bibitem{gotze_tikhomirov}F.~G\"otze and A.~Tikhomirov.
\newblock The circular law for random matrices.
\newblock {\em Ann. Probab.}, 38 (4), 1444-1491, 2010.

\bibitem{H1}J.~Huang.
\newblock Invertibility of adjacency matrices for random $d$-regular directed graphs.
\newblock {\em ArXiv preprint} \url{arXiv: 1806.01382v2}, 2018.

\bibitem{H2}J.~Huang.
\newblock Invertibility of adjacency matrices for random $d$-regular graphs.
\newblock {\em ArXiv preprint} \url{arXiv: 1807.06465v1}, 2018.

\bibitem{hH}H.~Huang.
\newblock Singularity of Bernoulli matrices in the sparse regime $pn = O(log(n))$. 
\newblock {\em ArXiv preprint}, \url{arXiv:2009.13726v1}, 2020. 

\bibitem{JSS1}V.~Jain, A.~Sah, and M.~Sawhney.
\newblock Singularity of discrete random matrices I.
\newblock {\em ArXiv preprint}, \url{arXiv:2010.06553v1}, 2020.

\bibitem{JSS2}V.~Jain, A.~Sah, and M.~Sawhney.
\newblock Singularity of discrete random matrices II.
\newblock {\em ArXiv preprint}, \url{arXiv:2010.06554v1}, 2020.



\bibitem{Knowles-Yin}A.~Knowles and J.~Yin.
\newblock Anisotropic local laws for random matrices.
\newblock {\em Probab. Theo. Rel. Flds.},  169, 257--352, 2017. 

\bibitem{kks}J.~Kahn, J.~Koml\'{o}s, and E.~Szemer\'{e}di.
\newblock On the probability that a random $\pm 1$ matrix is singular.
\newblock {\em J.~Amer.~Math.~Soc.}, 8(1), 223--240, 1995.

\bibitem{komlos1}J.~Koml\'{o}s.
\newblock On the determinant of $(0,1)$ matrices.
\newblock {\em Studia.~Sci.~Math.~Hungar.} 2, 7--22, 1967.


\bibitem{komlos2}J.~Koml\'{o}s.
\newblock On the determinant of random matrices.
\newblock {\em Studia.~Sci.~Math.~Hungar.} 3, 387--399, 1968.

\bibitem{komlos3}J.~Koml\'{o}s.
\newblock Circulated manuscript. Edited version available online at \url{http://sites.math.rutgers.edu/~komlos/01short.pdf}, 1977.

\bibitem{LSY}B.~Landon, P.~Sosoe, and H.-T.~Yau.
\newblock Fixed energy universality of Dyson Brownian motion.
\newblock {\em Adv.~Math.}, 346,1137--1332, 2019.

\bibitem{latala}R.~Lata\l a.
\newblock Some estimates of norms of random matrices.
\newblock {\em Proc.~Amer.~Math.~Soc.}, 133(5), 1273--1282, 2004.

\bibitem{LLTTY1}A.~E.~Litvak, A.~Lytova, K.~Tikhomirov, N.~Tomczak-Jaegermann, and P.~Youssef.
\newblock Adjacency matrices of random digraphs: singularity and anti-concentration.
\newblock {\em J.~Math.~Anal.~Appl.}, 445(2):1447--1491, 2017.

\bibitem{LLTTY2}A.~E.~Litvak, A.~Lytova, K.~Tikhomirov, N.~Tomczak-Jaegermann, and P.~Youssef.
\newblock The smallest singular value of a shifted $d$-regular random square matrix.
\newblock {\em Probab.~Thero.~Rel.~Flds.}, 173, 1301--1347, 2019.


\bibitem{LLTTY3}A.~E.~Litvak, A.~Lytova, K.~Tikhomirov, N.~Tomczak-Jaegermann, and P.~Youssef.
\newblock The rank of random regular digraphs of constant degree.
\newblock {\em J.~Complexity}, 48, 103--110, 2018.

\bibitem{LLTTY}A.~E.~Litvak, A.~Lytova, K.~Tikhomirov, N.~Tomczak-Jaegermann, and P.~Youssef.
\newblock Circular law for sparse random regular digraphs.
\newblock {\em J.~Eur.~Math.~Soc.}, to appear, 2021.

\bibitem{LPRT}A.~E.~Litvak, A.~Pajor, M.~Rudelson, and N.~Tomczak-Jaegermann.
\newblock Smallest singular value of random matrices and geometry of random polytopes.
\newblock {\em Adv. Math.} 195, 491--523, 2005.

\bibitem{LT}A.~E.~Litvak and K.~E.~Tikhomirov.
\newblock Singularity of sparse Bernoulli matrices.
\newblock {\em ArXiv preprint}, \url{arXiv:2004.03131v1}, 2020. 

\bibitem{M}A.~M\'{e}sz\'{a}ros.
\newblock The distribution of sandpile groups of random regular graphs.
\newblock {\em Trans.~Amer.~Math.~Soc.}, 373, 6529--6594, 2020.


\bibitem{NW}H.~H.~Nguyen and W.~M.~Wood.
\newblock Cokernels of adjacency matrices of random $r$-regular graphs.
\newblock {\em ArXiv preprint} \url{arXiv: 1806.10068v2}, 2018.

\bibitem{odlyzko}A.~M.~Odlyzko.
\newblock On subspaces spanned by random selections of $\pm1$ vectors.
\newblock {\em Journal of Combinatorial Theory, Series A}, 47, 124--133, 1988.

\bibitem{RT}E.~Rebrova and K.~Tikhomirov.
\newblock Coverings of random ellipsoids, and invertibility of matrices with i.i.d. heavy-tailed entries.
\newblock {\em Isr.~J.~Math.}, 227(2), 507--544, 2018.



\bibitem{R}M.~Rudelson.
\newblock  Invertibility of random matrices: Norm of the inverse.
\newblock {\em Ann. of Math.}, 168, 575--600, 2008.


\bibitem{RT}M.~Rudelson and K.~Tikhomirov.
\newblock The sparse circular law under minimal assumptions.
\newblock {\em Geom.~Funct.~Anal.}, 29, 561--637, 2019. 


\bibitem{RV1}M.~Rudelson and R.~Vershynin.
\newblock  {The Littlewood-Offord Problem and invertibility of random matrices}.
\newblock {\em Adv. Math.}  218 (2), 600--633, 2008.

 \bibitem{RV2}M.~Rudelson and R.~Vershynin.
\newblock Smallest singular value of a random rectangular matrix.
\newblock {\em Comm.~Pur.~App.~Math.}, 62, 1707-1739, 2009.

\bibitem{RV3}M.~Rudelson and R.~Vershynin.
\newblock Invertibility of random matrices:~unitary and orthogonal perturbations.
\newblock {\em J.~Amer.~Math.~Soc.}, 27(2), 293--338, 2014.


\bibitem{SST}A.~Sankar, D.~A.~Spielman, and S.-H.~Teng.
\newblock Smoothed Analysis of the Condition Numbers and Growth Factors of Matrices.
\newblock {\em SIAM J.~Matrix Anal.~Appl.}, 28(2), 446--476, 2006.




 \bibitem{smale}S.~Smale.
 \newblock On the efficiency of algorithms of analysis.
 \newblock {\em Bull.~Amer.~Math.~Soc. (N.S.)}, 13, 87--121, 1985.





 \bibitem{tao-survey}T.~Tao.
 \newblock Least singular value, circular law, and Lindeberg exchange.
 \newblock {\em Preprint}, available at \url{http://helper.ipam.ucla.edu/publications/qlatut/qlatut_15156.pdf}, 2017.

  \bibitem{tao-vu-sing-1}T.~Tao and V.~Vu.
  \newblock On random $\pm1$ matrices:~singularity and determinant.
  \newblock {\em Rand.~Struct.~\& Algo.}, 28, 1--23, 2006.


 \bibitem{tao-vu-sing-2}T.~Tao and V.~Vu.
 \newblock On the singularity probability of random Bernoulli matrices.
 \newblock {\em J.~Amer.~Math.~Soc.}, 20, 603--628, 2007.

 \bibitem{tao-vu-sparse}T.~Tao and V~Vu.
 \newblock Random matrices:~the circular law.
 \newblock {\em Comm.~Contem.~Math.}, 10(2), 261--307, 2008.

 \bibitem{tao_vu} T.~Tao and V.~Vu.
\newblock Random matrices: universality of the ESDs and the circular law.
\newblock {\em Ann. Probab.}, 38, 2023-2065, 2010 (with an appendix by M. Krishnapur).

\bibitem{T}K.~Tikhomirov.
\newblock Singularity of random Bernoulli matrices.
\newblock {\em Ann.~Math.}, 191, 593--634, 2020.

\bibitem{V0}R.~Vershynin.
\newblock Introduction to the non-asymptotic analysis of random matrices.
\newblock {\em Compressed sensing}, 210--268, Cambridge Univ.~Press, Cambridge, 2012.

 \bibitem{V} R.~Vershynin.
 \newblock {Invertibility of symmetric random matrices}.
 \newblock {\em Rand.~Struct.~\& Algo.},  44(2), 135--182, 2014.

\bibitem{von1}J.~von Neumann.
\newblock {\em Collected works.~Vol.~V:~Design of computers, theory of automata and numerical analysis.}
\newblock General editor:~A.~H.~Taub. A Pergamon Press Book The Macmillan Co., New York 1963.

\bibitem{von2}J.~von Neumann and H.~H.~Goldstine.
\newblock Numerical inverting of matrices of high order.
\newblock {\em Bull.~Amer.~Math.~Soc.}, 53, 1021--1099, 1947.

\bibitem{vu1}V.~Vu.
\newblock Random discrete matrices.
\newblock In {\em Horizons of combinatorics}, Vol.~17 of {\em Bolyai Soc.~Math.~Stud.}, 257--280. Springer, Berlin, 2008.


\bibitem{vu2}V.~Vu.
\newblock Combinatorial problems in random matrix theory.
\newblock In {\em Proceedings of the International Congress of Mathematicians -- Seoul 2014}. Vol.~4, 489--508, Kyung Moon Sa, Seoul, 2014.

 \bibitem{wei}F.~Wei.
 \newblock Investigate invertibility of sparse symmetric matrices.
 \newblock {\em ArXiv preprint} \url{arXiv:1712.04341v2}, 2017.

 \bibitem{wood}P. M.~Wood.
\newblock Universality and the circular law for sparse random matrices.
\newblock {\em Ann. Appl. Probab.}, 22(3), 1266--1300, 2012.

 \end{thebibliography}
\end{document}